\DeclarePairedDelimiter\ket{\lvert}{\rangle}
\DeclarePairedDelimiterX\braket[2]{\langle}{\rangle}{#1 \delimsize\vert #2}
\numberwithin{equation}{section}
\newtheorem{thm}{Theorem}[section]
\newtheorem{prop}[thm]{Proposition}
\newtheorem{lem}[thm]{Lemma}
\newtheorem{cor}[thm]{Corollary}
\newtheorem{claim}{Claim}{\bf}{\it}
\newtheorem{fthm}{Theorem}{\bf}{\it}
{\bf}{\it}
{\bf}{\it}
\theoremstyle{definition}
\newtheorem{defn}[thm]{Definition}
\theoremstyle{remark}
\newtheorem{rem}[thm]{Remark}
{\bf}{\it}
\newtheorem{definition and corollary}[thm]{Definition and Corollary}
\newcommand{\A}{{\mathbb A}}
\newcommand{\af}{\mathrm{af}}
\newcommand{\al}{\alpha}
\newcommand{\C}{{\mathbb C}}
\newcommand{\cO}{{\mathcal O}}
\newcommand{\Hom}{\mbox{\rm Hom}}
\newcommand{\Ext}{\mbox{\rm Ext}}
\newcommand{\tI}{\mathtt I}
\newcommand{\id}{\mbox{\rm id}}
\newcommand{\la}{\lambda}
\newcommand{\Sym}{\mathfrak S}
\newcommand{\g}{\mathfrak{g}}
\newcommand{\gb}{\mathfrak{b}}
\newcommand{\h}{\mathfrak{h}}
\newcommand{\wh}{\widehat{\mathfrak{h}}}
\newcommand{\wth}{\widetilde{\mathfrak{h}}}
\newcommand{\ws}{\widehat{\mathfrak{s}}}
\newcommand{\wts}{\widetilde{\mathfrak{s}}}
\newcommand{\gI}{\mathfrak{I}}
\newcommand{\gn}{\mathfrak{n}}
\newcommand{\wg}{\widehat{\mathfrak{g}}}
\newcommand{\tg}{\widetilde{\mathfrak{g}}}
\newcommand{\bv}{\mathbf{v}}
\newcommand{\bu}{\mathbf{u}}
\newcommand{\Q}{\mathbb{Q}}
\newcommand{\Z}{\mathbb{Z}}
\newcommand{\vac}{\ket{0}}
\title{A Weyl module stratification of integrable representations\footnote{With an appendix by Ryosuke Kodera}}
\author{Syu \textsc{Kato}\footnote{Department of Mathematics, Kyoto University, {\tt{E-mail:syuchan@math.kyoto-u.ac.jp}}; Research supported in part by JSPS Grant-in-Aid for Scientific Research (B) JP26287004} \and
Sergey Loktev \footnote{Department of Mathematics, National Research University Higher School of Economics, Usacheva str. 6,  Moscow 119048, Russia;  ITEP, 25 B.Cheremushkinskaya, Moscow 117218 Russia; {\tt{E-mail:s.loktev@gmail.com}}; Research supported in part by Laboratory of Mirror Symmetry NRU HSE, RF Government grant, ag. ¹ 14.641.31.0001}}
\begin{document}
\maketitle

\begin{abstract}
We construct a  filtration  on  an integrable highest weight module of an affine Lie algebra whose adjoint graded quotient
is a direct sum of global Weyl modules. We show that the graded multiplicity of each Weyl module there is given by
the corresponding level-restricted Kostka polynomial.  This leads to an interpretation of level-restricted Kostka polynomials as the graded dimension of the space of conformal coinvariants. In addition, as an application of the level one case of the main result, we realize global Weyl modules of current algebras of type $\mathsf{ADE}$ in terms of Schubert subvarieties of thick affine Grassmanian, as predicted by Boris Feigin.
\end{abstract}

\section*{Introduction}
Let $\g$ be a simple Lie algebra over $\C$. Associated to this, we have an untwisted affine Kac-Moody algebra $\widetilde{\g}$ and a current algebra $\g [z] := \g \otimes \C [z] \subset \widetilde{\g}$. The representation theory of $\widetilde{\g}$ attracts a lot of attention in 1990s because of its relation to mathematical physics \cite{FFR94,Tel95,KMS2,KMS95} in addition to its own interest \cite{Kac, KT95}. There, they derive numerous interesting equalities and combinatorics. The representation theory of $\g [z]$ and its variants are studied in detail by many people from 2000s \cite{Kas02,CP01,CG07,FL07,CI15,LNSSS2} as a fork project on the study of representation theory of $\widetilde{\g}$.

The representation theory of $\g [z]$ is essentially the same as $\g [z,z^{-1}]$, that can be also seen as a part of the representation theory of $\widetilde{\g}$, and it also affords the natural space of intertwiners of representations of $\widetilde{\g}$. Moreover, $\g [z]$ inherits many representations directly from $\g$. Therefore, the representation theory of current algebra can be seen as a bridge between these of $\g$ and $\widetilde{\g}$. The goal of this paper is to provide several basic results that support this idea, and to connect them with the combinatorics/equalities from 1990s.

We have an integral weight lattice $P$ of $\g$ and the integral weight lattice $\widetilde{P}$ of $\widetilde{\g}$, such that we have a canonical surjection $\widetilde{P} \in \Lambda \mapsto \overline{\Lambda} \in P$. 
 Let $P_+$ be the set of dominant weights, and let $P_+^k \subset P$ be the set of level $k$ dominant weights (projected to $P$). For each $\lambda \in P_+$, we have a local Weyl module $W ( \lambda, 0)$ and a global Weyl module $W ( \lambda )$ defined by Chari-Pressley \cite{CP01}.

Both local Weyl modules and global Weyl modules constitute a basis in $K_0$ of the category of graded $\g [z]$-modules. It is shown (\cite{Kat15,Kho13,CI15,Kle14,LNSSS3}) that these bases are orthogonal to each other with respect to ${\rm Ext}^\bullet$ scalar product (see
 Theorem~\ref{crit-filt}) and their characters are related to the  specializations of Macdonald polynomials at $t=0$. It appears
 that characters of some natural representations of $\g [z]$ can be expressed via Weyl module characters with positive coefficients.
 In particular, the Cauchy identity implies that the projective $\g [z]$-modules have a filtrations whose adjoint graded space consists of  global Weyl modules (\cite{CI15}).

Let us choose an inclusion of $\g [z]$ into $\widetilde{\g}$ as the {\it nonpositive} part of $\widetilde{\g}$, so the representations of $\widetilde{\g}$ can be considered as $\g [z]$-modules. For each $\lambda \in P_+^k$, we have a level $k$ integrable highest weight $\widetilde{\g}$-module $L_k ( \lambda )$. In \cite{CF13}, it is shown that the characters of $L_1 ( \Lambda )^{\otimes n}$ ($n \ge 1$) can be expressed via characters of the global Weyl modules with positive coefficients.

Our main result provides an explanation of this phenomena: 

\begin{fthm}\label{fstr}
Each integrable highest weight $\widetilde{\g}$-module considered as a $\g [z]$-module
admits a filtration by global Weyl modules.
\end{fthm}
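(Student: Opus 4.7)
The plan is to apply the Ext-orthogonality criterion, Theorem~\ref{crit-filt}, which (in its natural form) asserts that a graded $\g[z]$-module $M$ of suitable finiteness admits a filtration by global Weyl modules precisely when $\Ext^{>0}_{\g[z]}(M,W(\mu,0))=0$ for every $\mu\in P_+$. Thus the entire problem reduces to verifying this higher Ext-vanishing for $M=L_k(\Lambda)$ restricted from $\widetilde{\g}$ to $\g[z]$ via the nonpositive-part embedding, together with a subsequent character calculation identifying the multiplicities.

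To establish the Ext-vanishing, the plan is to combine a resolution of $L_k(\Lambda)$ with the known description of $W(\mu,0)$ as a finitely generated module over the commutative algebra $\mathrm{Sym}(\h\otimes z\C[z])$. Concretely, a Garland--Lepowsky (or Kumar BGG) resolution of $L_k(\Lambda)$ by $\widetilde{\g}$-modules parabolically induced from $\g[z]$ reduces, via Frobenius reciprocity, the Ext-computation to Lie algebra cohomology of $\g\otimes z\C[z]$ with coefficients in $W(\mu,0)$; this cohomology should vanish above degree zero because local Weyl modules are free over a polynomial subalgebra of $U(\g\otimes z\C[z])$. An alternative, more geometric route is to exhaust $L_k(\Lambda)$ by its affine Demazure submodules $D(w,\Lambda)$: at level one in simply-laced type these coincide with local Weyl modules (Fourier--Littelmann, Naoi), and at higher level Joseph--Naoi-style excellent filtrations lift this to filtrations by global Weyl modules that are compatible with the direct-limit presentation of $L_k(\Lambda)$.

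Once the filtration exists, its graded multiplicities are identified with level-restricted Kostka polynomials by expanding the Weyl--Kac character of $L_k(\Lambda)$ in the basis of global Weyl module characters (which, up to normalization, are specializations of Macdonald polynomials at $t=0$) and matching the resulting expressions against the fermionic/RSOS formula for the level-restricted Kostka polynomials. The principal obstacle is harmonizing the homological criterion with the multiplicity identification: local Weyl modules carry a nontrivial action of $\mathrm{Sym}(\h\otimes z\C[z])$ with infinite-dimensional spectrum, so Ext-vanishing must be controlled uniformly along that spectrum, and the chosen resolution of $L_k(\Lambda)$ must be compatible with the internal $z$-grading in order for the level-restricted Kostka polynomials to appear with the correct $q$-weights. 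Arranging this compatibility, rather than the mere abstract existence of \emph{some} global Weyl filtration, is where the bulk of the technical difficulty is expected to lie.
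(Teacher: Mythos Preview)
Your overall framework is the same as the paper's: apply the Ext-criterion of Theorem~\ref{crit-filt} (only $\Ext^1$ is needed, against $W(\mu,0)^*$ rather than $W(\mu,0)$), and unwind it using the BGGL resolution and Shapiro's lemma. The gap is in the step where you claim the resulting Ext vanishes ``because local Weyl modules are free over a polynomial subalgebra of $U(\g\otimes z\C[z])$.'' This is a confusion between local and global Weyl modules: it is $W(\mu)$ that is free over $\C[\A^{(\mu)}]$, whereas $W(\mu,0)$ is its finite-dimensional fibre at the origin and carries no such freeness. Even for $W(\mu)$, freeness over $\C[\A^{(\mu)}]$ gives no control over $\Ext^1_{\g[z]}(L_k(\lambda),W(\mu,0)^*)$; after duality and Shapiro this Ext is $\Ext^1_{\wg_k}\bigl(U_k(\wg)\otimes_{U(\g[\xi])}W(\mu,0),\,L_k(\lambda)\bigr)$, and by linkage the only potentially nonzero contribution sits at the single weight $s_0\circ_k\lambda$. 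Nothing about the $\C[\A^{(\mu)}]$-action addresses whether this specific class survives.

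What the paper actually uses is a property of $W(\mu,0)$ that is invisible from your description: the \emph{Demazure property} (Lemma~\ref{glWD}), asserting that for any weight $\gamma$ with $\langle\alpha_0^\vee,\gamma\rangle=-n<0$ the operator $E_0^{n-1}$ is injective on the $\gamma$-weight space of $W(\mu,0)$. This is proved via the embedding $W(\mu)\hookrightarrow W(\mu)_{s_0}$ into an $\gI(0)$-module and a string-decomposition analysis (Theorems~\ref{FM}, \ref{crystal}, \ref{w-free}). Combined with an explicit Casimir argument (Lemma~\ref{non-split}) showing that a string $\gb(0)$-module containing both weights $\lambda$ and $s_0\circ_k\lambda$ already induces the nontrivial extension of $M_k(\lambda)$ by $M_k(s_0\circ_k\lambda)$, this forces the dangerous $\Ext^1$-class to be absorbed inside $U_k(\wg)\otimes_{U(\g[\xi])}W(\mu,0)$ and hence to vanish against $L_k(\lambda)$. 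Your alternative route via Demazure submodules and ``Joseph--Naoi-style excellent filtrations'' is not an existing result beyond level one in simply-laced type, and would itself amount to proving the theorem.
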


Let $M$ be a finitely generated graded $\g [z]$-module stratified by global Weyl modules. We denote the graded multiplicity of $W ( \mu )$ in $M$ by $( M : W ( \mu ) )_q$. Theorem \ref{fstr} implies that we have a well-defined notion of the graded multiplicity $( L_{k} ( \lambda ) : W ( \mu ) )_q$ that counts the number of occurrences of $W ( \mu )$ in the filtration of $L_{k} ( \lambda )$ (with grading shifts counted).

Since the representation theory of $\g [z]$ naturally carries the information of Macdonald polynomials specialized to $t = 0$, Theorem \ref{fstr} and a version of the BGG reciprocity (Theorem \ref{rec} and Theorem \ref{crit-filt}) yields:

\begin{fthm}\label{fPhom}
For each $\lambda \in P_+^k$ and $\mu \in P_+$, we have
$$\mathsf{gdim} \, H_i ( \g [z], \g ; W ( - w_0 \mu, 0 ) \otimes L_{k} ( \la ) ) = \begin{cases} ( L_{k} ( \la ) : W ( \mu ) )_q & (i = 0)\\ 0 & (i \neq 0)\end{cases},$$
where $w_0$ denote the longest element in the Weyl group of $\g$. In addition, $( L_{k} ( \la ) : W ( \mu ) )_q$ coincides with the restricted Kostka polynomial of level $k$ defined combinatorially in \S \ref{comb-mac}.
\end{fthm}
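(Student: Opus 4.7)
My plan has two parts: first establish the cohomological identity, then match the multiplicities with the combinatorial definition of level-restricted Kostka polynomials.

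The first step is to apply Theorem~A: the module $L_k(\la)$ admits a filtration whose subquotients are global Weyl modules $W(\nu)$, appearing with graded multiplicity $(L_k(\la) : W(\nu))_q$. Tensoring with $W(-w_0\mu, 0)$ and applying $H_\bullet(\g[z], \g; -)$ produces a spectral sequence whose $E_1$-page assembles the groups $H_i(\g[z], \g; W(-w_0\mu, 0) \otimes W(\nu))$. It therefore suffices to compute these pieces and verify that the spectral sequence collapses.

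The key computation is the vanishing and Kronecker-delta identity
$$H_i(\g[z], \g; W(-w_0\mu, 0) \otimes W(\nu)) = \delta_{i,0}\, \delta_{\mu,\nu} \cdot \C,$$
up to the appropriate graded shift. I would deduce this from the Ext-orthogonality of local and global Weyl modules provided by Theorem~\ref{crit-filt}. The bridge between the two is the standard identification of relative Lie algebra homology with an Ext group: since $W(-w_0\mu, 0)$ is finite-dimensional and $\g$-integrable, it is $\g$-linearly dual to $W(\mu, 0)$ (up to a sign twist via $-w_0$), so a short manipulation rewrites the relative homology as $\Ext^\bullet_{\g[z]}(W(\mu, 0), W(\nu))$ in the category of graded $\g[z]$-modules, where Theorem~\ref{crit-filt} delivers the required answer.

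For the second assertion I would argue via characters. Theorem~\ref{fstr} yields
$$\mathsf{gch}\, L_k(\la) = \sum_{\mu \in P_+} (L_k(\la) : W(\mu))_q \cdot \mathsf{gch}\, W(\mu),$$
while $\mathsf{gch}\, W(\mu)$ is expressible through the $t = 0$ specialization of the Macdonald polynomial $P_\mu$ (this was the motivation recalled in the introduction) and $\mathsf{gch}\, L_k(\la)$ is given by the affine Weyl--Kac character formula at level $k$. The combinatorial definition of the level-restricted Kostka polynomial in \S\ref{comb-mac} is precisely the expansion coefficient of the latter in the Macdonald basis, so linear independence of the $P_\mu$'s forces the two sets of coefficients to coincide.

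The main obstacle is the cohomological key computation: converting the Ext-pairing between local and global Weyl modules, which is formulated in the category of graded $\g[z]$-modules in Theorem~\ref{crit-filt}, into a statement about relative Lie algebra homology for the pair $(\g[z], \g)$, and ensuring that the spectral sequence arising from an infinite Weyl-module filtration of $L_k(\la)$ converges cleanly. Once convergence is controlled by the finiteness of $(L_k(\la) : W(\mu))_q$ in each bounded range of gradings, the rest of the argument is formal.
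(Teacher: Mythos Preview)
Your approach to the first assertion is essentially the paper's: convert the relative homology into an $\Ext$ in $\g[z]\mathchar`-\mathsf{gmod}$, then feed in the Weyl-module filtration of $L_k(\la)$ from Theorem~A and invoke the orthogonality of Theorem~\ref{crit-filt}. One technical slip: the manipulation does \emph{not} yield $\Ext^\bullet_{\g[z]}(W(\mu,0), W(\nu))$. What you actually get (moving the finite-dimensional factor to the target, as in the proof of Theorem~\ref{kmain}) is $\Ext^\bullet_{\g[z]}(W(\nu), W(-w_0\mu,0)^*)$, and it is this group---global Weyl in the first slot, dual local Weyl in the second---that Theorem~\ref{crit-filt}~1) computes. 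Your version, with a local Weyl module mapping into an (infinite-dimensional) global one, is not covered by that theorem and is not obviously zero. Once corrected, the spectral-sequence/filtration argument goes through exactly as you outline and matches the paper (which phrases it as repeated short exact sequences rather than a spectral sequence, but the content is identical).

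The second assertion, however, has a genuine gap. You write that ``the combinatorial definition of the level-restricted Kostka polynomial in \S\ref{comb-mac} is precisely the expansion coefficient'' of $\mathsf{ch}\,L_k(\la)$ in the Macdonald basis. It is not---or rather, that is exactly what needs to be proved. The combinatorial quantity $X^{(k)}_{\mu,\la}(q)$ is defined as a weighted count of level-$k$ restricted paths in the crystal $\mathbb B_{\mathrm{loc}}(\mu)$ via the energy function, and there is no a~priori reason this counts Macdonald expansion coefficients of $L_k(\la)$. The paper closes this gap in two steps: Corollary~\ref{altP} uses the Weyl--Kac character formula to express $P^{(k)}_{\mu,\la}$ as an alternating affine-Weyl sum of unrestricted $P_{\mu,\bullet}$'s, and Theorem~\ref{P-char} (a crystal-theoretic argument using Demazure subcrystals, the vertex-model decomposition of Theorem~\ref{vertex}, and the behaviour of $D$ under $\tilde e_0$) establishes the \emph{same} alternating sum for $X^{(k)}_{\mu,\la}$ in terms of unrestricted $X_{\mu,\bullet}$'s. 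The identification then reduces to the unrestricted case $P_{\mu,\la}=X_{\mu,\la}$, which is Theorem~\ref{LNS3}. Your sketch skips the crystal side of this comparison entirely.
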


Theorem \ref{fPhom} can be seen as a direct extension of Feigin-Feigin \cite{FF05}. The same idea as in the proof of Theorem \ref{fPhom} also yield a proof of Teleman's Borel-Weil-Bott theorem \cite[Theorem 0 a)]{Tel95} that we include for the sake of reference ($=$ Corollary \ref{TBWB}). From this view point, it is natural to reformulate our results in terms of conformal coinvariants.

\begin{fthm}\label{fCFT}
For each $\lambda \in P_+^k$ and $\mu \in P_+$, the vector space
$$H_0 ( \g [z], \g ; W ( \mu ) \otimes L_{k} ( \lambda ) )^{\vee}$$
is a free module over $\C [\A^{(\mu)}]$, where $\A^{(\mu)}$ is a certain configuration space of $|\la|$-points in $\A^1$ $($see \S $\ref{rep-current})$. Its specialization to each point $\vec{x} \in \A^{(\mu)}$ gives the space of the generalized conformal coinvariants $($see \S $\ref{lrm})$.
\end{fthm}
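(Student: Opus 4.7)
The plan is to combine the Weyl filtration from Theorem~\ref{fstr} with a homological orthogonality for pairs of global Weyl modules, and then interpret the specialization of the resulting free module geometrically. First I would use the fact that each global Weyl module $W(\mu)$ carries a commuting action of $A_\mu := \C[\A^{(\mu)}]$ by $\g[z]$-endomorphisms; this endows $H_0(\g[z], \g; W(\mu) \otimes L_k(\lambda))^{\vee}$ with a natural $A_\mu$-module structure. Applying Theorem~\ref{fstr}, choose a filtration $0 = F_0 \subset F_1 \subset F_2 \subset \cdots$ of $L_k(\lambda)$ by $\g[z]$-submodules whose successive quotients are shifted global Weyl modules $F_j / F_{j-1} \cong W(\nu_j)[d_j]$. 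Tensoring with $W(\mu)$ and applying $H_\bullet(\g[z], \g; -)$ produces long exact sequences connecting the homology of the $F_j$ with that of the subquotients $W(\nu_j)$.

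The heart of the argument is the orthogonality
$$H_i(\g[z], \g; W(\mu) \otimes W(\nu)) \cong \begin{cases} A_\mu & (i = 0,\ \nu = \mu), \\ 0 & \text{otherwise}, \end{cases}$$
as $A_\mu$-modules, the non-vanishing isomorphism being induced by the $A_\mu$-action on the first factor. I would extract this from Theorem~\ref{crit-filt} by resolving $W(\mu)$ through modules which are $A_\mu$-free and whose fibers over $\A^{(\mu)}$ are products of local Weyl modules; the local orthogonality between $W(-w_0\mu, 0)$ and global Weyl modules that powers Theorem~\ref{fPhom} then supplies the necessary vanishing. Granted this, an induction along the Weyl filtration shows that $H_i(\g[z], \g; W(\mu) \otimes L_k(\lambda))$ vanishes for $i > 0$ and that $H_0(\g[z], \g; W(\mu) \otimes L_k(\lambda))^{\vee}$ is a successive extension of free $A_\mu$-modules of known graded ranks. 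A graded character comparison using the multiplicities $(L_k(\lambda) : W(\mu))_q$ determined in Theorem~\ref{fPhom} then forces the total module to be $A_\mu$-free of the predicted rank.

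For the specialization claim I would invoke the Chari-Pressley description of the fiber of $W(\mu)$ at $\vec{x} \in \A^{(\mu)}$: the specialization $W(\mu) \otimes_{A_\mu} \C_{\vec{x}}$ is the tensor product of local Weyl modules attached to the points of $\vec{x}$ with their multiplicities. Since $H_0(\g[z], \g; W(\mu) \otimes L_k(\lambda))^{\vee}$ has been shown to be $A_\mu$-free with all higher homology vanishing, specialization commutes with the coinvariant functor, and the specialized space is precisely the generalized conformal coinvariants at $\vec{x}$ in the sense of \S\ref{lrm}.

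The hard step will be the orthogonality asserted in the second paragraph: one must track the $A_\mu$-action carefully through the identification and establish the vanishing of higher $H_i$ as $A_\mu$-modules rather than merely as vector spaces. This is delicate because the contragredient of a local Weyl module is not itself a global Weyl module, so one needs an $A_\mu$-flat resolution in order to convert the local-global pairing from Theorem~\ref{fPhom} into the global-global pairing that appears in the statement.
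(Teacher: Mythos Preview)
Your approach is valid but differs from the paper's in which factor you filter. You filter the second factor $L_k(\lambda)$ by global Weyl modules and then need a global--global orthogonality $H_i(\g[z],\g;W(\mu)\otimes W(\nu))\cong\delta_{i,0}\,\delta_{\nu,-w_0\mu}\,A_\mu$ (the nonvanishing case should read $\nu=-w_0\mu$, not $\nu=\mu$). The paper instead filters the first factor $W(\mu)$ by its $A_\mu$-degree: Theorem~\ref{free} says $W(\mu)$ is already $A_\mu$-free, so its associated graded is a sum of shifts of the local Weyl module $W(\mu,0)$, and the only input needed is Theorem~\ref{kmain}, namely $H_{>0}(\g[z],\g;W(\mu,0)\otimes_\C L_k(\lambda))=0$. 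This makes the $A_\mu$-freeness of $H_0$ immediate (the filtration is the one induced by $A_\mu$ itself, and each graded piece is the full $H_0$ for $W(\mu,0)$), after which the vanishing of higher homology and semi-continuity give the specialization to conformal coinvariants. Your ``hard step'' would itself be proved by filtering $W(\mu)$ in exactly this way and invoking Theorem~\ref{crit-filt}~(1), so your route is strictly longer: it interposes an intermediate global--global lemma that the paper bypasses by going straight to Theorem~\ref{kmain}. Note also that no resolution of $W(\mu)$ is required---it is already $A_\mu$-free with local Weyl module fibers---so the mechanism you sketch in your second paragraph is more elaborate than necessary.
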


We remark that if we specialize to a generic $\vec{x}$, then Theorem \ref{fCFT} reduces into Teleman's result \cite{Tel95}. In general, the above homology group has subtle cancelations that is observed in \cite{FF05} when $\g = \mathfrak{sl} ( 2 )$.

Assume that $\g$ is of type $\mathsf{ADE}$. Then, the $G$-invariant Schubert subvariety of the thick affine Grassmanian $\mathbf{Gr} _G$ of $G$ is in bijection with $P_+$ (see e.g. \cite{Kas90, Zhu09}). We denote by $\mathbf{Gr} _G ^{\lambda}$ the Schubert subvariety of $\mathbf{Gr} _G$ corresponding to $\la \in P_+$. Our analysis, together with that of Cherednik-Feigin \cite{CF13} and a result of \cite{Kat17b}, implies the following realization of global Weyl modules predicted by Boris Feigin:

\begin{fthm}\label{fFeigin}
Assume that $\g$ is of type $\mathsf{ADE}$. For each $\Omega \in P_+^1$ and $\lambda \in P_+$ such that $\lambda \ge \overline{\Omega}$, we have the following isomorphism of $\g [z]$-modules:
$$\Gamma_c ( \mathbf{Gr} _G ^{\lambda}, \cO_{\mathbf{Gr} _G ^{\lambda}} ( 1 ) ) \stackrel{\cong}{\longrightarrow} W ( \lambda )^\vee,$$
where $\cO_{\mathbf{Gr} _G ^{\lambda}} ( 1 )$ is the determinant line bundle of $\mathbf{Gr}_G$ restricted to $\mathbf{Gr} _G ^{\lambda}$.
\end{fthm}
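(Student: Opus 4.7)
The plan is to compare two filtrations of $L_1(\Omega)^\vee$: an algebraic one coming from Theorems \ref{fstr} and \ref{fPhom}, and a geometric one coming from the Schubert stratification of the thick affine Grassmannian, then identify their pieces using \cite{CF13} and \cite{Kat17b}.

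A thick form of the affine Borel--Weil theorem identifies the compactly supported sections of $\cO(1)$ on the connected component of $\mathbf{Gr}_G$ containing $\mathbf{Gr}_G^{\overline{\Omega}}$ with $L_1(\Omega)^\vee$ as a $\g[z]$-module.  Under this identification, the $G$-invariant Schubert subvarieties $\mathbf{Gr}_G^\la$ for $\la \in P_+$ with $\la \ge \overline{\Omega}$ induce an exhaustive ascending filtration of $L_1(\Omega)^\vee$ indexed by the dominance order.  On the other hand, Theorem \ref{fstr} applied to $L_1(\Omega)$, combined with the level-one restricted Kostka polynomial computation of Theorem \ref{fPhom} (which in type $\mathsf{ADE}$ at level one recovers the character identity of Cherednik--Feigin \cite{CF13}, each $W(\la)^\vee$ with $\la \ge \overline{\Omega}$ occurring with the appropriate multiplicity), produces an algebraic global Weyl filtration of $L_1(\Omega)^\vee$.

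The theorem will follow from matching these two filtrations, which I would do by induction on $\la$ in the dominance order.  The base case $\la = \overline{\Omega}$ is handled by the classical Borel--Weil theorem on the minuscule partial flag variety $\mathbf{Gr}_G^{\overline{\Omega}}$, giving $\Gamma_c(\mathbf{Gr}_G^{\overline{\Omega}}, \cO(1)) \cong W(\overline{\Omega})^\vee$.  For the inductive step, one uses the short exact sequence
$$0 \to \Gamma_c(\mathbf{Gr}_G^\la \setminus \partial \mathbf{Gr}_G^\la, \cO(1)) \to \Gamma_c(\mathbf{Gr}_G^\la, \cO(1)) \to \Gamma_c(\partial \mathbf{Gr}_G^\la, \cO(1)) \to 0,$$
where $\partial \mathbf{Gr}_G^\la$ is the union of the strictly smaller $\mathbf{Gr}_G^\mu$; surjectivity rests on the vanishing of higher cohomology of $\cO(1)$ on these possibly singular Schubert subvarieties, which is the geometric input supplied by \cite{Kat17b}.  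The induction hypothesis identifies the right-hand term, and together with the existence (from \cite{Kat17b}) of a canonical cyclic vector of weight $\la$ in $\Gamma_c(\mathbf{Gr}_G^\la, \cO(1))$ and a character count coming from the algebraic filtration, this pins down the middle term as $W(\la)^\vee$.

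The main obstacle is exactly this pinning-down step.  Theorem \ref{fstr} by itself only yields existence of some Weyl filtration, so without the presentation-theoretic input of \cite{Kat17b} one could only conclude an equality of graded characters.  One must instead produce in each step an explicit surjection $W(\la)^\vee \twoheadrightarrow \Gamma_c(\mathbf{Gr}_G^\la, \cO(1))$ lifting a generator of weight $\la$, and verify via a character comparison (using Cherednik--Feigin and the just-established hypothesis on $\partial \mathbf{Gr}_G^\la$) that the surjection is an isomorphism.
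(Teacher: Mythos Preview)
Your overall strategy matches the paper's: use the Schubert stratification of the thick affine Grassmannian together with the identification $\Gamma(\mathbf{Gr}_G^\lambda,\cO(1))^\vee\cong U(\g[z])\bv_\lambda$ from \cite{Kat17b} to get a geometric filtration of $L_1(\Omega)$, compare it with the Weyl filtration supplied by Theorems~\ref{fstr} and \ref{mult-one} (equivalently \cite{CF13}), and conclude by a character count. The paper does exactly this in Theorem~\ref{Fmain}, but without induction: it produces for every $\lambda$ simultaneously a surjection $\kappa_\lambda\colon \Gamma_c(\mathbf{Gr}_G^\lambda,\cO(1))^\vee\twoheadrightarrow W(\lambda)$ (by observing, via Claim~\ref{first}, that the extremal weight vector $\bv_\lambda$ lands in the head of the subquotient $W(\lambda)$ of the Weyl filtration), and then the single inequality
\[
\sum_\lambda q^{\bullet}\,\mathsf{ch}\,W(\lambda)\ \le\ \sum_\lambda \mathsf{ch}\,\Gamma_c(\mathbf{Gr}_G^\lambda,\cO(1))^\vee\ =\ \mathsf{ch}\,L_1(\Omega)
\]
is forced to be an equality by Theorem~\ref{mult-one}, so every $\kappa_\lambda$ is an isomorphism.

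Your inductive scheme, as written, does not run. In the thick Grassmannian the closure relation is $\mathbf{Gr}_G^\lambda\subset\overline{\mathbf{Gr}_G^\mu}$ iff $\lambda\ge\mu$, so the boundary $\partial\mathbf{Gr}_G^\lambda$ is the union of $\mathbf{Gr}_G^\mu$ with $\mu>\lambda$, not $\mu<\lambda$. Hence if you start the induction at $\lambda=\overline\Omega$ and go upward in dominance, the induction hypothesis tells you nothing about $\Gamma_c(\partial\mathbf{Gr}_G^\lambda)$; the boundary strata are the ones you have \emph{not} yet treated. For the same reason your base case is off: $\mathbf{Gr}_G^{\overline\Omega}$ is not a minuscule partial flag variety but the entire connected component ${}^{\Omega}\mathbf{Gr}_G$, with $\Gamma({}^{\Omega}\mathbf{Gr}_G,\cO(1))\cong L_1(\Omega)^\vee$. (Relatedly, in the paper's conventions $\Gamma_c(\mathbf{Gr}_G^\lambda,\cO(1))$ is by definition the kernel of restriction to the boundary, i.e.\ the \emph{left} term of your short exact sequence, not the middle one.) There is no natural starting point for a downward induction either, since $P_+\cap(\overline\Omega+Q_+)$ has no maximal element. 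The paper's one-shot global character comparison is precisely what replaces the induction and makes the argument go through.
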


The organization of the paper is as follows: In section one, we prepare basic notation and environments. In section two, we exhibit that the Bernstein-Gelfand-Gelfand-Lepowsky resolution gives a projective resolution of an integrable highest weight module, viewed as a $\g [z]$-module. In section three, we define the level-restricted Kostka polynomials and provide its main properties (Theorem \ref{fstr} and Theorem \ref{fPhom}). In addition, we identify our coinvariant space as a natural enhancement of conformal coinvariants (Theorem \ref{fCFT}). In section four, we derive Feigin's realization of global Weyl modules in terms of a Schubert subvariety of thick affine Grassmanian. In section five, we provide an alternating sum formula (Theorem \ref{P-char}) for the polynomials that naturally extends the level-restricted Kostka polynomials (that is implicit in the literature), which plays a crucial role in the comparison with our level-restricted Kostka polynomials (Corollary \ref{comp-mac}). The appendix (by Ryosuke Kodera) contains a bypass of 
Theorem \ref{fstr} in the proof of Theorem \ref{fFeigin} using free field realizations.

\medskip

{\small{\bf Acknowledgments:}
S.K. thanks Michael Finkelberg to communicate him Feigin's insight and a kind invitation to Moscow on the fall 2016, and Masato Okado to show/explain his mimeo. S.L. thanks Department of Mathematics of Kyoto University for their hospitality.  
We both thanks Boris Feigin and Ivan Cherednik for fruitful and stimulating discussions.}

\section{Preliminaries}\label{prelim}
A vector space is always a $\C$-vector space, and a graded vector space refers to a $\Z$-graded vector space whose grading is either bounded from the below or bounded from the above and each of its graded piece is finite-dimensional. For a graded vector space $M = \bigoplus_{i \in \Z} M _i$ or its completion $M^{\wedge} = \prod_{i \in \Z} M_i$, we define its dual as $M^\vee := \bigoplus_{i \in \Z} \mathrm{Hom}_{\C} ( M _i, \C )$, where $\mathrm{Hom}_{\C} ( M _i, \C )$ is understood to have degree $-i$. We define the graded dimension of a graded vector space as
$$\mathsf{gdim} \, M := \sum_{i\in \Z} q^i \dim _{\C} \, M_i \in \Q [\![q,q^{-1}]\!].$$
For each $n \in \Z$, let us define the grade $n$-shift of a graded vector space $M$ as: $( M \left< n \right> )_i := M_{n+i}$ for every $i \in \Z$. For $f ( q ) \in \Q (q)$, we set $\overline{f ( q ) } := f ( q^{-1} )$.

\subsection{Algebraic groups and its Lie algebras}
Let $G$ be a connected, adjoint semi-simple algebraic group over $\C$, and let $B$ and $H$ be a Borel subgroup and a maximal torus of $G$ such that $H \subset B$. Let $G_{\rm{sc}}$ be the simply connected cover of $G$, and $H_{\rm{sc}}$ be the preimage of $H$ in $G_{\rm{sc}}$. We set $N$ $(= [B,B])$ to be the unipotent radical of $B$ and let $N^-$ be the opposite unipotent subgroup of $N$ with respect to $H$. We denote the Lie algebra of an algebraic group by the corresponding German letter. We have a (finite) Weyl group $W := N_G ( H ) / H$. For an algebraic group $E$, we denote its set of $\C [z]$-valued points by $E [z]$, its set of $\C [\![z]\!]$-valued points by $E [\![z]\!]$, and its set of $\C (z)$-valued points by $E (z)$.

Let $P := \mathrm{Hom} _{gr} ( H_{\rm{sc}}, \C ^{\times} )$ be the weight lattice of $H_{\rm{sc}}$, let $\Delta \subset P$ be the set of roots, let $\Delta^+ \subset \Delta$ be the set of roots belonging to $\gb$, and let $\Pi \subset \Delta^+$ be the set of simple roots. We denote by $\Pi^{\vee}$ the set of simple coroots of $\g$. For $\lambda, \mu \in P$, we define $\lambda \ge \mu$ if and only if $\lambda - \mu \in \Z_{\ge 0} \Delta^+$. Let $Q^{\vee}$ be the dual lattice of $P$ with a natural pairing $\left< \bullet, \bullet \right> : Q^{\vee} \times P \rightarrow \Z$. Let $r$ be the rank of $G$ and we set $\mathtt I := \{1,2,\ldots,r\}$. We fix bijections $\mathtt I \cong \Pi \cong \Pi^{\vee}$ such that $i \in \mathtt I$ corresponds to $\alpha_i \in \Pi$, its coroot $\alpha_i^{\vee} \in \Pi^{\vee}$, (non-zero) root vectors $E_i, F_i$ corresponding to $\alpha_i, - \alpha_i$, and a simple reflection $s_i \in W$ corresponding to $\alpha_i$. We also define a reflection $s_{\alpha} \in W$ corresponding to $\alpha \in \Delta^+$. Let $\ell : W \rightarrow \Z_{\ge 0}$ be the length function and let $w_0 \in W$ be the longest element. Let $P_{+} := \{ \la \in P \mid \left< \alpha_{i}^{\vee}, \la \right> \in \Z_{\ge 0}, i \in \mathtt I \}$. Let $\{\varpi_i\}_{i \in \tI} \subset P_+$ denote the dual basis of $\Pi^{\vee}$. For each $\la \in P_{+}$, we have a finite-dimensional irreducible $\g$-module $V ( \la )$ with highest weight $\la$. 

Let $\Delta_{\af} := \Delta \times \Z \delta \cup \Z_{\neq 0} \delta$ be the untwisted affine root system of $\Delta$ with its positive part $\Delta^+ \subset \Delta_{\af}^{+}$. We set $\alpha_0 := - \vartheta + \delta$, $\Pi_{\af} := \Pi \cup \{ \alpha_0 \}$, and $\tI_{\af} := \tI \cup \{ 0 \}$, where $\vartheta$ is the highest root of $\Delta^+$. We define a normalized inner product $( \bullet, \bullet ) : \h^* \times \h^* \rightarrow \C$ to be the unique $W$-invariant inner product such that $( \vartheta, \vartheta ) = 2$. We set $W _{\af} := W \ltimes Q^{\vee}$ and call it the affine Weyl group. It is a reflection group generated by $\{s_i \mid i \in \mathtt I_{\af} \}$, where $s_0$ is the reflection with respect to $\alpha_0$. This equips $W_\af$ a length function $\ell : W_\af \rightarrow \Z_{\ge 0}$ extending that of $W$. For a subgroup $W' \subset W_\af$ generated by a subset of $\{s_{i}\}_{i \in \tI}$, we identify $W' \setminus W_\af$ with the set of minimal length representatives of right $W'$-cosets in $W_\af$. The embedding $Q^{\vee} \hookrightarrow W_\af$ defines a translation element $t_{\gamma}$ for each $\gamma \in Q^{\vee}$ whose normalization is $t_{- \vartheta^{\vee}} = s_{\vartheta} s_0$.

\subsection{Affine Lie algebras}\label{aff-Lie}

Let $\widetilde{\g}$ be the untwisted affine Kac-Moody algebra associated to $\g$. I.e. we have
$$\widetilde{\g} = \g \otimes _\C \C [\xi, \xi^{-1}] \oplus \C K \oplus \C d,$$
where $K$ is central, $[d, X \otimes \xi^m] = m X \otimes \xi ^m$ for each $X \in \g$ and $m \in \Z$, and for each $X, Y \in \g$ and $f, g \in \C [\xi^{\pm 1}]$ it holds:
$$[X \otimes f , Y \otimes g ] = [X, Y] \otimes f g + ( X, Y )_{\g} \cdot K \cdot \mathrm{Res}_{\xi = 0} f \frac{\partial g}{\partial \xi},$$
where $(\bullet, \bullet)_{\g}$ denotes the unique  $\g$-invariant bilinear form such that $( \al^{\vee}, \al^{\vee})_{\g} = 2$ for a long simple root $\al$. We set $E_0 := F_{\vartheta} \otimes \xi$ and $F_0 := E_{\vartheta} \otimes \xi^{-1}$ (these are root vectors of $\alpha_0, - \alpha_0$, respectively). Then, $\{E_i, F_i\} _{i \in \tI _{\af}}$ generates a subalgebra $\wg$ of $\tg$. We set $\wh := \h \oplus \C K$ and $\wth := \wh \oplus \C d$. For each $i \in \tI_{\af}$, we denote by $\alpha_{i}^{\vee} \in \wth$ the coroot of $\al_{i}$ whose set enhances $\Pi^{\vee} \subset \h$. Let $\gI \subset \tg$ (resp. $\gI^-$) be the subalgebra of $\tg$ generated by $\wth$ and $\{ E_i \}_{i \in \mathtt I_\af}$  (resp.
 $\wth$ and $\{ F_i \}_{i \in \mathtt I_\af}$). 

By introducing $z = \xi^{-1} \in \C [\xi^{\pm 1}]$, we define a Lie subalgebra $\g [z] \subset \tg$. We have the Chevalley involution $\theta$ on $\tg$ such that $\theta ( E_i ) = F_i, \theta ( F_i ) = E_i$ ($i \in \tI_\af$) and $\theta ( d ) = - d$. We also have an involution $\Phi$ on $\tg$ such that $\Phi ( X \otimes \xi ^m ) := X \otimes z ^m$ ($m \in \Z$), $\Phi ( K ) := -K$, and $\Phi ( d ) := - d$. Note that $\Phi$ induces an isomorphism $\g [\xi] \cong \g [z]$ as Lie subalgebras of $\tg$. We set $\Theta := \theta \circ \Phi = \Phi \circ \theta$.

We set $\widehat{P}$ to be the lattice spanned by a fixed choice of fundamental weights $\Lambda_0, \Lambda_1, \ldots, \Lambda_r \in \wth^*$ of $\wg$ such that $\left< \al_i ^{\vee}, \Lambda_j \right> = \delta_{ij}$. We define $\widehat{P}_+ := \sum_{i \in \mathtt I_\af} \Z_{\ge 0} \Lambda_i$ and $\widehat{P} := \sum_{i \in \mathtt I_\af} \Z \Lambda_i$. For $k \in \Z_{\ge 0}$, we also set
$$\widehat{P}_+^k := \{ \Lambda \in \widehat{P}_+ \mid \left< K, \Lambda \right> = k \} \subset \{ \Lambda \in \widehat{P} \mid \left< K, \Lambda \right> = k \} =: \widehat{P}^k.$$
We define $\widetilde{P} := \widehat{P} \oplus \Z \delta \subset \wth^*$. We have projection maps
$$\widetilde{P} \rightarrow \widetilde{P} / \Z \delta \cong \widehat{P}\rightarrow \widehat{P} / \Z \Lambda_0 \cong P.$$
We denote the projection of $\Lambda \in \widetilde{P}$ or $\widehat{P}$ to $P$ by $\overline{\Lambda}$. Let us denote the image of $\widehat{P}_+^k$ under this projection by $P_+^k$. We also identify $P$ with $\widehat{P}^0 \subset \widetilde{P}$.

Let $k \in \Z$. We fix an element $\rho_k \in \widetilde{\h}^*$ such that $\left< \alpha_i^{\vee}, \rho_k \right> = 1$ for each $i \in \mathtt I$, and $\left< \alpha_0^{\vee}, \rho_k \right> = k + 1$. For each $w \in W_{\af}$ and $\Lambda \in \widetilde{P
}$, we define
$$w \circ _k \Lambda := w ( \Lambda + \rho_k ) - \rho_k.$$
When $k = 0$, then we simply write $\circ$ instead of $\circ_k$. Note that for $w \in W_{\af}$ and $\lambda \in P$, we have $w \circ ( \lambda + k \Lambda_0 ) = w \circ_{k} \lambda + k \Lambda_{0}$.

Every element of $P$ is either $\circ_k$-conjugate to $P_+^k$ modulo $\Z \delta$ or $W_\af$ has a non-trivial stabilizer group with respect to the $\circ_k$-action.

Finally, we set $U_k ( \wg ) := U ( \wg ) / ( K - k ) U ( \wg )$ and $U_k ( \tg ) := U ( \tg ) / ( K - k ) U ( \widetilde{\g} )$. We refer $U_k ( \wg )$-modules and $U_k ( \tg )$-modules by the $\wg_k$-modules and $\tg_k$-modules, respectively.

\subsection{Representations of current algebras}\label{rep-current}

We review some results from current algebra representations (cf. \cite{Kat16} \S 1.2).

\begin{defn}[$\g$-integrable module]
A $\g [z]$-module $M$ is said to be $\g$-integrable if $M$ is finitely generated and it decomposes into a sum of finite-dimensional $\g$-modules. Let $\g [z] \mathchar`-\mathsf{mod}$ be the category of $\g$-integrable $\g [z]$-modules. For each $\lambda \in P_+$, let $\g [z] \mathchar`-\mathsf{mod}^{\le \lambda}$ be the full subcategory of $\g [z] \mathchar`-\mathsf{mod}$ whose object is isomorphic to a direct sum of $\g$-modules in $\{ V (\mu) \}_{\mu \le \lambda}$.\\
A $\g [z]$-module $M$ is said to be graded if $M$ is a graded vector space and we have $( X \otimes z^{n} ) M_{m} \subset M_{n+m}$ for each $X \in \g$ and $n, m \in \Z$. We denote the category of graded $\g$-integrable $\g [z]$-modules by $\g [z] \mathchar`-\mathsf{gmod}$.
\end{defn}

\begin{defn}[projective modules and global Weyl module]\label{gWP}
For each $\lambda \in P_+$, we define the non-restricted projective module $P ( \lambda )$ as
$$P ( \lambda ) := U ( \g [z] ) \otimes _{U ( \g )} V ( \lambda ).$$
Let $P ( \lambda ;\mu )$ be the largest $\g [z]$-module quotient of $P ( \lambda )$ such that
\begin{equation}
  \Hom _{\g} ( V (\gamma), P ( \lambda; \mu ) ) = \{ 0 \} \hskip 5mm \text{if} \hskip 5mm \gamma \not\le \mu.\label{cut}
\end{equation}
We define the global Weyl module $W ( \lambda )$ of $\g [z]$ to be $P ( \lambda; \lambda )$.
\end{defn}

For each $\la \in P_{+}$ and $a \in \C$, the $\g$-module $V ( \la )$ can be regarded as a $\g [z]$-module through the evaluation map $\g [z] \to \g$ at $z = a$ for each $\la \in P_{+}$. We denote it by $V ( \la, a )$. We can further regard $V ( \la, 0 )$ as a module in $\g [z] \mathchar`-\mathsf{gmod}$ by putting a grading concentrated in a single degree. For $M \in \g [z] \mathchar`-\mathsf{gmod}$, we have $M \left< n \right> \in \g [z] \mathchar`-\mathsf{gmod}$ for every $n \in \Z$.

\begin{lem}\label{graded-mods}
Let $\lambda, \mu \in P_+$. The projective module $P ( \lambda )$, its quotient $P ( \lambda; \mu )$, and global Weyl modules $W ( \lambda )$ can be regarded as graded modules with simple heads $V ( \lambda, 0 )$ sitting at degree $0$. \hfill $\Box$
\end{lem}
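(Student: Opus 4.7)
The plan is to transfer the natural $\Z_{\ge 0}$-grading of $\g[z]$ (with $\deg(X\otimes z^{n})=n$) to each of the three modules, and then to identify the graded head in each case. First I would equip $U(\g[z])$ with its induced grading, so that $\g\subset\g[z]$ sits in degree $0$ and $U(\g[z])$ becomes a non-negatively graded associative algebra. Declaring the generating subspace $V(\lambda)$ of $P(\lambda)=U(\g[z])\otimes_{U(\g)}V(\lambda)$ to live in degree $0$ then makes $P(\lambda)$ into a graded $\g[z]$-module. Applying the Poincar\'{e}--Birkhoff--Witt theorem to the decomposition $\g[z]=\g\oplus(\g\otimes z\C[z])$ yields a graded vector space isomorphism $P(\lambda)\cong U(\g\otimes z\C[z])\otimes_{\C}V(\lambda)$, and in particular identifies $P(\lambda)_{0}=V(\lambda)$. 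Since the positive degree part of $\g[z]$ acts by strictly positive degree shifts, the subspace $P(\lambda)_{>0}$ is a $\g[z]$-submodule with $P(\lambda)/P(\lambda)_{>0}\cong V(\lambda,0)$. Any graded simple quotient of $P(\lambda)$ must be generated by the image of its degree-$0$ part, hence by a $\g$-quotient of $V(\lambda)$; since $V(\lambda)$ is simple over $\g$, this forces the graded head of $P(\lambda)$ to be $V(\lambda,0)$.

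Next I would treat $P(\lambda;\mu)$: its defining kernel $K\subset P(\lambda)$ is the $\g[z]$-submodule generated by the sum of all $\g$-submodules of $P(\lambda)$ isomorphic to $V(\gamma)$ with $\gamma\not\le\mu$. Because $\g$ sits in degree $0$ of $\g[z]$, each graded piece $P(\lambda)_{n}$ is $\g$-stable, so the generating subspace of $K$ is itself graded; combined with the gradedness of $U(\g[z])$ this shows that $K$ is a graded $\g[z]$-submodule, and $P(\lambda;\mu)$ inherits a grading. Assuming $\lambda\le\mu$ (otherwise $P(\lambda;\mu)=0$ and the assertion is vacuous), the degree-$0$ generators of $K$ lie inside $P(\lambda)_{0}=V(\lambda)$; the only $\g$-submodules there are $0$ and $V(\lambda)$, and the latter is excluded by the hypothesis $\lambda\le\mu$, so the degree-$0$ generators vanish. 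As $\g[z]$ acts by non-negative degree shifts, we conclude $K\cap P(\lambda)_{0}=\{0\}$, so that $V(\lambda)$ survives in $P(\lambda;\mu)_{0}$ and the same argument as before identifies the graded head with $V(\lambda,0)$. Taking $\mu=\lambda$ then covers the global Weyl module $W(\lambda)=P(\lambda;\lambda)$.

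The only delicate point is verifying that $K$ does not meet the degree-$0$ part $V(\lambda)$ of $P(\lambda)$; once this is secured the head statement follows automatically from the simplicity of $V(\lambda)$ as a $\g$-module, together with the observation that $U(\g[z])$ is generated in non-negative degrees. No further subtlety arises, since $\g$-integrability and the category $\g[z]\text{-}\mathsf{mod}$ are manifestly compatible with the grading-shift functor $\langle n\rangle$.
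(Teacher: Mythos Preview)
Your argument is correct and is precisely the standard verification one would supply here; the paper itself omits the proof entirely (the lemma is stated with a terminal $\Box$ and no justification), treating the result as routine. The one point worth making fully explicit---which you essentially have---is that $\Hom_{\g[z]\text{-}\mathsf{gmod}}(P(\lambda),V(\lambda,0))\cong\Hom_{\g}(V(\lambda),V(\lambda))$ is one-dimensional, so $P(\lambda)_{>0}$ is the \emph{unique} maximal graded submodule and the head is a single copy of $V(\lambda,0)$ rather than a direct sum of several.
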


\begin{lem}
The module $P ( \lambda )$ is the projective cover of $V ( \lambda, x )$ as a $\g$-integrable $\g [z]$-module for every $x \in \C$.
\end{lem}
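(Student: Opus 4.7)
The plan has three parts: projectivity of $P(\la)$ in $\g[z]\mathchar`-\mathsf{mod}$, construction of a canonical surjection $P(\la) \twoheadrightarrow V(\la, x)$, and verification of the cover property.

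For projectivity, I would use the induction--restriction adjunction along the inclusion $\g \hookrightarrow \g[z]$: for any $M \in \g[z]\mathchar`-\mathsf{mod}$,
\[
\Hom_{\g[z]}(P(\la), M) \;=\; \Hom_{\g[z]}\bigl(U(\g[z]) \otimes_{U(\g)} V(\la),\, M\bigr) \;\cong\; \Hom_{\g}(V(\la), M).
\]
Semisimplicity of $\g$ makes $V(\la)$ projective in the category of $\g$-integrable $\g$-modules, so $\Hom_{\g}(V(\la), -)$ is exact there; combined with the adjunction, $\Hom_{\g[z]}(P(\la), -)$ is exact on $\g[z]\mathchar`-\mathsf{mod}$. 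This yields projectivity.

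For the surjection, the same adjunction sends the canonical $\g$-linear identification $V(\la) \cong V(\la, x)|_{\g}$ to a $\g[z]$-map $\pi_x \colon P(\la) \to V(\la, x)$ whose image is a nonzero $\g[z]$-submodule of the simple module $V(\la, x)$, hence equal to it.

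For the cover property, I would reduce to the case $x = 0$ using the change-of-variable Lie algebra automorphism $\tau_x \colon \g[z] \to \g[z]$, $X \otimes f(z) \mapsto X \otimes f(z - x)$. Since $\tau_x$ fixes the subalgebra $\g \subset \g[z]$ pointwise, pullback along $\tau_x$ yields an (ungraded) $\g[z]$-isomorphism $P(\la) \cong \tau_x^* P(\la)$ and identifies $V(\la, x)$ with $V(\la, 0)$, under which $\pi_x$ becomes $\pi_0$. The base case $x = 0$ is then Lemma~\ref{graded-mods}: $\pi_0$ exhibits $V(\la, 0)$ as the simple head of $P(\la)$ concentrated at degree zero, with kernel equal to the graded maximal submodule $z\g[z] \cdot P(\la)$.

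The main obstacle is lifting the graded essentiality of $\pi_0$ to the $\g$-integrable (ungraded) category: concretely, one needs every proper $\g[z]$-submodule of $P(\la)$ to be sent to zero by $\pi_0$. I would handle this by a Nakayama-type argument exploiting that $P(\la)$ is finitely generated over $\g[z]$, that the ideal $z\g[z]$ is the augmentation kernel with $\g[z]/z\g[z] \cong \g$, and that the induced $\Z_{\ge 0}$-grading on $P(\la)$ places the finite-dimensional generator $V(\la) = P(\la)_0$ at the bottom, so that a nontrivial image under $\pi_0$ forces the submodule to hit the generating space $V(\la)$ nontrivially and thus to equal $P(\la)$.
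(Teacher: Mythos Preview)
Your first two steps (projectivity via Frobenius reciprocity, and the construction of the surjection $\pi_x$) are correct and match the paper's approach: the paper's one-line proof is essentially the universal property of $P(\lambda)=U(\g[z])\otimes_{U(\g)}V(\lambda)$ together with $\g$-integrability, which is exactly your adjunction argument.

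However, the paper does \emph{not} prove essentiality of $\pi_x$, and in fact essentiality fails in the ungraded category $\g[z]\mathchar`-\mathsf{mod}$ whenever $\lambda\neq 0$. Take any $y\neq x$ in $\C$ and set $N:=\ker\pi_y\subset P(\lambda)$. Then $N$ is proper (since $V(\lambda,y)\neq 0$), yet for every $X\in\g$ and $v\in V(\lambda)$ the element $(X\otimes z)(1\otimes v)-y(1\otimes Xv)$ lies in $N$, and its image under $\pi_x$ is $(x-y)Xv$. Since $\g\cdot V(\lambda)=V(\lambda)$, this gives $\pi_x(N)=V(\lambda,x)$, so $\ker\pi_x$ is not superfluous. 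Equivalently, $P(\lambda)$ admits the whole family $\{V(\lambda,x)\}_{x\in\C}$ of pairwise non-isomorphic simple quotients, hence has no simple head in the ungraded category and cannot be a projective cover in the strict sense of any one of them.

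This is precisely where your Nakayama step breaks: the implication ``$\pi_0(N)\neq 0\Rightarrow N\cap P(\lambda)_0\neq 0$'' is false --- the submodule $N=\ker\pi_y$ for $y\neq 0$ meets $P(\lambda)_0$ trivially (as $\pi_y$ is injective on degree~$0$) yet surjects under $\pi_0$. The obstacle you flagged, lifting graded essentiality to the ungraded category, is a genuine obstruction rather than a technicality. The paper's phrase ``projective cover'' here should be read loosely as ``projective module with a canonical surjection'', which your Steps~1--2 already establish; your Steps~3--4 are aiming at a stronger statement that does not hold.
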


\begin{proof}
Thanks to the $\g$-integrability, $P ( \la )$ is the maximal $\g$-integrable quotient of $U ( \g [z] )$ that is generated by $V ( \la )$ (as $U ( \g )$-modules).
\end{proof}

In order to establish relation between global and local Weyl modules, we set $|\la| := \sum_{i \in \tI} \left< \al_i^{\vee}, \la \right>$ and
$$\C [\A^{(\la)}] := \bigotimes _{i \in \tI} \C [X^{(i)}_1,X^{(i)}_2,\ldots,X^{(i)}_{m_i}]^{\Sym_{m_i}} \hskip 5mm m_i = \left< \al_i^{\vee}, \la \right>,$$
where we understand that $\C [\A^{(\la)}]$ is a graded ring by setting $\deg \, X^{(i)}_j = 1$ for every $i \in \tI$ and $j \in \{ 1,2,\ldots,m_i\}$.

\begin{thm}[Chari-Fourier-Khandai \cite{CFK}]
The module $W ( \lambda )$ admits a free action of $\C [ \A ^{(\lambda)} ]$ induced by the $U ( \h [z] )$-action on the $\h$-weight $\lambda$-part of $W ( \lambda )$, that commutes with the $\g [z]$-action and respects the grading of $W ( \lambda )$.
\end{thm}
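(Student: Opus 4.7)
The plan is in three parts: first identify $\C[\A^{(\la)}]$ with the $\h$-weight $\la$ subspace $\mathbf{A}(\la) := W(\la)_{\la}$ of the global Weyl module; second, construct from this datum an action on the full module $W(\la)$ by $\g[z]$-equivariant endomorphisms; and third, establish freeness via a fiber-dimension argument.

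For the first step, let $v_\la$ denote the highest weight generator of $W(\la)$. Because $W(\la)$ is a highest weight module with weights contained in $\la - \Z_{\ge 0}\Delta^+$, every weight-$\la$ vector has the form $X \cdot v_\la$ for some $X \in U(\h[z])$; hence $\mathbf{A}(\la)$ is a graded commutative quotient of $U(\h[z])$. To identify it with $\C[\A^{(\la)}]$ I would combine the $\g$-integrability relation $F_i^{m_i+1} v_\la = 0$ (where $m_i := \langle \al_i^\vee, \la\rangle$) with the classical Garland identities in $U(\g[z])$. Applying suitable divided powers $E_i^{(r)}$ to expressions of the form $(F_i \otimes 1)^{(a_0)}(F_i \otimes z)^{(a_1)} \cdots v_\la$ and projecting back to the $\la$-weight component produces Newton-type relations which force the classes of $h_i \otimes z^n$ ($n \ge 0$) to behave as the $n$-th power sums in exactly $m_i$ indeterminates. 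Since the different $\h_i[z]$-actions on $v_\la$ are mutually independent, the tensor decomposition defining $\C[\A^{(\la)}]$ emerges automatically, and the grading matches that of the symmetric algebra.

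For the second step, any $a \in \mathbf{A}(\la) \subset W(\la)_\la$ is automatically annihilated by $\gn$ (no weight can exceed $\la$), so by the universal property of $P(\la) = U(\g[z])\otimes_{U(\g)} V(\la)$ the assignment $v_\la \mapsto a$ extends uniquely to a $\g[z]$-module homomorphism $\phi_a \colon P(\la) \to W(\la)$. Its image lies in weights $\le \la$, and hence $\phi_a$ descends to an endomorphism of $W(\la)$. The assignment $a \mapsto \phi_a$ is an algebra homomorphism $\mathbf{A}(\la) \to \End_{\g[z]}(W(\la))^{\mathrm{op}}$; commutation with the $\g[z]$-action is built into the construction, and grade-preservation is immediate since $\mathbf{A}(\la)_n \subset W(\la)_n$ and $\phi_a$ shifts degrees by $|a|$.

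The main obstacle is freeness. Set $A := \mathbf{A}(\la) = \C[\A^{(\la)}]$, which is a regular graded polynomial ring. First, $W(\la)$ is finitely generated over $A$ because the quotient $W(\la)/A_+ W(\la)$ coincides with the local Weyl module $W(\la,0)$ (the fiber at $\vec x = 0$), which is finite-dimensional by Chari-Pressley, so the graded Nakayama lemma applies. Next, I would establish flatness of $W(\la)$ over $A$ by verifying that the fiber dimension $\dim_{k(\mathfrak p)} W(\la) \otimes_A k(\mathfrak p)$ is constant on $\Spec A$. Upper semicontinuity bounds this above by $\dim W(\la,0)$; at the generic point, with parameters $\vec x$ pairwise distinct, the fiber is identified with a tensor product of local Weyl modules $W(\varpi_i, x^{(i)}_j)$ supported at the respective points, whose total dimension matches $\dim W(\la,0)$ by the multiplicativity of local Weyl module dimensions. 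Constant fiber dimension over a regular ring together with finite generation forces flatness, and graded flatness over a positively graded polynomial ring yields freeness. The delicate point is the matching of fiber dimensions at intermediate strata, which ultimately rests on the fact that local Weyl modules have dimension independent of the base point; this is where the fusion-product theory of local Weyl modules, and hence the most genuinely representation-theoretic input, is used.
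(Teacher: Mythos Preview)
The paper does not prove this theorem; it is quoted from \cite{CFK}, and the freeness assertion is restated separately as Theorem~\ref{free} with attributions to Chari--Loktev, Fourier--Littelmann, and Naoi. So there is no in-paper argument to compare your proposal against. Your outline is essentially the approach taken in that literature.

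There is, however, one ordering issue worth flagging. In step~1 the Garland identities only yield a \emph{surjection} $\C[\A^{(\la)}]\twoheadrightarrow \mathbf A(\la)$: they show that the power-sum relations hold in $\mathbf A(\la)$, but they do not by themselves rule out further relations among the images of the $h_i\otimes z^n$. Yet in step~3 you write $A:=\mathbf A(\la)=\C[\A^{(\la)}]$ and invoke regularity of $A$ to run the fiber-dimension/flatness argument. That identification has not been established at that point, so the argument as written is circular.

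The fix is minor: run step~3 over $\C[\A^{(\la)}]$ itself (which is regular by definition), letting it act through the surjection from step~1. The finite-generation, semicontinuity, and generic-fiber computations all make sense there, and this is where the deep input $\dim W(\la,0)=\prod_i \dim W(\varpi_i,0)^{m_i}$ (the content of \cite{CL06,FL07,Nao12}) is used. Once you know $W(\la)$ is free over $\C[\A^{(\la)}]$, the weight-$\la$ component $\mathbf A(\la)=W(\la)_\la$ is a free $\C[\A^{(\la)}]$-module of rank one (since $W(\la,0)_\la$ is one-dimensional), and \emph{then} the surjection $\C[\A^{(\la)}]\to\mathbf A(\la)$ is forced to be an isomorphism. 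In other words, the full identification you assert in step~1 is a corollary of step~3, not a prerequisite for it.
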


\begin{defn}\label{loc-w}
For each $x \in \A^{(\lambda)}$, we have a specialization $W ( \lambda, x ) := W ( \lambda ) \otimes_{\C [\A^{(\lambda)}]}  \C _x$. These modules are called {\em local Weyl modules}.
\end{defn}

\begin{lem}[see \cite{CFK}]
If $x \in \A^{(\lambda)}$ is the orbit of $|\lambda|$-distinct points, then we have
$$W ( \lambda, x ) \cong \bigotimes _{i=1}^r \bigotimes _{j=1}^{m_i} W ( \varpi_i, x_{i,j} ).$$
Here $\{ ( x_{i,1},\ldots,x_{i,m_i} )\}_{i \in \tI} \in \prod_{i \in \tI}\A ^{m_i}$ corresponds to $x$ $($up to $\prod_i \Sym_{m_i}$-action$)$.
\end{lem}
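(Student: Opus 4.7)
The plan is to build a $\g[z]$-equivariant morphism $W(\lambda,x) \to M$, where $M := \bigotimes_{i,j} W(\varpi_i, x_{i,j})$, then promote it to an isomorphism by first showing surjectivity via a Lagrange interpolation argument and then matching dimensions via the freeness assertion of the previous theorem. First I would endow $M$ with its $\g[z]$-module structure by the iterated coproduct and let $v_{i,j} \in W(\varpi_i, x_{i,j})$ be the highest weight generator, setting $v := \bigotimes_{i,j} v_{i,j}$. Two direct computations show (i) $v$ has $\h$-weight $\sum_{i,j} \varpi_i = \lambda$ and is annihilated by $\gn \otimes \C[z]$, and (ii) for every $h \in \h$ and $n \ge 0$,
$$(h \otimes z^n)\cdot v \;=\; \Big(\sum_{i,j} \varpi_i(h)\, x_{i,j}^{n}\Big) v.$$
Because the $\C[\A^{(\lambda)}]$-action on $W(\lambda)$ is implemented by $U(\h[z])$ on its $\lambda$-weight space, (ii) says exactly that $v$ is a $\C[\A^{(\lambda)}]$-eigenvector with eigenvalue $x$. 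Together with (i) and the $\g$-integrability of $M$, the universal property of $W(\lambda) = P(\lambda;\lambda)$ from Definition \ref{gWP} produces a $\g[z]$-equivariant map $W(\lambda,x) \to M$ sending the class of the cyclic vector to $v$.

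Next I would verify surjectivity by showing that $v$ generates $M$ under $\g[z]$. Each $W(\varpi_i, x_{i,j})$ is finitely generated and $\g$-integrable, so there exists $N \gg 0$ for which $\g \otimes \mathfrak{m}_{x_{i,j}}^{N}$ annihilates $W(\varpi_i, x_{i,j})$ for every pair $(i,j)$, where $\mathfrak{m}_a \subset \C[z]$ denotes the maximal ideal at $a$. Since the points $\{x_{i,j}\}$ are pairwise distinct, Lagrange interpolation furnishes $q_{i_0,j_0}(z) \in \C[z]$ with $q_{i_0,j_0}(x_{i_0,j_0}) = 1$ and $q_{i_0,j_0} \in \bigcap_{(i,j) \neq (i_0,j_0)} \mathfrak{m}_{x_{i,j}}$. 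Consequently, for any $X \in \g$ and any $f(z) \in \C[z]$, the element $X \otimes q_{i_0,j_0}^{N} f(z) \in \g[z]$ acts on $M$ as a prescribed element of $\g[z]$ on the $(i_0,j_0)$-th factor and as zero on the remaining factors. Combined with the cyclicity of each $W(\varpi_i, x_{i,j})$ on $v_{i,j}$, this propagates factor by factor to the cyclicity of $v$ in $M$.

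Finally, injectivity follows from a fibre-dimension comparison. By the Chari–Fourier–Khandai theorem recalled above, $W(\lambda)$ is free over $\C[\A^{(\lambda)}]$, so $x \mapsto \dim W(\lambda,x)$ is constant on $\A^{(\lambda)}$; analogously, assembling the free actions on each $W(\varpi_i)$ yields a "universal" family whose fibre dimension $\prod_{i,j}\dim W(\varpi_i, x_{i,j})$ is independent of the distinct points $x_{i,j}$. Since the surjection we constructed exists for every choice of pairwise distinct points, it suffices to match the two dimensions at one convenient configuration, which one does by comparing with the known $\g$-character of $W(\lambda)$ (equivalently, the Macdonald specialization at $t=0$). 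This last dimension identification is the substantive content and the main obstacle of the proof; once it is in hand, the constructed surjection must be an isomorphism.
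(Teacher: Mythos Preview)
The paper supplies no proof here; it defers entirely to \cite{CFK}. Your construction of the map $W(\lambda,x)\to M$ and the Lagrange-interpolation argument for surjectivity are correct and standard. One small slip: ``finitely generated and $\g$-integrable'' does not by itself force $\g\otimes\mathfrak m_{x_{i,j}}^{N}$ to annihilate $W(\varpi_i,x_{i,j})$---the global module $W(\varpi_i)$ already refutes that---so you need the finite-dimensionality of the local Weyl module as a separate input.

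Where you depart from \cite{CFK} is the injectivity step. You reduce to $\dim W(\lambda,0)=\prod_i\bigl(\dim W(\varpi_i,0)\bigr)^{m_i}$ and call on the Macdonald character of $W(\lambda)$. That identity is available in the paper as a black box (it is the content of the works cited in Theorem~\ref{free}), so your argument goes through; but be aware that in those works the present lemma is one of the \emph{inputs}---via semicontinuity it furnishes the bound $\dim W(\lambda,0)\ge\prod_i\dim W(\varpi_i,0)$, while the matching reverse bound is what \cite{CL06,FL07,Nao12} establish by explicit spanning sets---so your route risks circularity if read outside this paper. The argument actually in \cite{CFK} is more elementary and self-contained: one shows $W(\lambda,x)$ is finite-dimensional and annihilated by $\g\otimes\prod_{i,j}(z-x_{i,j})^{N}$ for $N\gg 0$; the Chinese Remainder Theorem (the points being distinct) then factors $W(\lambda,x)$ as a tensor product $\bigotimes_{i,j}W_{i,j}$ of cyclic highest-weight modules concentrated at the individual points; reading off the $\h[z]$-action on the cyclic vector identifies the highest weight of $W_{i,j}$ as $\varpi_i$, whence the universal property of $W(\varpi_i,x_{i,j})$ gives a surjection $M\twoheadrightarrow W(\lambda,x)$. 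Together with your surjection in the other direction this forces the isomorphism, with no dimension or character computation needed.
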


\begin{thm}[Chari-Loktev \cite{CL06}, Fourier-Littelmann \cite{FL07}, Naoi \cite{Nao12}]\label{free}
The action of $\C [ \A ^{(\lambda)} ]$ on $W ( \lambda )$  is free, so $W ( \lambda, x ) \cong W ( \lambda, y )$ as $\g$-modules $($but not as $\g[z]$-modules$)$ for each $x, y \in \A^{(\lambda)}$. In particular, $W ( \lambda, x )$ is finite-dimensional for any $x$.
\end{thm}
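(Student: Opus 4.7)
The plan is to reduce freeness of $W(\lambda)$ over $\C[\A^{(\lambda)}]$ to a single equality of fiber dimensions, and then establish that equality by comparison with a Demazure-type submodule of a level-$k$ integrable highest weight $\widetilde{\g}$-module. The outer shell is formal: for a generic $x \in \A^{(\lambda)}$ corresponding to $|\lambda|$ distinct labeled points, the previous lemma gives
$$W(\lambda, x) \cong \bigotimes_{i,j} W(\varpi_i, x_{i,j}).$$
Each factor $W(\varpi_i, x_{i,j})$ is finite-dimensional by the Chari-Pressley theorem (and isomorphic as a $\g$-module to $W(\varpi_i, 0)$ via translation in $z$), so the generic fiber has a fixed dimension $d_\lambda := \prod_{i \in \tI} (\dim_{\C} W(\varpi_i, 0))^{m_i}$.

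Upper semi-continuity of fiber dimensions of the finitely generated module $W(\lambda)$ over the integral domain $\C[\A^{(\lambda)}]$ then forces $\dim_{\C} W(\lambda, x) \ge d_\lambda$ for every $x$. The real content of the theorem is the matching upper bound at the most degenerate point $x = 0 \in \A^{(\lambda)}$. Here the plan is to realize $W(\lambda, 0)$ as a quotient of a graded $\g[z]$-stable Demazure submodule $V_w(\Lambda)$ of a level-$k$ integrable highest weight $\widetilde{\g}$-module, with $w \in W_{\af}$ and $\Lambda \in \widehat{P}_+^k$ chosen so that $w \circ_k \Lambda$ has classical part $\lambda$. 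Level $k=1$ suffices in type $\mathsf{ADE}$ by Fourier-Littelmann, while the general case uses higher level and Naoi's tensor-product decomposition machinery for Demazure modules over (twisted) current algebras. The Demazure character formula computes $\mathrm{ch}\, V_w(\Lambda)$ explicitly, and a combinatorial identification shows $\mathrm{ch}\, V_w(\Lambda) = \prod_{i \in \tI} (\mathrm{ch}\, W(\varpi_i, 0))^{m_i}$, producing the required upper bound $\dim_{\C} W(\lambda, 0) \le d_\lambda$.

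Once $\dim_{\C} W(\lambda, 0) = d_\lambda$ is in hand, every fiber of $W(\lambda)$ over the irreducible base $\A^{(\lambda)}$ has dimension exactly $d_\lambda$; then $W(\lambda)$ is a finitely generated graded module over a graded polynomial ring with constant fiber dimension, hence flat, hence projective by finite presentation, and therefore free. The $\g$-module isomorphism $W(\lambda, x) \cong W(\lambda, y)$ and the finite-dimensionality of every fiber follow at once.

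The hardest step will be the character matching $\mathrm{ch}\, V_w(\Lambda) = \prod_{i \in \tI} (\mathrm{ch}\, W(\varpi_i, 0))^{m_i}$. In the simply-laced case this is a manageable application of the Demazure character formula combined with Joseph-type identities, but outside type $\mathsf{ADE}$ it requires the substantive tensor-product theorem for higher-level Demazure modules that forms the technical core of Naoi's work. Equivalently, one must show that the natural surjection from the Demazure module onto $W(\lambda, 0)$ is an isomorphism, and it is at this combinatorial step that the particular structure of the underlying root system becomes unavoidable; the remainder of the argument is a purely categorical deduction from this dimension identity.
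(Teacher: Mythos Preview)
The paper does not supply its own proof of this theorem; it is quoted from the cited references \cite{CL06,FL07,Nao12} and used as a black box. So there is no in-paper proof to compare against.

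Your outline is a faithful high-level summary of the argument in those references: the generic fiber is the tensor product of fundamental local Weyl modules (the preceding lemma), semi-continuity gives the lower bound at $0$, and the substantive step is the upper bound $\dim_\C W(\lambda,0)\le d_\lambda$ via an identification with (a quotient of) a Demazure module of the right level. You are also right that this last step is where the case distinctions live: \cite{FL07} handles simply-laced $\g$ at level one, while \cite{Nao12} supplies the non-simply-laced case through higher-level Demazure modules and their tensor-product/fusion behaviour. Once the two bounds meet, your freeness deduction is the standard one; note that since $W(\lambda)$ and $\C[\A^{(\lambda)}]$ are positively graded with $\C$ in degree~$0$, graded Nakayama plus torsion-freeness of the kernel already gives freeness directly, without appealing to Quillen--Suslin or separate flatness/projectivity steps.
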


\subsection{Verma and parabolic Verma modules}\label{VW}

We set $\g [z]_1 := z \g[z] = \ker ( \g [z] \to \g ) \cong \g \otimes z\C[z]$ and $\g [\xi]_1 :=  \xi \g[\xi] = \Phi ( \g [z]_1 )$.

\begin{defn}\label{po}
We say that a $\tg$-module $M$ belongs to the parabolic category $\cO$ if 
\begin{enumerate}
\item The action of $\wth$ on $M$ is semi-simple. Moreover, each $\wth$-weight space is finite-dimensional and eigenvalues of $d \in \wth$ are bounded from above;
\item The action of  $\g [\xi]$ is locally finite (each vector generates a finite-dimesional space under this action).
\end{enumerate}
\end{defn}

\begin{rem}
{\bf 1)} The restriction a module in the parabolic category $\cO$ to $\g [z]$ is $\g$-integrable. {\bf 2)} One can replace the second condition in Definition~\ref{po} with locally finiteness of the action of $\gI$ to obtain the usual category $\cO$.
\end{rem}

\begin{defn}[integrable modules]
A module from the parabolic category $\cO$ is said to be integrable if the actions of $E_0$ and $F_0$ are locally finite.
\end{defn}

Let $\lambda \in P_{+}$ and $k \in \Z$. We have the corresponding one-dimensional module $\C_{( \lambda + k \Lambda_{0} )}$ of $\wth$, that can be inflated to a module of $\gI$ by the trivial action of $[\gI, \gI]$. We define the Verma module $\mathbf M_k ( \lambda )$ as
$$\mathbf M_k ( \lambda ) := U ( \tg ) \otimes_{U ( \gI )} \C_{( \lambda + k \Lambda_{0} )},$$
and its (unique) simple quotient by $L _{k} ( \lambda )$ (see e.g. \cite[\S 1.2]{KT95}). In case $\lambda \in P_+$, we have a finite-dimensional irreducible $\g + \wth$-module $V ( \lambda ) \otimes _\C \C_{k \Lambda_0}$ with a $\gb$-eigenvector $\bv_{\lambda}$ of $\wth$-weight $\lambda + k \Lambda_0$. We inflate $V ( \lambda )$ into a $( \g [\xi] + \wth )$-module by the trivial action of $\g [\xi]_1$. We define the parabolic Verma module $M_{k} ( \lambda )$ as
$$M_k ( \lambda ) := U ( \widetilde{\g} ) \otimes_{U ( \g [\xi] + \wth )} \left( V ( \lambda ) \otimes _\C \C_{k \Lambda_0} \right).$$
Twisting $\C_{\pm \delta}$ on the RHS gives rise to isomorphic $\wg_k$-modules (up to the shift of the $\C d$-action). We understand this twist as the grading shift of $\wg_k$-modules.

By construction, we have $\widetilde{\g}$-module surjections
$$\mathbf M_{k} ( \lambda ) \longrightarrow M_{k} ( \lambda ) \longrightarrow L_{k} ( \lambda )$$
for each $\lambda \in P_{+}$. We understand that their cyclic vectors are degree $0$ by convention. The following facts are crucial in what follows:

\begin{thm}[see e.g. \cite{Pol91, KL93}]
For each $\la \in P_+$, the modules $M_k ( \lambda ) $ and $L_k  ( \lambda )$ belong to the parabolic category $\cO$. Moreover, $L_k  ( \lambda )$ is integrable if and only if $\lambda \in P_+^k$. \hfill $\Box$
\end{thm}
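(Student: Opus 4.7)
The plan is to deduce parabolic-category-$\cO$ membership from a PBW decomposition of $M_k(\lambda)$, descend the property to $L_k(\lambda)$ by quotient, and then characterise integrability via an $\mathfrak{sl}_2$-analysis on the cyclic vector with respect to the triple $(E_0, \alpha_0^{\vee}, F_0)$; the classical Kac-Moody existence theorem then supplies the reverse direction.

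\textbf{Step 1 (parabolic $\cO$).} The triangular decomposition $\tg = (\g \otimes \xi^{-1}\C[\xi^{-1}]) \oplus \wth \oplus \g[\xi]$ yields a PBW-isomorphism
\[
M_k(\lambda) \;\cong\; U\!\left(\g \otimes \xi^{-1}\C[\xi^{-1}]\right) \otimes_{\C} \bigl(V(\lambda) \otimes_{\C} \C_{k\Lambda_0}\bigr).
\]
The element $d \in \wth$ acts diagonally on the left tensor factor with non-positive integer eigenvalues and as a scalar on the right, so $d$-eigenvalues in $M_k(\lambda)$ are bounded above and each $d$-homogeneous piece is finite dimensional (since the $d$-graded components of $U(\g \otimes \xi^{-1}\C[\xi^{-1}])$ are, and $V(\lambda)$ is finite dimensional). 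Since $\g[\xi]$ shifts $d$-degree non-negatively, its orbit on any vector lies in the sum of finitely many finite-dimensional $d$-eigenspaces; hence its action is locally finite. All these properties pass to the quotient $L_k(\lambda)$, so both lie in the parabolic category $\cO$.

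\textbf{Step 2 (forward implication of integrability).} Since $E_0 = F_{\vartheta} \otimes \xi \in \g[\xi]$, its local finiteness is automatic in $\cO$. Integrability of $L_k(\lambda)$ therefore reduces to local finiteness of $F_0 = E_{\vartheta} \otimes \xi^{-1}$. The cyclic vector $\bv_\lambda$ is annihilated by $E_0$ and has $\alpha_0^{\vee}$-eigenvalue $n := \langle \alpha_0^{\vee}, \lambda + k\Lambda_0 \rangle = k - \langle \vartheta^{\vee}, \lambda \rangle$. By $\mathfrak{sl}_2$-theory applied to the triple $(E_0, \alpha_0^{\vee}, F_0)$ acting on the span of $\{F_0^{m} \bv_\lambda\}_{m \geq 0}$, combined with finiteness of weight spaces in $\cO$, local finiteness of $F_0$ on $\bv_\lambda$ forces $F_0^{n+1} \bv_\lambda = 0$ and hence $n \geq 0$. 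Combined with $\lambda \in P_+$, this is precisely the condition $\lambda \in P_+^k$.

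\textbf{Step 3 (reverse implication) and the main obstacle.} The converse is the nontrivial direction: when $\lambda \in P_+^k$, one must show that $L_k(\lambda)$ actually is integrable rather than merely admitting an integrable quotient. The plan is to construct the maximal integrable quotient $M_k(\lambda)^{\mathrm{int}}$ of $M_k(\lambda)$ by quotienting out the submodule generated by $F_0^{n+1} \bv_\lambda$ (the $E_i, F_i$ for $i \in \tI$ already act locally nilpotently since the $\g$-module generated by $\bv_\lambda$ is the finite-dimensional $V(\lambda)$), verify that this quotient is non-zero using the Serre-type relations, and invoke the classical Kac-Garland complete-reducibility/irreducibility statement for integrable highest weight modules (see e.g.~Kac's book, Chapter 10, or~\cite{Pol91, KL93}) to conclude $M_k(\lambda)^{\mathrm{int}} = L_k(\lambda)$. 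The genuine obstacle is the non-emptiness and irreducibility of this maximal integrable quotient: in first principles this rests on the Weyl-group symmetry of the formal character and the Shapovalov form, and is the reason one invokes the classical references rather than proving it from scratch.
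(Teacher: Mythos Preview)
The paper does not supply its own proof of this statement: it is quoted from \cite{Pol91, KL93} and closed with a $\Box$. So there is nothing in the paper to compare your argument against; your sketch is essentially the standard textbook proof (Kac, Chapters~9--10), and it is correct.

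One small sharpening of Step~3: you do not actually need irreducibility of the maximal integrable quotient $M_k(\lambda)^{\mathrm{int}}$. It suffices to show that this quotient is nonzero and integrable. Nonzeroness follows because $F_0^{n+1}\bv_\lambda$ is a singular vector (it is killed by $E_0$ via the $\mathfrak{sl}_2$ identity and by $E_i$, $i\in\tI$, since $[E_i,F_0]=0$ and $E_i\bv_\lambda=0$), so the submodule it generates has all weights strictly below $\lambda+k\Lambda_0$. Integrability follows since $F_0$ is locally nilpotent on the cyclic vector and the set of $F_0$-locally-nilpotent vectors is a submodule (Kac, Lemma~3.4). Once $M_k(\lambda)^{\mathrm{int}}$ is nonzero and integrable, $L_k(\lambda)$ is its unique simple quotient and inherits integrability. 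The stronger statement that $M_k(\lambda)^{\mathrm{int}}$ is already irreducible is the paper's later Theorem~\ref{verma}, and is not needed here.
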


For each $\la \in P_+^k$, the module $L_k ( \la )$ carries a unique $\wth$-eigenvector of its weight $w ( \la + k \Lambda_0 ) \in \wth^*$ for each $w \in W_\af$. We call such a weight an extremal weight and such a vector an extremal weight vector of $L _k ( \la )$ (see e.g. \cite{Kum87}).

\begin{thm}[Chari-Greenstein \cite{CG07}]\label{CG}
For each $\lambda \in P_+$, we have
$$M_{k} ( \lambda ) \cong U ( \g [z]_1 ) \otimes _{\C} V ( \lambda )$$
as graded $\g$-modules. Moreover, $M_{k} ( \lambda )$ is isomorphic to a projective module $P ( \lambda )$ in $\g [z] \mathchar`-\mathsf{mod}$. \hfill $\Box$
\end{thm}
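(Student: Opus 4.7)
The plan is to derive both assertions from a single application of the Poincar\'e--Birkhoff--Witt theorem. The key ingredient is the vector space direct sum decomposition
\[
\tg \;=\; \g[z]_1 \oplus (\g[\xi] + \wth),
\]
which holds since $\g \otimes_{\C} \C[\xi,\xi^{-1}] = \g[z]_1 \oplus \g[\xi]$ and $\wth = \h \oplus \C K \oplus \C d$ contributes only the central and derivation parts. PBW then yields
\[
U(\tg) \;\cong\; U(\g[z]_1) \otimes_{\C} U(\g[\xi] + \wth)
\]
as a left $U(\g[z]_1)$-module and right $U(\g[\xi]+\wth)$-module.

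First I apply this identification to the defining induction of $M_k(\lambda)$, which absorbs the one-dimensional $\C_{k\Lambda_0}$-twist into the $\wth$-part and collapses the $U(\g[\xi]+\wth)$-factor against $V(\lambda)$, giving the $\C$-linear isomorphism
\[
M_k(\lambda) \;\cong\; U(\g[z]_1) \otimes_{\C} V(\lambda).
\]
To upgrade this to the claimed isomorphism of graded $\g$-modules, I use that for $X \in \g$ and $u \in U(\g[z]_1)$ the relation $Xu = uX + [X,u]$ in $U(\tg)$ combined with $[\g,\g[z]_1] \subset \g[z]_1$ shows that, after moving $X$ across the tensor (where it acts in the standard way on $V(\lambda)$), the residual term is the adjoint $\g$-action on $U(\g[z]_1)$. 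The grading, read off as $-d$-eigenvalues, places $V(\lambda)$ in degree zero and $X \otimes z^n \in \g[z]_1$ in degree $n > 0$, matching the natural grading on the right.

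A parallel argument using the Lie-algebra decomposition $\g[z] = \g \oplus \g[z]_1$ and PBW yields
\[
P(\lambda) \;=\; U(\g[z]) \otimes_{U(\g)} V(\lambda) \;\cong\; U(\g[z]_1) \otimes_{\C} V(\lambda)
\]
as $\g[z]$-modules with the same structure as above, so that composing the identifications produces $M_k(\lambda) \cong P(\lambda)$ in $\g [z] \mathchar`-\mathsf{mod}$. The projectivity of $P(\lambda)$ then follows from Frobenius reciprocity
\[
\Hom_{\g [z] \mathchar`-\mathsf{mod}}(P(\lambda), M) \;\cong\; \Hom_{\g}(V(\lambda), M),
\]
since every $M \in \g [z] \mathchar`-\mathsf{mod}$ is semisimple as a $\g$-module by the $\g$-integrability hypothesis and $V(\lambda)$ is an irreducible finite-dimensional $\g$-module, making $\Hom_{\g}(V(\lambda), -)$ an exact functor.

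The only point that requires genuine care is the bookkeeping of the gradings through the PBW identification, so that the $d$-eigenvalue grading on $M_k(\lambda)$ matches (up to sign) the natural $z$-grading on $U(\g[z]_1) \otimes V(\lambda)$. Once the conventions are pinned down, everything else is a direct consequence of PBW and the semisimplicity of finite-dimensional $\g$-modules.
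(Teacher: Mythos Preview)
Your argument is correct. The paper does not supply its own proof here---the theorem is quoted from Chari--Greenstein with a bare $\Box$---so there is nothing to compare against; what you have written is precisely the standard PBW argument one would give, and it is complete.

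One small point worth making explicit: when you compose the two identifications to obtain $M_k(\lambda)\cong P(\lambda)$ as $\g[z]$-modules, you are using not only the $\g$-action computed in the previous paragraph but also that, under both PBW splittings, an element of $\g[z]_1$ acts by left multiplication on the $U(\g[z]_1)$ tensor factor. This is immediate from the construction, but since the preceding paragraph only spells out the $\g$-action, a half-sentence noting that the $\g[z]_1$-actions visibly match would remove any ambiguity in the phrase ``with the same structure as above''.
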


\begin{lem}\label{g-int}
For each $\lambda \in P_+$ and a $\wg$-module $M$ from the parabolic category $\cO$, we have an isomorphism
$$\Hom_{\wg} ( M_{k} ( \lambda ), M ) \cong \Hom_{\wh + \gn} ( \C_{\lambda}, M^{\g [\xi]_1} ).$$
\end{lem}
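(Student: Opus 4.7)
The plan is a chain of Frobenius reciprocities along the tower
\[
\wh + \gn \;\subset\; \gb + \wth \;\subset\; \g + \wth \;\subset\; \g[\xi] + \wth \;\subset\; \tg,
\]
with two non-formal ingredients: the cyclic vector of $M_k(\lambda)$ is annihilated by $\g[\xi]_1$, which will force any map out of $M_k(\lambda)$ to land in $M^{\g[\xi]_1}$; and the finite-dimensional irreducible $V(\lambda)$ is the quotient of the induced $\gb$-module $\C_\lambda$ by Serre relations which are automatic on $\gn$-killed, weight-$\lambda$ vectors in any $\g$-integrable target (using $\lambda \in P_+$).

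First I would apply Frobenius reciprocity to the defining presentation $M_k(\lambda) = U(\tg) \otimes_{U(\g[\xi]+\wth)} (V(\lambda) \otimes \C_{k\Lambda_0})$ to get
\[
\Hom_{\tg}(M_k(\lambda), M) \;\cong\; \Hom_{\g[\xi]+\wth}(V(\lambda) \otimes \C_{k\Lambda_0}, M).
\]
Because $\g[\xi]_1$ is an ideal of $\g[\xi]$ acting trivially on the source, the image of any such map lies in the $\g$-submodule $M^{\g[\xi]_1}$, which is $\g$-integrable by local finiteness of $\g[\xi]$ on $M$ (parabolic $\cO$). So the above hom space becomes
\[
\Hom_{\g+\wth}(V(\lambda) \otimes \C_{k\Lambda_0}, M^{\g[\xi]_1}),
\]
and the universal property of $V(\lambda)$ identifies this in turn with
\[
\Hom_{\gn+\wth}(\C_{\lambda + k\Lambda_0}, M^{\g[\xi]_1}).
\]

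To convert $\Hom_{\tg}$ into $\Hom_{\wg}$ I would use that $\wg$ omits $d$ while $v_\lambda$ still generates $M_k(\lambda)$ over $\wg$ (since $d\cdot v_\lambda = 0$). A $\wg$-map $\phi\colon M_k(\lambda) \to M$ is determined by $\phi(v_\lambda)$, and the constraints it must satisfy ($\g[\xi]_1$-annihilation, $\gn$-annihilation, $\h$-weight $\lambda$, $K$-eigenvalue $k$) all lie in $\wg$ and place no restriction on the $d$-weight. Decomposing $M$ along its $d$-eigenspaces will give
\[
\Hom_{\wg}(M_k(\lambda), M) \;\cong\; \bigoplus_{n \in \Z} \Hom_{\tg}(M_k(\lambda)\langle n\rangle, M) \;\cong\; \Hom_{\wh + \gn}(\C_\lambda, M^{\g[\xi]_1}),
\]
where $\C_\lambda$ is the $\wh$-module with $\h$-weight $\lambda$ and $K$-eigenvalue equal to the level of $M$. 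The main delicate point will be this last step—the $\wg$-versus-$\tg$ bookkeeping of the $d$-grading; the earlier identifications are routine Frobenius reciprocity.
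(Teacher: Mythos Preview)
Your proof is correct and reaches the same conclusion, but the organization is genuinely different from the paper's. The paper starts from the \emph{right}-hand side: it identifies $\Hom_{\wh+\gn}(\C_\lambda, M^{\g[\xi]_1})$ with $\Hom_{\wg}(\mathbf M_k(\lambda), M)$ for the \emph{full} Verma module, and then argues that any such map must kill the images of the BGG embeddings $\mathbf M_k(w\circ\lambda)\hookrightarrow\mathbf M_k(\lambda)$ for $w\in W\setminus\{e\}$, because $M$ is $\g$-integrable while $\mathbf M_k(w\circ\lambda)^{\g[\xi]_1}$ has no $\g$-integrable quotient unless $w=e$; hence the map factors through the parabolic quotient $M_k(\lambda)$. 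You instead start from the \emph{left}-hand side: Frobenius reciprocity for the parabolic induction sends $\Hom$ down to $\g[\xi]+\wth$, the trivial $\g[\xi]_1$-action forces the image into $M^{\g[\xi]_1}$, and then finite-dimensional highest weight theory (using $\g$-integrability of $M$) collapses $V(\lambda)$ to its highest weight line. Both arguments pivot on the same substantive point---$\g$-integrability of $M$ forces the Serre relations $F_i^{\langle\alpha_i^\vee,\lambda\rangle+1}v=0$ on any $\gn$-singular vector of weight $\lambda$---but yours invokes it as a statement about $V(\lambda)$ inside a locally finite $\g$-module, while the paper invokes it as a statement about which Verma highest weights admit finite-dimensional quotients. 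Your route is arguably more direct and avoids the machinery of BGG embeddings. One small remark: in your final paragraph the list of constraints on $\phi(v_\lambda)$ should in principle also include the Serre relations (these are the only relations in $U(\wg)$ on $v_\lambda$ beyond the ones you list), but you have already shown in the previous step that they hold automatically in $M^{\g[\xi]_1}$, so the argument is complete.
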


\begin{proof}
The RHS corresponds to the space $\wg$-module morphisms $\psi : \mathbf M_k ( \lambda ) \rightarrow M$. Consider the $\wg$-module maps
$$\mathbf M_k ( w \circ \lambda ) \longrightarrow \mathbf M_k ( \lambda ) \hskip 5mm  w \in W$$
induced by the $U ( \g )$-module inclusion of their $\g [\xi] _1$-fixed parts. By the classification of finite-dimensional highest weight $\g$-modules, we deduce that $\mathbf M_k ( w \circ \lambda ) ^{\g [\xi]_1}$ has a $\g$-integrable quotient if and only if $w = e$. Since $M$ belongs to the parabolic category $\cO$ and, hence, $\g$-integrable, $\psi$ must factor through the quotient of $\mathbf M_k ( \lambda )$ by the images of $\mathbf M_k ( w \circ \lambda )$ ($w \neq e$), that is $M_k ( \lambda )$. So the assertion follows.
\end{proof}

\begin{thm}[see \cite{Kac}, Chapter 7]\label{verma}
Let $\lambda \in P_+^k$. Then, we have
$$L_k ( \lambda ) \cong M_k ( \lambda ) / U ( \wg ) F_0 ^{k - \left< \vartheta, \lambda \right> + 1} v_{\lambda}.$$
\end{thm}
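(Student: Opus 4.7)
The plan is to construct a surjection from the claimed quotient onto $L_k(\lambda)$ and then invoke the Kac-Moody classification of integrable highest weight modules to show it is an isomorphism. First I would produce the surjection. Since $\lambda \in P_+^k$, the module $L_k(\lambda)$ is integrable, so the affine $\mathfrak{sl}_2$-triple $(E_0, F_0, \alpha_0^\vee)$ acts locally finitely on it. The cyclic vector $v_\lambda \in L_k(\lambda)$ is annihilated by $E_0 = F_\vartheta \otimes \xi \in \g[\xi]_1$ (a consequence of $v_\lambda$ generating the parabolic Verma quotient $M_k(\lambda)$), and has $\alpha_0^\vee$-eigenvalue $m := \langle \alpha_0^\vee, \lambda + k\Lambda_0\rangle = k - \langle\vartheta, \lambda\rangle$, which is a nonnegative integer by the level condition. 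Standard $\mathfrak{sl}_2$-theory then forces $F_0^{m+1} v_\lambda = 0$ in $L_k(\lambda)$, yielding a surjection
\[
Q := M_k(\lambda) \big/ U(\wg) F_0^{m+1} v_\lambda \twoheadrightarrow L_k(\lambda).
\]

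Next I would check that $w := F_0^{m+1}v_\lambda \in M_k(\lambda)$ is a singular vector: for $i \in \tI$ the Chevalley relations $[E_i, F_0] = 0$ together with $E_i v_\lambda = 0$ yield $E_i w = 0$, and the same $\mathfrak{sl}_2$-calculation used above gives $E_0 w = 0$. Hence $Q$ is a cyclic highest weight $\wg_k$-module with highest weight $\lambda + k\Lambda_0 \in \widehat{P}_+^k$, and for each $i \in \tI_\af$ the Chevalley generator $F_i$ satisfies $F_i^{\langle\alpha_i^\vee, \lambda + k\Lambda_0\rangle + 1}v_\lambda = 0$ on the cyclic vector: for $i \in \tI$ this is inherited from the $\g$-integrability built into the parabolic Verma $M_k(\lambda)$ (cf.\ Theorem \ref{CG}), and for $i = 0$ it is precisely the relation we imposed.

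Finally, I would invoke the propagation lemma from Kac-Moody theory (\cite{Kac}, Lemma 3.4 and Proposition 3.6): local nilpotence of each $F_i$ on the cyclic vector of a highest weight module implies local nilpotence on the whole module, and combined with the automatic local nilpotence of the $E_i$'s, $Q$ is integrable. An integrable highest weight $\wg$-module with dominant integral highest weight is irreducible (\cite{Kac}, Chapter 10), so $Q \twoheadrightarrow L_k(\lambda)$ is an isomorphism. The hard part is the propagation step: one must verify that the nilpotency relation imposed only on the cyclic vector actually holds throughout $Q$. One can shortcut this by directly invoking Kac's description of the maximal proper submodule of the full Verma module $\mathbf M_k(\lambda)$ as generated by $\{F_i^{\langle\alpha_i^\vee, \lambda+k\Lambda_0\rangle+1}v_\lambda\}_{i \in \tI_\af}$; the relations for $i \in \tI$ are absorbed in the passage from $\mathbf M_k(\lambda)$ to $M_k(\lambda)$, leaving only the $i = 0$ relation, which is exactly the statement of the theorem.
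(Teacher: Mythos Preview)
Your argument is correct and is precisely the standard proof from \cite{Kac}, Chapter~10 (via Lemma~3.4/3.5 and Corollary~10.4). The paper does not supply its own proof of this theorem; it simply records the statement with the citation ``see \cite{Kac}, Chapter~7'', so there is nothing to compare against beyond noting that your write-up faithfully unpacks the cited reference.
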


This isomorphism can be extended to the resolution of $L_k ( \lambda )$ by parabolic Verma modules.

\begin{thm}[Hackenberger-Kolb \cite{HK07}, Theorem 3.6]\label{BGGL-res}
For each $\lambda \in P_+^k$, we have a resolution by parabolic Verma modules
\begin{equation}
\cdots \stackrel{d_3}{\longrightarrow} \bigoplus_{w \in W \backslash W_{\af}, \ell ( w ) = 2} M _k ( w \circ_k \lambda ) \stackrel{d_2}{\longrightarrow} M _k ( s_0 \circ_k \lambda ) \stackrel{d_1}{\longrightarrow} M _k ( \lambda ) \rightarrow L _k ( \lambda ) \rightarrow 0.\label{W-res}
\end{equation}
\end{thm}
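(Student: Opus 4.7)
The plan is to derive the parabolic BGG resolution of $L_k(\la)$ from the (strong, non-parabolic) BGG resolution by $\tg$-Verma modules together with the classical finite BGG resolutions of the $\g$-heads of the parabolic Vermas, merged via a double complex.

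First, I would invoke the strong BGG resolution for affine Kac-Moody algebras (Garland-Lepowsky, extended by Rocha-Caridi-Wallach and Kumar): since $\la \in P_+^k$ makes $\la + \rho_k$ dominant integral regular for $\wth$, there is an exact complex
\begin{equation*}
\cdots \longrightarrow \bigoplus_{w \in W_\af,\, \ell(w) = i} \mathbf M_k(w \circ_k \la) \longrightarrow \cdots \longrightarrow \mathbf M_k(\la) \longrightarrow L_k(\la) \longrightarrow 0,
\end{equation*}
with differentials assembled from the canonical (one-dimensional) Verma embeddings $\mathbf M_k(w' \circ_k \la) \hookrightarrow \mathbf M_k(w \circ_k \la)$ attached to covers $w \lessdot w'$ in the Bruhat order of $W_\af$, equipped with signs so that every Bruhat square anticommutes.

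Next, every $w \in W_\af$ has a unique length-additive factorization $w = v\bar w$ with $v \in W$ and $\bar w$ the minimal-length representative of $W\bar w \in W \backslash W_\af$; this $\bar w$ satisfies $\bar w^{-1}(\al_i) \in \Delta_\af^+$ for each $i \in \mathtt I$, from which one checks that $\bar w \circ_k \la \in P_+$ and $w \circ_k \la = v \circ (\bar w \circ_k \la)$. For each fixed $\bar w$, the classical BGG resolution of $V(\bar w \circ_k \la)$ by $\g$-Vermas, induced up through the exact functor $U(\tg) \otimes_{U(\g[\xi] + \wth)} (-)$, produces an exact complex
\begin{equation*}
\cdots \longrightarrow \bigoplus_{v \in W,\, \ell(v) = j} \mathbf M_k(v \circ (\bar w \circ_k \la)) \longrightarrow \cdots \longrightarrow \mathbf M_k(\bar w \circ_k \la) \longrightarrow M_k(\bar w \circ_k \la) \longrightarrow 0.
\end{equation*}

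Arrange the $\mathbf M_k(w \circ_k \la)$ into a double complex bi-graded by $(\ell(v), \ell(\bar w))$, with horizontal differentials given by the induced finite BGG above and vertical differentials given by the affine strong BGG. Each row is acyclic with $H^0$ equal to $M_k(\bar w \circ_k \la)$, so one spectral sequence collapses to the parabolic complex appearing in the statement, while the other spectral sequence computes $L_k(\la)$; comparing yields the desired resolution. The main obstacle is verifying anticommutativity of the bicomplex, which reduces to the sign-compatibility lemma of Lepowsky on $4$-cycles of the Bruhat graph; since all the Vermas in play lie in a single regular integral block of the parabolic category $\cO$ for $\tg$, where Hom-spaces between Vermas are at most one-dimensional, the signs can be propagated exactly as in the classical finite-dimensional argument.
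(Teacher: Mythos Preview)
The paper does not give its own proof of this statement; it is cited from Hackenberger--Kolb \cite{HK07}. Your approach is the standard one (essentially Lepowsky's, extended to the Kac--Moody setting) and is correct.

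Two clarifications on the execution. First, the bicomplex structure is automatic once you note that a Bruhat cover $w \lessdot w'$ in $W_\af$ projects under $w \mapsto \bar w$ to either an equality or a cover in $W \backslash W_\af$ (a general fact for parabolic quotients of Coxeter groups); hence the affine BGG differential respects your bigrading and splits as $d_h + d_v$, and then $d_h^2 = d_v^2 = d_h d_v + d_v d_h = 0$ follow from $(d_h + d_v)^2 = 0$ by comparing bidegrees --- so no separate sign-propagation argument is required. Second, you do not need a second spectral sequence: the total complex with differential $d_h + d_v$ \emph{is} the affine BGG resolution and therefore has homology $L_k(\la)$ concentrated in degree $0$; the single row-first spectral sequence already degenerates at $E_2$, since each row is the induced finite BGG resolution of $M_k(\bar w \circ_k \la)$ (induction along $U_k(\tg) \otimes_{U(\g[\xi]+\wth)} (-)$ being exact), and the surviving $E_1$-complex is exactly the parabolic complex in the statement.
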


Combining this statement with Theorem \ref{CG}, we obtain

\begin{prop}
By restricting to $\g [z]$, the resolution $(\ref{W-res})$ can be seen as a graded projective resolution of $L _k ( \lambda )$ whose grading arises from the $(-d)$-action.
\end{prop}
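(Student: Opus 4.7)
The plan is to check three independent points for the complex (\ref{W-res}) viewed as a complex in $\g[z]\text{-}\mathsf{mod}$: exactness after restriction, termwise projectivity, and compatibility with a grading induced by the $(-d)$-action. Exactness is immediate because restriction from $\tg_k$-modules to $\g[z]$-modules is exact, so (\ref{W-res}) remains an exact sequence after restriction.

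For projectivity I would invoke Theorem \ref{CG}, which identifies $M_k(\mu) \cong P(\mu) = U(\g[z])\otimes_{U(\g)} V(\mu)$ as objects of $\g[z]\text{-}\mathsf{mod}$ for every $\mu \in P_+$, in particular for each $\mu = w\circ_k\lambda$ appearing in the resolution. Frobenius reciprocity gives $\Hom_{\g[z]}(P(\mu), -) = \Hom_\g(V(\mu), -)$, and since $V(\mu)$ is projective in the semisimple category of finite-dimensional $\g$-modules, $P(\mu)$ is projective in $\g[z]\text{-}\mathsf{mod}$. A (finite in each cohomological degree) direct sum of projectives is projective, so every term of (\ref{W-res}) is projective in $\g[z]\text{-}\mathsf{mod}$.

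For the grading I would observe that $z = \xi^{-1}$ together with $[d, X\otimes\xi^m] = m(X\otimes\xi^m)$ yields $[-d, X\otimes z^n] = n(X\otimes z^n)$, so on any $\wth$-semisimple $\tg$-module the $(-d)$-eigenspace decomposition defines a $\g[z]$-module grading in the sense of \S\ref{rep-current}. Every object of the parabolic category $\cO$ is $\wth$-semisimple by definition, and the differentials $d_i$ in (\ref{W-res}) are $\wg$-equivariant, hence commute with $d$ and are graded of degree zero. Each $M_k(w\circ_k\lambda)$ is cyclically generated by a $d$-eigenvector, so under the isomorphism of Theorem \ref{CG} the $(-d)$-grading coincides with the intrinsic grading on $P(w\circ_k\lambda)$ supplied by Lemma \ref{graded-mods}, up to an overall shift determined by the $d$-weight of the generating vector.

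I expect no serious obstacle: the only item requiring care is the bookkeeping of the grading shift on each summand, which is pinned down by the $d$-weight $\langle d,\, w\circ_k\lambda + k\Lambda_0\rangle$ of the cyclic vector of $M_k(w\circ_k\lambda)$.
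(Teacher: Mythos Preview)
Your argument is correct and is precisely the spelled-out version of what the paper leaves implicit (the paper simply says the proposition follows by combining Theorem~\ref{BGGL-res} with Theorem~\ref{CG}). One small logical slip: the clause ``the differentials $d_i$ are $\wg$-equivariant, hence commute with $d$'' is not a valid inference, since $d\notin\wg$; what you need is that (\ref{W-res}) is a resolution of $\tg$-modules (equivalently, of objects in the parabolic category~$\cO$), so the $d_i$ are $\tg$-equivariant and in particular commute with $d$.
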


\section{The Weyl filtration}\label{wf}

We retain the setting of the previous section. In this section, we utilize the following two results on current algebra representations to prove Theorem \ref{FC-str}.

For $M \in \g [z] \mathchar`-\mathsf{gmod}$ and $\la \in P_{+}$, we set
\begin{align*}
[M: V ( \la, 0 )]_{q} &:= \sum_{n \in \Z} q^{n} \cdot \dim \, \Hom_{\g} ( V ( \la ), M _{n} )\\
\mathsf{ch} \, M & := \sum_{n \in \Z, \mu \in P} q^n e^{\mu} \cdot \dim \, \Hom_{\h} ( \C _\mu, M _{n} ).
\end{align*}
In case $M$ is a $\widetilde{\g}_k$-module, then $q$ represents $e^{- \delta}$ in the standard convention.

\begin{thm}[Chari-Ion \cite{CI15}]\label{rec}
\ 

\begin{enumerate}
\item The projective module $P ( \lambda)$ of $\g [z] \mathchar`-\mathsf{mod}$ admits a graded filtration by $\{ W (\mu) \}_{\mu \ge \lambda}$ with suitable grading shifts;

\item If we denote by $( P ( \lambda ) : W ( \mu ) )_q$ the number of $W(\mu)$ appearing in the filtration $($in a graded sense$)$, then for each $\lambda, \mu \in P_+$ we have
$$( P ( \lambda ) : W ( \mu ) )_q = [ W ( \mu,0 ) : V ( \lambda ) ]_q.$$
In particular, the both sides belongs to $\Z_{\ge 0} [q]$.
\end{enumerate}
\end{thm}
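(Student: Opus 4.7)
The strategy is to realize global Weyl modules as the standard objects of a highest-weight-like stratification of $\g [z] \mathchar`-\mathsf{mod}$, construct the filtration by truncating $P(\lambda)$ along the dominance order, and read off the multiplicities through a Cauchy-type character identity combined with Frobenius reciprocity. The key observation that enables the whole argument is that for each $\nu \in P_{+}$ the full subcategory $\g [z] \mathchar`-\mathsf{mod}^{\le \nu}$ is Serre, and Definition~\ref{gWP} identifies $W(\mu) = P(\mu;\mu)$ as the maximal quotient of $P(\mu)$ lying in $\g [z] \mathchar`-\mathsf{mod}^{\le \mu}$. Consequently $W(\mu)$ is the projective cover of $V(\mu,0)$ in $\g [z] \mathchar`-\mathsf{mod}^{\le \mu}$, and Frobenius reciprocity specializes to $\Hom_{\g [z]}(W(\mu),M) = \Hom_{\g}(V(\mu),M)$ for every $M \in \g [z] \mathchar`-\mathsf{mod}^{\le \mu}$.

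\textbf{Filtration by truncation.} For each $\mu \in P_{+}$ with $\mu \ge \lambda$, write $P_{\mu} := P(\lambda;\mu)$. The defining universal property yields surjections $P_{\mu} \twoheadrightarrow P_{\mu'}$ whenever $\mu \ge \mu'$, and $P(\lambda)$ is recovered from this inverse system. Working inductively along the dominance order starting at $\mu = \lambda$ (where $P_{\lambda} = W(\lambda)$), the kernel of $P_{\mu} \twoheadrightarrow P_{<\mu}$, with $P_{<\mu} := \varprojlim_{\mu' < \mu} P_{\mu'}$, lies in $\g [z] \mathchar`-\mathsf{mod}^{\le \mu}$ and is generated as a $\g [z]$-module by its $V(\mu)$-isotypic component; the projective property of the previous paragraph then provides a surjection
$$\bigoplus_{j} W(\mu) \left< -d_{j} \right> \twoheadrightarrow \ker\bigl( P_{\mu} \twoheadrightarrow P_{<\mu} \bigr)$$
for a suitable multiset of degrees $\{d_{j}\}$ recording the graded dimension of that isotypic component. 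Splicing these successive kernels back together yields an exhaustive filtration of $P(\lambda)$, provided each such surjection is an isomorphism.

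\textbf{Multiplicities and main obstacle.} To promote each surjection to an isomorphism and to identify the multiplicities, I would compare graded characters. A direct PBW computation gives $\mathsf{ch} \, P(\lambda) = \mathsf{ch} \, V(\lambda) \cdot \mathsf{ch} \, S(\g \otimes z \C [z])$, and a Cauchy-type identity for Weyl module characters rewrites this as $\sum_{\mu \ge \lambda} [W(\mu,0):V(\lambda)]_{q} \cdot \mathsf{ch} \, W(\mu)$. Comparing this with the total character of the successive kernels forces each surjection to be an isomorphism and pins down the multiplicity of $W(\mu)$ in the filtration as $[W(\mu,0):V(\lambda)]_{q}$; independently, Frobenius reciprocity gives $\mathsf{gdim} \, \Hom_{\g [z]}(P(\lambda),W(\mu,0)) = [W(\mu,0):V(\lambda)]_{q}$, closing the loop. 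The main obstacle is precisely this character identity: while surjectivity in the previous paragraph is a formal consequence of the projective property of $W(\mu)$ in the truncated category, upgrading each surjection to an isomorphism genuinely requires the Cauchy-type identity, equivalently the expression of $\mathsf{ch} \, W(\mu)$ as a specialization of a symmetric Macdonald polynomial at $t=0$, which is the nontrivial analytic input of the proof.
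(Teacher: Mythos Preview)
The paper does not supply its own proof of this statement: Theorem~\ref{rec} is quoted from Chari--Ion \cite{CI15} and used as a black box, so there is no ``paper's proof'' to compare against.

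Evaluated on its own, your sketch is a faithful outline of the Chari--Ion argument. You correctly isolate the two ingredients: (i) $W(\mu)=P(\mu;\mu)$ is projective in the Serre subcategory $\g[z]\mathchar`-\mathsf{mod}^{\le\mu}$, which manufactures surjections from copies of $W(\mu)$ onto the successive truncation kernels; and (ii) the character identity expressing $\mathsf{ch}\,P(\lambda)$ as $\sum_{\mu}[W(\mu,0):V(\lambda)]_q\,\mathsf{ch}\,W(\mu)$, which forces those surjections to be isomorphisms. You are also right that (ii) is where the genuine content lies --- it is equivalent to the identification of $\mathsf{ch}\,W(\mu)$ with the $t=0$ specialization of the symmetric Macdonald polynomial, and this is exactly the nontrivial input Chari--Ion import.

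Two places where your write-up should be tightened before it becomes a proof. First, the dominance order on $P_+$ is not total, so ``$P_{<\mu}:=\varprojlim_{\mu'<\mu}P_{\mu'}$'' and the phrase ``inductively along the dominance order'' need to be made precise: choose a total refinement of $\le$ compatible with the grading, or argue that at each stage the kernel is generated by the single new isotypic component $V(\mu)$ because $P_{<\mu}$ is by definition the maximal quotient with $\Hom_\g(V(\mu),-)=0$ and the kernel is the \emph{minimal} submodule with that property. Second, the claim that the kernel is ``generated by its $V(\mu)$-isotypic component'' deserves the one-line justification just indicated; as written it is asserted rather than argued.
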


\begin{thm}[Kleshchev \cite{Kle14} Theorem 7.21 and Lemma 7.23, cf. \S 10.3]\label{crit-filt}
\ 

\begin{enumerate}
\item We have
$$ \mathrm{Ext}^i_{\g [z] \mathchar`-\mathsf{mod}} ( W ( \lambda ), W (\mu,0)^* \left< n \right> ) = \begin{cases} \C & (\lambda = - w_0 \mu, i = 0, n = 0)\\ \{ 0 \} & (\text{otherwise}) \end{cases};$$
\item A finitely generated $\g$-integrable $\g [z]$-module $M$ admits a filtration by global Weyl modules if and only if
$$\mathrm{Ext} ^1_{\g [z]} ( M, W ( \lambda, 0 ) ^* ) = \{ 0 \} \hskip 5mm \forall \lambda \in P _+;$$
\item A finite-dimensional graded $\g [z]$-module $M$ admits a filtration by the dual of local Weyl modules if and only if
$$\mathrm{Ext} ^1_{\g [z]} ( W ( \lambda ), M ) = \{ 0 \} \hskip 5mm \forall \lambda \in P _+.$$
\end{enumerate}
\end{thm}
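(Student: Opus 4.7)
The plan is to treat (1)--(3) as the three pillars of a highest-weight category structure on $\g [z] \mathchar`-\mathsf{gmod}$, with the dominance order on $P_+$ as the poset of simples, global Weyl modules $W ( \lambda )$ as standards, and duals of local Weyl modules $W ( \mu, 0 )^*$ as costandards. Part (1) is the Ext-orthogonality between the two families; parts (2) and (3) are Ringel's filtration criteria, which follow formally from (1) together with Theorem~\ref{rec} by standard homological-algebra arguments.

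For part (1) with $i = 0$: since $W ( \mu, 0 )^* \langle n \rangle$ is finite-dimensional while $\C [\A^{(\lambda)}]$ acts freely on $W ( \lambda )$, any morphism
\[
W ( \lambda ) \longrightarrow W ( \mu, 0 )^* \langle n \rangle
\]
must annihilate the augmentation ideal of $\C [\A^{(\lambda)}]$ and hence factors through $W ( \lambda, 0 )$. Contragredient duality identifies $\mathrm{Hom}_{\g [z]} ( W ( \lambda, 0 ), W ( \mu, 0 )^* \langle n \rangle )$ with $\mathrm{Hom}_{\g [z]} ( W ( \mu, 0 ), W ( \lambda, 0 )^* \langle n \rangle )$; since $W ( \mu, 0 )$ has simple head $V ( \mu, 0 )$ in degree $0$ and $W ( \lambda, 0 )^*$ has simple socle $V ( - w_0 \lambda, 0 )$ in degree $0$, this space is one-dimensional precisely when $\lambda = - w_0 \mu$ and $n = 0$, and zero otherwise. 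For $i \ge 1$ I would construct a projective resolution $P_\bullet \twoheadrightarrow W ( \lambda )$ whose terms are direct sums of shifted $P ( \nu )$'s with $\nu \ge \lambda$, built iteratively by covering each successive kernel by its projective cover: the kernels inherit global-Weyl filtrations with indices strictly above $\lambda$ from Theorem~\ref{rec}(1). Applying $\mathrm{Hom}_{\g [z]} ( -, W ( \mu, 0 )^* ) \cong \mathrm{Hom}_{\g} ( V ( - ), W ( \mu, 0 )^* )$ via the projectivity of $P ( \nu )$ converts the Ext computation into a complex of $\g$-Hom spaces whose Euler characteristic matches the graded multiplicities of Theorem~\ref{rec}(2); combined with the $i = 0$ identification above, this matching forces vanishing in all positive degrees.

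Parts (2) and (3) then proceed by d\'evissage. For (2), the direction ``$W$-filtered implies $\mathrm{Ext}^1$-vanishing'' is immediate from (1) and the long exact sequence by induction on filtration length. For the converse, I would induct on the $\g$-support of $M$ in $P_+$ (a finite set, by finite generation). Pick a maximal weight $\lambda$ in the support, use a graded basis of $\mathrm{Hom}_{\g} ( V ( \lambda ), M )$ to build a surjection $\bigoplus_j W ( \lambda ) \langle n_j \rangle \twoheadrightarrow M$ covering the $V ( \lambda )$-isotypic head, and let $K$ be its kernel. The higher-$i$ vanishing from (1), combined with the long exact sequence, forces $\mathrm{Ext}^1 ( K, W ( \mu, 0 )^* ) = 0$ for all $\mu$, while $\lambda$ is no longer in the support of $K$; the inductive hypothesis produces a global-Weyl filtration on $K$ and hence on $M$. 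Part (3) is the Ringel-dual mirror: replace the head-covering by a socle-injection $M \hookrightarrow \bigoplus_j W ( \mu, 0 )^* \langle n_j \rangle$ and induct downward on the cokernel's support.

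The main obstacle throughout is the higher-$i$ vanishing in (1), since (2) implicitly needs $\mathrm{Ext}^2$ to control the kernel's $\mathrm{Ext}^1$ and the Euler-characteristic matching against Theorem~\ref{rec}(2) requires careful graded bookkeeping. The route taken in \cite{Kle14} sidesteps this by peeling off global Weyl modules one standard at a time using only $\mathrm{Ext}^{\le 1}$, at the cost of a more intricate inductive step.
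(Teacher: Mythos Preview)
The paper does not give its own proof of this theorem; it is quoted from \cite{Kle14} as a black box, so there is no in-paper argument to compare against. Your high-level reading is correct: this is the standard/costandard $\mathrm{Ext}$-orthogonality together with the accompanying filtration criteria for an (affine) highest-weight structure on $\g[z]\text{-}\mathsf{gmod}$, with $\Delta(\lambda)=W(\lambda)$ and $\nabla(\lambda)=W(-w_0\lambda,0)^*$, and Theorem~\ref{rec} supplying the BGG-type reciprocity input.

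You already flag the central gap honestly: an Euler-characteristic match does not by itself force the $i\ge 1$ vanishing in (1), and your d\'evissage for (2) as written leans on $\mathrm{Ext}^2$, which is circular. One further slip worth naming: the map $\bigoplus_j W(\lambda)\langle n_j\rangle \to M$ you build in (2) is \emph{not} a surjection --- $M$ need not be generated by its $V(\lambda)$-isotypic component --- so there is no kernel-of-a-surjection to induct on. The route that actually closes, and is what ``peeling off one standard at a time'' means in \cite{Kle14}, runs the other way: one shows this map is \emph{injective} (this is where the hypothesis $\mathrm{Ext}^1(M,W(-w_0\lambda,0)^*)=0$ together with maximality of $\lambda$ is genuinely spent), and then inducts on the \emph{cokernel}, whose support no longer contains $\lambda$. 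Checking that the cokernel again satisfies the $\mathrm{Ext}^1$-vanishing for \emph{all} $\mu$ uses only the long exact sequence and the $i=0$ case of (1), so no $\mathrm{Ext}^2$ is needed; the delicate point is the case $\mu=\lambda$, where one must verify that $\mathrm{Hom}(M,\nabla(\lambda))\to\mathrm{Hom}(\bigoplus_j W(\lambda)\langle n_j\rangle,\nabla(\lambda))$ is surjective. With that correction your outline becomes the standard argument; part (3) has the mirror issue and the mirror fix.
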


\begin{rem}\label{ext-id}
For each $M,M' \in \g [z] \mathchar`-\mathsf{mod}$, we have
$$\mathrm{Ext} ^i _{\g [z] \mathchar`-\mathsf{mod}} ( M, M' ) \stackrel{\cong}{\longrightarrow} \mathrm{Ext} ^i _{\g [z]} ( M, M' ) \hskip 5mm i \in \Z$$
since the category of finite-dimensional $\g$-modules is completely reducible (cf. \cite[\S 3.1]{Kum02} or \cite[Remark 0.2]{FGT}; see also Remark \ref{hom-rem}).
\end{rem}

\subsection{The Demazure property}

Note that we can consider the current algebra representation as the modules of $( \g [z] + \wth )$ with $K=0$ and the action of $(-d)$ as the grading operator.

\begin{defn}\label{DP}
Let $V$ be a representation of a subalgebra of $\tg$ contating $\wth$ and $E_0$. We say that $V$ satisfies the Demazure property if for each $n>0$ and $\gamma \in P$ such
that $\left< \alpha_0^{\vee}, \gamma \right> = -n$, the action of $\left(E_0\right)^{n-1}$ on 
the corresponding weight space $V^\gamma = \C_{\gamma} \otimes \mathrm{Hom}_{\h} ( \C_{\gamma}, V ) \subset V$
has trivial kernel.
\end{defn}

\begin{rem}
{\bf 1)} This definition is motivated by the fact that for the simply-laced $\g$, local Weyl modules coincide with the level one Demazure modules, and this property can be deduced from the structure of their Demazure crystals. {\bf 2)} The assumption on the Lie algebra of Definition \ref{DP} holds for $\Phi ( \g [z] + \wth ) = ( \g [\xi] + \wth )$, and $\gI$. 
\end{rem}

Let us introduce some notation. For each $i \in \mathtt I_{\af}$, we have a minimal paraholic subalgebra $\gI \subset \gI ( i ) = \C F_i \oplus \gI$. We denote by $\mathfrak{sl} ( 2, i )$ the Lie subalgebra of $\tg$ generated by $E_i$ and $F_i$, and $\gb ( i )$ the Lie subalgebra of $\tg$ generated by $E_i$ and $\wth$. A string $\gb ( i ) $-module is a finite-dimensional $\gb ( i )$-module that is $\wth$-semisimple and have unique simple submodule and unique simple quotient (that is automatically isomorphic to an irreducible $\mathfrak{sl} ( 2, i )$-module up to a $\wth$-weight twist).

In the below (only) in this subsection, we regard $W ( \la )$ as a $\g [\xi]$-module through $\Phi$ for the sake of conventional simplicity. (This modification is rather consistent with the other literatures.)

\begin{thm}[\cite{Kat16} Theorem 4.12 2)]\label{FM}
For each $\la \in P_+$, there exists an $\gI ( 0 )$-module $W ( \lambda )_{s_0}$ such that
$$W ( \lambda ) \subset W ( \lambda )_{s_0}.$$
\end{thm}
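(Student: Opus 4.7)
The plan is to realize $W(\lambda)$ as a Demazure-type submodule inside a larger $\wg$-module, so that moving ``one $s_0$-step up'' in the Demazure filtration automatically produces the desired $\gI(0)$-extension. The natural ambient object is the level-zero extremal weight $\wg$-module $V(\lambda)$ of Kashiwara, which carries extremal vectors $v_w$ indexed by cosets $w \in W_\af / W_\lambda$. By the standard identification of level-zero Demazure-type modules of $\wg$ with current algebra modules (in the spirit of Fourier--Littelmann, Naoi, Ishii--Naito--Sagaki, and Kashiwara's own work), the $\gI$-submodule of $V(\lambda)$ generated by the extremal vector $v_{t_{-\lambda}}$ is isomorphic, as a graded $\g[\xi]$-module via $\Phi$, to $W(\lambda)$. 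So the first step is to set up this identification precisely and match the $\wth$-grading with the $(-d)$-grading on $W(\lambda)$.

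Granted that identification, the second step is purely combinatorial: for $\lambda \in P_+$ one checks that $s_0 \, t_{-\lambda} > t_{-\lambda}$ in the Bruhat order on $W_\af$, since $t_{-\lambda}$ lies in the antidominant chamber with respect to the finite root system and $\alpha_0$ pairs positively with $-\lambda + \delta$. By the general theory of Demazure modules for affine Kac--Moody algebras, whenever $s_i w > w$ the $\gI(i)$-submodule generated by $v_w$ is obtained from the $\gI$-submodule generated by $v_w$ by acting with the string $\mathfrak{sl}(2,i)$-representation, producing a finite-dimensional string extension at each weight and remaining stable under $\gI(i)$. Applied with $i = 0$ and $w = t_{-\lambda}$, this defines $W(\lambda)_{s_0}$ as the $\gI(0)$-submodule of $V(\lambda)$ generated by $v_{t_{-\lambda}}$. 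The inclusion $W(\lambda) \subset W(\lambda)_{s_0}$ is then tautological.

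The main obstacle is Step 1: the rigorous matching between the universal/abstract object $W(\lambda)$ (defined by highest-weight generators and relations governing $\h[z]$ and the Serre-type relations from Definition \ref{gWP}) and the concrete $\gI$-submodule $U(\gI) \cdot v_{t_{-\lambda}} \subset V(\lambda)$. One needs to (i) verify that $v_{t_{-\lambda}}$ satisfies the defining relations of $W(\lambda)$ (in particular the $\g$-integrability truncation \eqref{cut} and freeness over $\C[\A^{(\lambda)}]$ coming from Theorem \ref{free}), and (ii) show the opposite direction by a character/dimension comparison, for which the Demazure character formula for $\{v_{t_{-\mu}}\}_{\mu \in P_+}$ together with Theorem \ref{rec} provides the needed bookkeeping. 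Once this identification is in place the rest, including $\gI(0)$-stability and the inclusion, is formal from the Demazure machinery.
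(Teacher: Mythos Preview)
The paper does not give its own proof of this statement: Theorem~\ref{FM} is quoted as \cite{Kat16}, Theorem~4.12~2), with no argument supplied. So there is no in-paper proof to compare your attempt against.

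Your outline is nonetheless in the right spirit and compatible with the framework of the cited source, where $W(\lambda)$ sits inside a family $\{W(\lambda)_w\}$ of Demazure-type modules indexed by the affine Weyl group, and $W(\lambda)_{s_0}$ is the next step up. A few points would need care if you wrote this out. First, you describe $W(\lambda)$ as ``the $\gI$-submodule generated by $v_{t_{-\lambda}}$'' but then treat it as a $\g[\xi]$-module; for these to agree, that $\gI$-submodule must already be $\g[\xi]$-stable, which holds for the correct extremal vector in the level-zero module but is a nontrivial feature of the level-zero setting that deserves a sentence. Second, $t_{-\lambda}$ for $\lambda \in P_+$ generally lies only in the \emph{extended} affine Weyl group (since $\lambda \notin Q^\vee$ in general), so the Bruhat inequality $s_0\,t_{-\lambda} > t_{-\lambda}$ and the Demazure machinery must be set up there; this is routine but not automatic from the paper's conventions. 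Third, as you rightly flag, Step~1 is the substantive content; in \cite{Kat16} the analogous identification (together with the companion statements quoted here as Theorem~\ref{crystal} and Theorem~\ref{w-free}) is established via a detailed analysis of the $\C[\A^{(\lambda)}]$-module structure and a character comparison, not by a one-sided relations check alone.
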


The $\gI ( 0 )$-module $W ( \lambda )_{s_0}$ is completely reducible as an $\mathfrak{sl} ( 2, 0 )$-module. For each $m \in \Z_{\ge 0}$, let $W ( \lambda )_{s_0}^m$ denote the sum of irreducible $\mathfrak{sl} ( 2, 0 )$-submodules of dimension $m$. We have $W ( \la )_{s_0} = \bigoplus_{m \ge 0} W ( \la )_{s_0}^m$.

We define the canonical filtration of $W ( \lambda )_{s_0}$ as:
$$F_n W ( \lambda )_{s_0} := \sum_{m \ge n} W ( \lambda )_{s_0}^m.$$
This defines a decreasing separable filtration of $W ( \lambda )_{s_0}$ such that $F_0 W ( \lambda )_{s_0} = W ( \lambda )_{s_0}$.

\begin{thm}\label{crystal}
For each $\la \in P_+$ and $m \ge 0$, the module
$$( F_m W ( \la )_{s_0} \cap W ( \la ) ) / ( F_{m+1} W ( \la )_{s_0} \cap W ( \la ) )$$
constructed from the inclusion in Theorem $\ref{FM}$ is $\mathfrak{b} ( 0 )$-stable, and it is the direct sum of irreducible $( \mathfrak{sl} ( 2, 0 ) + \wth )$-modules of dimension $m$, and one-dimensional $\wth$-modules $\C_{\mu}$ such that $\left< \al_0^{\vee}, \mu \right> = m-1$.
\end{thm}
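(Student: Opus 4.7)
The plan is to identify $Q_m := (F_m W(\la)_{s_0} \cap W(\la))/(F_{m+1} W(\la)_{s_0} \cap W(\la))$ as a $\gb(0)$-submodule of the ambient $\mathfrak{sl}(2,0)$-isotypic component $W(\la)_{s_0}^m = F_m W(\la)_{s_0}/F_{m+1} W(\la)_{s_0}$ and then to classify such submodules via the ambient $(\mathfrak{sl}(2,0)+\wth)$-structure. To make this precise, I first check that $W(\la)_{s_0}^m$ is $(\mathfrak{sl}(2,0)+\wth)$-stable: since $\wth$ is abelian, for an $\mathfrak{sl}(2,0)$-highest weight vector $v$ of a dim-$m$ $\mathfrak{sl}(2,0)$-submodule and any $h \in \wth$, the vector $hv$ remains $E_0$-killed of $\al_0^\vee$-weight $m-1$, hence lies in another dim-$m$ $\mathfrak{sl}(2,0)$-submodule. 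Because $E_0 = F_\vartheta \otimes \xi \in \g[\xi]$, the module $W(\la)$ is $\gb(0)$-stable, so $F_m W(\la)_{s_0} \cap W(\la)$ is $\gb(0)$-stable; this places $Q_m$ as a $\gb(0)$-submodule of $W(\la)_{s_0}^m$.

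Next I use the $(\mathfrak{sl}(2,0)+\wth)$-isomorphism $W(\la)_{s_0}^m \cong V^m_\top \otimes \mathrm{std}_m$, where $V^m_\top$ is the $\wth$-module of $\mathfrak{sl}(2,0)$-highest weight vectors (of $\al_0^\vee$-weight $m-1$) and $\mathrm{std}_m$ is the $m$-dimensional irreducible $\mathfrak{sl}(2,0)$-module with $\wth$ acting through $\al_0^\vee$. Writing $V^m_j$ for the $\al_0^\vee$-weight-$(m-1-2j)$ subspace, the map $F_0^j$ gives a $\wth$-equivariant isomorphism $V^m_\top \xrightarrow{\sim} V^m_j$; setting $T_j := Q_m \cap V^m_j$ and transporting back to $V^m_\top$ yields subspaces $T_j^\flat \subseteq V^m_\top$. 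The identity $E_0 F_0^j u = j(m-j) F_0^{j-1} u$ (with non-zero scalar for $1 \le j \le m-1$), combined with $E_0$-stability of $Q_m$, produces a descending chain $T_{m-1}^\flat \subseteq \cdots \subseteq T_1^\flat \subseteq T_0^\flat \subseteq V^m_\top$.

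The heart of the argument, which I expect to be the main obstacle, is the reverse inclusions $T_1^\flat \subseteq T_j^\flat$ for all $2 \le j \le m-1$; equivalently, if $u \in V^m_\top$ and $F_0 u \in W(\la)$, then $F_0^j u \in W(\la)$ for every $1 \le j \le m-1$. Since $F_0 \notin \g[\xi]$, one cannot verify this by directly applying $\g[\xi]$ to $F_0 u$; my plan to overcome it is to combine an induction on $m$ that reduces to the first non-trivial case with the explicit construction of $W(\la)_{s_0}$ in Theorem \ref{FM} (i.e., \cite{Kat16}, Theorem 4.12) as a controlled $\gI(0)$-enlargement of $W(\la)$, and to invoke the freeness of $W(\la)$ over $\C[\A^{(\la)}]$ from Theorem \ref{free} to specialize at generic $x \in \A^{(\la)}$, where the local Weyl module factorizes into fundamental pieces whose Demazure-crystal structure (cf.\ the Remark following Definition \ref{DP}) propagates the desired $F_0$-stability. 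Granting these equalities, choose a $\wth$-stable complement $T' \subseteq T_0^\flat$ of $T_1^\flat$; then $T_1^\flat \oplus F_0 T_1^\flat \oplus \cdots \oplus F_0^{m-1} T_1^\flat$ is a direct sum of $m$-dimensional irreducible $(\mathfrak{sl}(2,0)+\wth)$-submodules contained in $Q_m$, while $T' \subseteq V^m_\top$ contributes one-dimensional $\wth$-modules $\C_\mu$ with $\langle \al_0^\vee,\mu\rangle = m-1$, and together these exhaust $Q_m$, establishing the claimed decomposition.
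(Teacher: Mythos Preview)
Your reduction is clean and correct up to the point you yourself flag as the ``main obstacle'': the equalities $T_1^\flat = T_2^\flat = \cdots = T_{m-1}^\flat$. But this step is the entire representation-theoretic content of the theorem, and your proposal does not establish it. The plan you sketch---induction on $m$, specialization at generic $x \in \A^{(\la)}$ via freeness, and invocation of Demazure-crystal structure for the fundamental local Weyl modules---does not cohere into an argument. Induction on $m$ has no evident leverage, since the statements for distinct isotypic layers are logically independent. Specialization would at best yield the analogous assertion for some $W(\la,x)$, not for the global $W(\la)$ inside $W(\la)_{s_0}$; you have not explained how the inclusion $W(\la) \subset W(\la)_{s_0}$ behaves under specialization, nor how to lift the conclusion back (note that freeness of $W(\la)_{s_0}$ over $\C[\A^{(\la)}]$ is Theorem~\ref{w-free}, itself a separate citation to \cite{Kat16}). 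And the final appeal to the crystal structure of the fundamental pieces is exactly the external input the paper invokes, so even if the specialization route were made rigorous you would be importing the cited results through another door rather than giving an independent proof.

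For comparison, the paper's own proof is a one-line citation to \cite[Lemma~4.4 and Corollary~4.8]{Kat16}. Those results describe how the $\gb(0)$-stable piece corresponding to $W(\la)$ meets each $0$-string of $W(\la)_{s_0}$: either in the whole string or in its single top element. Your chain condition $T_1^\flat = \cdots = T_{m-1}^\flat$ is the module-theoretic restatement of precisely this dichotomy, so what you have left unproved is not a technicality but the theorem itself. The honest fix is either to quote \cite{Kat16} as the paper does, or to reproduce the relevant crystal argument from there in full.
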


\begin{proof}
The assertion follows by \cite[Lemma 4.4 and Corollary 4.8]{Kat16}.
\end{proof}

\begin{thm}\label{w-free}
Keep the setting of Theorem \ref{crystal} and Theorem \ref{free}. Then, the $\C [\A^{(\lambda)}]$-action on $W ( \la )$ naturally induces $\mathfrak{b} ( 0 )$-endomorphisms of $W ( \lambda )_{s_0}$. Moreover, this $\C [\A^{(\lambda)}]$-action on $W ( \la )_{s_0}$ is free.
\end{thm}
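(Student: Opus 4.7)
The approach is to exploit the $\mathfrak{sl}(2,0)$-isotypic decomposition
\[
W(\la)_{s_0} = \bigoplus_{m \ge 1} W(\la)_{s_0}^m \cong \bigoplus_{m \ge 1} V^m \otimes_\C L(m-1),
\]
where $L(m-1)$ is the $m$-dimensional irreducible $\mathfrak{sl}(2,0)$-module and $V^m \subset W(\la)_{s_0}$ is the top weight space of $W(\la)_{s_0}^m$, namely the vectors of $\alpha_0^\vee$-weight $m-1$ killed by $E_0$. Using the description of $F_m W(\la)_{s_0} \cap W(\la) / F_{m+1} W(\la)_{s_0} \cap W(\la)$ in Theorem \ref{crystal} (a direct sum of complete $m$-dimensional $(\mathfrak{sl}(2,0)+\wth)$-submodules and extra one-dimensional $\wth$-modules at the top weight), the two sources together exhaust $V^m$, so $V^m \subset W(\la)$.

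Since $E_0 = F_\vartheta \otimes \xi$ lies in $\g[\xi]$ and $\C[\A^{(\la)}]$ centralizes the $\g[\xi]$-action on $W(\la)$, every $\varphi \in \C[\A^{(\la)}]$ commutes with $E_0$ and preserves $\h$-weights; hence $\varphi(V^m) \subset V^m$. The extension is then defined by
\[
\tilde\varphi|_{W(\la)_{s_0}^m} := \varphi|_{V^m} \otimes \mathrm{id}_{L(m-1)}.
\]
By construction this is $\mathfrak{sl}(2,0)$-equivariant, and it is $\wth$-equivariant because $\varphi$ preserves $\h$-weights and respects the grading (which encodes the $(-d)$-action); since $\gb(0) \subset \mathfrak{sl}(2,0) + \wth$, the map $\tilde\varphi$ is a $\gb(0)$-endomorphism of $W(\la)_{s_0}$.

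The heart of the proof is the consistency check $\tilde\varphi|_{W(\la)} = \varphi$. For $v \in V^m$ with $F_0^j v \in W(\la)$, the $\mathfrak{sl}(2,0)$-identities $E_0^j F_0^j v = c_{m,j} v$ (with $c_{m,j} \ne 0$) and $E_0^{j+1} F_0^j v = 0$, combined with $[\varphi, E_0] = 0$, yield $E_0^{j+1} \varphi(F_0^j v) = 0$ and $E_0^j \varphi(F_0^j v) = c_{m,j} \varphi(v)$. Since $W(\la)$ is a free $\C[\A^{(\la)}]$-module (Theorem \ref{free}), $\varphi(v) \ne 0$ whenever $\varphi$ and $v$ are nonzero; a weight-and-parity analysis in the isotypic decomposition (using both that $E_0^{j+1}$ annihilates $\varphi(F_0^j v)$ and that $E_0^j$ does not) then forces $\varphi(F_0^j v) \in W(\la)_{s_0}^m$. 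The isomorphism induced by $E_0^j$ from the $\alpha_0^\vee$-weight $(m-1-2j)$ subspace of $W(\la)_{s_0}^m$ onto $V^m$ then gives $\varphi(F_0^j v) = F_0^j \varphi(v)$.

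For freeness of the extended action, the tensor factorization reduces it to freeness of each $V^m$ over $\C[\A^{(\la)}]$. Since $V^m$ is cut out of the free module $W(\la)$ by $\C[\A^{(\la)}]$-stable conditions ($\h$-weight and $\ker E_0$), and the fibers $V^m \otimes_{\C[\A^{(\la)}]} \C_\mu$ have $\mu$-independent dimension (all local Weyl modules are isomorphic as $\g$-modules by Theorem \ref{free}, so their $\mathfrak{sl}(2,0)$-isotypic structure is constant along $\A^{(\la)}$), a graded Nakayama argument yields the freeness of $V^m$. The main obstacle I anticipate is the consistency step: although $[\varphi, E_0] = 0$ is automatic from $\g[\xi]$-equivariance, promoting it to $[\varphi, F_0] = 0$ requires the weight/parity argument to single out the correct isotypic component of $\varphi(F_0^j v)$, and this is where the fine structural description from Theorem \ref{crystal} is indispensable.
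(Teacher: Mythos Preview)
Your approach differs substantially from the paper's, which simply observes that $W(\lambda)_{s_0}$ is cyclically generated (as an $\gI(0)$-module) by the $\h$-weight $s_\vartheta\lambda$-part of $W(\lambda)$, a free rank-one $\C[\A^{(\lambda)}]$-module, and then invokes \cite[Theorem~5.1]{Kat16} for both the extension and the freeness. Your isotypic-decomposition strategy is natural, but the argument as written has two genuine gaps.

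\textbf{Consistency.} The claim that $E_0^{j+1}\varphi(F_0^j v)=0$ together with $E_0^{j}\varphi(F_0^j v)\neq 0$ forces $\varphi(F_0^j v)\in W(\lambda)_{s_0}^m$ is not correct. Write $\varphi(F_0^j v)=\sum_{m'} w_{m'}$ with $w_{m'}\in W(\lambda)_{s_0}^{m'}$; the vector has $\alpha_0^\vee$-weight $m-1-2j$. The vanishing of $E_0^{j+1}$ only excludes $m'>m$, while for $m'\le m-2$ one already has $E_0^{j}w_{m'}=0$, so neither condition rules out components with $m'<m$. A more careful induction on $j$ shows only that $\varphi(F_0^j v)-F_0^j\varphi(v)\in V^{m-2j}$, not that it vanishes. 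Moreover, elements of $W(\lambda)$ are not in general of the form $F_0^j v$ with $v\in V^m$ and $F_0^j v\in W(\lambda)$: Theorem~\ref{crystal} describes the \emph{associated graded} of the filtration $F_\bullet\cap W(\lambda)$, and the singleton summands there witness that the isotypic projections $u_m$ of $u\in W(\lambda)$ need not themselves lie in $W(\lambda)$. So verifying $\tilde\varphi=\varphi$ on such pure elements would not suffice even if that step were fixed.

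\textbf{Freeness.} Your argument that the fibers $V^m\otimes_{\C[\A^{(\lambda)}]}\C_x$ have constant dimension appeals to an ``$\mathfrak{sl}(2,0)$-isotypic structure'' of local Weyl modules, but $W(\lambda,x)$ carries no $\mathfrak{sl}(2,0)$-action: $F_0=E_\vartheta\otimes\xi^{-1}\notin\g[\xi]$. What you can say is that $V^m$ is the kernel of the $\C[\A^{(\lambda)}]$-linear map $E_0$ between free modules; but kernels of such maps need not be free, and the fiberwise rank of $E_0$ on $W(\lambda,x)$ may genuinely depend on $x$ since the $\g[\xi]$-module structure of $W(\lambda,x)$ varies. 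The graded Nakayama step therefore lacks the constant-rank input it needs. The paper bypasses this entirely by citing the freeness statement from \cite{Kat16}.
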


\begin{proof}
The first part of the assertion follows by Theorem~\ref{free} since $\C [\A^{(\lambda)}]$ is also isomorphic to the $\h$-weight $s_{\vartheta} \la$-part of $W ( \lambda )$ (by the $\g$-invariance) and $W ( \lambda )_{s_0}$ is cyclically generated by the $\h$-weight $s_{\vartheta} \la$-part inside $W ( \lambda )$ by \cite[Proof of Theorem 5.1]{Kat16}. The freeness assertion is \cite[Theorem 5.1]{Kat16}.
\end{proof}

\begin{lem}\label{glWD}
Global and local Weyl modules satisfy the Demazure property if we twist the action by $\Phi$ $($as above$)$.
\end{lem}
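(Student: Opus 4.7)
The plan is to embed $W(\la)$ into the $\gI(0)$-module $W(\la)_{s_0}$ of Theorem~\ref{FM} and exploit its $\mathfrak{sl}(2,0)$-complete reducibility (stated in the preamble to Theorem~\ref{crystal}), reducing the Demazure property to a standard one-line $\mathfrak{sl}_2$-computation. The local case will then follow by specialisation using the free $\C[\A^{(\la)}]$-action given by Theorem~\ref{w-free}.

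For $W(\la)$ itself, fix $\gamma \in P$ with $\langle \alpha_0^\vee, \gamma \rangle = -n < 0$ and a nonzero $v \in W(\la)^\gamma$. The inclusion $W(\la) \subset W(\la)_{s_0}$ is equivariant for $\wth$ and for $E_0 = F_\vartheta \otimes \xi$, so we may regard $v \in W(\la)_{s_0}^\gamma$. Using the decomposition $W(\la)_{s_0} = \bigoplus_{m \geq 0} W(\la)_{s_0}^m$ into $\mathfrak{sl}(2,0)$-isotypic components of dimension $m$, write $v = \sum_m v_m$. The $\alpha_0^\vee$-weights in a dimension-$m$ $\mathfrak{sl}_2$-irreducible are $m-1, m-3, \dots, -(m-1)$, so a weight-$\gamma$ vector exists there only when $m \geq n+1$; on any such irreducible the map $E_0^{n-1}$ sends the one-dimensional weight-$(-n)$ subspace isomorphically to the one-dimensional weight-$(n-2)$ subspace, the coefficient being a product of the form $\prod_{i=0}^{n-2}(k-i)(m-k+i)$ with $k = (m-1+n)/2$ and every factor strictly positive. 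Since the nonzero images $E_0^{n-1} v_m$ lie in distinct isotypic components $W(\la)_{s_0}^m$, they cannot cancel, and $E_0^{n-1} v \neq 0$ in $W(\la)_{s_0}$, hence in $W(\la)$.

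For local Weyl modules $W(\la, x)$, the key additional input is Theorem~\ref{w-free}: $W(\la)_{s_0}$ is a free $\C[\A^{(\la)}]$-module on which $\C[\A^{(\la)}]$ acts by $\mathfrak{b}(0)$-endomorphisms, hence commutes with $E_0$ and $\wth$. Thus $E_0^{n-1}\colon W(\la)_{s_0}^\gamma \to W(\la)_{s_0}^{\gamma + (n-1)\alpha_0}$ is $\C[\A^{(\la)}]$-linear, and the previous paragraph identifies it as a $\C[\A^{(\la)}]$-linear isomorphism onto its image. This isomorphism survives specialisation $-\otimes_{\C[\A^{(\la)}]} \C_x$, so $E_0^{n-1}$ remains injective on the corresponding weight space of $W(\la)_{s_0} \otimes_{\C[\A^{(\la)}]} \C_x$. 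Combined with the $\C[\A^{(\la)}]$-freeness of $W(\la)$ (Theorem~\ref{free}) and the compatibility of its embedding into $W(\la)_{s_0}$, this yields injectivity of $E_0^{n-1}$ on $W(\la, x)^\gamma$.

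The main obstacle is making the descent to local Weyl modules completely rigorous: one must ensure that the inclusion $W(\la) \subset W(\la)_{s_0}$ stays injective at weight $\gamma$ after tensoring over $\C[\A^{(\la)}]$ with $\C_x$, i.e., that no $\mathrm{Tor}^1$-contribution from the cokernel interferes. Either a direct flatness check on the weight-$\gamma$ cokernel, or an identification of $W(\la)^\gamma$ as a $\C[\A^{(\la)}]$-direct summand of $W(\la)_{s_0}^\gamma$ via the refined filtration of Theorem~\ref{crystal}, should close this gap.
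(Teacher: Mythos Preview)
Your treatment of the global Weyl module is correct and coincides with the paper's: both use the embedding $W(\lambda)\subset W(\lambda)_{s_0}$ of Theorem~\ref{FM} and the $\mathfrak{sl}(2,0)$-semisimplicity of $W(\lambda)_{s_0}$; you phrase it via the isotypic decomposition, the paper via the canonical filtration of Theorem~\ref{crystal}, but the content is the same (the paper in fact obtains the slightly sharper $E_0^{\,n}\mathbf v\neq 0$).

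For the local Weyl module the gap you flag is real, and neither of your proposed fixes is established by the cited results. Theorem~\ref{crystal} only describes the associated graded pieces $(F_m\cap W(\lambda))/(F_{m+1}\cap W(\lambda))$; it does not exhibit $W(\lambda)^\gamma$ as a $\C[\A^{(\lambda)}]$-summand of $W(\lambda)_{s_0}^\gamma$, and freeness of the two modules separately does not make the cokernel of the inclusion flat. The paper avoids base change altogether and argues inside $W(\lambda)$. One lifts $\mathbf u$ to $\mathbf v\in F_m\cap W(\lambda)$ with $m>n$ and assumes $E_0^{\,n-1}\mathbf v\in I\cdot W(\lambda)$; freeness of the $\C[\A^{(\lambda)}]$-action on $W(\lambda)_{s_0}$ then forces a preimage $\mathbf v'$ to lie in $F_m\cap W(\lambda)$ as well, of weight $\gamma+(n-1)\alpha_0$ with $\langle\alpha_0^\vee,\gamma+(n-1)\alpha_0\rangle=n-2<m-1$. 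The key use of Theorem~\ref{crystal} is that at this weight the $m$-th graded piece of $W(\lambda)$ carries no singletons $\C_\mu$ (those require $\langle\alpha_0^\vee,\mu\rangle=m-1$), so $\mathbf v'$ lies in a genuine $\mathfrak{sl}(2,0)$-submodule \emph{of $W(\lambda)$}; one may then apply $F_0^{\,n-1}$ without leaving $W(\lambda)$ and conclude $\mathbf v\in I\cdot W(\lambda)$, contradicting $\mathbf u\neq 0$. In short, Theorem~\ref{crystal} is used not to split the inclusion into $W(\lambda)_{s_0}$ but to locate an $\mathfrak{sl}(2,0)$-stable piece of $W(\lambda)$ itself containing $\mathbf v'$; your specialization route would need an additional flatness input that the quoted statements do not supply.
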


\begin{proof}
We first check the Demazure property for the global Weyl module $W ( \lambda )$. Suppose $\bv \in  W ( \lambda )$
is a non-zero vector of ($\wth$-)weight $\gamma$ with $\left< \alpha_0^{\vee}, \gamma \right> = -n < 0$. Then 
$\bv \in F_m W ( \la )_{s_0} \cap W ( \la )$ with some $m > n$. As $\left< \alpha_0^{\vee}, \gamma \right> < 0$, the only possibility provided by Theorem~\ref{crystal} is that $\bv$ belongs to $F_m W ( \la )_{s_0} \cap W ( \la )$ alltogether with the irreducible $(\mathfrak{sl} ( 2, 0 ) + \wth)$-modules of dimension $m > n$. In particular, we have even stronger statement $\left(E_0\right)^{n} \bv \ne 0$.

We consider the case of local Weyl modules. Suppose that $\bu \in W ( \lambda,0 )$ is a non-zero vector of weight $\gamma$ with $\left< \alpha_0^{\vee}, \gamma \right> = -n < 0$. Then $\bu$ is the image of some $\bv \in F_m W ( \la )_{s_0} \cap W ( \la )$ with $m > n$ (as we discussed in the above) under the natural projection of
$W ( \lambda)$ onto $W ( \lambda,0 )$. As we know that $\left(E_0\right)^{n-1} \bv \ne 0$ in $W ( \lambda )$, it remains to check that the projection of $\left(E_0\right)^{n-1} \bv$ is not zero in $W ( \lambda,0 )$.

By Definition~\ref{loc-w} the module $W ( \lambda,0 )$ is isomorphic to a quotient of $W ( \lambda)$ by the action of the augmentation ideal $I \subset \C [\A^{(\lambda)}]$. Since the action of $\C [\A^{(\lambda)}]$ on $W ( \la )_{s_0}$ preserves the action of $\mathfrak{sl} ( 2, 0 )$, so is $P \in I$. As this $\C [\A^{(\lambda)}]$-action is free by Theorem \ref{w-free}, we have $P \bv' \in F_m W ( \la )_{s_0} \cap W ( \la )$ for $\bv' \in W ( \la )_{s_0}$ only if $\bv' \in F_m W ( \la )_{s_0}$. As $\left(E_0\right)^{n-1} \bv \in F_m W ( \la )_{s_0} \cap W ( \la )$, it is enough to show that $\left(E_0\right)^{n-1} \bv \not\in I \cdot \left( F_m W ( \la )_{s_0} \cap W ( \la ) \right)$.

We prove this assertion by finding a contradiction. So suppose $\left(E_0\right)^{n-1} \bv = P\cdot \bv'$, where $P \in I$ and $\bv' \in F_m W ( \la )_{s_0} \cap W ( \la )$ is a vector of weight $\gamma + (n-1) \alpha_0$. Here we have $\left< \al_0^{\vee}, \gamma + (n-1) \alpha_0 \right> = n-2 < m-1$, the above consideration shows that this weight appears only in the sum of irreducible $( \mathfrak{sl} ( 2, 0 ) + \wth)$-modules inside $F_m W ( \la )_{s_0} \cap W ( \la )$. Therefore, $\bv'$ belongs to a $( \mathfrak{sl} ( 2, 0 ) + \wth)$-module inside $F_m W ( \la )_{s_0} \cap W ( \la )$. This implies that $ \left(F_0\right)^{n-1} \bv'$ is a non-zero element of $F_m W ( \la )_{s_0} \cap W ( \la )$ and 
$P \cdot \left(F_0\right)^{n-1} \bv'$ is proportional to $\bv$, hence we conclude $\bu=0$. This is a contradiction, and hence we conclude that the projection of $\left(E_0\right)^{n-1} \bv$ is not zero in $W ( \lambda,0 )$ as required.
\end{proof}

\subsection{Ext's in the Parabolic Category ${\mathcal O}$}

Recall that $k$ denotes a non-negative integer.

\begin{prop}\label{alc-comb}
Let $\lambda \in P _+^k$. For each $w, v \in W_{\af}$ such that $\overline{w \circ_k \lambda}, \overline{v \circ_k \lambda} \in P _+$, we have $\overline{w \circ_k \lambda} \le \overline{v \circ_k \lambda}$ if $w \le v$.
\end{prop}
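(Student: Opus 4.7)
My plan is to proceed by induction on $\ell(v)-\ell(w)$, first reducing to the case where both $w,v$ are minimal length coset representatives of $W\backslash W_\af$, then to the case where $v$ covers $w$ in this parabolic Bruhat poset, and finally handling the cover case via direct computation.

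\emph{Reduction to minimal coset representatives.} I would first observe that since $\la \in P_+^k$, the shift $\la+\rho_k$ is strictly regular dominant in $\wth^*$ (indeed $\langle\alpha_j^\vee, \la+\rho_k\rangle \ge 1$ for all $j \in \tI_\af$), hence $\overline{\tilde{w}\circ_k\la}+\rho$ is regular in $P$ for every $\tilde{w} \in W_\af$. Decomposing $w=u\tilde{w}$ with $\tilde{w}$ a minimal coset representative and $u \in W$, one verifies $\overline{w\circ_k\la}=u\circ\overline{\tilde{w}\circ_k\la}$, and the uniqueness of the dominant representative in each $W$-orbit of a regular weight forces $u=e$, so $w$ itself is a minimal coset representative. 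Since the parabolic Bruhat order on $W\backslash W_\af$ is ranked by $\ell$ and admits saturated chains (a standard fact), I may further assume $v$ covers $w$: $v=ws_\beta$ with $\beta \in \Delta_\af^+$ and $\ell(v) = \ell(w)+1$.

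\emph{The cover case.} For such a cover, the reflection computation gives
$v\circ_k\la - w\circ_k\la = -c \cdot w(\beta)$, where $c=\langle\beta^\vee, \la+\rho_k\rangle>0$ by regularity and $w(\beta) \in \Delta_\af^+$ since the length increases. I would write $w(\beta)=\alpha+n\delta$ with $\alpha \in \Delta$ and $n\ge 0$, and rule out $n=0$ as follows: if $w(\beta)=\alpha \in \Delta$, then $s_\alpha=ws_\beta w^{-1} \in W$, so $v=s_\alpha w$ would lie in the same right $W$-coset as $w$, contradicting that $w \ne v$ are distinct minimal representatives. Hence $n\ge1$, and projecting to $P$,
\[
\overline{v\circ_k\la}-\overline{w\circ_k\la} = -c\,\alpha.
\]
The statement reduces to showing that $\alpha$ is a non-positive root, i.e., $-\alpha \in \sum_{j \in \tI}\Z_{\ge 0}\alpha_j$, so that $-c\alpha \in \Z_{\ge 0}\Pi$.

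\emph{The main obstacle} is this final claim that $\alpha\le0$. Writing $w=ut_\gamma$ with $u \in W, \gamma \in Q^\vee$, and using the identity $v^{-1}\alpha_j - w^{-1}\alpha_j = -\langle\alpha^\vee,\alpha_j\rangle\beta$ for $j \in \tI$, I assume for contradiction $\alpha>0$ and choose $j$ with $\langle\alpha^\vee,\alpha_j\rangle>0$ (possible since $\langle\alpha^\vee,\alpha\rangle=2$). Setting $m = n + \langle\alpha, u\gamma\rangle$, the minimal-coset-rep condition on $v$ then forces a lower bound on $\langle\alpha_j, u\gamma\rangle$ involving $\langle\alpha^\vee, \alpha_j\rangle \cdot m$, while the condition on $w$ gives the opposite bound on $\langle\alpha_j, u\gamma\rangle$; combined with $n \ge 1$, these are incompatible. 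The conceptual statement behind this casework is that the edge labels of covers in the Bruhat graph of $W\backslash W_\af$ always have non-positive finite part, geometrically reflecting the structure of the dominant-region alcove decomposition under the level-$(k+h^\vee)$ action.
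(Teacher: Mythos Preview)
Your approach is correct and takes a genuinely different route from the paper. The paper argues globally via alcove geometry: it places $\frac{1}{k+1}\overline{w\circ_k\lambda}$ in the alcove $wA_0^+$, invokes Lusztig's identification of gallery distance with length to deduce $\ell(wt_\gamma)=\ell(w)+\ell(t_\gamma)$ for large dominant $\gamma$, and then appeals to Soergel's characterization of the Bruhat order on dominant-chamber alcoves to conclude the dominance relation directly, without induction.

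Your argument is more elementary and self-contained. The reduction to minimal coset representatives is sound (for $\tilde w$ minimal one has $\tilde w^{-1}\alpha_i\in\Delta_\af^+$ for $i\in\tI$, whence $\overline{\tilde w\circ_k\lambda}+\rho$ is already strictly dominant, so the regular-orbit argument forces $u=e$). The reduction to parabolic covers is also fine, since $W\backslash W_\af$ is graded by $\ell$ and its covers are covers in $W_\af$. Your final sketch can be completed as follows: writing $w=ut_\gamma$ and $\beta=u^{-1}\alpha+m\delta$ with $m=n+(\alpha,u\gamma)$, the condition $w^{-1}\alpha_j>0$ gives $(\alpha_j,u\gamma)\ge 0$ for each $j\in\tI$; if $\alpha=\sum_j c_j\alpha_j>0$ with $c_j\ge 0$, summing yields $(\alpha,u\gamma)\ge 0$, hence $m\ge n\ge 1$. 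The condition $v^{-1}\alpha_j>0$ gives $(\alpha_j,u\gamma)\ge\langle\alpha^\vee,\alpha_j\rangle\, m$, and summing against $c_j$ yields $(\alpha,u\gamma)\ge 2m$, i.e.\ $m-n\ge 2m$, contradicting $m\ge 1$. (Equivalently: the reflecting hyperplane for $\alpha+n\delta$ with $\alpha>0$ and $n\ge 1$ misses the closed dominant chamber, so it cannot separate two alcoves lying there.)

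The trade-off: the paper outsources the combinatorics to \cite{Lus80} and \cite{Soe97}, while your argument avoids external citations at the cost of the casework above. Both are manifestations of the same alcove-geometric fact.
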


\begin{proof}
Note that $\frac{1}{k+1} \lambda \in \h^*$ belongs to the fundamental alcove (denoted by $A_v^+$ for $v = 0$ in \cite{Lus80}). We have $\frac{1}{k+1} \overline{w \circ_k \lambda} \in A = w A_0^+, \frac{1}{k+1} \overline{v \circ_k \lambda} \in A' = v A_0^+$. By \cite[Lemma 3.6]{Lus80}, we have $d ( A, A_0^+ ) = \ell ( w )$, and $d ( A', A_0^+ ) = \ell ( v )$. By the above identification of $d (\bullet, A_0^+)$ and $\ell ( \bullet )$, we deduce $\ell ( w t_{\gamma} ) = \ell ( w ) + \ell ( t_{\gamma} )$ and $\ell ( v t_{\gamma} ) = \ell ( v ) + \ell ( t_{\gamma} )$ for some large dominant coroot $\gamma$. Now the subward property of the Bruhat order yields $w t_{\gamma} < v t_{\gamma}$, that is equivalent to $A \preceq A'$ (inside the dominant chamber with respect to $W$) by the equivalence of assertions after \cite[Claim 4.14]{Soe97}. Since $\overline{w \circ_k \lambda} - \overline{v \circ_k \lambda} \in Q$, we conclude the result.
\end{proof}

\begin{thm}[Kac-Kazhdan \cite{KK79} Theorem 2, cf. Fiebig \cite{Fie03} \S 3.2]\label{link}
Let $\lambda, \mu \in P_+$. Then, we have
$$\bigoplus_{i \in \Z} \mathrm{Ext}^i_{\wg_{k}} ( L_k ( \lambda ), L _k ( \mu ) ) \neq \{ 0 \}$$
only if $\lambda$ and $\mu$ belongs to the same $W_{\af}$-orbit under the $\circ_k$-action. Moveover, we have $[\mathbf M_k ( w \circ_k \lambda ) : L_k ( v \circ_k \lambda )] \neq 0$ only if $w \le v$. \hfill $\Box$
\end{thm}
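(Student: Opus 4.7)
The plan is to prove the two assertions separately, using standard tools from the representation theory of affine Kac--Moody algebras.

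For the vanishing of $\mathrm{Ext}$, I would invoke the affine analog of the Harish-Chandra isomorphism. The center $Z(\wg_k)$ (at a fixed level $k$, possibly after passing to a suitable completion when $k$ is critical, but here $k$ is non-negative so things are under control) acts by a scalar on each highest weight module $\mathbf{M}_k(\nu)$, and this scalar is the evaluation of a $W_\af$-invariant function under the $\circ_k$-action at $\nu$. In particular, $L_k(\lambda)$ and $L_k(\mu)$ admit the same central character only if $\lambda$ and $\mu$ lie in the same $W_\af$-orbit under $\circ_k$. Any non-trivial extension between two simple modules forces the central character to agree, since $Z(\wg_k)$ acts on an extension with values in the annihilator of each composition factor. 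This gives the first assertion.

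For the Bruhat order statement, I would use the Kac--Kazhdan description of composition factors of Verma modules. First note that $\mathbf{M}_k(w \circ_k \lambda)$ is a submodule of $\mathbf{M}_k(\lambda)$ for every $w$ with $\lambda \in P_+^k$, because (at non-critical level) the embedding structure of Vermas over $\wg$ is governed by the affine Bruhat order, and dominant integral highest weights at positive level sit in the dominant chamber of the $\circ_k$-action. Consequently, if $L_k(v \circ_k \lambda)$ occurs as a composition factor of $\mathbf{M}_k(w \circ_k \lambda)$, then by the BGG-type criterion one can produce a chain of reflection steps $w = w_0 < w_1 < \cdots < w_m = v$ (each step being a reflection that increases the length in $W_\af$), witnessing the embedding $\mathbf{M}_k(v \circ_k \lambda) \hookrightarrow \mathbf{M}_k(w \circ_k \lambda)$. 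Equivalently, $w \le v$ in the Bruhat order.

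The main obstacle is the second assertion, because one has to avoid the subtleties that appear at the critical level and because the Jantzen filtration argument for affine Kac--Moody algebras requires care: one deforms the level parameter, computes the sum $\sum_{i>0} \mathrm{ch}\,\mathbf{M}_k(\lambda)^i$ via the affine Shapovalov determinant, and identifies the contributing reflection hyperplanes. This reduces inductively to checking: if $s_\alpha \circ_k (w \circ_k \lambda) < w \circ_k \lambda$ for a positive real affine root $\alpha$, then $w < s_\alpha w$ in $W_\af$. This last step is a purely combinatorial statement about the affine Bruhat order in the dominant alcove, for which Proposition~\ref{alc-comb} and the results of \cite{Soe97} quoted in its proof provide the necessary framework. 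Combining the inductive step with the linkage constraint from the first paragraph then yields the claim.
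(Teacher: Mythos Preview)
The paper does not supply a proof of this theorem: the statement ends with \hfill $\Box$ immediately after citing Kac--Kazhdan and Fiebig, so it is quoted as a known result and there is nothing in the paper to compare your argument against.

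That said, your sketch contains a genuine gap in the first paragraph. There is no Harish-Chandra isomorphism for $U_k(\wg)$ at non-critical level: the honest center of $U_k(\wg)$ is essentially reduced to scalars once $K$ is specialized, and the Casimir $\Omega$ lives only in a completion. More to the point, the single scalar by which $\Omega$ acts on $\mathbf M_k(\nu)$ does \emph{not} separate distinct $W_\af$-orbits under $\circ_k$ --- two integral level-$k$ weights can share a Casimir eigenvalue without being dot-conjugate. So the central-character argument you give does not establish the linkage principle. The correct mechanism is precisely the one you invoke for the second assertion: the Kac--Kazhdan factorization of the Shapovalov determinant and the resulting Jantzen-type sum formula show that $[\mathbf M_k(\lambda):L_k(\mu)]\neq 0$ forces $\mu$ to be reachable from $\lambda$ by a chain of positive affine root reflections, which in the integral case is exactly $W_\af$-linkage under $\circ_k$; the block decomposition (and hence the $\mathrm{Ext}$ statement) then follows as in Fiebig. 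Your treatment of the second assertion via the Jantzen filtration and an inductive Bruhat-order argument is on target and matches the cited literature; just use that same machinery for the first assertion rather than appealing to a nonexistent center.
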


For a $\wg_k$-module $M$, let us denote $M^{\#}$ the module that is the dual space $M^{\vee}$ (as a graded vector space) equipped with the $\wg_k$-action twisted by $\Theta$. Then, the $\wg_k$-module $L_k ( \lambda )$ viewed as a $\g [z]$-module is sent to $L_k ( \lambda )$, with its $\g [z]$-module structure given through the automorphism $\Phi$ from the standard $\wg_k$-action on $L_k ( \lambda )$. The same procedure makes $M_k ( \mu ) ^{\#}$ into an injective envelope of $V ( \mu )$ (as $M_k ( \mu )$ is a projective cover of $V ( \mu )$).

\begin{prop}[Shapiro's Lemma]\label{sclem}
Let $V$ be a graded $\g [\xi]$-module $($or a graded $\gI$-module$)$ and let $M$ be a $\wg_k$-module. Then, we have
$$\Ext^i_{\wg_k} ( U_k ( \wg ) \otimes_{U ( \g [\xi] + \C d )} V, M ) \cong \Ext^i_{\g [\xi] + \C d} ( V, M ) \hskip 5mm i \in \Z,$$
$($or the isomorphism obtained by replacing $( \g [\xi] + \C d )$ with $\gI).$
\end{prop}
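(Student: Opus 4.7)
The plan is to prove this via the standard Eckmann--Shapiro argument: I would show that the induction functor $\mathrm{Ind} := U_k(\wg) \otimes_{U(\g[\xi]+\C d)} (-)$ is exact and preserves projectivity, transfer a projective resolution of $V$ through $\mathrm{Ind}$, and then conclude using the induction-restriction adjunction term-wise.

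To prove exactness, I would first establish a PBW-type decomposition. From the vector space decomposition $\wg = (\g \otimes \xi^{-1}\C[\xi^{-1}]) \oplus (\g[\xi] + \C K + \C d)$, PBW yields
$$U(\wg) \;\cong\; U(\g \otimes \xi^{-1}\C[\xi^{-1}]) \otimes_{\C} U(\g[\xi]+\C K + \C d)$$
as right $U(\g[\xi]+\C K + \C d)$-modules. Passing to the quotient by the central relation $K-k$, we obtain that $U_k(\wg)$ is isomorphic to $U(\g \otimes \xi^{-1}\C[\xi^{-1}]) \otimes_{\C} U(\g[\xi]+\C d)$ as a right $U(\g[\xi]+\C d)$-module, which is in particular free (hence flat). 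For the $\gI$-variant, the analogous decomposition $\wg = \mathfrak{n}_\af^- \oplus \gI$, where $\mathfrak{n}_\af^-$ denotes the Lie subalgebra of $\tg$ generated by $\{F_i\}_{i \in \tI_\af}$, similarly yields freeness of $U_k(\wg)$ as a right module over $U(\gI)/(K-k)U(\gI)$ after quotienting.

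Second, combining this freeness with the tautological adjunction $\Hom_{\wg_k}(\mathrm{Ind}(-), -) \cong \Hom_{\g[\xi]+\C d}(-,\mathrm{Res}(-))$, together with the obvious exactness of restriction, I can conclude that $\mathrm{Ind}$ sends projectives to projectives. Therefore, any projective resolution $P_\bullet \to V$ in the category of graded $(\g[\xi]+\C d)$-modules is sent by $\mathrm{Ind}$ to a projective resolution $\mathrm{Ind}(P_\bullet) \to \mathrm{Ind}(V)$ in the category of graded $\wg_k$-modules. Applying $\Hom_{\wg_k}(-,M)$ and invoking the adjunction term-wise produces a natural isomorphism of complexes
$$\Hom_{\wg_k}(\mathrm{Ind}(P_\bullet), M) \;\cong\; \Hom_{\g[\xi]+\C d}(P_\bullet, M),$$
and taking cohomology delivers the desired identification of Ext groups.

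The one aspect requiring mild care is the compatibility with the $\Z$-grading, which is needed so that the resolutions live in the appropriate graded categories (and so that $\Hom$ and $\Ext$ are the right notions). Since the grading arises from the semisimple action of $-d$ which preserves both summands in each PBW decomposition above, everything proceeds in parallel in the graded and ungraded settings, so I do not anticipate this to be a serious obstacle beyond bookkeeping.
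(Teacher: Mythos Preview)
Your proposal is correct and follows essentially the same approach as the paper: the paper's proof is a two-line remark that $U_k(\wg)$ is free over $U(\g[\xi])$ (and over $U(\gI)/(K-k)$) by PBW, after which the classical Shapiro lemma gives the result; you have simply spelled out that classical argument in full. One minor bookkeeping point: in your PBW decomposition you place $\C d$ inside $\wg$, whereas in the paper $d \in \tg \setminus \wg$ and the $\C d$-action is encoded by the grading---but as you note at the end, this is harmless since the grading carries the $d$-action through, and the paper's own proof likewise works over $U(\g[\xi])$ rather than $U(\g[\xi]+\C d)$.
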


\begin{proof}
It is straight-forward to see that $U ( \wg )_k$ is a free $U ( \g [ \xi ] )$-algebra and also a free $U ( \gI ) / ( K - k )$-algebra (by the PBW theorem). Hence, Shapiro's lemma implies the results.
\end{proof}

\begin{cor}
Let $\lambda, \mu \in P_+$ such that $\lambda \not> \mu$. Then, we have
$$\Ext^1 _{\wg_k} ( M_k ( \lambda ), L_k ( \mu ) ) = \{ 0 \}.$$
In particular, $M_k ( \lambda )$ is projective in the parabolic category $\cO$  when $\lambda \in P_+^k$.
\end{cor}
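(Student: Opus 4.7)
My plan is to apply Shapiro's lemma (Proposition~\ref{sclem}) to transfer the Ext computation to a smaller Lie algebra, and then exploit the $d$-grading to force the splitting of any extension directly. Since $M_k(\lambda) = U_k(\wg) \otimes_{U(\g[\xi]+\C d)} V(\lambda)$, with $V(\lambda)$ regarded as a $(\g[\xi]+\C d)$-module on which $\g[\xi]_1$ and $d$ both act by zero, Proposition~\ref{sclem} yields
\begin{equation*}
\Ext^1_{\wg_k}(M_k(\lambda), L_k(\mu)) \;\cong\; \Ext^1_{\g[\xi] + \C d}(V(\lambda), L_k(\mu)),
\end{equation*}
so it suffices to split any graded extension $0 \to L_k(\mu) \to E \to V(\lambda) \to 0$ of $(\g[\xi]+\C d)$-modules.

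For the splitting, observe that $L_k(\mu)$ has $d$-eigenvalues bounded above by $0$ (being in parabolic category $\cO$), while $V(\lambda)$ sits at $d$-eigenvalue $0$; hence $E$ also has $d$-eigenvalues $\le 0$, so its positive-$d$-eigenvalue part vanishes. The zero-$d$-eigenvalue subspace $E^{(0)}$ fits into a short exact sequence $0 \to L_k(\mu)^{(0)} \to E^{(0)} \to V(\lambda) \to 0$ of $\g$-modules. Complete reducibility of $\g$-integrable representations lets us choose a $\g$-submodule $\widetilde V(\lambda) \subset E^{(0)}$ that projects isomorphically to $V(\lambda)$. Since $\g[\xi]_1 = \bigoplus_{n \ge 1} \g \otimes \xi^n$ strictly raises the $d$-eigenvalue, we have $\g[\xi]_1 \cdot \widetilde V(\lambda) \subset E^{(>0)} = 0$, making $\widetilde V(\lambda)$ a $(\g[\xi]+\C d)$-submodule complementary to $L_k(\mu)$, which furnishes the desired splitting.

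For the \emph{in particular} statement, parabolic category $\cO$ admits local composition series by the simple modules $\{L_k(\nu)\}_{\nu \in P_+}$, so the Ext vanishing established above propagates via long exact sequences to give $\Ext^1_{\wg_k}(M_k(\lambda), N) = 0$ for every object $N$ in the category. For $\lambda \in P_+^k$, the surjection $M_k(\lambda) \twoheadrightarrow L_k(\lambda)$ of Theorem~\ref{verma} then identifies $M_k(\lambda)$ as the projective cover of the integrable simple $L_k(\lambda)$ in parabolic~$\cO$.

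The main subtlety I expect is justifying that the Ext group is computed in the graded category where $d$ acts semisimply, so that the $d$-eigenspace decomposition of $E$ is legitimate (without this, the Jordan-block behaviour of $d$ on ungraded extensions would need a separate argument); this interpretation is consistent with the paper's convention of working with weight modules throughout. I note in passing that the argument above does not directly invoke the hypothesis $\lambda \not> \mu$, which suggests the corollary's restriction is stated more conservatively than what the method actually delivers.
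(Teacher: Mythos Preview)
Your argument has a genuine gap, and the symptom is exactly what you flag in your final paragraph: your splitting argument never uses the hypothesis $\lambda \not> \mu$, so if it were valid it would prove $\Ext^1_{\wg_k}(M_k(\lambda), L_k(\mu)) = 0$ for \emph{all} $\lambda, \mu \in P_+$. That conclusion is false. For $\mu \in P_+^k$ and $\lambda = \overline{s_0 \circ_k \mu}$ (so $\lambda > \mu$), the BGGL resolution of $L_k(\mu)$ (Theorem~\ref{BGGL-res}) restricts to a minimal projective resolution in $\g[z]\mathchar`-\mathsf{mod}$ (cf.\ the proof of Corollary~\ref{TBWB}); applying $\Hom_{\g[z]}(-,V(\lambda))$ gives $\Ext^1_{\g[z]}(L_k(\mu), V(\lambda)) \neq 0$, and the paper's duality identifies this with the Ext you claim to kill.

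The concrete failure is in the Shapiro step and the handling of $d$. The correct reduction (as the paper's own proof of the Corollary uses) is to $\Ext^1_{\g[\xi]}(V(\lambda), L_k(\mu))$ with no $d$ present, so your eigenspace decomposition of $E$ is simply unavailable. If instead you work over $\g[\xi]+\C d$, two problems remain: first, Lie-algebra Ext classifies \emph{all} extensions, including those on which $d$ acts with Jordan blocks, so you cannot assume $E^{(0)}$ surjects onto $V(\lambda)$; second, and more fundamentally, $\Ext^1_{\wg_k}$ does not see the $d$-grading, so the identification with $\Ext^1_{\g[\xi]+\C d}$ must account for all grading shifts of $V(\lambda)$. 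Your argument only treats the shift placing $V(\lambda)$ at the top of the $d$-spectrum of $L_k(\mu)$; the nontrivial extension above lives at the shift $\langle d, s_0 \circ_k \mu\rangle < 0$, where $E^{(>n)} \neq 0$ and $\g[\xi]_1 \cdot \widetilde V(\lambda)$ can land nontrivially in $L_k(\mu)$. The paper instead dualizes via $\#$ to $\Ext^1_{\g[z]}(L_k(\mu), V(\lambda))$ and constrains the possible $\lambda$ via the head of the first syzygy, using Theorem~\ref{link} and Proposition~\ref{alc-comb} to force $\lambda > \mu$.
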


\begin{proof}
By Proposition \ref{sclem}, the assertion is equivalent to
$$\Ext^1 _{\g [\xi]} ( V ( \lambda ), L_k ( \mu ) ) = \{ 0 \}.$$
Since $\#$ exchanges an injective resolution of $L_k ( \mu )$ (viewed as $\g [\xi]$-module) and a projective resolution of $L_k ( \mu )$ viewed as $\g [z]$-modules, we deduce that
\begin{equation}
\Ext^1 _{\g [\xi]} ( V ( \lambda ), L_k ( \mu ) ) \cong \Ext^1 _{\g [z]} ( L_k ( \mu ), V ( \lambda ) ).\label{extisom}
\end{equation}
Since $M_k ( \mu )$ is the projective cover of $L_k ( \mu )$ as $\g [z]$-modules, the non-trivial extension in the RHS of (\ref{extisom}) occurs in the highest weights in $M_k ( \mu )$. Thanks to Proposition \ref{alc-comb} and Theorem \ref{link}, we need $\mu < \lambda$ to obtain a non-trivial extension. This proves the first assertion. In view of Theorem \ref{link} and the fact that $P_+^k$ is contained in the fundamental domain of the $\circ_k$-action on (the real part of) $\h^*$, we deduce the latter assertion again by Proposition \ref{alc-comb}.
\end{proof}

Recall that a string $\gb ( 0 )$-module is a finite-dimensional space spanned by $\wth$-eigenvectors $\bv,\,E_0 \bv,\,E_0^2 \bv,\dots,E_0^m \bv$ for some $m \in \Z_{\ge 0}$.

\begin{lem}\label{non-split}
Suppose that $\lambda \in P_+^k$.
Let $V$ be a string $\gb ( 0 )$-module containg weights $( \lambda + k \Lambda_0 )$ and $( s _0 \circ_k \lambda + k \Lambda_0 )$. We inflate to regard it as a $\gI$-module through the surjection $\gI \to ( \gb ( 0 ) + \wth )$.  Then, the maximal $\g$-integrable quotient of $U_k ( \tg ) \otimes _{U ( \gI )} V$ contains the non-trivial extension of $M_k ( \lambda )$ by $M_k ( s _0 \circ_k \lambda )$ as its subquotient.
\end{lem}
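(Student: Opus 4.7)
The plan is to analyze the Verma filtration of $\gu := U_k(\tg) \otimes_{U(\gI)} V$ arising from the string structure of $V$, pass to the maximal $\g$-integrable quotient $L(\gu)$, and isolate the required extension via the block decomposition of parabolic category $\cO$.

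First, I reduce to the case $V = V_{\mathrm{ext}}$ of dimension $n+1$ (where $n := k - \langle\vartheta, \lambda\rangle + 1 \ge 1$), spanned by $\xi, E_0\xi, \ldots, E_0^n\xi$ for some weight-$\mu_0 := s_0 \circ_k \lambda + k\Lambda_0$ vector $\xi$ of $V$: replacing $V$ by such a subquotient translates to a subquotient of $\gu$, and hence of $L(\gu)$, which preserves the desired containment. The $\gb(0)$-filtration $V_{\mathrm{ext}} \supset \langle E_0\xi, \ldots, E_0^n\xi\rangle \supset \cdots \supset \langle E_0^n\xi\rangle$ then induces, by flatness of $U_k(\tg)$ over $U(\gI)$, a $\wg_k$-filtration on $\gu_{\mathrm{ext}}$ with successive subquotients the Verma modules $\mathbf M_k(s_0 \circ_k \lambda + j\alpha_0)$ for $j = 0, 1, \ldots, n$; in particular, $\mathbf M_k(\lambda)$ embeds as the bottom submodule, generated by $\widetilde v_\lambda := 1 \otimes E_0^n\xi$ (which one checks is killed by $\gI \cap \gn_\af^+$), and $\mathbf M_k(s_0 \circ_k \lambda)$ is the top quotient.

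Applying the right-exact functor $L$ and using Lemma~\ref{g-int} (each $\lambda + (n-j)\vartheta \in P_+$ since $\vartheta$ is the highest root), the filtration descends to one on $L(\gu_{\mathrm{ext}})$ whose successive subquotients are quotients of the parabolic Vermas $M_k(s_0 \circ_k \lambda + j\alpha_0)$. With respect to the $W_\af$-invariant bilinear form $(\cdot,\cdot)$ on $\wth^*$ (normalized so $(\alpha_0,\alpha_0) = 2$), one has $(\mu_1 + \rho, \alpha_0) = n$ for $\mu_1 := \lambda + k\Lambda_0$, so
$$|\mu_1 + (j-n)\alpha_0 + \rho|^2 - |\mu_1 + \rho|^2 = 2(j-n)\cdot n + (j-n)^2 \cdot 2 = 2j(j-n),$$
which is nonzero for $0 < j < n$; hence the intermediate highest weights $\mu_0 + j\alpha_0$ are not $W_\af\circ$-conjugate to $\mu_1$. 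By the Kac--Kazhdan linkage principle (Theorem~\ref{link}), the block-of-$\lambda$ projection gives a direct summand $\widetilde E$ of $L(\gu_{\mathrm{ext}})$ whose composition factors lie in $\{L_k(\lambda), L_k(s_0 \circ_k \lambda)\}$; a multiplicity count reading off the Verma filtration (together with the sub-embedding $M_k(s_0 \circ_k\lambda) \hookrightarrow M_k(\lambda)$ forced by the singular vector $F_0^n v_\lambda \in \mathbf M_k(\lambda)$, which one verifies is $\gI$-singular using $[E_0, F_0^n] v_\lambda = 0$) identifies $\widetilde E$ with an extension involving $M_k(s_0 \circ_k\lambda)$ and $M_k(\lambda)$.

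The main obstacle is proving $\widetilde E$ is non-split. A splitting section would, by Lemma~\ref{g-int}, yield a $(\gn + \g[\xi]_1)$-invariant vector of weight $\mu_0$ in $L(\gu_{\mathrm{ext}})$ whose image in the top quotient $M_k(s_0 \circ_k\lambda)$ is the highest weight vector. Any preimage in $\gu_{\mathrm{ext}}$ is of the form $1 \otimes \xi + w$ with $w \in \gu_{\mathrm{ext}}^{(1)}$ of weight $\mu_0$; then $E_0 \cdot (1 \otimes \xi) = 1 \otimes E_0\xi = \widetilde v_\lambda$ generates the bottom submodule $\mathbf M_k(\lambda) \subset \gu_{\mathrm{ext}}$, whose image in the block-of-$\lambda$ summand is nonzero by the multiplicity count above. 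A weight analysis (treating the cases $n = 1$, where one uses the explicit computation $E_0(F_0 \otimes E_0\xi) = (n-1)\widetilde v_\lambda$, and $n > 1$, where $E_0 w$ has weight $\mu_0 + \alpha_0 \neq \mu_1$) shows that $E_0 w$ cannot cancel the image of $\widetilde v_\lambda$ in $\widetilde E$, contradicting $E_0$-invariance and establishing $\widetilde E$ as the non-trivial extension.
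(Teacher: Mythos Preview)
Your overall architecture matches the paper's: filter $V$, get a Verma filtration on the induced module, isolate the two Verma factors $\mathbf M_k(\lambda)$ and $\mathbf M_k(s_0\circ_k\lambda)$ by a block/Casimir argument, and then show the resulting two-step module does not split. However, the non-splitting step contains a genuine error.

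You assert $E_0\cdot(1\otimes\xi)=1\otimes E_0\xi=\widetilde v_\lambda$, but by your own definition $\widetilde v_\lambda=1\otimes E_0^{n}\xi$, so this equality holds only when $n=1$. For $n>1$ the vector $E_0(1\otimes\xi+w)$ has weight $\mu_0+\alpha_0$, not $\mu_1$; your case analysis then speaks of ``$E_0w$ cannot cancel the image of $\widetilde v_\lambda$,'' which is a non sequitur because $\widetilde v_\lambda$ is not the term in play. Worse, for $0<1<n$ the weight $\mu_0+\alpha_0$ is exactly the highest weight of one of the intermediate Verma layers you have thrown away by the block projection, so showing that the image of $E_0(1\otimes\xi+w)$ in $\widetilde E$ is nonzero is not a mere ``weight analysis''; it requires tracking the block projection on a weight space that receives contributions from several Verma layers. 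The paper avoids this entirely by applying $E_0^{m}$ (your $n$) rather than a single $E_0$: this jumps straight from $\mu_0$ to $\mu_1$, where the weight space of the induced module is one-dimensional and spanned by $\widetilde v_\lambda$, so non-vanishing in the block summand is immediate.

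A second, softer gap is the phrase ``a multiplicity count \ldots identifies $\widetilde E$ with an extension involving $M_k(s_0\circ_k\lambda)$ and $M_k(\lambda)$.'' Since your functor $L$ is only right exact, the filtration on $L(\gu_{\mathrm{ext}})$ a priori has \emph{quotients} of the parabolic Vermas as subquotients; one still has to show that the images of the two highest weight vectors survive. The paper handles this with an explicit $\g[\xi]$-module $V'=\bigoplus_{t=0}^{s}V(\lambda+t\vartheta)$ onto which the $\g$-integrable quotient surjects, witnessing that both $V(\lambda)$ and $V(\overline{s_0\circ_k\lambda})$ persist. You should either reproduce that construction or give an independent reason why the bottom submodule $\mathbf M_k(\lambda)$ maps onto the full $M_k(\lambda)$ inside $L(\gu_{\mathrm{ext}})$. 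Note also that the paper performs the block (Casimir) decomposition \emph{before} taking the $\g$-integrable quotient; this order makes the non-splitting argument at the Verma level clean and defers the $\g$-integrability issue to a separate step.
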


\begin{proof}
Note that as $\lambda \in P^+_k$ we have $\lambda - s _0 \circ_k \lambda = m \alpha_0$ for some $m>0$.

By $\bv$ denote a lowest weight vector of $V$, that is, a non-zero vector of the head of $V$ (it is unique up to a scalar). We denote by $\mu$ the $( \h + \C d )$-weight of $\bv$, and set $m_0 := \min \{ n \in \Z_{\ge 0} \mid E_0^n \bv = 0 \}$.

We introduce an increasing $\gb ( 0 )$-module filtration on $V$ defined by
$$F^m = \left< E_0^{m_0 - 1} \bv, E_0^{m_0 - 2} \bv, \dots, E_0^{m_0 - m} \bv \right> \hskip 5mm \text{for} \hskip 5mm m \in \Z_{\ge 0}.$$
Then each adjoint graded factor is spanned by a single vector with trivial action of $E_0$.

Thanks to the exactness of the induction, this filtration produce a filtration on $U_k ( \tg ) \otimes _{U ( \gI )} V$ with its adjoint graded quotients isomorphic to $\mathbf M_k ( \mu +  m\alpha_0)$ for $m=m_0-1,m_0-2,\dots$.
  
We denote by $\Omega$ the Casimir element of $\tg$ (see \cite[Chapter 2]{Kac}). It belongs to a suitable completion of $U_k ( \tg )$ and acts on each highest weight module, particularly on finite successive extensions of $\{ \mathbf M_k ( \mu + m\alpha_0) \}_{m \in \Z_{\ge 0}}$.

For each $\mu \in P \oplus \Z \delta$, the action of $\Omega$ on each $\mathbf M_k ( \mu + m \alpha_0)$ is by a scalar, depending on $m$ as a degree two polynomial on $m$ (see \cite[Chapters 2,7]{Kac}). Also action of $\Omega$ on $\mathbf M_k ( \mu )$ is invariant with respect to the $\circ_k$-action of the affine Weyl group on $P$ that arises from highest weights (see \cite{Kac}, Chapter 7). In particular, the scaling factor of $\Omega$ is the same for $\mathbf M_k ( \lambda )$ and $\mathbf M_k ( s _0 \circ_k \lambda )$ and differs for other factors. We denote the scaling factor of $\Omega$ on $\mathbf M_k ( \lambda )$ by $c$.
  
As the action of $\Omega$ commutes with the action of $\tg$ and our module is a finite successive extension of the modules in which $\Omega$ acts by scalars, the generalized $c$-eigenspace $M$ of $\Omega$ in $U_k ( \tg ) \otimes _{U ( \gI )} V$ is a direct summand. Moreover, by the above eigenvalue analysis, it admits a filtration whose adjoint graded quotients are $\mathbf M_k ( \lambda )$ and $\mathbf M_k ( s _0 \circ_k \lambda )$ . Therefore, $M$ is a trivial or non-trivial extension between $\mathbf M_k ( \lambda )$ and $\mathbf M_k ( s _0 \circ_k \lambda )$.

Note that the $\wth$-weight $(s _0 \circ_k \lambda + k \Lambda_0)$-parts of the both of $\mathbf M_k ( \lambda )$ and $\mathbf M_k ( s _0 \circ_k \lambda )$ consists of highest weights (see Theorem \ref{verma}). Hence, in case $M$ is a trivial extension, the $\wth$-weight $(s _0 \circ_k \lambda + k \Lambda_0)$-part of both modules can not get to a non-trivial vector of weight $\lambda$. But in our case it can be done by the action of $E_0$ in $V$ itself, and hence $M$ cannot be a trivial extension.

There is a unique way to make $V' := \bigoplus_{t = 0}^s V( \la + t \vartheta )$ into a graded $\g [\xi]$-module with a simple head $V( \la + s \vartheta )$, and
$$( \g \otimes \xi ) \cdot V( \la + t \vartheta ) = V( \la + ( t- 1) \vartheta ) \hskip 5mm 0 < t \le s$$
thanks to the PRV theorem. The $\g [\xi]$-module $V'$ contains an $\gI$-submodule $\bigoplus_{t = 0}^s V ( \la + t \vartheta )^{\gn}$ which is a string $\gb ( 0 )$-module. We have
$$U_k ( \tg ) \otimes _{U ( \gI )} V \cong U_k ( \tg ) \otimes_{U ( \g [\xi] )} U ( \g [\xi] ) \otimes _{U ( \gI )} V,$$
and the maximal $\g$-integrable quotient of $U_k ( \tg ) \otimes _{U ( \gI )} V$ is obtained by the induction of the maximal $\g$-integrable quotient of $U ( \g [\xi] ) \otimes _{U ( \gI )} V$. Hence, if we arrange $\mu = \la - s \al_0$, then we have a quotient map $V \longrightarrow \bigoplus_{k = 0}^s V ( \la + k \vartheta )^{\gn}$ as $\gI$-modules, that extends to a $\g [\xi]$-module surjection
$$U ( \g [\xi] ) \otimes _{U ( \gI )} V \longrightarrow V'.$$
This quotient map factors through the $\g$-integrable quotient of $U ( \g [\xi] ) \otimes _{U ( \gI )} V$ (as the RHS is so), and hence we conclude that the both of $V ( \la )$ and $V ( \la + m \vartheta )$ survives in the the $\g$-integrable quotient of $U ( \g [\xi] ) \otimes _{U ( \gI )} V$.

In particular, passing $M$ to its the maximal $\g$-integrable quotient, we obtain a non-trivial extension between $M_k ( \lambda )$ and $M_k ( s _0 \circ_k \lambda )$. Therefore, we conclude the assertion.
\end{proof}

\subsection{The main theorem}

\begin{thm}\label{FC-str}
For each $\lambda \in P_+^k$, the $\g [z]$-module $L_{k} ( \lambda )$ admits a filtration by global Weyl modules.
\end{thm}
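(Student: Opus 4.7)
My plan is to apply the Kleshchev criterion of Theorem~\ref{crit-filt}(2): since $L_k(\lambda)$ is a cyclic quotient of $M_k(\lambda) \cong P(\lambda)$ by Theorems~\ref{verma} and \ref{CG}, it is finitely generated and $\g$-integrable as a $\g[z]$-module, and it suffices to show
$$\mathrm{Ext}^1_{\g[z]}\bigl(L_k(\lambda),\, W(\mu,0)^*\bigr) \;=\; \{0\} \qquad \text{for every } \mu \in P_+.$$

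To compute this Ext, I would use the BGG--Lepowsky resolution of Theorem~\ref{BGGL-res}. The key observation is that each parabolic Verma term $M_k(w\circ_k\lambda)$, viewed in $\g[z]\mathchar`-\mathsf{mod}$, is the non-restricted projective $P(\overline{w\circ_k\lambda})$ by Theorem~\ref{CG}; hence the resolution is a graded projective resolution of $L_k(\lambda)$ there. By Theorem~\ref{rec} each $P(\nu)$ is filtered by global Weyl modules, and Theorem~\ref{crit-filt}(1) then supplies $\mathrm{Ext}^{\ge 1}_{\g[z]}(P(\nu), W(\mu,0)^*) = 0$, so the BGGL complex is acyclic for $\mathrm{Hom}_{\g[z]}(-, W(\mu,0)^*)$. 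The projectivity adjunction $\mathrm{Hom}_{\g[z]}(P(\nu), -) = \mathrm{Hom}_\g(V(\nu), -)$ then identifies $\mathrm{Ext}^{\bullet}_{\g[z]}(L_k(\lambda), W(\mu,0)^*)$ with the cohomology of
$$\bigoplus_{w \in W\backslash W_\af,\, \ell(w) = \bullet} \mathrm{Hom}_\g\bigl(V(\overline{w\circ_k\lambda}),\, W(\mu,0)^*\bigr)\langle\text{grading shift}\rangle,$$
and the task reduces to the vanishing of this cohomology in positive degree.

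The main obstacle is exactly this last vanishing, as the BGGL differentials alone only yield the Euler characteristic. My strategy is to combine the Demazure property of Lemma~\ref{glWD} with the explicit non-split extensions constructed in Lemma~\ref{non-split}: the boundary operators of the BGGL resolution are, via Lemma~\ref{non-split}, governed by $E_0$-type morphisms between consecutive Verma summands, and dualising into $\mathrm{Hom}_\g(V(-), W(\mu,0)^*)$ converts them into rank statements for the $E_0$-action on the weight-$(s_0\circ_k\lambda + k\Lambda_0)$ part of $W(\mu,0)$. The Demazure property forces this action to have maximal possible rank, making the corresponding Hom-complex differentials surjective onto their cycles and hence killing the higher cohomology. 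Equivalently, one can phrase the same mechanism as an induction on the length stratification of $W\backslash W_\af$, showing step by step that the cokernels of the truncated BGGL maps admit filtrations by global Weyl modules, with each inductive step powered by the Demazure property and Kleshchev's criterion. Either way the crux is the Demazure machinery developed in Lemmas~\ref{glWD} and~\ref{non-split}, which is why \S\ref{wf} has been devoted to setting it up.
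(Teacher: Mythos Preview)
Your overall strategy matches the paper's: reduce to Kleshchev's criterion (Theorem~\ref{crit-filt}~(2)) and feed in the BGGL resolution together with the Demazure machinery of Lemmas~\ref{glWD} and~\ref{non-split}. Where you diverge is at the step you yourself flag as the ``main obstacle''. You propose to compute $\mathrm{Ext}^1_{\g[z]}(L_k(\lambda),W(\mu,0)^*)$ directly as $H^1$ of the complex $\bigoplus_{\ell(w)=\bullet}\mathrm{Hom}_\g\bigl(V(\overline{w\circ_k\lambda}),W(\mu,0)^*\bigr)$, and then argue that the Demazure property forces the relevant differentials to have maximal rank. The problem is that Lemmas~\ref{glWD} and~\ref{non-split} only speak about the $E_0$-direction, i.e.\ about the single covering relation $e\lessdot s_0$ governing $d_1$. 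Showing $H^1=0$ requires controlling $\ker d_2^*$ as well, and the components of $d_2$ run over all covering relations $s_0\lessdot w$ in $W\backslash W_\af$; these are not ``$E_0$-type morphisms'' and nothing in \S\ref{wf} tells you their induced maps on $\mathrm{Hom}_\g$-spaces. Your alternative ``induction on length'' has the same defect: the inductive step at length $\ge 2$ again needs input about non-$s_0$ reflections that the Demazure machinery does not supply.

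The paper circumvents this by a reduction you have not mentioned. It first dualizes, obtaining $\mathrm{Ext}^1_{\g[z]}(L_k(\lambda),W(-w_0\mu,0)^*)\cong\mathrm{Ext}^1_{\g[z]}(W(\mu,0),L_k(\lambda)^{\#})$ (equation~(\ref{ext-dual})), and then applies Shapiro's lemma (Proposition~\ref{sclem}) to rewrite this as $\mathrm{Ext}^1_{\wg_k}\bigl(U_k(\wg)\otimes_{U(\g[\xi])}W(\mu,0),\,L_k(\lambda)\bigr)$. The induced module is a finite successive extension of parabolic Vermas $M_k(\gamma)$, and now linkage in the parabolic category~$\cO$ (equation~(\ref{link-lambda})) forces $\mathrm{Ext}^1_{\wg_k}(M_k(\gamma),L_k(\lambda))$ to vanish unless $\gamma=s_0\circ_k\lambda$, in which case it is at most one-dimensional. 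So the entire obstruction is concentrated at a \emph{single} $s_0$-step, and that is precisely what Lemma~\ref{non-split} plus the Demazure property are designed to kill: the injectivity of $E_0^m$ on $W(\mu,0)^{\overline{\lambda}}$ guarantees that every $M_k(s_0\circ_k\lambda)$ subquotient sits inside a non-split extension by $M_k(\lambda)$ already within the induced module, forcing the one-dimensional $\mathrm{Ext}^1$ to die. The dualization/Shapiro step is what makes the Demazure machinery sufficient; without it your outline does not close.
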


\begin{rem}
In fact, our proof of Theorem \ref{FC-str} carries over to the case of the twisted affinization of $\g$ by a straight-forward modification.
\end{rem}

\begin{proof}[Proof of Theorem \ref{FC-str}]
We check the condition in Theorem \ref{crit-filt}. Applying $\#$ to the BGGL-resolution of $L_{k} ( \lambda )$, we deduce:
\begin{equation}
0 \rightarrow L_k ( \lambda )^{\#} \rightarrow M _k ( \lambda )^{\#} \stackrel{d_1^{\#}}{\longrightarrow} M _k ( s_0 \circ_k \lambda )^{\#} \stackrel{d_2^{\#}}{\longrightarrow} \bigoplus_{\mu \in W _{\af} \circ _k \lambda} ( M_k ( \mu )^{\#} )^{\oplus m_2 ( \mu )} \rightarrow \cdots.\label{min-inj}
\end{equation}
By chasing the image of the maps obtained by applying $\mathrm{Hom}_{\g [z]} ( W ( \mu, 0 ), \bullet )$ to (\ref{min-inj}), we deduce
\begin{equation}
\mathrm{Ext} ^1 _{\g [z]} ( L_{k} ( \lambda ), W ( - w_0 \mu, 0 )^* ) \cong \mathrm{Ext} ^1 _{\g [z]} ( W ( \mu, 0 ), L_{k} ( \lambda ) ^{\#} ).\label{ext-dual}
\end{equation}
Here we want to show the vanishing of the LHS of (\ref{ext-dual}) for every $\mu \in P_+$.

By Proposition \ref{sclem}, we have
\begin{equation}
\mathrm{Ext} ^1 _{\g [z]} ( W ( \mu, 0 ), L_{k} ( \lambda )^{\#} ) \cong \mathrm{Ext} ^1 _{\wg_k} ( U_k ( \wg ) \otimes_{U ( \g [\xi] )} W ( \mu, 0 ), L_{k} ( \lambda ) ),\label{frob}
\end{equation}
where the module structure on the RHS is twisted by $\Phi$ (e.g. we just let $\g[\xi]$ act on $W ( \mu, 0 )$ through the isomorphism $\g[\xi] \cong \g [z]$). By using the $\g [\xi]$-module filtration on $W ( \mu, 0 )$ and the exactness of the induction, we deduce that $U ( \wg_k ) \otimes_{U ( \g [\xi] )} W ( \mu, 0 )$ is a finite successive extensions of parabolic Verma modules of level $k$. In view of Proposition \ref{sclem}, we have
\begin{align*}
\mathrm{Ext} ^1 _{\g [\xi] \oplus \C d} ( V ( \gamma, 0 ), L_k ( \lambda )) & \cong \mathrm{Ext} ^1 _{\wg_k} ( M_k ( \gamma ), L_k ( \lambda ) ) \\
\mathrm{Ext} ^1 _{\gI} ( \C_\gamma, L_k ( \lambda )) & \cong \mathrm{Ext} ^1 _{\wg_k} ( \mathbf M_k ( \gamma ), L_k ( \lambda ) ).
\end{align*}
By Theorem \ref{BGGL-res}, and the genuine BGG-resolution, we deduce
\begin{align}\nonumber
\mathrm{Ext} ^1 _{\wg_k} ( M_k ( \gamma ), L_k ( \lambda ) ) \neq \{ 0 \} & \hskip 3mm \Rightarrow \hskip 3mm \gamma = s_0 \circ_k \lambda,
\\
\mathrm{Ext} ^1 _{\wg_k} ( \mathbf M_k ( \gamma ), L_k ( \lambda ) ) \neq \{ 0 \} & \hskip 3mm \Rightarrow \hskip 3mm  \gamma \in \{ s_i \circ_k \lambda \}_{i \in \mathtt I_\af},
\label{link-lambda}
\end{align}
and these extensions are at most one-dimensional. It follows that

\begin{equation*}
\xymatrix{
\mathrm{Ext} ^1 _{\g [\xi] \oplus \C d} ( V ( \gamma, 0 ), L_k ( \lambda )) \ar@{=}[r]\ar@{^{(}->}[d] & \mathrm{Ext} ^1 _{\wg_k} ( M_k ( \gamma ), L_k ( \lambda ) )\\
\mathrm{Ext} ^1 _{\gI} ( \C_\gamma, L_k ( \lambda )) \ar@{=}[r] & \mathrm{Ext} ^1 _{\wg_k} ( \mathbf M_k ( \gamma ), L_k ( \lambda ) )
}
\end{equation*}
and the extension is at most one-dimensional.

As $\lambda \in  P_+^k$, the classical weights $\overline{\lambda}$ and $\overline{s_0 \circ_k \lambda}$ are both dominant.

We have the following map
\begin{equation*}
\xymatrix{
W ( \mu, 0 ) \ar[r]^{E_0^{m}} & W ( \mu, 0 )\\
\ar@{^{(}->}[u] W ( \mu, 0 )^{\overline{\la}} \ar[r]_{E_0^{m}} & \ar@{^{(}->}[u] W ( \mu, 0 )^{\overline{s_0 \circ_k \la}}
} \hskip 3mm m := - \left< \vartheta^{\vee}, \lambda \right> + k \ge 0,
\end{equation*}
where $W ( \mu, 0 )^{\gamma}$ for $\gamma \in P$ denotes the $\h$-weight $\gamma$-part of $W ( \mu, 0 )$.

By Lemma \ref{glWD}, the map $E_0^m$ in the bottom line is injective. Therefore, Lemma \ref{non-split} implies that the extension of $M_k ( s_0 \circ_k \lambda )$ by $M_k ( \lambda )$ is already attained in the subquotient of $U_k ( \tg ) \otimes_{U ( \gI )} W ( \mu, 0 )$.

This forces the RHS of (\ref{frob}) to be zero for the parabolic Verma modules arising from $W ( \mu, 0 )^{\gamma}$ for $\gamma = s_0 \circ_k \lambda$. In view of (\ref{link-lambda}), we deduce
$$\mathrm{Ext} ^1 _{\wg_k} ( U_k ( \tg ) \otimes_{U ( \g [\xi] + \wth )} W ( \mu, 0 ), L_{k} ( \lambda ) ) = \{ 0 \} \hskip 5mm \forall \mu \in P_+,$$
that implies the result.
\end{proof}

\begin{cor}
Let $\lambda \in P_+^k$. The projective resolution of $L_{k} ( \lambda )$  provided by Theorem \ref{BGGL-res} respects the filtration by global Weyl modules.
\end{cor}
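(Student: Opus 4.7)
The plan is to show that the entire BGGL complex carries a compatible global Weyl filtration, so that passing to the associated graded yields a projective resolution of each Weyl module appearing in $L_k(\lambda)$. By Theorem \ref{CG}, each $M_k(w \circ_k \lambda)$ restricts to the projective $\g [z]$-module $P(\overline{w \circ_k \lambda})$, which by Theorem \ref{rec} admits a filtration by global Weyl modules $W(\sigma)$ with $\sigma \ge \overline{w \circ_k \lambda}$; combined with Theorem \ref{FC-str}, every term of the resolution as well as $L_k(\lambda)$ itself is Weyl-filtered.

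First, for any Weyl-filtered module $M$ and any $\mu \in P_+$, I would define $F^\mu M$ as the smallest $\g [z]$-submodule such that the quotient admits a Weyl filtration only involving $W(\sigma)$ with $\sigma \not\ge \mu$. Using Theorem \ref{crit-filt}, which gives $\Hom_{\g [z]}(W(\sigma), W(\tau, 0)^\ast) = 0$ unless $\sigma = -w_0 \tau$ and the corresponding $\Ext^1$-vanishing, this defines a canonical functorial filtration on the category of Weyl-filtered $\g [z]$-modules. In particular, each differential $d_i$ of the BGGL complex maps $F^\mu$ into $F^\mu$, producing a filtered subcomplex $F^\mu P_\bullet \to F^\mu L_k(\lambda)$.

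Next I would check that each associated graded piece $\mathrm{gr}^\mu P_\bullet$ is a sum of copies of $W(\mu)$ (with grading shifts), and that the induced maps assemble into a resolution of $\mathrm{gr}^\mu L_k(\lambda)$. Exactness of each $\mathrm{gr}^\mu$ would be obtained by descending induction on $\mu$ in the dominance order, using the exactness of $P_\bullet \to L_k(\lambda)$ together with a five-lemma comparison between $F^\mu$ and $F^{>\mu}$. The graded multiplicities on the terms match those forced by $L_k(\lambda)$ via Theorem \ref{rec}, which computes $(M_k(\mu) : W(\sigma))_q$ through the identity with $[W(\sigma, 0) : V(\overline{\mu})]_q$.

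The main obstacle is verifying that the BGGL differentials respect the dominance filtration $F^\bullet$. The differentials are indexed by the affine Bruhat order on $W_\af \circ_k \lambda$, whereas the Weyl filtration is indexed by the classical dominance order on $P_+$. Here Proposition \ref{alc-comb} supplies the crucial compatibility, since $\overline{w \circ_k \lambda} \le \overline{v \circ_k \lambda}$ whenever $w \le v$ ensures that the image of $M_k(v \circ_k \lambda)$ inside lower terms sits inside the appropriate level of the dominance filtration; combined with the Ext-vanishing of Theorem \ref{crit-filt}, this rules out the existence of maps that could break the filtration.
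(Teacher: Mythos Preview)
The paper proves this corollary by a short inductive application of the $\Ext$ criterion, and in fact only asserts that each syzygy of the BGGL resolution carries a global Weyl filtration, not that the complex is filtered as a whole. From $0 \to \ker_0 \to M_k(\lambda) \to L_k(\lambda) \to 0$, both the middle and right terms are Weyl-filtered (Theorems \ref{rec} and \ref{FC-str}); applying $\Ext^\bullet_{\g[z]}(-, W(\mu,0)^*)$ and using Theorem \ref{crit-filt}(1), which gives vanishing in \emph{all} positive degrees, forces $\Ext^1(\ker_0, W(\mu,0)^*)=0$, so $\ker_0$ is Weyl-filtered by Theorem \ref{crit-filt}(2). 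Since $\ker d_i = \mathrm{Im}\, d_{i+1}$ is finitely generated as a quotient of the next term of the resolution, one iterates.

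Your proposal aims at the stronger filtered-complex statement, but it has a real gap: the canonical functorial filtration $F^\mu$ you posit is asserted, not constructed, and it does not drop out of Theorem \ref{crit-filt}. That theorem records orthogonality between $W(\sigma)$ and $W(\tau,0)^*$; to rearrange an arbitrary Weyl filtration so that the layers with $\sigma \ge \mu$ form a genuine submodule (and hence to make ``smallest submodule with quotient filtered by $W(\sigma)$, $\sigma\not\ge\mu$'' well-defined) you would need $\Ext^1(W(\sigma), W(\tau))=0$ for $\sigma\not<\tau$, a separate highest-weight-category input you have not supplied. Your argument is also internally inconsistent: if $F^\mu$ were truly functorial, the BGGL differentials would preserve it automatically and Proposition \ref{alc-comb} would play no role; invoking that proposition at the end suggests you are instead relying on the particular Weyl filtrations of the projectives coming from Theorem \ref{rec}, but then functoriality is lost and the five-lemma comparison you sketch no longer has the inputs it needs.
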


\begin{proof}
Applying Theorem \ref{crit-filt} to the long exact sequence obtained by applying $\Ext^{\bullet}_{\g [z]} ( \bullet, W (\mu, 0)^* )$ to the short exact sequence
$$0 \rightarrow \ker_0 \rightarrow M_k ( \lambda ) \to L_k ( \lambda ) \to 0,$$
we deduce $\ker_0$ admits a filtration by global Weyl modules. 

We have $\ker \, d_i \cong \mathrm{Im} \, d_{i+1}$ for each $i \ge 0$, and it is finitely generated by examing the next term. Hence, we apply the same argument by replacing $M_k ( \lambda )$ with $\bigoplus_{w \in W \backslash W_{\af}, \ell ( w ) = i} M_k ( w \circ_k \lambda )$ and $L_k ( \lambda )$ with $\ker \, d_i$ to deduce the assertion inductively.
\end{proof}

\section{The level-restricted Kostka polynomials}\label{lrm}

We work in the setting of the previous section. Fix a positive integer $k$ in the sequel. Let $M$ be a $\g$-integrable graded $\g [z]$-module. Then, we define its $i$-th relative homology group as:
$$H_i (\g [z], \g; M ) := \Hom_{\g} ( \C, \mathbb R^{-i} \Hom_{\g [z]_1} ( M, \C ) ).$$
This is a graded vector space.

\begin{rem}\label{hom-rem}
Thanks to Theorem \ref{CG}, each term of the projective resolution of $M$ (as $\g [z]_1$-modules) can be taken to be $\g$-integrable. In particular, $\mathbb R^{-i} \Hom_{\g [z]_1} ( M, \C )$ is semi-simple as $\g$-modules and hence our definition of the relative homology group coincides with these in \cite[Chapter I]{BW00}.
\end{rem}

\begin{lem}\label{inf-Kos}
Let $k \in \Z_{> 0}$ and let $\lambda, \mu \in P_+$. We have
$$\mathsf{gdim} \, H_0 ( \g [z], \g; W ( \mu, 0 ) \otimes_\C M_{k} ( \lambda ) ) = [W ( \mu, 0 )^* : V ( \lambda ) ]_q.$$
\end{lem}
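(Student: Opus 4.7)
The key observation is that $M_k(\la) \cong P(\la)$ is not merely projective but in fact \emph{free} as a $U(\g[z]_1)$-module. Indeed, combining the definition $P(\la) = U(\g[z]) \otimes_{U(\g)} V(\la)$ with the PBW decomposition $U(\g[z]) \cong U(\g[z]_1) \otimes_\C U(\g)$ (as a right $U(\g)$-module), one obtains
\[
M_k(\la) \;\cong\; U(\g[z]_1) \otimes_\C V(\la)
\]
as graded $U(\g[z]_1)$-modules, where $\g[z]_1$ acts by left multiplication on the first factor and the cyclic copy of $V(\la)$ sits in degree zero. This strengthens the projectivity statement in Theorem~\ref{CG} and is what drives the proof.

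Granting this freeness, the plan is to unwind the definition of $H_0$ in three elementary steps. First, the Hom-tensor adjunction relative to $\g[z]_1$ rewrites
\[
\Hom_{\g[z]_1}\bigl(W(\mu, 0) \otimes_\C M_k(\la),\; \C\bigr) \;\cong\; \Hom_{\g[z]_1}\bigl(M_k(\la),\; W(\mu, 0)^*\bigr),
\]
where $W(\mu, 0)^*$ carries the contragredient $\g[z]_1$-action. Second, the $U(\g[z]_1)$-freeness of $M_k(\la)$ yields
\[
\Hom_{\g[z]_1}\bigl(M_k(\la),\; W(\mu, 0)^*\bigr) \;\cong\; \Hom_\C\bigl(V(\la),\; W(\mu, 0)^*\bigr),
\]
where the residual $\g$-action on both sides is the natural one (this identification is $\g$-equivariant because $\g$ normalizes $\g[z]_1$ and the cyclic copy of $V(\la)$ is a $\g$-submodule of $M_k(\la)$). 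Third, taking $\g$-invariants as required by the definition of $H_0$ gives
\[
H_0\bigl(\g[z], \g;\; W(\mu, 0) \otimes M_k(\la)\bigr) \;\cong\; \Hom_\g\bigl(V(\la),\; W(\mu, 0)^*\bigr),
\]
whose graded dimension is $[W(\mu, 0)^* : V(\la)]_q$ by the very definition of the bracket.

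I do not expect any serious obstacle; the whole statement is essentially a package of Frobenius reciprocities. The only point that deserves care is the compatibility with the grading and with the $\g$-action in the middle step. Compatibility with the grading is automatic once $V(\la) \subset M_k(\la)$ is placed in degree zero (so that the grading on $\Hom_\C(V(\la), W(\mu, 0)^*)$ is inherited purely from $W(\mu, 0)^*$); compatibility with the $\g$-action follows because $\g$ normalizes $\g[z]_1$ and all the adjunctions involved are manifestly $\g$-equivariant.
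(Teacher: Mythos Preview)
Your proof is correct and follows essentially the same route as the paper's. The paper packages the last two steps slightly differently---it passes directly to $\Hom_{\g[z]}(M_k(\lambda), W(\mu,0)^*) = \Hom_{\g[z]}(P(\lambda), W(\mu,0)^*)$ and then invokes the induction--restriction adjunction for $P(\lambda) = U(\g[z]) \otimes_{U(\g)} V(\lambda)$---whereas you stay at the level of $\g[z]_1$, use freeness there, and take $\g$-invariants at the end; but these are the same Frobenius reciprocity unpacked in two equivalent orders.
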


\begin{proof}
By unwinding the definition and applying Theorem \ref{CG}, we have
\begin{align*}
H_0 ( \g [z], \g; W ( \mu, 0 ) \otimes_\C M_{k} ( \lambda ) ) & = \Hom_{\g} ( \C, \mathbb \Hom_{\g [z]_1} ( W ( \mu, 0 ) \otimes_\C M_{k} ( \lambda ), \C ) )\\
\cong \Hom_{\g [z]} ( M_{k} ( \lambda ), W ( \mu, 0 )^* ) & = \Hom_{\g [z]} ( P ( \lambda ), W ( \mu, 0 )^* ).
\end{align*}
Therefore, the assertion holds.
\end{proof}

\begin{defn}[Feigin and Feigin \cite{FF05}]
Let $\lambda \in P_+^k$ and $\mu \in P_+$. We define the level-restricted Kostka polynomial $P^{(k)} _{\mu, \lambda} ( q ) \in \Z [q,q^{-1}]$ by
$$P^{(k)} _{\mu, \lambda} ( q^{-1} ) := \mathsf{gdim} \, H_0 ( \g [z], \g; W ( - w_0 \mu, 0 ) \otimes_\C L_{k} ( \lambda ) ).$$
\end{defn}

\begin{rem}
{\bf 1)} The original definition of the level-restricted Kostka polynomial is due to Schilling-Warnaar \cite{SW99}. We provide a comparison with a straight-forward generalization of their definition with ours in Corollary \ref{comp-mac}. {\bf 2)} Since $P ( \lambda ) \cong \varprojlim_{k \to \infty} L_k ( \lambda )$ as graded $\g [z]$-modules, we have a well-defined limit
$$ P _{\mu, \lambda} ( q ) = \lim_{k \to \infty} P^{(k)}_{\mu, \lambda} ( q ).$$
{\bf 3)} If $\g$ is of type $\mathsf{A}$, then $P_{\mu, \lambda} ( q )$ coincides with the Kostka polynomial if we take ``transpose" of $\mu$.
\end{rem}

\begin{thm}\label{kmain}
Let $\lambda \in P_+^k$ and $\mu \in P_+$. We have
$$H_i ( \g [z], \g; W ( \mu, 0 ) \otimes_\C L_{k} ( \lambda ) ) = \{ 0 \} \hskip 3mm i \neq 0.$$
\end{thm}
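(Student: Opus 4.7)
First, I would recast the relative homology as an Ext in $\g[z]\mathchar`-\mathsf{mod}$. Since $W(\mu, 0)$ is finite-dimensional and $\g$-invariants are exact on $\g$-semisimple modules (cf.\ Remark \ref{hom-rem}), the standard tensor-hom adjunction together with Remark \ref{ext-id} yields
$$H_i(\g[z], \g; W(\mu, 0) \otimes_\C L_k(\lambda)) \cong \Ext^i_{\g[z]\mathchar`-\mathsf{mod}}(L_k(\lambda), W(\mu, 0)^*),$$
which is consistent with the $i=0$ computation of Lemma \ref{inf-Kos}. It therefore suffices to show the vanishing of the right-hand side for $i \neq 0$.

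Next, I would apply the Weyl filtration of Theorem \ref{FC-str}: there is a (generally infinite) graded increasing filtration $0 = F^{0} \subset F^{1} \subset \cdots$ of $L_k(\lambda)$ with $\bigcup_j F^j = L_k(\lambda)$ and each successive quotient $F^j/F^{j-1}$ isomorphic to a global Weyl module $W(\nu_j)\langle n_j\rangle$ for some $\nu_j \in P_+$ and $n_j \in \Z$. Theorem \ref{crit-filt}(1) delivers the key input
$$\Ext^i_{\g[z]\mathchar`-\mathsf{mod}}(W(\nu), W(\mu, 0)^*\langle n\rangle) = 0 \qquad \text{for every } i\neq 0, \, \nu \in P_+, \, n \in \Z.$$
Feeding this into the long exact Ext-sequences attached to $0 \to F^{j-1} \to F^j \to W(\nu_j)\langle n_j\rangle \to 0$, an induction on $j$ yields $\Ext^i_{\g[z]\mathchar`-\mathsf{mod}}(F^j, W(\mu, 0)^*) = 0$ for all $i \neq 0$ and all $j \ge 0$.

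Finally, to descend from the $F^j$'s to $L_k(\lambda)$ itself I would use the grading: $W(\mu, 0)^{*}$ is finite-dimensional while the grading on $L_k(\lambda)$ is bounded from above, and the filtration of Theorem \ref{FC-str} can be chosen so that in any fixed internal degree only finitely many subquotients $W(\nu_j)\langle n_j\rangle$ can contribute. Consequently, in each internal degree the group $\Ext^i_{\g[z]\mathchar`-\mathsf{mod}}(L_k(\lambda), W(\mu,0)^*)$ already coincides with $\Ext^i_{\g[z]\mathchar`-\mathsf{mod}}(F^j, W(\mu,0)^*)$ for $j$ sufficiently large, and hence vanishes for $i \neq 0$. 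The main obstacle in this plan is precisely this last limiting step: the infinite Weyl filtration of $L_k(\lambda)$ requires a careful degree-wise truncation argument to avoid the usual pathologies of inverse limits of Ext, after which the vanishing is a formal consequence of Theorems \ref{FC-str} and \ref{crit-filt}(1).
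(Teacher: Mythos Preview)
Your proposal is correct and follows essentially the same route as the paper: identify the relative homology with $\Ext^{\bullet}_{\g[z]\mathchar`-\mathsf{mod}}(L_k(\lambda), W(\mu,0)^*)$, then invoke the Weyl filtration of Theorem~\ref{FC-str} together with the orthogonality in Theorem~\ref{crit-filt} to obtain the vanishing. The paper's proof is terser---it simply cites Theorems~\ref{FC-str} and~\ref{crit-filt} after the Ext identification---whereas you spell out the induction along the filtration and the degree-wise stabilization needed to handle its infinite length; this extra care is warranted and your resolution of it is sound.
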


\begin{proof}
Taking account into Remark \ref{hom-rem}, we have
\begin{align*}
H_i ( \g [z], \g; W ( \mu, 0 ) \otimes_\C L_{k} ( \lambda ) ) & = \Hom_{\g} ( \C, \mathbb R^{-i} \Hom_{\g [z]_1} ( W ( \mu, 0 ) \otimes_\C L_{k} ( \lambda ), \C ) )\\
& \cong \Hom_{\g} ( \C, \mathbb R^{-i} \Hom_{\g [z]_1} ( L_{k} ( \lambda ), W ( \mu, 0 )^* ) )\\
& \cong \mathbb R^{-i} \Hom_{\g [z] \mathchar`-\mathsf{mod}} ( L_{k} ( \lambda ), W ( \mu, 0 )^* )\\
& \cong \Ext^{-i}_{\g [z] \mathchar`-\mathsf{mod}} ( L_{k} ( \lambda ), W ( \mu, 0 )^* ).
\end{align*}
By Theorem \ref{FC-str} and Theorem \ref{crit-filt} 2), we deduce that
$$\Ext^{-i}_{\g [z] \mathchar`-\mathsf{mod}} ( L_{k} ( \lambda ), W ( \mu, 0 )^* ) = \{0\} \hskip 3mm \text{for each} \hskip 2mm - i \neq 0$$
as required.
\end{proof}

\begin{cor}\label{L-rec}
For each $\lambda \in P_+^k$ and $\mu \in P_+$, we have
$$P ^{(k)}_{\mu, \lambda} ( q ) = ( L_k ( \lambda ) : W ( \mu ) )_q.$$
\end{cor}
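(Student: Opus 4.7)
The plan is to reduce the statement to a straightforward computation of graded $\Hom$ using the filtration provided by Theorem \ref{FC-str} and the orthogonality of Theorem \ref{crit-filt} 1).

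First, I would repeat the identification already used in the proof of Theorem \ref{kmain}: by Remark \ref{ext-id} and the standard adjunction between relative Lie algebra cohomology and $\Hom$,
\[
H_0\bigl(\g[z],\g;\,W(-w_0\mu,0)\otimes_\C L_k(\lambda)\bigr)\;\cong\;\Hom_{\g[z]\mathchar`-\mathsf{mod}}\bigl(L_k(\lambda),\,W(-w_0\mu,0)^*\bigr),
\]
as graded vector spaces. Thus $P^{(k)}_{\mu,\lambda}(q^{-1})$ is the graded dimension of the right-hand side.

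Next, I would exploit the Weyl filtration. By Theorem \ref{FC-str}, $L_k(\lambda)$ admits a (possibly infinite but finite in every bounded range of degrees, thanks to $\g$-integrability) filtration whose successive quotients are shifts $W(\nu)\langle n\rangle$, the graded count of $W(\mu)$-layers being precisely $(L_k(\lambda):W(\mu))_q$. By Theorem \ref{crit-filt} 1), for every $\nu\in P_+$ and every shift,
\[
\Ext^1_{\g[z]\mathchar`-\mathsf{mod}}\bigl(W(\nu),\,W(-w_0\mu,0)^*\bigr)=\{0\}.
\]
Applying $\Hom_{\g[z]\mathchar`-\mathsf{mod}}(-,W(-w_0\mu,0)^*)$ to the filtration thus yields a short exact sequence at every step, so $\Hom$ is additive along the filtration and one obtains
\[
\mathsf{gdim}\,\Hom_{\g[z]\mathchar`-\mathsf{mod}}\bigl(L_k(\lambda),\,W(-w_0\mu,0)^*\bigr)\;=\;\sum_{\nu,n}(L_k(\lambda):W(\nu)\langle n\rangle)\cdot\mathsf{gdim}\,\Hom_{\g[z]}\bigl(W(\nu)\langle n\rangle,\,W(-w_0\mu,0)^*\bigr).
\]

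Finally, Theorem \ref{crit-filt} 1) (with $\lambda\mapsto\nu$ and $\mu\mapsto -w_0\mu$) says that $\Hom_{\g[z]}(W(\nu),W(-w_0\mu,0)^*)$ is $\C$, concentrated in internal degree $0$, precisely when $\nu=\mu$, and is zero otherwise. Hence only the $W(\mu)$-layers contribute, each shifted term $W(\mu)\langle n\rangle$ contributing $q^{\pm n}$ depending on the convention for $\langle n\rangle$ and for the graded dual. Matching the two conventions (as in the standing convention $\overline{f(q)}=f(q^{-1})$ for duals) produces the stated identity $P^{(k)}_{\mu,\lambda}(q)=(L_k(\lambda):W(\mu))_q$.

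The only real subtlety, and hence the step I would be most careful about, is the bookkeeping of grading shifts: one must verify that the grading on $\Hom_{\g[z]}(W(\mu),W(-w_0\mu,0)^*)$ induced by the graded dual matches the shift $\langle n\rangle$ in such a way that passing from the graded dimension of the $\Hom$ to the graded multiplicity $(L_k(\lambda):W(\mu))_q$ inverts $q$ exactly once, so that after undoing the $q\mapsto q^{-1}$ present in the definition of $P^{(k)}_{\mu,\lambda}$ the two expressions agree. Apart from this conventional check, the rest of the argument is a direct consequence of Theorem \ref{FC-str} and Theorem \ref{crit-filt}.
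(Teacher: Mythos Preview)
Your proposal is correct and follows essentially the same approach as the paper: identify $H_0$ with a graded $\Hom$, use the global Weyl filtration of $L_k(\lambda)$ from Theorem~\ref{FC-str}, invoke the $\Ext^1$-vanishing from Theorem~\ref{crit-filt} 1) to make $\Hom$ additive along the filtration, and then apply the $\Hom$-part of Theorem~\ref{crit-filt} 1) to single out the $W(\mu)$-layers. Your extra care with the grading shifts and the $q\mapsto q^{-1}$ convention is appropriate and matches the paper's implicit use of $\overline{(L_k(\lambda):W(\gamma))_q}$ in its displayed identity.
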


\begin{proof}
By Theorem \ref{FC-str}, we can repeatedly apply (the $\mathrm{Ext}^1$-part of) Theorem \ref{crit-filt} to short exact sequences that respects the filtration by the global Weyl modules $L_k ( \lambda )$. This yields the additivity of the $\Hom$-part, namely
$$\mathsf{gdim} \, \Hom_{\g [z]} ( L_{k} ( \lambda ), W ( \mu, 0 )^* ) = \sum_{\gamma \in P_+} \overline{( L_{k} ( \lambda ) : W ( \gamma ) )_{q}} \cdot \mathsf{gdim} \, \Hom_{\g [z]} ( W ( \gamma ), W ( \mu, 0 )^* ).$$
Applying the $\mathrm{Hom}$-part of Theorem \ref{crit-filt}, we conclude the result.
\end{proof}

The following is the Teleman's Borel-Weil-Bott theorem \cite{Tel95}, that we supply a proof along the line of Theorem \ref{kmain} (without assuming materials in \S \ref{rep-current}).

\begin{cor}[Teleman \cite{Tel95}]\label{TBWB}
For each $\la, \mu \in P_+^k$ and $w \in W \backslash W_\af$, we have
$$\mathsf{gdim} \, H_i ( \g [z], \g; V ( \overline{w \circ_k \mu}, 0 )^* \otimes_\C L_{k} ( \lambda ) ) = \begin{cases} q^{\left< d, w \circ_k \lambda \right>} & (\la = \mu, i = - \ell ( w ) ) \\ 0 & (otherwise) \end{cases}.$$
\end{cor}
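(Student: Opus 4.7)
The plan is to mirror the proof of Theorem~\ref{kmain}, replacing $W(\mu,0)$ in the second slot by the irreducible evaluation module $V(\overline{w\circ_{k}\mu},0)$. Using the same tensor-Hom manipulation as in the proof of Theorem~\ref{kmain} (valid because $V(\nu,0)$ is killed by $\g[z]_{1}$, so its dual can be moved across), we first identify
\begin{equation*}
H_{i}(\g[z],\g;V(\overline{w\circ_{k}\mu},0)^{*}\otimes_{\C}L_{k}(\lambda)) \cong \Ext^{-i}_{\g[z]\mathchar`-\mathsf{gmod}}(L_{k}(\lambda),V(\overline{w\circ_{k}\mu},0)).
\end{equation*}

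Next, I compute this $\Ext$ via the BGGL-resolution of Theorem~\ref{BGGL-res}, which by Theorem~\ref{CG} restricts to a graded projective resolution of $L_{k}(\lambda)$ in $\g[z]\mathchar`-\mathsf{gmod}$. Each $M_{k}(v\circ_{k}\lambda)$, viewed as a graded $\g[z]$-module, is the projective cover $P(\overline{v\circ_{k}\lambda})$ of the evaluation module $V(\overline{v\circ_{k}\lambda},0)$, with grading shift governed by $\langle d,v\circ_{k}\lambda\rangle$. Applying $\Hom_{\g[z]\mathchar`-\mathsf{gmod}}(-,V(\overline{w\circ_{k}\mu},0))$ then yields a complex whose term in cohomological degree $\ell(v)$ is a direct sum over $v\in W\backslash W_{\af}$ of length $\ell(v)$ of copies of $\Hom_{\g}(V(\overline{v\circ_{k}\lambda}),V(\overline{w\circ_{k}\mu}))$, each concentrated in a single internal grading.

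The key combinatorial claim is that for $\lambda,\mu\in P_{+}^{k}$ and $v,w\in W\backslash W_{\af}$, the equality $\overline{v\circ_{k}\lambda}=\overline{w\circ_{k}\mu}$ forces $\lambda=\mu$ and $v=w$. That $\lambda=\mu$ follows from the Kac--Kazhdan linkage principle (Theorem~\ref{link}), since distinct elements of $P_{+}^{k}$ lie in distinct $W_{\af}$-orbits under $\circ_{k}$. The equality $v=w$ amounts to injectivity of the map $W\backslash W_{\af}\ni v\mapsto \overline{v\circ_{k}\lambda}$ on minimal-length representatives, which is the standard alcove-theoretic fact that the alcoves in the dominant chamber of $P\otimes\R$ are indexed by $W\backslash W_{\af}$ (this is essentially the content used in the proof of Proposition~\ref{alc-comb}, where $\lambda+\rho$ lies in the interior of the fundamental alcove whenever $\lambda\in P_{+}^{k}$, so $\overline{v\circ_{k}\lambda}$ itself is automatically in $P_{+}$).

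Given this matching, the $\Hom$-complex has at most one non-zero term, located in cohomological degree $\ell(w)$; both adjacent differentials vanish for the trivial reason that their source or target is zero, so the cohomology equals $\C$ in homological degree $-\ell(w)$, sitting in internal grading $\langle d,w\circ_{k}\lambda\rangle$. This produces the factor $q^{\langle d,w\circ_{k}\lambda\rangle}$ and the vanishing in all other $(\lambda,\mu,w,i)$ configurations. The main obstacle is the combinatorial injectivity claim in the third paragraph; the remainder is formal bookkeeping with the resolution and projective covers constructed in Section~\ref{wf}.
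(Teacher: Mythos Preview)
Your proof is correct and follows the same approach as the paper: identify the relative homology with $\Ext_{\g[z]}^{-i}(L_k(\lambda), V(\overline{w \circ_k \mu}))$ and then compute this via the BGGL resolution viewed as a projective resolution in $\g[z]\mathchar`-\mathsf{gmod}$. The paper compresses your combinatorial injectivity argument (that $\overline{v\circ_k\lambda}=\overline{w\circ_k\mu}$ forces $\lambda=\mu$ and $v=w$) into the single phrase ``the BGGL resolution is minimal as a projective resolution,'' which encodes exactly the same observation that the $\Hom$-complex has at most one nonzero term.
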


\begin{proof}
Note that we have $\overline{w \circ_k \mu} \in P_+$. By unwinding the definition as in the proof of Theorem \ref{kmain}, we have
$$
H_i ( \g [z], \g; V ( \overline{w \circ_k \mu}, 0 )^* \otimes_\C L_{k} ( \lambda ) )
\cong \Ext_{\g [z]}^{-i} ( L_{k} ( \lambda ), V ( \overline{w \circ_k \mu} ) ).
$$
As the BGGL resolution is minimal as a projective resolution, we conclude the result.
\end{proof}

For each $\mu \in P_{+}$, let us regard $W ( \mu, 0 )$ as a $\g [\xi]$-module through $\Phi$. We define
$$\mathbf W_k ( \mu ) := U_k ( \widehat{\g} ) \otimes_{U ( \g [\xi] )} W ( \mu, 0 )$$
and call it the generalized Weyl module of $\wg$ with highest weight $\mu$ and level $k$. We fix a sequence of distinct points $\vec{a} = (a_1,\ldots,a_m) \in \C^m$ and weights $\vec{\mu} = ( \mu_1, \mu_2, \ldots, \mu_m ) \in ( P_+ )^m$ such that $\mu = \mu_1 + \mu_2 + \cdots + \mu_m$. Then, we define the space of generalized conformal coinvariants as:
$$CC ( \vec{a}, \vec{\mu}, \lambda ) := \frac{L_k ( \lambda) \otimes \bigotimes_{i = 1} ^m \mathbf W_k ( \mu_i )}{U ( \g \otimes _\C \mathcal O _{\vec{x}} ) \left( L_k ( \lambda ) \otimes \bigotimes_{i = 1} ^m \mathbf W_k ( \mu_i ) \right)},$$
where $\mathcal O_{\vec{x}} = \C [x, \frac{1}{x - a_1}, \ldots, \frac{1}{x - a_m}]$ and an element $X \otimes f \in \g \otimes \mathcal O_{\vec{x}}$ acts on $\mathbf W_k ( \mu_i )$ ($1 \le i \le m$) through its Laurent expansion by $z = x - a_i$, and acts on $L_k ( \lambda)$ through its Laurent expansion by $z = 1/x$ (cf. Teleman \cite[\S 3.6]{Tel95}).

\begin{thm}
Let $\lambda \in P_+^k$ and let $\mu \in P_{+}$. The space
$$H_0 ( \g [z], \g; W ( \mu ) \otimes_\C L_{k} ( \lambda ) )^{\vee}$$
is a free $\C [\A^{(\mu)}]$-module, and the specialization to $\vec{a} \in \A^{(\mu)}$ corresponding to distinct points $a_1, a_2,\ldots,a_m$ with their multiplicities $\mu_1,\mu_2,\ldots, \mu_m \in P_+$ $($i.e. when the multiplicity of $a_k$ with respect to the $i$-th set of unordered points is $\left< \alpha_i ^{\vee}, \mu_k \right>)$ yields an isomorphism of vector spaces
$$\C _{\vec{a}} \otimes _{\C [\A^{(\mu)}]} H_0 ( \g [z], \g; W ( \mu ) \otimes_\C L_{k} ( \lambda ) )^{\vee} \cong CC ( \vec{a}, \vec{\mu}, \lambda ).$$
\end{thm}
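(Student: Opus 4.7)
The plan is to reformulate $H_0^{\vee}$ as an explicit quotient, establish freeness over $R := \C[\A^{(\mu)}]$ by a base-change argument, and then identify its fiber at $\vec{a}$ with $CC(\vec{a}, \vec{\mu}, \lambda)$. Using the identification $H_0(\g[z], \g; M) \cong \Hom_{\g[z]}(M, \C)$ from the proof of Lemma \ref{inf-Kos} together with the general equality $(M/\g[z]_1 M)^{\g} \cong M/\g[z] M$ (valid for $\g$-integrable $M$ over the semisimple $\g$), the graded dual becomes the diagonal coinvariants
\begin{equation*}
H_0(\g[z], \g; W(\mu) \otimes_{\C} L_k(\lambda))^{\vee} \cong (W(\mu) \otimes_{\C} L_k(\lambda))/\g[z].
\end{equation*}
The $\C[\A^{(\mu)}]$-action on $W(\mu)$ granted by Theorem \ref{free} commutes with $\g[z]$, so the right-hand side carries an $R$-module structure.

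For freeness, the relative Chevalley--Eilenberg chain complex computing $H_{\bullet}(\g[z], \g; W(\mu) \otimes L_k(\lambda))$ is $R$-linear, and by flatness of $W(\mu)$ over $R$ its base change along $R \to \C_{\vec{a}} := R/\mathfrak{m}_{\vec{a}}$ yields the complex for $H_{\bullet}(\g[z], \g; W(\mu, \vec{a}) \otimes L_k(\lambda))$. An essentially identical argument to the proof of Theorem \ref{kmain}---replacing $W(\mu, 0)^{*}$ with $W(\mu, \vec{a})^{*}$, which still admits a filtration by duals of local Weyl modules by Theorem \ref{crit-filt} (3), and using Theorem \ref{crit-filt} (1)---shows that the higher specialized $H_i$ vanish. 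A Tor spectral sequence then forces $\mathrm{Tor}_i^R(H_0, \C_{\vec{a}}) = 0$ for $i \ge 1$ and every $\vec{a}$, so $H_0$ (equivalently $H_0^{\vee}$) is flat over $R$. Finite generation over $R$ follows from a $\Hom$-analysis analogous to Corollary \ref{L-rec} (the specialized fiber being finite-dimensional graded); combined with flatness over the polynomial ring $R$, this yields freeness.

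For the identification with $CC(\vec{a}, \vec{\mu}, \lambda)$ at a fixed $\vec{a}$, a generalization of the tensor-product decomposition for distinct points gives $W(\mu, \vec{a}) \cong \bigotimes_k W(\mu_k, a_k)$ as $\g[z]$-modules, so the specialization of $H_0^{\vee}$ equals $\bigl(L_k(\lambda) \otimes \bigotimes_k W(\mu_k, a_k)\bigr)/\g[z]$. To match this with $CC$, split $\mathcal{O}_{\vec{x}} = \C[x] \oplus \bigoplus_k (x - a_k)^{-1}\C[(x - a_k)^{-1}]$ as a vector space and take coinvariants iteratively. By the PBW decomposition $\mathbf W_k(\mu_k) \cong U(\g \otimes \xi^{-1}\C[\xi^{-1}]) \otimes W(\mu_k, 0)$ underlying Proposition \ref{sclem}, taking coinvariants of $L_k(\lambda) \otimes \bigotimes_j \mathbf W_j(\mu_j)$ under the diagonal action of $\g \otimes (x - a_k)^{-1}\C[(x - a_k)^{-1}]$---which acts as negative modes $X \otimes \xi_k^{-n}$ on $\mathbf W_k(\mu_k)$ (with $\xi_k = x - a_k$), and through positive formal power series via Laurent expansion at $a_j$ ($j \ne k$) on $\mathbf W_j(\mu_j)$ and at $\infty$ on $L_k(\lambda)$---clears the negative-mode part of $\mathbf W_k(\mu_k)$, leaving $W(\mu_k, 0) \cong W(\mu_k, a_k)$ (the identification $\xi_k = x - a_k$ converting the $\g[\xi]$-action into the $\g[x]$-action translated by $a_k$). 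Iterating over $k$ reduces $CC$ to $\bigl(L_k(\lambda) \otimes \bigotimes_k W(\mu_k, a_k)\bigr)/\g[x]$, which coincides with the previous expression under $\g[x] = \g[z]$.

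The main obstacle is a rigorous treatment of this Shapiro-type reduction. The negative-mode action at $a_k$, moved across via the coinvariant relations, becomes infinite formal power series in $\xi_{\infty} = 1/x$ acting on $L_k(\lambda)$ and in $\xi_j$ on $\mathbf W_j(\mu_j)$; these are well-defined on any given vector because the positive part of $\wg$ acts locally nilpotently on $L_k(\lambda)$ (as a parabolic category $\cO$ module whose $d$-grading is bounded above) and on $\mathbf W_j(\mu_j)$ (whose $(-d)$-grading is bounded below by PBW). This form of reduction is standard in the CFT literature (cf.\ \cite{Tel95}), but its simultaneous iteration over $k$, together with the verification that the residual $\g[x]$-coinvariants match the required $\g[z]$-coinvariants, is the principal technical burden.
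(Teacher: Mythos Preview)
Your overall architecture is reasonable, but the freeness step contains a genuine gap. You invoke Theorem~\ref{crit-filt}~(3) to assert that $W(\mu,\vec a)^{*}$ admits a filtration by duals of local Weyl modules, and then Theorem~\ref{crit-filt}~(1) to obtain the Ext-orthogonality against global Weyl modules. Neither is available as stated: part~(3) is formulated for \emph{graded} $\g[z]$-modules, and $W(\mu,\vec a)$ is not graded once $\vec a\neq 0$; part~(1) provides $\Ext^{i}(W(\gamma),W(\nu,0)^{*})=0$ only for the specialization at the origin, not for $W(\nu,b)^{*}$ with $b\neq 0$, let alone for a tensor product $\bigotimes_k W(\mu_k,a_k)^{*}$ supported at several points. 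So the claimed vanishing of $H_{i}(\g[z],\g;W(\mu,\vec a)\otimes L_k(\lambda))$ for every $\vec a$ is not justified, and without it your Tor/base-change argument for flatness does not get started.

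The paper circumvents this entirely by exploiting the grading on $R=\C[\A^{(\mu)}]$. Since $W(\mu)$ is a free graded $R$-module (Theorem~\ref{free}), the $R_{+}$-adic filtration on $W(\mu)$ has associated graded a direct sum of grading shifts of $W(\mu,0)$. Feeding this into the long exact sequence and using only Theorem~\ref{kmain} (the vanishing at the \emph{single} point $0$) gives both $H_i(\g[z],\g;W(\mu)\otimes L_k(\lambda))=0$ for $i\neq 0$ and a filtration of $H_0$ whose layers are exactly (not merely subquotients of) $H_0(\g[z],\g;W(\mu,0)\otimes L_k(\lambda))$. Freeness of $H_0^{\vee}$ over $R$ then falls out, and the specialization statement follows by semi-continuity. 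In short, the paper needs the higher vanishing only at $\vec a=0$, whereas your route demands it at every $\vec a$.

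Your identification with $CC(\vec a,\vec\mu,\lambda)$ via an iterated Shapiro-type reduction is essentially the content of \cite[Corollary~3.6.6]{Tel95}, which the paper simply cites; your sketch is along the right lines but, as you note, carrying it out carefully is precisely what Teleman does.
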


\begin{proof}
By Theorem \ref{free}, the module $H_0 ( \g [z], \g; W ( \mu ) \otimes_\C L_{k} ( \lambda ) )$ admits an increasing exhausting filtration whose adjoint graded quotient is the direct sum of submodules of $H_{0} ( \g [z], \g; W ( \mu, 0 ) \otimes_\C L_{k} ( \lambda ) )$ with grading shifts. By Theorem \ref{kmain}, we deduce that the adjoint graded quotient of $H_0 ( \g [z], \g; W ( \mu ) \otimes_\C L_{k} ( \lambda ) )$ is the direct sum of $H_0 ( \g [z], \g; W ( \mu, 0 ) \otimes_\C L_{k} ( \lambda ) )$ (instead of its proper submodules) with grading shifts. The dual of our homology group commutes with inverse limit by the degree-wise Mittag-Leffler condition thanks to \cite[Theorem 0]{Tel95} and Theorem \ref{kmain}. Therefore, we conclude that the free $\C [\A^{(\mu)}]$-action on $W ( \mu )$ lifts to $H_0 ( \g [z], \g; W ( \mu ) \otimes_\C L_{k} ( \lambda ) )^{\vee}$. In addition, we have
$$H_i ( \g [z], \g; W ( \mu ) \otimes_\C L_{k} ( \lambda ) ) = \{ 0 \} \hskip 5mm i \neq 0.$$

By the semi-continuity theorem applied to $0 \in \A^{(\mu)}$ by regarding $W ( \mu ) \otimes_\C L_{k} ( \lambda )$ as a finitely generated $U ( \g [z] ) \otimes \C [\A^{(\mu)}]$-module (through the above construction), we deduce that the natural map
$$\C _{\vec{a}} \otimes _{\C [\A^{(\mu)}]} H_0 ( \g [z], \g; W ( \mu ) \otimes_\C L_{k} ( \lambda ) )^{\vee}  \rightarrow H_0 ( \g [z], \g; \C _{\vec{a}} \otimes _{\C [\A^{(\mu)}]} W ( \mu ) \otimes_\C L_{k} ( \lambda ) )^*$$
is an isomorphism. The RHS is further isomorphic to
$$H_0 ( \g [z], \g; \bigotimes_{i = 1}^{m} W ( \mu_{i}, a_{i} ) \otimes_\C L_{k} ( \lambda ) )^*,$$
where $\{a_{1}, \ldots, a_{m} \} \subset \C$ denotes the set of distinct points in the configuration $\vec{a}$, and $\mu_{i}$ is the sum of fundamental weights that is supported on $a_{i}$. This last vector space is precisely $CC ( \vec{a}, \vec{\mu}, \lambda )$ in view of \cite[Corollary 3.6.6]{Tel95}.
\end{proof}

\begin{thm}[Weyl-Kac character formula \cite{Kac}, see also \cite{Kum02} Theorem 2.2.1]\label{KT}
For each $\lambda \in P_+^k$, we have the following equality of characters:
$$
\mathsf{ch} \, L _k ( \lambda ) = \sum_{w \in W \backslash W _{\af}} (-1)^{\ell ( w )} \mathsf{ch} \, M_k ( \overline{w \circ _k \lambda} ) \otimes _{\C} \C _{\left< d, w \circ_k \lambda \right> \delta}.
$$
\end{thm}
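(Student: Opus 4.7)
My plan is to obtain the formula as a direct Euler--Poincar\'e computation on the Bernstein--Gelfand--Gelfand--Lepowsky resolution already supplied as Theorem \ref{BGGL-res}. Concretely, the exact sequence
$$\cdots \to \bigoplus_{\substack{w \in W \backslash W_\af \\ \ell(w) = i}} M_k(w \circ_k \lambda) \to \cdots \to M_k(s_0 \circ_k \lambda) \to M_k(\lambda) \to L_k(\lambda) \to 0$$
lives in the parabolic category $\cO$, so each term is $\wth$-semisimple with finite-dimensional weight spaces and bounded-from-above $d$-eigenvalues. Taking the alternating sum of characters in an appropriate completion of the group algebra of $\wth^*$ yields immediately
$$\mathsf{ch}\, L_k(\lambda) \;=\; \sum_{w \in W \backslash W_\af} (-1)^{\ell(w)} \, \mathsf{ch}\, M_k(w \circ_k \lambda).$$

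Second, I would translate each summand on the right into the form stated. The module $M_k(\mu)$ is defined in \S\ref{VW} only for $\mu \in P_+$, but for any $\Lambda \in \widetilde{P}$ with $\overline{\Lambda} \in P_+$ the same parabolic induction of $V(\overline{\Lambda}) \otimes \C_{k\Lambda_0}$ from $(\g[\xi] + \wth)$, now with $d$ acting on the cyclic vector by the full eigenvalue $\langle d, \Lambda\rangle$, produces precisely $M_k(\overline{\Lambda}) \otimes_\C \C_{\langle d, \Lambda\rangle \delta}$ (the $\delta$-twist being central, this is just a grading shift). Specialising $\Lambda = w \circ_k \lambda$ and observing that $\overline{w \circ_k \lambda} \in P_+$ for every $w \in W \backslash W_\af$ appearing in the BGGL resolution (this is guaranteed by the construction of the resolution, cf.\ Proposition~\ref{alc-comb}) converts the identity into the statement of the theorem.

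The remaining point is convergence of the infinite alternating sum, which is the only step requiring real attention. Since each $M_k(\overline{w\circ_k\lambda})$ has $d$-eigenvalues bounded above by $\langle d, w \circ_k \lambda \rangle$, and the latter tends to $-\infty$ as $\ell(w) \to \infty$ (because $\lambda \in P_+^k$ forces the $\circ_k$-orbit of $\lambda$ to move strictly into the negative $d$-direction away from the fundamental alcove), each weight space of the formal character receives contributions from only finitely many $w$. This is a standard Kac--Moody bookkeeping and makes the right-hand side a well-defined element of the completed character ring; the identity of characters then holds verbatim.
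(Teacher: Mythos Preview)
The paper does not supply its own proof of this theorem; it is quoted from \cite{Kac} (and \cite{Kum02}) as a standard result and used as input. So there is no proof in the text to compare against.

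Your derivation is nonetheless correct and is precisely the classical route: an exact resolution in a category with $\wth$-semisimple, finite-dimensional weight spaces forces the alternating sum of characters to vanish, and the BGGL resolution (Theorem~\ref{BGGL-res}) gives exactly the terms you need. Your bookkeeping with the $\delta$-shift is right (this is what the paper means in \S\ref{VW} when it says twisting by $\C_{\pm\delta}$ is ``the grading shift of $\wg_k$-modules''), and the fact that $\overline{w\circ_k\lambda}\in P_+$ for minimal coset representatives $w\in W\backslash W_\af$ is already implicit in the very statement of the BGGL resolution, since $M_k(\mu)$ is only defined for $\mu\in P_+$; so you need not invoke Proposition~\ref{alc-comb} for that. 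Your convergence argument is the standard one and is fine.

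One small caveat worth being aware of: in the literature the logical order is usually the reverse --- the Weyl--Kac formula is proved first (by Casimir/denominator arguments, as in \cite{Kac}), and BGG-type resolutions are established separately; taking the Euler characteristic of the resolution then \emph{recovers} the character formula as a consistency check. Since both Theorem~\ref{BGGL-res} and Theorem~\ref{KT} are imported from outside sources here, your deduction is perfectly legitimate within the paper's logical framework.
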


\begin{cor}\label{altP}
For each $\lambda \in P^k_+$ and $\mu \in P_+$, we have
$$P _{\mu, \lambda}^{(k)} ( q ) = \sum_{w \in W \backslash W_{\af}} (-1)^{\ell ( w )} q^{- \left< d, w \circ_k \lambda \right>} P _{\mu, \overline{w \circ_k \lambda}} ( q ).$$
\end{cor}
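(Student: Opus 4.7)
The plan is to obtain the identity as an Euler characteristic of global Weyl multiplicities along the BGGL resolution of $L_k(\lambda)$.

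First, by Corollary \ref{L-rec} the left-hand side equals the graded multiplicity $(L_k(\lambda) : W(\mu))_q$, so it suffices to express this multiplicity as the desired alternating sum. The BGGL resolution provided by Theorem \ref{BGGL-res}
$$
\cdots \to \bigoplus_{\ell(w)=i} M_k(w\circ_k\lambda) \to \cdots \to M_k(s_0\circ_k\lambda) \to M_k(\lambda) \to L_k(\lambda) \to 0
$$
has been shown (in the corollary immediately preceding this section) to respect the global Weyl filtrations of its terms. Now apply Theorem \ref{CG}: as a graded $\g[z]$-module each $M_k(\overline{w\circ_k\lambda})$ is isomorphic to the non-restricted projective module $P(\overline{w\circ_k\lambda})$, and the implicit $\C_{\langle d, w\circ_k\lambda\rangle\delta}$-twist inside the shorthand $M_k(w\circ_k\lambda)$ contributes the grading shift $q^{-\langle d, w\circ_k\lambda\rangle}$ to every multiplicity. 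Combined with Theorem \ref{rec}, this gives
$$
\bigl(M_k(w\circ_k\lambda) : W(\mu)\bigr)_q = q^{-\langle d, w\circ_k\lambda\rangle}\, P_{\mu,\,\overline{w\circ_k\lambda}}(q).
$$

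Next, break the BGGL resolution into short exact sequences $0 \to K_i \to \bigoplus_{\ell(w)=i} M_k(w\circ_k\lambda) \to K_{i-1} \to 0$ with $K_{-1} = L_k(\lambda)$, and observe that each $K_i$ inherits a global Weyl filtration from the compatibility above (so that $\mathrm{Ext}^1_{\g[z]}(K_i, W(-w_0\mu,0)^*) = 0$ by Theorem \ref{crit-filt}, keeping the Weyl multiplicity $(-:W(\mu))_q$ additive on these sequences). Telescoping the additivity yields
$$
(L_k(\lambda) : W(\mu))_q = \sum_{w \in W\backslash W_\af} (-1)^{\ell(w)}\, q^{-\langle d, w\circ_k\lambda\rangle}\, P_{\mu,\,\overline{w\circ_k\lambda}}(q),
$$
which is exactly the asserted formula. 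Alternatively, the same conclusion may be drawn purely from characters, by combining the Weyl–Kac formula (Theorem \ref{KT}) with the expansions $\mathsf{ch}\, L_k(\lambda) = \sum_{\mu} P^{(k)}_{\mu,\lambda}(q)\,\mathsf{ch}\, W(\mu)$ (Theorem \ref{FC-str} and Corollary \ref{L-rec}) and $\mathsf{ch}\, P(\nu) = \sum_{\mu} P_{\mu,\nu}(q)\,\mathsf{ch}\, W(\mu)$ (Theorem \ref{rec}), and then invoking linear independence of $\{\mathsf{ch}\, W(\mu)\}_{\mu \in P_+}$ (which is immediate from the triangularity of the highest weight $\mu$ in $W(\mu)$ with respect to the dominance order).

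The main (minor) points to check are the convergence of the infinite alternating sum — for any fixed $q$-degree only finitely many $w$ contribute, since $-\langle d, w\circ_k\lambda\rangle$ grows with $\ell(w)$ and the grading of $W(\mu)$ is bounded below — and the fact that $\overline{w\circ_k\lambda}$ lies in $P_+$ for every minimal length coset representative $w \in W \backslash W_\af$ (needed for $P_{\mu, \overline{w\circ_k\lambda}}(q)$ to be defined via Theorem \ref{rec}), which is the standard alcove-geometric consequence of $\lambda + \rho_k$ lying in the fundamental alcove and $w$ being a minimal coset representative. Neither of these presents a genuine obstacle; the substance of the proof is packaged in the already established Weyl-filtration compatibility of the BGGL resolution.
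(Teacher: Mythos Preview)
Your proof is correct. The ``alternative'' you sketch at the end is precisely the paper's argument: expand both $\mathsf{ch}\, L_k(\lambda)$ and each $\mathsf{ch}\, M_k(w\circ_k\lambda)$ in the $\C(\!(q)\!)$-basis $\{\mathsf{ch}\, W(\mu)\}_{\mu\in P_+}$ (the coefficients being $P^{(k)}_{\mu,\lambda}(q)$ and $P_{\mu,\overline{w\circ_k\lambda}}(q)$ by Corollary~\ref{L-rec} and Lemma~\ref{inf-Kos}, respectively), and then read off the desired linear relation from the Weyl--Kac character formula (Theorem~\ref{KT}).

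Your primary route --- telescoping the global Weyl multiplicities through the short exact sequences cut from the BGGL resolution --- also works, but it invokes the extra structural input that the resolution is compatible with the Weyl filtration (the corollary right after Theorem~\ref{FC-str}); the paper does not need this, since linear independence of characters already pins down the coefficients. What your approach buys is that the identity holds at the level of actual filtration multiplicities rather than merely after a character comparison. One small citation point: the identification $(P(\nu):W(\mu))_q = P_{\mu,\nu}(q)$ that you attribute to Theorem~\ref{rec} alone really needs Lemma~\ref{inf-Kos} (or the limit description of $P_{\mu,\nu}$ in the remark following the definition) as well.
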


\begin{proof}
Taking into account the fact that $\{ \mathsf{ch}\, W(\lambda) \}_{\lambda \in P^+}$ forms a $\C (\!(q)\!)$-basis in the space of graded characters, the expansion coefficients of characters of $L_{k} ( \la )$ and $M_k ( w \circ_k \la )$ in terms of $\{ \mathsf{ch}\, W(\lambda) \}_{\lambda \in P^+}$ are determined uniquely. They are $P _{\mu, \lambda}^{(k)} ( q )$ and $P _{\mu, \overline{w \circ_k \lambda}} ( q )$ by Corollary \ref{L-rec} and Lemma \ref{inf-Kos}, respectively. They obey the linear relation coming from Theorem \ref{KT}, that implies the result.
\end{proof}

\section{The Feigin realization of global Weyl modules}
We retain the setting of the previous section. We assume that $\g$ is of type $\mathsf{ADE}$ in addition.

\begin{thm}\label{mult-one}
Let $\varpi \in P_+^1$ and let $\mu \in P_+$. Then, we have
$$( L_{1} ( \varpi ) : W ( \mu ) )_q = \begin{cases} q ^{- \left< d, w ( \varpi + \Lambda_0 ) \right>} & (\mu = \overline{w ( \varpi + \Lambda_0 )}, w \in W_{\af})\\ 0 & (\text{otherwise})\end{cases}.$$
\end{thm}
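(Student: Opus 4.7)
The plan is to combine the multiplicity formula from Corollary \ref{L-rec}, the alternating-sum identity of Corollary \ref{altP}, and the explicit level-one character of $L_{1}(\varpi)$ that is available in simply-laced type. By Corollary \ref{L-rec}, proving the claim is the same as computing $P^{(1)}_{\mu, \varpi}(q)$, and Corollary \ref{altP} expresses this as
\[
P^{(1)}_{\mu, \varpi}(q) \;=\; \sum_{w \in W\backslash W_{\af}} (-1)^{\ell(w)}\, q^{-\langle d,\, w\circ_{1}\varpi\rangle}\, P_{\mu,\, \overline{w\circ_{1}\varpi}}(q),
\]
with the convention that $P_{\mu, \gamma}(q) = 0$ for $\gamma \notin P_{+}$. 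Since Theorem \ref{FC-str} furnishes a global-Weyl filtration of $L_{1}(\varpi)$ having the $(L_{1}(\varpi):W(\mu))_{q}$ as its multiplicities, and since $\{\mathsf{ch}\, W(\mu)\}_{\mu \in P_{+}}$ is $\C(\!(q)\!)$-linearly independent (as used in the proof of Corollary \ref{altP}), the claim is equivalent to the sign-free character identity
\[
\mathsf{ch}\, L_{1}(\varpi) \;=\; \sum_{\substack{w \in W\backslash W_{\af} \\ \overline{w\circ_{1}\varpi}\in P_{+}}} q^{-\langle d,\, w\circ_{1}\varpi\rangle}\, \mathsf{ch}\, W\bigl(\overline{w\circ_{1}\varpi}\bigr).
\]

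The key simply-laced level-one input is the structure of the stabilizer of $\varpi + \Lambda_{0}$ in $W_{\af}$ under the linear action. Writing $\varpi = \varpi_{i_{\varpi}}$ (or $i_{\varpi}=0$ when $\varpi=0$), this stabilizer is the \emph{finite} parabolic subgroup generated by $\{s_{i} : i \in \tI_{\af} \setminus \{i_{\varpi}\}\}$, isomorphic to the Weyl group of the Levi subalgebra of $\wg$ obtained by deleting the node $i_{\varpi}$ from the affine Dynkin diagram. Consequently, the $W_{\af}$-orbit of $\varpi+\Lambda_{0}$ is cleanly parametrized by this coset space, the dominance condition $\overline{w(\varpi+\Lambda_{0})} \in P_{+}$ singles out a well-defined subset of Bruhat-minimal representatives in $W \backslash W_{\af}$, and both the dominant projection $\overline{w(\varpi+\Lambda_{0})}$ and the $q$-grading $-\langle d, w(\varpi+\Lambda_{0})\rangle$ descend unambiguously to that coset.

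The character identity is then verified using the Frenkel-Kac lattice vertex operator realization of $L_{1}(\varpi)$ (equivalently, the $n=1$ case of the Cherednik-Feigin character formula from \cite{CF13}), which exhibits $\mathsf{ch}\, L_{1}(\varpi)$ as a sum of theta-like pieces indexed exactly by the orbit $W_{\af}(\varpi+\Lambda_{0})$ with monomial $q$-coefficients $q^{-\langle d, w(\varpi+\Lambda_{0})\rangle}$. The main obstacle is a term-by-term matching of this expansion with the $\{\mathsf{ch}\, W(\mu)\}_{\mu \in P_{+}}$ expansion, i.e., confirming that the alternating sum above for $P^{(1)}_{\mu, \varpi}(q)$ collapses to a single monomial per dominant orbit representative. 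Positivity on the left-hand side (provided by the vertex operator decomposition) combined with the leading-term identity $P_{\mu, \mu}(q) = 1$ forces all non-leading contributions to cancel, while the freeness in Theorem \ref{free} accounts for the claimed $q$-shift $q^{-\langle d, w(\varpi+\Lambda_{0})\rangle}$.
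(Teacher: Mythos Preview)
Your overall strategy is essentially the paper's own: reduce to a character identity by Theorem~\ref{FC-str} and linear independence of the $\mathsf{ch}\,W(\mu)$, then invoke the Cherednik--Feigin identity \cite[(1.25)]{CF13}. However, there are two genuine problems in the execution.

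First, the displayed ``sign-free character identity'' you write down is indexed by the \emph{dot} orbit $\{\overline{w\circ_{1}\varpi}\}$, whereas the theorem (and Cherednik--Feigin) concerns the \emph{linear} orbit $\{\overline{w(\varpi+\Lambda_{0})}\}$. These are different sets: in type $\mathsf{ADE}$ at level one the linear orbit projects to all of $P_{+}\cap(\varpi+Q)$, while the dot orbit is the (shifted) set of weights appearing in the BGGL resolution, which is much sparser. You seem aware of this, since the very next paragraph switches to the linear orbit, but the identity as written is simply false, and nothing you say bridges the two indexings. The detour through Corollary~\ref{altP} is not needed and only creates this confusion; once the Cherednik--Feigin formula is in hand the alternating sum plays no role.

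Second, the closing paragraph does not actually prove anything. ``Positivity on the left-hand side combined with $P_{\mu,\mu}(q)=1$ forces all non-leading contributions to cancel'' is an assertion, not an argument. What is really required is the step the paper isolates as Claim~\ref{first}: for each dominant $\mu=\overline{w(\varpi+\Lambda_{0})}$, the $\h$-weight $\mu$ appears in $L_{1}(\varpi)$ only in $d$-degrees $\le \langle d, w(\varpi+\Lambda_{0})\rangle$ (this is \cite[Proposition~11.3]{Kac}). Since the cyclic vector of the copy of $W(\mu)$ must sit in a weight-$\mu$ space, this pins down the grading shift uniquely. The freeness statement of Theorem~\ref{free} is irrelevant here; it governs the internal structure of $W(\lambda)$, not where its head lands inside $L_{1}(\varpi)$.
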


\begin{proof}
By Cherednik-Feigin \cite[(1.25)]{CF13}, the character of $L_{1} ( \varpi )$ is the multiplicity-free sum of these of $W ( \mu )$ up to grading shifts (note that we used $Q = Q^{\vee} \subset W_\af$ here). Since the graded characters of $\{W ( \mu )\}_{\mu \in P_+}$ are linearly independent, we deduce the $q=1$ case of the assertion.

Let us normalize $L_1 ( \varpi )$ such that its highest weight vector has $d$-degree $0$. For $w \in W_{\af} $ we set $a_w := \left< d, w ( \varpi + \Lambda_0 ) \right>$. Then, we have
\begin{claim}\label{first}
Assume that $w \in W_\af$. Let $\mu$ be the $\h$-weight of $L_1 ( \varpi )$ that appears in the $d$-degree $> a_w$-part. Then, we have $\mu < \overline{w ( \varpi + \Lambda_0 )}$.
\end{claim}

\begin{proof}
Easy consequence of \cite[Proposition 11.3]{Kac}
\end{proof}

We return to the proof of Theorem \ref{mult-one}. In view of the $q=1$ case, Claim \ref{first} forces the degree shift of $W ( \overline{w ( \varpi + \Lambda_0 )} )$ appearing in $L _1 ( \varpi )$ to be exactly $a_w$ as required.
\end{proof}

Let us define the thick affine Grassmanian as
$$\mathbf{Gr} _G := G (\!( \xi )\!) / G [z],$$
that can be presented as an infinite type scheme \cite{Kas89,Kat17b}. The scheme $\mathbf{Gr} _G$ carries a canonical $G_{\mathrm{sc}} (\!( \xi )\!)$-equivariant line bundle $\cO ( 1 )$ that we call the determinant line bundle (see e.g. Kashiwara \cite{Kas90}). 

By our adjointness assumption on $G$, the set of connected components of the scheme $\mathbf{Gr}_G$ is in bijection with the set of level one fundamental weights $P_+^1$ (see Zhu \cite[\S 0.2.5]{Zhu09} but beware that our affine Grassmanian is ``thick"). For each $\varpi \in P_+^1$, we denote by ${}^{\varpi} \mathbf{Gr}_G$ the corresponding component. 

Each $\lambda \in P_+$ defines a cocharacter of $H$ by our assumptions of $G$. Hence, it defines a point of $H(\!(\xi)\!)$, and hence a point $[\xi^{\la}] \in \mathbf{Gr}_G$. From this, we define the Schubert variety $\mathbf{Gr}_G^{\lambda}$ as the $G [\![\xi]\!]$-orbit through $[\xi^{\la}]$. We have $\mathbf{Gr}_G^{\lambda} \subset \overline{\mathbf{Gr}_G^{\mu}}$ if and only if $\lambda \ge \mu$ (see e.g. Kashiwara-Tanisaki \cite[\S 1.2]{KT95}).

\begin{thm}[\cite{Kat17b} Theorem 2.17 and Theorem 2.19]\label{KSconj}
Let $\varpi \in P_+^1$ and let $\lambda \in P_+$ such that $\varpi \le \lambda$, we have
$$\Gamma (\mathbf{Gr}_G^{\lambda}, \cO ( 1 ) )^\vee \cong U ( \g [z] ) \bv_{\lambda} \subset L_1 ( \varpi ),$$
where $\bv_{\lambda}$ is an extremal weight vector of $L_1 ( \varpi )$ of $\h$-weight $\lambda$. Moreover, the natural restriction map
$$\Gamma ( {}^{\varpi} \mathbf{Gr}_G, \cO ( 1 ) ) \rightarrow \Gamma ( \mathbf{Gr}_G^{\lambda}, \cO ( 1 ) )$$
is surjective. \hfill $\Box$
\end{thm}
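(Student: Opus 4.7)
The plan is to deduce this theorem from the infinite-dimensional Borel--Weil picture for the thick affine Grassmannian $\mathbf{Gr}_G$, much in the spirit of Kashiwara \cite{Kas89,Kas90}. The starting point is the identification
\[
\Gamma({}^{\varpi} \mathbf{Gr}_G, \cO(1))^{\vee} \cong L_1(\varpi),
\]
which realizes the level one integrable representation as the continuous dual of global sections of the determinant line bundle on the component ${}^{\varpi}\mathbf{Gr}_G$. Under this identification, the fiber of $\cO(1)$ at the $H$-fixed point $[\xi^{\la}]$ is naturally paired with an extremal weight vector $\bv_{\la}$ of weight $\la$, because this fiber is the unique $H$-weight line of weight $-\la$ in the relevant dual. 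The second assertion of the theorem, surjectivity of restriction, is then equivalent (by the long exact sequence of cohomology) to the vanishing
\[
H^1\bigl({}^{\varpi}\mathbf{Gr}_G, \, \cO(1) \otimes \mathcal{I}_{\mathbf{Gr}_G^{\la}}\bigr) = \{0\}.
\]

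The key step is to approach this vanishing and the identification of $\Gamma(\mathbf{Gr}_G^{\la}, \cO(1))^{\vee}$ as $U(\g[z])\bv_{\la}$ simultaneously through a Demazure-type filtration of $G[\![\xi]\!]$-orbits exhausting $\mathbf{Gr}_G^{\la}$. First I would reduce to a Bott--Samelson--type desingularization $\pi: Z \to \mathbf{Gr}_G^{\la}$ obtained from a reduced expression of the minimal coset representative of $t_{\la}$ (modulo the finite Weyl group); on $Z$ one has $\mathbb P^1$-bundle structure over lower-dimensional Bott--Samelson varieties, which makes higher cohomology vanishing for pullbacks of $\cO(1)$ provable inductively, as in the classical Kempf argument (or via Frobenius splitting, adapted to infinite type by limiting arguments from the finite-dimensional Schubert varieties of the thin affine Grassmannian). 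Pushing forward and using $R\pi_* \cO_Z = \cO_{\mathbf{Gr}_G^{\la}}$ would give both the vanishing statement and a presentation of $\Gamma(\mathbf{Gr}_G^{\la}, \cO(1))$ as the limit of Demazure modules.

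On the algebraic side, I would identify the dual of this Demazure-type limit with the $\g[z]$-submodule generated by $\bv_{\la}$: the action of $G[\![\xi]\!]$ on the dual of sections matches that of its Lie algebra $\g[\![\xi]\!] \supset \g[z]$ on $L_1(\varpi)$, and the orbit description $\mathbf{Gr}_G^{\la} = G[\![\xi]\!] \cdot [\xi^{\la}]$ translates into the cyclic generation statement $U(\g[z])\bv_{\la}$ (here the $z$-adic completion issues on the geometric side match the opposite-completion appearing in the $\g[z]$-action by our use of $z = \xi^{-1}$). Combined with the cohomological surjectivity above, this yields both parts of the theorem.

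The main obstacle is the cohomology vanishing in the pro-scheme / ind-scheme setting for $\mathbf{Gr}_G$ presented as an infinite-type scheme. Unlike the thin affine Grassmannian, here sections are not finite-dimensional, so one cannot blindly invoke the finite Schubert variety toolkit; careful control of the inverse limits defining $\Gamma$ is needed, and degree-wise Mittag--Leffler conditions (as were used earlier for conformal coinvariants in \S\ref{lrm}) have to be verified for $\cO(1) \otimes \mathcal{I}_{\mathbf{Gr}_G^{\la}}$. This is exactly the technical content worked out in \cite{Kat17b}.
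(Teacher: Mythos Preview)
The paper does not prove this theorem at all: it is imported wholesale from \cite{Kat17b} (Theorems~2.17 and~2.19), as signalled by the citation in the theorem header and the terminal $\Box$ with no intervening argument. So there is no ``paper's own proof'' to compare against; the result is used as a black box.

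Your sketch is a plausible outline of the Borel--Weil/Demazure approach one expects to find in \cite{Kat17b}, and you yourself say as much in your final sentence. As a self-contained proof it is incomplete in the expected places (the pro-/ind-scheme cohomology vanishing, the identification $R\pi_* \cO_Z = \cO_{\mathbf{Gr}_G^{\la}}$ in this infinite-type setting, and the precise matching of the $\g[\![\xi]\!]$-action with the $\g[z]$-action under $z=\xi^{-1}$), but since the present paper simply cites the result, there is nothing further to reconcile.
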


\begin{thm}\label{Fmain}
Let $\varpi \in P_+^1$ and let $\lambda \in P_+$ such that $\varpi \le \lambda$. We have an isomorphism
$$W ( \lambda )^\vee \cong \ker \, \left( \Gamma (\mathbf{Gr}_G^{\lambda}, \cO ( 1 ) ) \longrightarrow \bigoplus_{\mu > \lambda} \Gamma (\mathbf{Gr}_G^{\mu}, \cO ( 1 ) )\right)$$
as $\g [z]$-modules. $($Here the maps in the RHS are the restriction maps.$)$
\end{thm}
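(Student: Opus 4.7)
The plan is to dualize: setting $N_\lambda := U(\g[z])\bv_\lambda \subseteq L_1(\varpi)$ (which coincides with $\Gamma(\mathbf{Gr}_G^\lambda, \cO(1))^\vee$ by Theorem~\ref{KSconj}) and $N_{>\lambda} := \sum_{\mu \in P_+,\,\mu > \lambda} N_\mu$, the restriction maps on sections dualize to the inclusions $N_\mu \hookrightarrow N_\lambda$, and the statement reduces to constructing an isomorphism of $\g[z]$-modules $W(\lambda) \cong N_\lambda / N_{>\lambda}$.

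First I would construct a surjection $\psi\colon W(\lambda) \twoheadrightarrow N_\lambda/N_{>\lambda}$. Since $\lambda \in P_+$, the extremal weight vector $\bv_\lambda$ is annihilated by $\gn$ and generates a copy of $V(\lambda)$ under $\g$; extending to $\g[z]$ by the universal property of induction yields a $\g[z]$-morphism $P(\lambda) \to L_1(\varpi)$ with image $N_\lambda$. To see that the composite $P(\lambda) \to N_\lambda \to N_\lambda/N_{>\lambda}$ factors through $W(\lambda) = P(\lambda;\lambda)$, I verify that $N_\lambda/N_{>\lambda}$ has no $V(\gamma)$-isotypic component with $\gamma \not\le \lambda$. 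By Theorems~\ref{FC-str} and~\ref{mult-one}, $L_1(\varpi)$ admits a Weyl filtration in which each $W(\mu)$ (for $\mu \in P_+$, $\mu \ge \varpi$) appears exactly once; a $V(\gamma)$-isotypic vector with $\gamma \not\le \lambda$ can live only in a $W(\mu)$-subquotient with $\mu \ge \gamma$, and $\mu = \lambda$ is excluded because $V(\gamma) \subseteq W(\lambda)$ would force $\gamma \le \lambda$, so $\mu > \lambda$ and the vector lies in $N_{>\lambda}$.

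To prove that $\psi$ is an isomorphism I would compare graded characters using the shifts $a_\mu = -\left< d, w(\varpi+\Lambda_0)\right>$ from Theorem~\ref{mult-one}, where $\mu = \overline{w(\varpi+\Lambda_0)}$. The level-one Casimir identity gives $a_\mu = \tfrac{1}{2}((\mu,\mu)-(\varpi,\varpi))$, hence the strict inequality $a_\mu > a_\lambda$ whenever $\mu > \lambda$. Consequently the $\h$-weight-$\lambda$ part of $L_1(\varpi)$ in degree $a_\lambda$ is one-dimensional, spanned by $\bv_\lambda$: for $\mu > \lambda$ the $\lambda$-weight piece of $W(\mu)\langle -a_\mu\rangle$ sits in degrees $\ge a_\mu > a_\lambda$, while $W(\mu)$ has no $\lambda$-weight at all when $\mu \not\ge \lambda$. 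Arranging the Weyl filtration degree by degree into a chain of $\g[z]$-submodules $\{F^{\ge\lambda} L_1(\varpi)\}$ with $F^{\ge\lambda}/F^{>\lambda} \cong W(\lambda)\langle -a_\lambda\rangle$---possible because only finitely many $\mu$ contribute in each degree and Theorem~\ref{crit-filt} allows one to reshuffle incomparable indices---the inclusion $N_\lambda \subseteq F^{\ge\lambda}$ follows from the vanishing of the $V(\lambda)$-isotypic in $L_1(\varpi)/F^{\ge\lambda}$, while the reverse follows from the non-vanishing of the image of $\bv_\lambda$ in $F^{\ge\lambda}/F^{>\lambda}$ together with the cyclicity of $W(\lambda)\langle -a_\lambda\rangle$. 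Hence $N_\lambda/N_{>\lambda} \cong W(\lambda)\langle -a_\lambda\rangle$, which exhibits $\psi$ as an isomorphism of $\g[z]$-modules.

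The main obstacle is assembling the Weyl filtration into a coherent chain $F^{\ge\lambda}$ over the infinite, non-totally-ordered poset $\{\mu \in P_+ : \mu \ge \varpi\}$. This hinges on the strict monotonicity $a_\mu > a_\lambda$ for $\mu > \lambda$, a level-one ($\mathsf{ADE}$) phenomenon ensuring that each graded piece of $L_1(\varpi)$ receives contributions from only finitely many $W(\mu)$ and that the head of $W(\lambda)\langle -a_\lambda\rangle$ is pinned down by its bi-degree; without such monotonicity the bookkeeping between comparable and incomparable indices would be considerably more delicate.
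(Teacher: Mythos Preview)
Your proof has two genuine gaps. First, in constructing $\psi$: from $\mu \ge \gamma$ and $\gamma \not\le \lambda$ you infer $\mu > \lambda$, but $\mu$ and $\lambda$ may well be incomparable; and even granting $\mu > \lambda$, a vector lying in the $W(\mu)$-subquotient of some chain-indexed Weyl filtration of $L_1(\varpi)$ need not lie in $N_\mu = U(\g[z])\bv_\mu$ --- that identification is exactly what you are trying to prove. (The surjection $W(\lambda)\twoheadrightarrow N_\lambda/N_{>\lambda}$ \emph{can} be established, e.g.\ via the Frenkel--Kac realization as in Proposition~\ref{prop:surjection}, but not by this argument.) Second, your poset-indexed system $\{F^{\ge\lambda}\}$ is only postulated, not constructed: Theorem~\ref{crit-filt} produces chain filtrations, and ``reshuffling incomparable indices'' does not promote a chain to a coherent poset-indexed family. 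Even granting its existence, your ``reverse'' step only yields $F^{\ge\lambda} = N_\lambda + F^{>\lambda}$; to obtain $F^{\ge\lambda}\subseteq N_\lambda$ you would still need $F^{>\lambda}\subseteq N_\lambda$, which calls for a descending induction that cannot start on this infinite poset.

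The paper avoids both issues by reversing the surjection: it builds $\kappa_\lambda\colon N_\lambda/N_{>\lambda}\twoheadrightarrow W(\lambda)$ instead. Take any chain-indexed Weyl filtration of $L_1(\varpi)$; your own degree argument (Claim~\ref{first} plus the strict monotonicity $a_\mu>a_\lambda$ for $\mu>\lambda$) forces $\bv_\lambda$ to land in the $W(\lambda)$-layer and generate it, while each $\bv_\mu$ with $\mu>\lambda$ has $\h$-weight $\mu$, which does not occur in $W(\lambda)$, so $N_{>\lambda}$ lies in the kernel. This gives the termwise inequality $q^{a_\lambda}\,\mathsf{ch}\,W(\lambda)\le \mathsf{ch}\,(N_\lambda/N_{>\lambda})$; summing over $\lambda$ and using $\sum_\lambda \mathsf{ch}\,(N_\lambda/N_{>\lambda})=\mathsf{ch}\,L_1(\varpi)$ together with the identity $\sum_\lambda q^{a_\lambda}\,\mathsf{ch}\,W(\lambda)=\mathsf{ch}\,L_1(\varpi)$ from Theorem~\ref{mult-one} forces every $\kappa_\lambda$ to be an isomorphism at once. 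No poset-indexed refinement of the Weyl filtration is ever needed.
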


\begin{proof}
We borrow the setting in the proof of Theorem \ref{mult-one}. By Claim \ref{first}, the extremal weight vector of $L_k ( \varpi )$ with its $\wth$-weight $w ( \varpi + \Lambda_0 )$ is contained in the head of $W ( \overline{w ( \varpi + \Lambda_0 )} )$ in the $\g [z]$-module stratification of $L_1 ( \varpi )$ by global Weyl modules. Hence, Theorem \ref{KSconj} implies that the graded $\g [z]$-module
$$\Gamma_c (\mathbf{Gr}_G^{\lambda}, \cO ( 1 ) )^\vee := \ker \, \left( \Gamma (\mathbf{Gr}_G^{\lambda}, \cO ( 1 ) ) \longrightarrow \bigoplus_{\mu > \lambda} \Gamma (\mathbf{Gr}_G^{\mu}, \cO ( 1 ) )\right) ^\vee$$
admits a surjection to $W ( \lambda )$, that we denote by $\kappa_{\lambda}$. Thus, we have
\begin{equation}
\sum_{\lambda \in \varpi + Q_+ \cap P_+} q^{-\left< d, w_{\la} ( \varpi + \Lambda_0 ) \right>} \mathsf{ch} \, W ( \lambda ) \le \sum_{\lambda \in \varpi + Q_+ \cap P_+} \mathsf{ch} \, \Gamma_c (\mathbf{Gr}_G^{\lambda}, \cO ( 1 ) )^\vee = \mathsf{ch} \, L_1 ( \varpi ),\label{char-est-f}
\end{equation}
where $w_\la \in W_\af$ denote the element so that $\overline{w_{\la} ( \varpi + \Lambda_0 )} = \la$ and $\le$ means we have the inequality for every coefficients of the monomial in $P \times \Z \delta$. By construction, the inequality in (\ref{char-est-f}) is a genuine inequality if and only if $\kappa_{\lambda}$ fails to be surjective for some $\lambda \in ( \varpi + Q_+ \cap P_+ )$. By Theorem \ref{mult-one}, the third term of (\ref{char-est-f}) is equal to the first term of (\ref{char-est-f}). Therefore, we deduce that every $\kappa_{\lambda}$ induces an isomorphism
$$\Gamma_c (\mathbf{Gr}_G^{\lambda}, \cO ( 1 ) )^\vee = \ker \, \left( \Gamma (\mathbf{Gr}_G^{\lambda}, \cO ( 1 ) ) \longrightarrow \bigoplus_{\mu > \lambda} \Gamma (\mathbf{Gr}_G^{\mu}, \cO ( 1 ) )\right) ^\vee \stackrel{\cong}{\longrightarrow} W ( \lambda )$$
as required.
\end{proof}

\section{A combinatorial definition of level restricted Kostka polynomials}\label{comb-mac}

This section exhibits a collection of folklore statements that are (most likely) originally due to Okado \cite{O}. Thus, we do not claim the novelty of the materials here. However, we provide their proofs that maybe of independent interest.

We employ the setting of \S \ref{aff-Lie}. We have quantum algebras $U_q ( \tg )$ and $U_q ( \wg )$ associated to $\tg$ and $\wg$ such that $U_q ( \wg ) \subset U_q ( \tg )$ (cf. \cite[\S 3.1]{HK02}).

\begin{defn}[Crystals, cf. \cite{HK02} \S 4.2]
An affine crystal
$$\mathbb B = ( B, \mathrm{wt}, \{ \varepsilon_i, \psi_i, \tilde{e}_i, \tilde{f}_i \}_{i \in \tI_{\af}} )$$
consists of the following data:
\begin{enumerate}
\item $B$ is a set, $\mathrm{wt} : B \rightarrow \widetilde{P}$, $\varepsilon_i : B \rightarrow \Z_{\ge 0}$ ($i \in \tI_{\af}$), and $\psi_i : B \rightarrow \Z_{\ge 0}$ ($i \in \tI_{\af}$) are maps;
\item For $b \in B$ and $i \in \tI_\af$, we have $\left< \alpha_i^{\vee}, \mathrm{wt} \, b \right> = - \varepsilon _i ( b ) + \psi_i ( b )$;
\item For $b \in B$ and $i \in \tI_\af$, we have maps $\tilde{e}_i : B \rightarrow B \sqcup \{\emptyset\}$ and $\tilde{f}_i : B \rightarrow B \sqcup \{\emptyset\}$ with the following properties:
\begin{itemize}
\item We have $\widetilde{e}_i ( b ) \neq \emptyset$ if and only if $\varepsilon_i ( b ) > 0$. We have $\widetilde{f}_i ( b ) \neq \emptyset$ if and only if $\psi_i ( b ) > 0$;
\item If $\varepsilon_i ( b ) > 0$, then $\mathrm{wt} \, \widetilde{e}_i ( b ) = \mathrm{wt} \, b + \alpha_i$. If $\psi_i ( b ) > 0$, then $\mathrm{wt} \, \widetilde{f}_i ( b ) = \mathrm{wt} \, b - \alpha_i$;
\item If $\varepsilon_i ( b ) > 0$, then $\widetilde{f}_i ( \widetilde{e}_i ( b ) ) = b$. If $\psi_i ( b ) > 0$, then $\widetilde{e}_i ( \widetilde{f}_i ( b ) ) = b$.
\end{itemize}
\end{enumerate}
A classical crystal $\mathbb B = ( B, \mathrm{wt}, \{ \varepsilon_i, \psi_i, \tilde{e}_i, \tilde{f}_i \}_{i \in \mathtt I_{\af}} )$ is the data obtained from the definition of an affine crystal by replacing $\widetilde{P}$ with $\widehat{P}$. We refer crystal as either affine or classical crystal, and refer $B$ as its underlying set. By abuse of notation, we may abbreviate $b \in B$ by $b \in \mathbb B$. A morphism of a crystal is a map of underlying set that intertwines $\mathrm{wt}, \{ \varepsilon_i, \psi_i, \tilde{e}_i, \tilde{f}_i \}_{i \in \tI_{\af}}$ (that is usually referred to as a strict morphism in the literature). \\
A highest weight element (resp. finite highest weight element) in a crystal $\mathbb B$ is an element $b$ such that $\varepsilon_i ( b ) = 0$ for every $i \in \tI_{\af}$ (resp. every $i \in \tI$). A crystal is connected if and only if each two elements are connected by finitely many sequences of $\tilde{f}_i$ and $\tilde{e}_i$. For an affine crystal $\mathbb B$ and $i \in \tI_\af$, an $i$-string is a subgraph $B' \subset B$ that is closed (and connected) under the action of $\{ \tilde{e}_i, \tilde{f}_i \}$. For $b \in B$ and $i \in \tI_\af$, we denote the $i$-string that contains $b$ as $S_i ( b )$.
\end{defn}

\begin{defn}[Tensor product of crystals, cf. \cite{HK02} \S 4.4]\label{crys-tensor}
Let $\mathbb B_1$ and $\mathbb B_2$ be crystals with underlying sets $B_1, B_2$. We define the tensor product crystal $\mathbb B_1 \otimes \mathbb B_2$ with its underlying set $B_1 \times B_2$ (to which we refer its element $(b_1,b_2)$ as $b_1 \otimes b_2$) by defining: 
\begin{itemize}
\item For each $b_1 \in B_1$ and $b_2 \in B_2$, we have $\mathrm{wt} \, ( b_1 \otimes b_2 ) := \mathrm{wt} \, b_1 + \mathrm{wt} \, b_2$;
\item For each $b_1 \in B_1$, $b_2 \in B_2$, and $i \in \tI_\af$, we define
$$\hskip -10mm \tilde{e}_i ( b_1 \otimes b_2 ) := \begin{cases} \tilde{e}_i b_1 \otimes b_2 & (\psi_i (b_1) \ge \varepsilon_i (b_2)) \\ b_1\otimes \tilde{e}_i b_2 & (\psi_i (b_1) < \varepsilon_i (b_2)) \end{cases}, \text{ and } \tilde{f}_i ( b_1 \otimes b_2 ) := \begin{cases} \tilde{f}_i b_1 \otimes b_2 & (\psi_i (b_1) > \varepsilon_i (b_2)) \\ b_1 \otimes \tilde{f}_i b_2 & (\psi_i (b_1) \le \varepsilon_i (b_2)) \end{cases},$$
where we understand that $\emptyset \otimes b_2 = b_1 \otimes \emptyset = \emptyset$.
\end{itemize}
The functions $\varepsilon_i$ and $\psi_i$ ($i \in \tI_\af$) are uniquely determined by the above.
\end{defn}

For a crystal $\mathbb B$, we define its character as:
$$\mathsf{ch} \, \mathbb B := \sum_{b \in \mathbb B} e^{\mathrm{wt} \, ( b )}.$$

\begin{thm}[Kashiwara \cite{Kas91}, cf. \cite{HK02} \S 5.1]\label{Blam}
For each $\Lambda \in \widetilde{P}_+$ such that $\left< K, \Lambda \right> = k \in \Z_{> 0}$, we have an affine crystal $\mathbb B ( \Lambda )$ that parametrizes a basis of $L_k ( \overline{\Lambda} )$ such that $\mathsf{ch} \, \mathbb B ( \Lambda ) = \mathsf{ch} \, L_k ( \overline{\Lambda} )$ up to $e^{m \delta}$-twist for some $m \in \Z$. Each $\mathbb B ( \Lambda )$ contains a unique element $b_{\Lambda}$ such that $\mathrm{wt} \, b_{\Lambda} = \Lambda$ and $\widetilde{e}_{i} b_{\Lambda} = \emptyset$ for every $i \in \tI_\af$.
\end{thm}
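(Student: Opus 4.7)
The plan is to realize $\mathbb{B}(\Lambda)$ via Kashiwara's crystal basis construction applied to the quantum affine algebra. First I would introduce the quantum deformation $U_q(\widetilde{\g})$ and the integrable highest weight $U_q(\widetilde{\g})$-module $V_q(\Lambda)$ of highest weight $\Lambda$, generated by a highest weight vector $u_\Lambda$. Since $\Lambda \in \widetilde{P}_+$ with positive level $k$, the module $V_q(\Lambda)$ is an integrable module in the category $\mathcal{O}$ whose character equals $\mathsf{ch}\, L_k(\overline{\Lambda})$ (up to the $e^{m\delta}$ indeterminacy that corresponds to the choice of $d$-eigenvalue of $u_\Lambda$).

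Next I would define the Kashiwara operators $\widetilde{e}_i,\widetilde{f}_i$ on $V_q(\Lambda)$ for each $i\in \tI_\af$ by decomposing every weight vector under the rank-one subalgebra $U_q(\mathfrak{sl}(2,i)) \subset U_q(\widetilde{\g})$ into a sum of its $U_q(\mathfrak{sl}(2,i))$-string components, and then shifting the position on the string. Let $A := \C[q]_{(q)}$ be the localization of $\C[q]$ at $q=0$, and define the crystal lattice
\[
L(\Lambda) := \sum_{n\ge 0,\; i_1,\ldots, i_n \in \tI_\af} A\cdot \widetilde{f}_{i_1}\cdots\widetilde{f}_{i_n} u_\Lambda \subset V_q(\Lambda),
\]
together with the subset
\[
B(\Lambda) := \bigl\{ \widetilde{f}_{i_1}\cdots\widetilde{f}_{i_n} u_\Lambda \bmod qL(\Lambda) \bigr\} \setminus \{0\} \subset L(\Lambda)/qL(\Lambda).
\]
Set $b_\Lambda := u_\Lambda \bmod qL(\Lambda)$.

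I would then prove that $(L(\Lambda), B(\Lambda))$ is a crystal basis of $V_q(\Lambda)$, i.e.\ that $L(\Lambda)$ is stable under all $\widetilde{e}_i, \widetilde{f}_i$, that $B(\Lambda)$ is an $A/qA$-basis of $L(\Lambda)/qL(\Lambda)$, and that the induced operators satisfy the crystal axioms of Definition~\ref{crys-tensor}'s tensor compatibility (in particular $\widetilde{f}_i \widetilde{e}_i = \mathrm{id}$ when defined). Given the crystal basis, the set $B(\Lambda)$ inherits the structure of an affine crystal: $\mathrm{wt}$ comes from the $\widetilde{P}$-grading of $V_q(\Lambda)$, while $\varepsilon_i,\psi_i$ record the endpoints of the $i$-strings. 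The character equality $\mathsf{ch}\,\mathbb{B}(\Lambda) = \mathsf{ch}\, L_k(\overline{\Lambda})$ (up to $e^{m\delta}$) then follows immediately from the basis property, and $b_\Lambda$ is a highest weight element because the vector $u_\Lambda$ is annihilated by every $E_i$ in $V_q(\Lambda)$, hence by every $\widetilde{e}_i$ after reduction modulo $q$; its uniqueness among weight $\Lambda$ elements follows from the one-dimensionality of the $\Lambda$-weight space of $V_q(\Lambda)$.

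The main obstacle is the existence assertion in the third step: the joint proof that $L(\Lambda)$ is stable under all Kashiwara operators and that $B(\Lambda)\setminus\{0\}$ maps bijectively onto a basis of $L(\Lambda)/qL(\Lambda)$. This is exactly Kashiwara's grand loop argument, which simultaneously inducts on the weight $\Lambda - \mathrm{wt}$ to control the tensor product crystal behaviour of $V_q(\Lambda) \otimes V_q(\mu)$ for auxiliary fundamental weights $\mu$, and extracts the needed compatibility of $\widetilde{e}_i,\widetilde{f}_i$ with the lattice from the rank-one $\mathfrak{sl}(2,i)$-analysis. Once this loop closes, everything else is formal bookkeeping.
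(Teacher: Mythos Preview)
The paper does not give its own proof of this statement: Theorem~\ref{Blam} is quoted as a result of Kashiwara \cite{Kas91} (with a pointer to the textbook account in \cite[\S 5.1]{HK02}) and is used as a black box in \S\ref{comb-mac}. So there is no argument in the paper to compare against.

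Your outline is a faithful sketch of Kashiwara's original construction, and it is correct as far as it goes: one passes to the quantum group $U_q(\widetilde{\g})$, takes the integrable highest weight module $V_q(\Lambda)$, defines the Kashiwara operators via the $U_q(\mathfrak{sl}(2,i))$-string decomposition, and then shows that the $A$-lattice generated by monomials in the $\widetilde{f}_i$ applied to $u_\Lambda$ is a crystal lattice with crystal basis $B(\Lambda)$. The character comparison and the properties of $b_\Lambda$ are indeed formal consequences once the crystal basis exists. Your identification of the grand loop as the genuine content is accurate; that is precisely where all the work lies, and it is substantial (one simultaneously proves a list of mutually dependent statements about $L(\Lambda)$, $B(\Lambda)$, the tensor product rule, and the compatibility with $B(\infty)$, by induction on the height of $\Lambda - \mathrm{wt}$). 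If you intend to actually write out a proof rather than cite one, be aware that this inductive package cannot be shortcut and requires the auxiliary polarization and global basis machinery; otherwise, citing \cite{Kas91} as the paper does is the standard practice.
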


\begin{thm}[Kashiwara \cite{Kas93} \S 3]\label{Demazure}
For each $\Lambda \in \widetilde{P}_+$ and $w \in W_\af$, we have a subset $\mathbb B ( \Lambda )_w \subset \mathbb B ( \Lambda )$ that is stable under the action of $\widetilde{e}_{i}$ for every $i \in \tI_\af$ with the following properties:
\begin{enumerate}
\item For each $i \in \tI_\af$ such that $s_i w > w$, we have
$$\mathbb B ( \Lambda )_{s_i w} = \bigcup_{m \ge 0} \widetilde{f}_i^m \mathbb B ( \Lambda )_{w};$$
\item For each $b \in \mathbb B ( \Lambda )_{w}$, the set
$$\{ \widetilde{e}_i^n \widetilde{f}_i^m b \}_{n, m \ge 0} \cap \mathbb B ( \Lambda )_{w}$$
is either singleton or an $i$-string.
\end{enumerate}
\end{thm}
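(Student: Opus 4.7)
My plan is to deduce the statement from Kashiwara's global basis theory, by realizing $\mathbb B(\Lambda)_w$ as the crystal incarnation of the $q$-Demazure module $V_w(\Lambda) := U_q(\mathfrak b^+) v_{w\Lambda}$ sitting inside the integrable highest weight module $L_q(\Lambda)$ of $U_q(\wg)$, where $v_{w\Lambda}$ is an extremal weight vector of weight $w\Lambda$ constructed via the braid group action on $L_q(\Lambda)$. The core analytic input is Kashiwara's theorem (the ``grand loop'' of \cite{Kas93}) that $L_q(\Lambda)$ admits a global crystal basis $G_\Lambda$ and that $V_w(\Lambda)$ is spanned by a well-defined subset of $G_\Lambda$; this subset, transported to $\mathbb B(\Lambda)$, will be our $\mathbb B(\Lambda)_w$. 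Its stability under $\tilde e_i$ is automatic: $V_w(\Lambda)$ is stable under every $e_i \in U_q(\mathfrak b^+)$, and the crystal operators $\tilde e_i$ permute elements of $G_\Lambda$ in accordance with the action of $e_i$.

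With this realization in hand, property (1) reduces to the $\mathfrak{sl}_2$-statement $V_{s_i w}(\Lambda) = U_q(\mathfrak{sl}_2^{(i)}) V_w(\Lambda)$ whenever $s_iw > w$: the inclusion $\supset$ follows because $\tilde f_i^{\max}$ sends (a normalization of) $v_{w\Lambda}$ to $v_{s_iw\Lambda}$, while $\subset$ follows from the PBW-type decomposition $U_q(\mathfrak b^+) = U_q(\mathfrak n^+_{(i)}) \cdot U_q(\mathfrak{sl}_2^{(i)})$ attached to the factorization of positive roots through the reflection $s_i$. Since enlarging a global-basis-compatible submodule by the $U_q(\mathfrak{sl}_2^{(i)})$-action at the crystal level amounts to closing up each $i$-string below by all $\tilde f_i^m$, assertion (1) drops out.

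For property (2), fix $b \in \mathbb B(\Lambda)_w$ and let $V_{S_i(b)}$ denote the simple $U_q(\mathfrak{sl}_2^{(i)})$-submodule of $L_q(\Lambda)$ spanned by the $i$-string $S_i(b)$; then $V_{S_i(b)} \cap V_w(\Lambda)$ is a $U_q(\mathfrak b^+_{(i)})$-submodule of the simple $U_q(\mathfrak{sl}_2^{(i)})$-module $V_{S_i(b)}$, and any such submodule is either zero, the top one-dimensional weight space, or all of $V_{S_i(b)}$. The zero case is excluded since $b$ itself lies in the intersection, yielding precisely the singleton/full-$i$-string dichotomy. The main obstacle is of course the foundational input itself---the compatibility of Demazure submodules with the global basis---which is the technical heart of \cite{Kas93} and requires the grand loop argument running jointly over all levels and weights; some additional care is also needed for affine $\Lambda$ with non-trivial $W_\af$-stabilizer, handled by Kashiwara via the modified quantized enveloping algebra $\dot U_q(\wg)$. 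Once this input is granted, (1) and (2) follow formally from the $U_q(\mathfrak{sl}_2)$-analysis above.
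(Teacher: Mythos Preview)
The paper does not supply its own proof of this theorem; it is quoted as a result of Kashiwara \cite{Kas93}, \S 3, and then applied. So there is no proof in the paper to compare against.

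That said, your sketch for property (2) contains a genuine error. You claim that a $U_q(\mathfrak b^+_{(i)})$-submodule of a finite-dimensional simple $U_q(\mathfrak{sl}_2^{(i)})$-module must be either zero, the one-dimensional top weight space, or the whole module. This is false: in the $(n+1)$-dimensional simple module with weight basis $v_0, \ldots, v_n$ (with $v_0$ highest), the span of $v_0, \ldots, v_k$ is a $U_q(\mathfrak b^+_{(i)})$-submodule for every $0 \le k \le n$. Stability under $\tilde e_i$ alone only forces $S_i(b) \cap \mathbb B(\Lambda)_w$ to be an \emph{upper segment} of the $i$-string; the dichotomy ``singleton or full string'' is a genuinely stronger constraint and is a main point of Kashiwara's argument. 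In \cite{Kas93} it is established by induction on $\ell(w)$ simultaneously with (1), via an analysis of how the $\tilde f_i$-saturation in (1) interacts with $\tilde e_j$-stability for $j \neq i$.

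Your outline of (1) is closer in spirit, but it also skips the non-obvious step: the assertion that enlarging a global-basis-compatible submodule by the $U_q(\mathfrak{sl}_2^{(i)})$-action corresponds, at the crystal level, to closing up each $i$-string under $\tilde f_i$ is precisely the compatibility that must be proved, since $f_i G(b)$ is not in general a scalar multiple of $G(\tilde f_i b)$. Kashiwara proves this compatibility inductively in tandem with the string property (2); invoking it as an input to derive (1) without further justification is circular.
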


For each $\Lambda \in \widetilde{P}$ and $i \in \tI_\af$, we define
$$\chi ( e^{\Lambda} ) := \frac{\sum_{w \in W_\af} ( -1 )^{\ell ( w )} e^{w \circ \Lambda}}{\prod_{\alpha \in \Delta_\af^-} ( 1 - e^{\alpha} )^{\mathrm{mult} \, \alpha}} \hskip 3mm \text{and}, \hskip 3mm D_i ( e^{\Lambda} ) := \frac{e^{\Lambda} - e^{s_i \circ \Lambda}}{( 1 - e^{-\alpha_i} )}$$
where $\mathrm{mult} \, \alpha$ denote the dimension of the $\alpha$-root space of $\widetilde{\g}$. We define
$$\Z [\widetilde{P}]^{\wedge} := \varprojlim_n \Z [\widetilde{P}] / (e^{\beta} \mid \beta \in \widetilde{P}, \left< d, \beta \right> < - n).$$
The original form of the Weyl-Kac character formula asserts (see Theorem \ref{KT}) that
\begin{equation}
\chi ( e^{\Lambda} ) = \mathsf{ch} \, L_k ( \Lambda ) \hskip 5mm \Lambda \in \widetilde{P}_+. \label{WCF}
\end{equation}
In addition, $\chi$ naturally extend to a linear operator
$$\Z [\widetilde{P}] \rightarrow \Z [\widetilde{P}]^{\wedge},$$
while $D_i$ ($i \in \tI_\af$) define linear operators on $\Z [\widetilde{P}]$. By Kumar \cite[Theorem 8.2.9 and \S 8.3]{Kum02}, there exists an infinite sequence
$$\mathbf i = (i_1,i_2,\ldots) \in \tI_\af^\infty$$
such that
\begin{equation}
\chi = \lim_{k\to\infty} D_{i_k} \circ \cdots D_{i_2} \circ D_{i_1} \hskip 5mm \text{on} \hskip 3mm \Z [\widetilde{P}]. \label{WCFs}
\end{equation}

\begin{defn}[Restricted paths]
Let $\mathbb B$ be a classical crystal. For each $\Gamma \in \widehat{P}_+$, we define the set of restricted paths as:
$$\mathcal P ( \mathbb B, \Gamma ) := \{ b \in \mathbb B \mid \varepsilon_i ( b ) \le \left< \alpha_i^{\vee}, \Gamma \right> \hskip 2mm \text{for each} \hskip 2mm i \in \tI_\af \}.$$
Similarly, for each $\gamma \in P_+$, we define the set of finitely restricted paths as:
$$\mathcal P_0 ( \mathbb B, \gamma ) := \{ b \in \mathbb B \mid \varepsilon_i ( b ) \le \left< \alpha_i^{\vee}, \gamma \right> \hskip 2mm \text{for each} \hskip 2mm i \in \tI \}.$$
\end{defn}

\begin{thm}[Kashiwara, Naito-Sagaki]\label{energy}
We have a classical crystal $\mathbb B ( \varpi_i )$ corresponding to a $($finite-dimensional$)$ level zero fundamental representation $\mathbb W ( \varpi _i )$ such that $\mathsf{ch} \, \mathbb B ( \varpi_i ) = \mathsf{ch} \, \mathbb W ( \varpi _i )$ for each $i \in \tI$.\\
For each $\lambda = \sum_{i =1}^r m_i \varpi_i \in P_+$, we define the tensor product $($classical$)$ crystal $\mathbb B _{\mathrm{loc}} ( \lambda )$ as
$$\mathbb B _{\mathrm{loc}} ( \lambda ) := \mathbb B ( \varpi_1) ^{\otimes m_1} \otimes \cdots \otimes \mathbb B ( \varpi_r )^{\otimes m_r}.$$
Then, $\mathbb B _{\mathrm{loc}} ( \lambda )$ is a connected crystal and is equipped with a function $D : \mathbb B _{\mathrm{loc}} ( \lambda ) \rightarrow \Z$ with the following conditions:
\begin{enumerate}
\item $D$ is preserved by the action of $\tilde{e}_i$ and $\tilde{f}_i$ for each $i \in \tI$;
\item $D ( b_{0} ) = 0$, where $b_0 \in \mathbb B _{\mathrm{loc}} ( \lambda )$ is the unique element such that $\mathrm{wt} \, b_0 = \lambda$;
\item we have $D ( \tilde{e}_0 b ) = D ( b ) - 1$ for each $b \in \mathbb B_{\mathrm{loc}} ( \lambda )$ such that $\varepsilon_0 ( b ) \ge 2$.
\end{enumerate}
\end{thm}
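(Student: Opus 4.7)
The plan is to assemble Theorem~\ref{energy} from the classical theory of crystal bases for quantum affine algebras; the statement is really a compilation of results of Kashiwara and Naito--Sagaki, so my task is to identify the right ingredients and check they fit together. First, for the existence of $\mathbb B(\varpi_i)$, I would invoke Kashiwara's theorem that each level-zero fundamental $U_q'(\widetilde{\g})$-module $\mathbb W(\varpi_i)$ is finite-dimensional and admits a crystal pseudo-basis; viewing it as a classical crystal (forgetting the $\delta$-part of weights on $0$-arrows) gives $\mathbb B(\varpi_i)$, and the character equality $\mathsf{ch}\,\mathbb B(\varpi_i)=\mathsf{ch}\,\mathbb W(\varpi_i)$ is automatic from the existence of the crystal basis. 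An equivalent combinatorial realization in terms of level-zero Lakshmibai--Seshadri paths is available via Naito--Sagaki.

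Next, for the connectedness of $\mathbb B_{\mathrm{loc}}(\lambda)$, I would use Akasaka--Kashiwara's notion of a \emph{simple crystal}: each $\mathbb B(\varpi_i)$ is simple, and tensor products of simple crystals remain connected. This yields connectedness directly from the definition of tensor product crystal recalled in Definition~\ref{crys-tensor}. Equivalently, through the Naito--Sagaki identification, $\mathbb B_{\mathrm{loc}}(\lambda)$ corresponds to the set of level-zero LS paths of shape $\lambda$, which is visibly connected.

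For the energy function $D$, I plan to follow the Nakayashiki--Yamada construction, extended by Hatayama--Kuniba--Okado--Takagi--Tsuboi. For each ordered pair $(i,j)\in \tI^2$, the combinatorial $R$-matrix $R_{ij}:\mathbb B(\varpi_i)\otimes \mathbb B(\varpi_j)\to \mathbb B(\varpi_j)\otimes \mathbb B(\varpi_i)$ is the unique isomorphism of classical crystals between these connected components; together with a local energy function $H_{ij}$ characterized by its prescribed behavior under $\tilde e_0, \tilde f_0$ and normalized to vanish on the classical highest weight element. Then $D$ is obtained by summing $H_{ij}$-values taken after successively applying $R$'s to bring pairs of factors adjacent in a prescribed order, and normalizing so that $D(b_0) = 0$. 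Property (1) is immediate from the $U_q(\g)$-equivariance of each $R_{ij}$ and the fact that each $H_{ij}$ is classically invariant; property (2) is exactly the chosen normalization.

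The main obstacle will be verifying property (3), that $D(\tilde e_0 b) = D(b)-1$ whenever $\varepsilon_0(b)\geq 2$. The defining relation of the local energy function forces $H_{ij}(\tilde e_0(x\otimes y)) - H_{ij}(x\otimes y)\in\{-1,0,+1\}$ depending on whether $\tilde e_0$ acts on the left or right tensor factor of $x\otimes y$ and of $R_{ij}(x\otimes y)$. The assumption $\varepsilon_0(b)\geq 2$ guarantees that the $\tilde e_0$-action stays in the interior of a $0$-string and, under each intermediate $R$-matrix transport, the local action pattern remains unchanged; hence exactly one $H_{ij}$ across the summation picks up a $-1$ contribution while all other local energies are preserved. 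This induction on the number of tensor factors, combined with the tensor product rule of Definition~\ref{crys-tensor} specialized to $i=0$, yields the required identity.
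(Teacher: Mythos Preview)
Your assembly of the ingredients is correct in spirit and largely parallels the paper's own proof, which is likewise a compilation of citations. The substantive difference is in how the function $D$ and its properties are sourced. Rather than building $D$ from local energies and combinatorial $R$-matrices (Nakayashiki--Yamada, HKOTT), the paper takes $D$ to be the Naito--Sagaki \emph{degree function} on level-zero Lakshmibai--Seshadri paths. Properties (1) and (2) are then their Lemma~3.2.1, and property (3) is read off their formula~(3.2.1) together with their Lemma~2.2.11, which characterizes precisely what $\varepsilon_0(b)\ge 2$ buys in the LS-path model (namely, that the initial direction of the path is unchanged by $\tilde e_0$). The identification of this degree function with the energy statistic is recorded separately via \cite{LNSSS2}. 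This route makes the role of the hypothesis $\varepsilon_0(b)\ge 2$ transparent.

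Your verification of property (3) is where the gap lies. The claim that $\varepsilon_0(b)\ge 2$ forces ``exactly one $H_{ij}$ across the summation to pick up a $-1$ contribution while all other local energies are preserved'' is not justified as stated. The global energy is not an unweighted sum of pairwise local energies: the standard formula involves $R$-transported local energies \emph{and} the intrinsic energies of the individual factors (nontrivial outside simply-laced type), and the net change under $\tilde e_0$ comes from a cancellation across several of these terms rather than a single one shifting by $-1$. The condition $\varepsilon_0(b)\ge 2$ does ultimately govern this cancellation, but ``staying in the interior of a $0$-string'' does not by itself pin down the action pattern after each $R$-transport, and your induction sketch does not supply the missing mechanism. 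If you insist on staying on the energy-function side, you must carry out the full bookkeeping (tracking how $\tilde e_0$ propagates through every $R$-transport and every local and intrinsic term) and then isolate why $\varepsilon_0(b)\ge 2$ is exactly the right hypothesis; it is considerably cleaner to follow the paper and cite Naito--Sagaki directly.
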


\begin{proof}
The definition of $\mathbb B ( \varpi_i )$ is due to Kashiwara \cite[Theorem 5.17]{Kas02}. The character comparison follows from the works of Naito-Sagaki \cite{NS08} and Chari-Ion \cite{CI15} (cf. \cite[Theorem 1.6]{Kat16}). The function $D$ is studied in \cite[\S 3]{NS08} under the name of degree function and its relation to the energy statistic is in \cite[Theorem 4.5]{LNSSS2}. The first two properties of $D$ is in \cite[Lemma 3.2.1]{NS08}, while the third property of $D$ also follow from \cite[(3.2.1)]{NS08} in view of \cite[Lemma 2.2.11]{NS08}.
\end{proof}

\begin{thm}[Kashiwara et al. \cite{KMS2, KMS95}, see Hong-Kang \cite{HK02} \S 10]\label{vertex}
Let $\mu \in P_+$. We have an isomorphism
$$\mathbb B ( k \Lambda_0) \otimes \mathbb B _{\mathrm{loc}} ( \mu ) \cong \bigoplus_{b \in \mathcal P ( \mathbb B _{\mathrm{loc}} ( \mu ), k \Lambda_0 )} \mathbb B ( \mathrm{wt} \, b + k \Lambda_0 - D ( b ) \delta )$$
of affine crystals.
\end{thm}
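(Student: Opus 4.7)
The plan is to reduce the statement to identifying highest weight elements in the tensor product, computing their affine weights (with the help of the energy function $D$), and then invoking a character identity. First, I would determine the highest weight elements of $\mathbb B(k\Lambda_0) \otimes \mathbb B_{\mathrm{loc}}(\mu)$ by the tensor product rule of Definition~\ref{crys-tensor}. Using the fact (Theorem~\ref{Blam}) that $b_{k\Lambda_0}$ satisfies $\varepsilon_i(b_{k\Lambda_0}) = 0$ and hence $\psi_i(b_{k\Lambda_0}) = \langle \alpha_i^\vee, k\Lambda_0\rangle$ for every $i \in \tI_\af$, one checks that $\tilde{e}_i(b_{k\Lambda_0} \otimes b) = \emptyset$ for all $i$ if and only if $\varepsilon_i(b) \le \langle \alpha_i^\vee, k\Lambda_0 \rangle$ for all $i$. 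This gives a bijection between the affine highest weight elements of the tensor product and the set $\mathcal P(\mathbb B_{\mathrm{loc}}(\mu), k\Lambda_0)$.

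Next I would promote the classical weight on $\mathbb B_{\mathrm{loc}}(\mu)$ to an affine weight by setting $\widetilde{\mathrm{wt}}(b) := \mathrm{wt}(b) - D(b)\delta$. Theorem~\ref{energy}(1) ensures consistency for $i \in \tI$ since $D$ is preserved there, while Theorem~\ref{energy}(3) gives consistency for $\tilde{e}_0$ in the interior of $0$-strings of length $\ge 2$; the boundary cases where $\varepsilon_0(b) = 1$ are precisely where a new connected component (i.e., a new highest weight element) appears. Under this convention, the affine weight of the highest weight element $b_{k\Lambda_0} \otimes b$ is exactly $k\Lambda_0 + \mathrm{wt}(b) - D(b)\delta$, matching the summand $\mathbb B(\mathrm{wt}\,b + k\Lambda_0 - D(b)\delta)$ of the right hand side.

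Once the highest weight elements and their affine weights are pinned down, I would show the subcrystal generated by each $b_{k\Lambda_0} \otimes b$ under $\tilde{f}_i$ ($i \in \tI_\af$) is isomorphic to $\mathbb B(k\Lambda_0 + \mathrm{wt}(b) - D(b)\delta)$; for $i \in \tI$ this uses the tensor product rule together with the Demazure filtration of Theorem~\ref{Demazure}, and for $i = 0$ the restriction $\varepsilon_0(b) \le k$ together with the perfectness-like properties of $\mathbb B_{\mathrm{loc}}(\mu)$ inherited from Theorem~\ref{energy}. The decomposition is then closed by a character comparison: the character of the left hand side is $\mathsf{ch}\,\mathbb B(k\Lambda_0) \cdot \sum_{b \in \mathbb B_{\mathrm{loc}}(\mu)} e^{\mathrm{wt}(b) - D(b)\delta}$, which by the Weyl-Kac formula \eqref{WCF} (cf.\ Theorem~\ref{KT}) equals $\sum_{b \in \mathcal P(\mathbb B_{\mathrm{loc}}(\mu), k\Lambda_0)} \chi(e^{k\Lambda_0 + \mathrm{wt}(b) - D(b)\delta})$, i.e.\ the character of the right hand side.

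The main obstacle, as in all results of this kind going back to \cite{KMS2, KMS95}, is the middle step: rigorously identifying each connected component of the tensor product with an integrable highest weight crystal $\mathbb B(\Lambda)$ of the correct highest weight. A clean way to handle this is to use the Demazure subcrystal structure of Theorem~\ref{Demazure} to produce the correct $\tilde{f}_i$-orbits inductively on $\ell(w)$, while verifying that the $0$-direction is compatible with the restriction $\varepsilon_0 \le k$; the character identity above then promotes this embedding to an isomorphism. The edge case $\varepsilon_0(b) = 1$, where Theorem~\ref{energy}(3) is silent, is exactly where one needs the perfectness of the underlying Kirillov-Reshetikhin crystals, and this is the content of the cited result of Kashiwara-Miwa-Stern.
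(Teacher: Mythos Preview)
Your identification of the highest weight elements of $\mathbb B(k\Lambda_0)\otimes\mathbb B_{\mathrm{loc}}(\mu)$ with $\{b_{k\Lambda_0}\otimes b : b\in\mathcal P(\mathbb B_{\mathrm{loc}}(\mu),k\Lambda_0)\}$ is correct and agrees with the paper. The remaining steps, however, diverge from the paper's argument and contain a genuine gap.

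The character comparison you propose is precisely what the Remark immediately following the theorem warns against. The assignment $b'\otimes b\mapsto \mathrm{wt}(b')+\mathrm{wt}(b)-D(b)\delta$ does \emph{not} define the affine weight on the tensor product that is compatible with the isomorphism, so
\[
\mathsf{ch}\,\mathbb B(k\Lambda_0)\cdot\sum_{b\in\mathbb B_{\mathrm{loc}}(\mu)}e^{\mathrm{wt}(b)-D(b)\delta}
\ \neq\
\sum_{b\in\mathcal P(\mathbb B_{\mathrm{loc}}(\mu),k\Lambda_0)}\mathsf{ch}\,\mathbb B(\mathrm{wt}\,b+k\Lambda_0-D(b)\delta)
\]
in general. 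The failure is not only at the birth of new components: inside a single component, whenever the tensor-product action of $\tilde e_0$ passes from the second factor to the first, the $\delta$-degree on the left-hand side stops tracking the actual affine weight. The identity you quote does hold, but its derivation in the paper (equations (\ref{eq1}) and (\ref{eq2}) in the proof of Theorem~\ref{P-char}) already \emph{uses} Theorem~\ref{vertex}, so invoking it here would be circular. Your appeal to the Weyl--Kac formula alone does not suffice.

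The paper also does not identify the connected components via Demazure crystals or perfectness. Instead it imports a representation-theoretic input: applying \cite[Theorems~10.4.3 and 10.4.4]{HK02} to the $U_q(\widehat\g)$-module underlying $\mathbb B_{\mathrm{loc}}(\mu)$ yields directly a \emph{classical} crystal embedding
\[
\Psi:\bigoplus_{b\in\mathcal P}\mathbb B(\mathrm{wt}\,b+k\Lambda_0)\ \hookrightarrow\ \mathbb B(k\Lambda_0)\otimes\mathbb B_{\mathrm{loc}}(\mu).
\]
Surjectivity is then a short combinatorial argument: repeated application of $\{\tilde e_i\}_{i\in\tI_\af}$ sends any element into the finite $\tilde e_i$-stable set $b_{k\Lambda_0}\otimes\mathbb B_{\mathrm{loc}}(\mu)$; on that set, whenever $\tilde e_0$ acts nontrivially one has $\varepsilon_0(b)>k\ge 1$, so $\varepsilon_0(b)\ge 2$ and Theorem~\ref{energy}(3) forces $D$ to strictly decrease. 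Hence the process terminates at a highest weight element already in $\mathrm{Im}\,\Psi$. The affine structure is then \emph{defined} by transport along $\Psi$, with $D$ used only to pin down the $\delta$-shift of each highest weight vector $b_{k\Lambda_0}\otimes b$. No perfectness hypothesis is needed, and the problematic case $\varepsilon_0(b)=1$ never arises in this argument.
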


\begin{proof}
Let $\mathbb W$ be the tensor product representation of $U_q ( \widehat{\g} )$ corresponding to $\mathbb B _{\mathrm{loc}} ( \mu )$ borrowed from Theorem \ref{energy}. Being a finite-dimensional integrable representation of the quantum group of $\widehat{\g}$, we can apply the argument in \cite[\S 10.4]{HK02}. By calculating using the lower global basis, we have
$$\sum_{b \in \mathcal P ( \mathbb B _{\mathrm{loc}} ( \mu ), k \Lambda_0 )} e ^{\mathrm{wt} \, b} = \mathsf{ch} \, \{v \in \mathbb W \mid \mathsf E_0^{k + 1} v = 0, \mathsf E_i v = 0, \hskip 3mm i \in \tI \},$$
where $\mathsf E_i \in U_q ( \widehat{\g} )$ ($i \in \tI_\af$) is $e_i$ in \cite[Definition 3.1.1]{HK02}. Applying \cite[Theorem 10.4.3, Theorem 10.4.4]{HK02}, we deduce a classical crystal morphism
$$\Psi : \bigoplus_{b \in \mathcal P ( \mathbb B _{\mathrm{loc}} ( \mu ), k \Lambda_0 )} \mathbb B ( \mathrm{wt} \, b + k \Lambda_0 ) \longrightarrow \mathbb B ( k \Lambda_0) \otimes \mathbb B _{\mathrm{loc}} ( \mu ).$$
Since each direct summand of the LHS is a connected crystal with a highest weight element (Theorem \ref{Blam}) and the image of the highest weight elements are distinct, it follows that $\Psi$ is an embedding of crystals. Therefore, the set $\mathbb B ( k \Lambda_0 ) \otimes \mathbb B _{\mathrm{loc}} ( \mu ) \backslash \mathrm{Im}\, \Psi$ is a classical crystal. As all the highest weight elements of $\mathbb B ( k \Lambda_0) \otimes \mathbb B _{\mathrm{loc}} ( \mu )$ are contained in $\mathrm{Im} \, \Psi$, it does not contain a highest weight element. A successive application of $\{\widetilde{e}_i\}_{i \in \tI_\af}$ sends an arbitrary element of $\mathbb B ( k \Lambda_0 ) \otimes \mathbb B _{\mathrm{loc}} ( \mu )$ to $b_{k \Lambda_0} \otimes \mathbb B _{\mathrm{loc}} ( \mu )$. The set $b_{k \Lambda_0} \otimes \mathbb B _{\mathrm{loc}} ( \mu )$ is stable under the action of $\{\widetilde{e}_i\}_{i \in \tI_\af}$. Therefore, it suffices to prove that applying sufficiently many $\{\widetilde{e}_i\}_{i \in \tI_\af}$ annihilates every element of $b_{k \Lambda_0} \otimes \mathbb B _{\mathrm{loc}} ( \mu )$ in order to prove $\mathbb B ( k \Lambda_0 ) \otimes \mathbb B _{\mathrm{loc}} ( \mu ) = \mathrm{Im}\, \Psi$.

By the definition of the tensor product action and Theorem \ref{energy}, we deduce that the value of $D$ decreases (by one) when we apply $\widetilde{e}_0$ to $b_{k \Lambda_0} \otimes \mathbb B _{\mathrm{loc}} ( \mu )$. Since $b_{k \Lambda_0} \otimes \mathbb B _{\mathrm{loc}} ( \mu )$ is a finite set, it follows that applying $\widetilde{e}_0$ (and other $\{\widetilde{e}_i\}_{i \in \tI}$) sufficiently many times annihilates the whole of $b_{k \Lambda_0} \otimes \mathbb B _{\mathrm{loc}} ( \mu )$. Therefore, we deduce that $\Psi$ is a bijection.

By the same argument, we conclude that $- D ( \bullet )\delta$ equips $b_{k \Lambda_0} \otimes \mathbb B _{\mathrm{loc}} ( \mu )$ with $\widetilde{P}$-valued weights. This makes $\Psi$ into an isomorphism of affine crystals as required.  
\end{proof}

\begin{rem}
Theorem \ref{energy} only states that we can equip $\mathbb B ( k \Lambda_0) \otimes \mathbb B _{\mathrm{loc}} ( \mu )$ a structure of affine highest weight crystals such that $b_{k \Lambda} \otimes \mathbb B _{\mathrm{loc}} ( \mu )$ contains all the highest weight vectors and the $D$-function gives the affine weights of the tensor product. In particular, we have
$$\sum_{b \in \mathcal P ( \mathbb B _{\mathrm{loc}} ( \mu ), k \Lambda_0 )} \mathsf{ch} \, \mathbb B ( \mathrm{wt} \, b + k \Lambda_0 - D ( b ) \delta ) \neq \sum_{b' \otimes b \in \mathbb B ( k \Lambda_0 ) \otimes \mathbb B _{\mathrm{loc}} ( \mu )} q^{D ( b ) \delta} e^{\mathrm{wt} \, ( b' ) + \mathrm{wt} \, ( b )}$$
in general. Note that $\mathrm{wt} \, ( b ) \in P$ (as $\mathbb B _{\mathrm{loc}} ( \mu )$ is a classical crystal), while $\mathrm{wt} \, ( b' ) - k \Lambda_0 \in P \times \Z \delta$ (as $\mathbb B ( \mathrm{wt} \, b + k \Lambda_0 - D ( b ) \delta )$ is an affine crystal).
\end{rem}

Using Theorem \ref{energy}, we define
$$\mathsf{gch} \, \mathbb B_{\mathrm{loc}} ( \lambda ) := \sum_{b \in \mathbb B_{\mathrm{loc}} ( \lambda )} q^{D ( b )} e ^{\mathrm{wt} \, ( b )} \in \Z [\widetilde{P}^0]$$
for each $\lambda \in P_+$.

\begin{defn}[Restricted Kostka polynomials]
For each $\mu, \lambda \in P_+$, we define
$$X_{\mu, \lambda} ( q ) := \sum_{b \in \mathcal P _0 ( \mathbb B_{\mathrm{loc}} ( \mu ), 0 ), \mathrm{wt} \, ( b ) = \lambda} q^{- D ( b )}.$$
For each $\mu \in P_+$ and $\lambda \in P_+^k$, we define
$$X_{\mu, \lambda} ^{(k)} ( q ) := \sum_{b \in \mathcal P ( \mathbb B_{\mathrm{loc}} ( \mu ), k \Lambda_0 ), \mathrm{wt} \, ( b ) = \lambda} q^{- D ( b )}.$$
\end{defn}

\begin{thm}[Lenart-Naito-Sagaki-Schilling-Shimozono \cite{LNSSS2}]\label{LNS3}
For each $\mu, \lambda \in P_+$, we have an equality
$$P_{\mu} ( q, 0 ) = \sum_{b \in \mathbb B _{\mathrm{loc}} ( \mu )} q^{- D ( b )} e^{\mathrm{wt} \, ( b )} = \sum_{\lambda \in P_+} X_{\mu, \lambda} ( q ) \cdot \mathsf{ch} \, V( \lambda ) = \overline{\mathsf{gch} \, \mathbb B_{\mathrm{loc}} ( \mu )},$$
where $P_{\mu} ( q,t )$ is the Macdonald polynomial.\hfill $\Box$
\end{thm}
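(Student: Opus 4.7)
The three expressions split into a direct part and a substantive Macdonald identification. First I would establish the chain
$$\overline{\mathsf{gch}\, \mathbb{B}_{\mathrm{loc}}(\mu)} \;=\; \sum_{b\in\mathbb{B}_{\mathrm{loc}}(\mu)} q^{-D(b)} e^{\mathrm{wt}(b)} \;=\; \sum_{\lambda \in P_+} X_{\mu,\lambda}(q)\, \mathsf{ch}\, V(\lambda)$$
tautologically. The first equality is $\overline{q^{D(b)}} = q^{-D(b)}$ applied termwise. For the second, Theorem \ref{energy}(1) shows that $D$ is constant along each connected component of $\mathbb{B}_{\mathrm{loc}}(\mu)$ with respect to the finite root operators $\{\tilde{e}_i, \tilde{f}_i\}_{i \in \tI}$. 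Each such finite connected component contains a unique finite highest weight element of some dominant weight $\lambda$ and is isomorphic, as a $\mathfrak{g}$-crystal, to the crystal of $V(\lambda)$; the finite highest weight elements of $\mathbb{B}_{\mathrm{loc}}(\mu)$ are precisely the members of $\mathcal{P}_0(\mathbb{B}_{\mathrm{loc}}(\mu),0)$. Grouping the sum by finite component yields the decomposition.

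The substantive content is the identification with the specialized Macdonald polynomial $P_\mu(q,0)$. My plan, following the strategy of \cite{LNSSS2}, is to start from the Ram-Yip alcove-walk expansion of the nonsymmetric Macdonald polynomial $E_{w_0\mu}(q,t)$, apply Hecke-symmetrization to obtain $P_\mu(q,t)$, and then specialize $t=0$. The $t=0$ specialization localizes the sum to ``quantum'' alcove walks carrying an explicit weight and degree statistic. In parallel, realize $\mathbb{B}_{\mathrm{loc}}(\mu)$ via projected level-zero Lakshmibai-Seshadri paths (Naito-Sagaki \cite{NS08}), so that $D$ arises as the explicit degree statistic on projected LS paths. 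The crucial step is then a weight-preserving bijection between surviving quantum alcove walks and projected LS paths that simultaneously matches the two degree statistics.

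The main obstacle is exactly this final matching. My approach would be axiomatic: use Theorem \ref{energy} to characterize $D$ on $\mathbb{B}_{\mathrm{loc}}(\mu)$ by three properties --- $\mathfrak{g}$-invariance, the normalization $D(b_0)=0$, and the recursion $D(\tilde{e}_0 b) = D(b)-1$ whenever $\varepsilon_0(b) \ge 2$ --- and then verify that the alcove-walk degree on the Macdonald side satisfies the same rules. Since $\mathbb{B}_{\mathrm{loc}}(\mu)$ is connected (Theorem \ref{energy}), the recursion together with $\mathfrak{g}$-invariance propagates equality from the normalization point $b_0$ throughout the crystal. Reconciling the sign and normalization conventions across the Ram-Yip, quantum alcove, and LS-path models is the delicate bookkeeping that makes this step the substantive one, and it is where I would rely heavily on the detailed combinatorics in \cite{LNSSS2, NS08}.
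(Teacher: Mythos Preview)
The paper does not prove this theorem: the $\Box$ appears immediately after the statement, and the result is simply attributed to \cite{LNSSS2}. Your proposal therefore goes beyond what the paper itself supplies. Your treatment of the tautological chain of equalities is correct (unwinding the definition of $\mathsf{gch}$ and grouping by finite $\g$-crystal components using Theorem~\ref{energy}(1)), and your outline of the Macdonald identification---Ram--Yip formula for $E_{w_0\mu}$, symmetrization to $P_\mu$, specialization $t=0$ surviving on quantum alcove walks, realization of $\mathbb{B}_{\mathrm{loc}}(\mu)$ via projected level-zero LS paths, and a weight- and degree-preserving bijection---is a faithful summary of the strategy in \cite{LNSSS2} with input from \cite{NS08}.

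One small caveat: the axiomatic route you propose at the end (pin down $D$ uniquely via connectedness and the three properties in Theorem~\ref{energy}, then verify the same recursion on the alcove-walk side) is a plausible alternative, but it is not how \cite{LNSSS2} actually proceeds; there the matching of statistics is established by an explicit bijection between the quantum alcove model and the LS-path model, not by checking an abstract characterization on both sides. Your variant would require separately establishing a crystal structure on the quantum-alcove side compatible with $\tilde{e}_0$, which is additional work. Either way, the substantive content lies entirely in the cited references, consistent with the paper's decision to quote the result without proof.
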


The following result is a straight-forward extension of a result due to Okado \cite{O} (which in turn uses \cite{HKKOTY}) for type $\mathsf{A}$ (see also Schilling-Shimozono \cite[\S 3.6]{SS00}).

\begin{thm}\label{P-char}
Let $\mu \in P_+$ and $\lambda \in P_+^k$. We have
$$X_{\mu, \lambda} ^{(k)} ( q ) = \sum_{w \in W_\af, \overline{w \circ \lambda} \in P_+} (-1)^{\ell ( w )} q ^{\left< d, \la - w \circ_k \la \right>} X_{\mu, \overline{w \circ_k \lambda}} ( q ).$$
\end{thm}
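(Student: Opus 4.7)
The plan is to proceed in direct analogy with the proof of Corollary \ref{altP} (the formula for $P$-polynomials), replacing the graded-multiplicity basis $\{\mathsf{ch}\, W(\lambda)\}_{\lambda \in P_+}$ by the classical-character basis $\{\mathsf{ch}\, V(\gamma)\}_{\gamma \in P_+}$ and applying the Weyl--Kac formula (Theorem \ref{KT}) to $\mathsf{ch}\, L_k(\lambda + k\Lambda_0)$.

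First, I would use Theorem \ref{vertex} to pass to affine characters and get the identity
\[
\sum_{b \in \mathcal P(\mathbb B_{\mathrm{loc}}(\mu), k\Lambda_0)} q^{D(b)}\, \mathsf{ch}\, L_k(\mathrm{wt}(b) + k\Lambda_0) \;=\; \sum_{\lambda \in P_+^k} \overline{X^{(k)}_{\mu,\lambda}(q)}\, \mathsf{ch}\, L_k(\lambda + k\Lambda_0).
\]
Next, I would apply Theorem \ref{KT} to each $\mathsf{ch}\, L_k(\lambda + k\Lambda_0)$, and then use Theorem \ref{CG} to rewrite $\mathsf{ch}\, M_k(\gamma) = e^{k\Lambda_0}\,\mathsf{ch}\, V(\gamma) \cdot \mathsf{gch}\, U(\g[z]_1)$. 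After dividing both sides by the common universal factor $e^{k\Lambda_0}\,\mathsf{gch}\, U(\g[z]_1)$, one reaches a linear combination of classical characters $\mathsf{ch}\, V(\overline{w \circ_k \lambda})$.

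Independently, I would compute the same expression starting from the ``classical'' side: using Theorem \ref{LNS3} to expand $\overline{\mathsf{gch}\, \mathbb B_{\mathrm{loc}}(\mu)} = \sum_{\gamma \in P_+} \overline{X_{\mu,\gamma}(q)}\, \mathsf{ch}\, V(\gamma)$, one obtains the same quantity expanded with coefficients $\overline{X_{\mu,\gamma}(q)}$ in the basis $\{\mathsf{ch}\, V(\gamma)\}_{\gamma \in P_+}$. Linear independence of the $\mathsf{ch}\, V(\gamma)$ then yields the ``inverse'' identity
\[
\overline{X_{\mu,\gamma}(q)} \;=\; \sum_{\substack{\lambda \in P_+^k,\; w \in W \backslash W_\af \\ \overline{w \circ_k \lambda} = \gamma}} (-1)^{\ell(w)}\, q^{-\langle d, w \circ_k \lambda\rangle}\, \overline{X^{(k)}_{\mu,\lambda}(q)}.
\]
Applying $\overline{\cdot}$ and Möbius-inverting this relation on the affine Weyl group $W_\af$—using the standard Weyl--Kac duality, which converts the $W \backslash W_\af$-sum on the right into a $W_\af$-sum with the condition $\overline{w \circ_k \lambda} \in P_+$—gives the target formula.

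The main obstacle is the identification noted in the Remark following Theorem \ref{vertex}: the affine character of the tensor product $\mathbb B(k\Lambda_0) \otimes \mathbb B_{\mathrm{loc}}(\mu)$ does not naively factor as $\mathsf{ch}\, L_k(k\Lambda_0) \cdot \overline{\mathsf{gch}\, \mathbb B_{\mathrm{loc}}(\mu)}$, so care must be taken to justify that the resulting reduction to the classical-character basis legitimately yields Theorem \ref{LNS3}'s expansion. Once this character matching is established, the Möbius-inversion step from the inverse identity to the target is a routine consequence of the affine Weyl--Kac denominator identity.
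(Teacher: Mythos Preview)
Your overall architecture---express one quantity in two ways and compare coefficients in a basis---is the same shape as the paper's argument, but the proposal has a genuine gap at exactly the step you flag as the ``main obstacle,'' and a second gap at the inversion step.

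\textbf{The character matching.} You start from the affine-crystal character of $\mathbb B(k\Lambda_0)\otimes\mathbb B_{\mathrm{loc}}(\mu)$ (via Theorem~\ref{vertex}), apply Weyl--Kac to each summand, divide by the universal factor, and then want to identify the result with $\overline{\mathsf{gch}\,\mathbb B_{\mathrm{loc}}(\mu)}$ so that Theorem~\ref{LNS3} applies. That identification is equivalent to asserting that the affine character of the tensor product crystal equals $\mathsf{ch}\,\mathbb B(k\Lambda_0)\cdot \overline{\mathsf{gch}\,\mathbb B_{\mathrm{loc}}(\mu)}$, which the Remark after Theorem~\ref{vertex} explicitly says is \emph{false}. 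So the ``independent'' computation from the classical side does not produce the same quantity, and the comparison collapses. The paper does \emph{not} compare two expressions for the tensor-product crystal character. Instead it applies the linear operator $\chi$ to the element $e^{k\Lambda_0}\cdot\mathsf{gch}\,\mathbb B_{\mathrm{loc}}(\mu)\in\Z[\widetilde P]$; one computation of $\chi$ uses $W_\af$-antisymmetrization term-by-term and the compatibility of $D$ with $\tilde e_0,\tilde e_i$ (Theorem~\ref{energy}~1),~3)) to land on the $X_{\mu,\gamma}$'s, while the other uses the realization $\chi=\lim_k D_{i_k}\cdots D_{i_1}$ together with Demazure-crystal string structure (Theorem~\ref{Demazure}) to relate $\chi(e^{k\Lambda_0}\cdot\mathsf{gch}\,\mathbb B_{\mathrm{loc}}(\mu))$ to $\sum_{b\in\mathcal P(\mathbb B_{\mathrm{loc}}(\mu),k\Lambda_0)}q^{D(b)}\chi(e^{k\Lambda_0+\mathrm{wt}\,b})$. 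This Demazure-operator/Demazure-crystal step is the substantive content that bridges the obstacle you identify; nothing in your proposal supplies it.

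\textbf{The inversion step.} Even granting your ``inverse'' identity, the passage to the target formula is not a routine M\"obius inversion. The matrix taking $(X^{(k)}_{\mu,\lambda})_{\lambda\in P_+^k}$ to $(X_{\mu,\gamma})_{\gamma\in P_+}$ is $P_+\times P_+^k$ (infinite by finite over $\Z[\![q]\!]$), and inverting it amounts to expressing $\mathsf{ch}\,M_k(\gamma)$ in terms of $\mathsf{ch}\,L_k(\lambda)$'s, which in general involves parabolic Kazhdan--Lusztig combinatorics, not just sign-alternation over $W_\af$. The paper avoids this entirely: because both computations are of the \emph{same} quantity $\chi(e^{k\Lambda_0}\cdot\mathsf{gch}\,\mathbb B_{\mathrm{loc}}(\mu))$, equating coefficients of the linearly independent family $\{\chi(e^{\lambda_++k\Lambda_0})\}_{\lambda_+\in P_+^k}$ yields the target formula directly, with no inversion needed.
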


\begin{proof}

We set $\mathbb B_{\mathrm{loc}} ( \mu; \la ) := \{ b \in \mathbb B_{\mathrm{loc}} ( \mu ) \mid \mathrm{wt} \, ( b ) = \la \}$ for each $\la \in P$. In view of (\ref{WCF}) and the $\Z$-linearity of $\chi$, we deduce
\begin{align}\nonumber
\chi ( e ^{k \Lambda_0} \cdot \,  & \mathsf{gch} \, \mathbb B_{\mathrm{loc}} ( \mu ) ) = \sum_{b \in \mathbb B_{\mathrm{loc}} ( \mu )} \chi ( e ^{\mathrm{wt} \, ( b ) - D ( b ) \delta + k \Lambda_0})\\\nonumber
& = \sum_{\lambda \in P} \sum_{w \in W_\af, \overline{w \circ_k \lambda} \in P_+^k} \sum_{b \in \mathbb B_{\mathrm{loc}} ( \mu ; \la )} \chi ( e^{\la + k \Lambda_0 - D ( b ) \delta} )\\\nonumber
& = \sum_{\lambda \in P} \sum_{w \in W_\af, \la_+ = \overline{w \circ_k \lambda} \in P_+^k} \sum_{b \in \mathbb B_{\mathrm{loc}} ( \mu ; \la )} (-1)^{\ell ( w )} q ^{\left< d, \la - \la_+ \right>} \chi ( e^{\la_+ + k \Lambda_0 - D ( b ) \delta} )\\
& = \sum_{\lambda \in P_+} \sum_{w \in W \backslash W_\af, \la_+ = \overline{w \circ_k \lambda} \in P_+^k} (-1)^{\ell ( w )} q ^{\left< d, \la - \la_+ \right>} X_{\mu, \lambda} ( q^{-1} ) \chi ( e^{\la_+ + k \Lambda_0} ).\label{eq1}
\end{align}
Here we used the fact that the reflection by $\circ_k$ is compatible with the function $D$ by Theorem \ref{energy} 1) and 3) in order to derive the third equality. The fourth equality follows from Theorem \ref{LNS3} and $\chi ( e ^{k \Lambda_0} \cdot \mathsf{ch} \, V ( \la ) ) = \chi ( e^{\la + k \Lambda_0} )$ for each $\la \in P_+$ and $k \in \Z_{> 0}$.

The tensor product crystal $\mathbb B ( k \Lambda_0 )\otimes \mathbb B_{\mathrm{loc}} ( \mu )$ is a classical crystal generated by its highest weight elements.

The set $b_{k \Lambda_0} \otimes \mathbb B_{\mathrm{loc}} ( \mu )$ decomposes into the disjoint union
\begin{equation}
b_{k \Lambda_0} \otimes \mathbb B_{\mathrm{loc}} ( \mu ) = \bigsqcup_{t \ge 1} b_{k \Lambda_0} \otimes \mathbb B_{\mathrm{loc}} ( \mu )^t \subset \bigsqcup_{t \ge 1} \mathbb B ( \Lambda^t ) \hskip 5mm \Lambda^t \in \widetilde{P}_+^k\label{disj}
\end{equation}
that respects the tensor product decomposition in Theorem \ref{vertex}. Here we warn that we equip $\mathbb B ( \Lambda^t )$ a structure of affine crystals. Since $b_{k \Lambda_0} \otimes \mathbb B_{\mathrm{loc}} ( \mu )^t$ is stable under the action of $\{\widetilde{e}_i\}_{i \in \tI_\af}$, the embedding $b_{k \Lambda_0} \otimes \mathbb B_{\mathrm{loc}} ( \mu )^t \subset \mathbb B ( \Lambda^t )$ is stable under the action of $\{\widetilde{e}_i\}_{i \in \tI_\af}$.

For each $w \in W$ and $i \in \tI_\af$, Theorem \ref{Demazure} implies that $\mathbb B ( k \Lambda_0 )_w$ is a disjoint union of $i$-strings or highest weight elements in $i$-strings. Here $\mathbb B_{\mathrm{loc}} (\mu)$ is stable under the action of $\{ \widetilde{e}_i, \widetilde{f}_i\}$.

By a rank one calculation, we deduce that $\mathbb B ( k \Lambda_0 )_w \otimes \mathbb B_{\mathrm{loc}} ( \mu )$ is a disjoint union of $i$-strings or highest weight elements in $i$-strings. Therefore, the set $$\mathbb B ( \Lambda^t )_w := \mathbb B ( k \Lambda_0 )_w \otimes \mathbb B_{\mathrm{loc}} ( \mu ) \cap \mathbb B ( \Lambda^t ) \subset \mathbb B ( \Lambda^t )$$
is a disjoint union of $i$-strings or highest weight elements in $i$-strings. In addition, we have
$$\mathbb B ( \Lambda^t )_{s_iw} = \bigcup_{m \ge 0} \widetilde{f}_i^m \mathbb B ( \Lambda^t )_w$$
for each $i \in \tI_\af$ such that $s_i w > w$ by Theorem \ref{Demazure} and rank one calculation.

In particular, we have
$$\sum_{b \in \mathbb B ( \Lambda^t )_{s_iw}} e^{\mathrm{wt} \, b} = \sum_{b \in \mathbb B ( \Lambda^t )_{w}} D_i ( e^{\mathrm{wt} \, b} )$$
for each $i \in \tI_\af$ such that $s_i w > w$ (here we again warn that the weight here is affine weight).

Applying (\ref{WCFs}), we deduce that
\begin{align}\nonumber
\chi ( e^{k \Lambda_0} \cdot \mathsf{gch} \, \mathbb B_{\mathrm{loc}} (\mu)) & = \sum_{b' \in \mathbb B_{\mathrm{loc}} (\mu)} q^{D(b')}\chi ( e^{k \Lambda_0 + \mathrm{wt} \, b'} )\\\nonumber
& = \sum_{t \ge 1} \sum_{b' \in \mathbb B_{\mathrm{loc}} (\mu)^t} q^{D(b')} \chi ( e^{k \Lambda_0 + \mathrm{wt} \, b'} )\\\nonumber
& = \lim _{w \to \infty} \sum_{t \ge 1} \sum_{b \in \mathbb B ( \Lambda^t )_{w}} \chi ( e^{k \Lambda_0 + \mathrm{wt} \, b} )\\\label{eq2}
& = \sum_{t \ge 1} \chi ( \Lambda^t ) = \sum_{b \in \mathcal P ( \mathbb B_{\mathrm{loc}} (\mu), k \Lambda _0)} q^{D ( b )} \chi ( e^{k \Lambda_0 + \mathrm{wt} \, b}),
\end{align}
where $w \to \infty$ means that we take a limit $\lim_{k \to \infty} s_{i_k} \cdots s_{i_2} s_{i_1}$. Therefore, equating (\ref{eq1}) and (\ref{eq2}) implies the result as required (with $q \mapsto q^{-1}$).
\end{proof}

\begin{cor}\label{comp-mac}
For each $\lambda \in P^k_+$ and $\mu \in P_+$, we have $P_{\mu,\lambda} ^{(k)} ( q ) = X_{\mu,\lambda} ^{(k)} ( q )$.
\end{cor}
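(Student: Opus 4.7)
The strategy is a two-step reduction. First show that both $P^{(k)}_{\mu,\la}$ and $X^{(k)}_{\mu,\la}$ are determined by their unrestricted counterparts via identical alternating sum formulas, and then verify the unrestricted identity via the projective limit $k\to\infty$.

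For the reduction step, compare Corollary \ref{altP}, which gives
$$P^{(k)}_{\mu,\la}(q) = \sum_{w\in W\backslash W_\af} (-1)^{\ell(w)} q^{-\langle d, w\circ_k\la\rangle} P_{\mu,\overline{w\circ_k\la}}(q),$$
with Theorem \ref{P-char}, which gives the analogous expansion
$$X^{(k)}_{\mu,\la}(q) = \sum_{w\in W\backslash W_\af,\,\overline{w\circ_k\la}\in P_+} (-1)^{\ell(w)} q^{\langle d, \la - w\circ_k\la\rangle} X_{\mu,\overline{w\circ_k\la}}(q).$$
Because $\la\in P$ is identified with an element of $\widehat P^0 \subset \widetilde P$, we have $\langle d,\la\rangle = 0$, so the exponent of $q$ is the same in both formulas: $-\langle d, w\circ_k\la\rangle = \langle d,\la - w\circ_k\la\rangle$. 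The summation ranges also coincide when $\la \in P_+^k$, since $\la$ lies in the fundamental alcove and hence a minimal length representative $w\in W\backslash W_\af$ produces a dominant $\overline{w\circ_k\la}$. This reduces the corollary to the unrestricted case $P_{\mu,\la}(q) = X_{\mu,\la}(q)$ for $\mu,\la\in P_+$.

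For the unrestricted identity, I would pass to the limit. By the remark following the definition of $P^{(k)}_{\mu,\la}$, we have $P_{\mu,\la}(q) = \lim_{k\to\infty} P^{(k)}_{\mu,\la}(q)$, and the isomorphism $P(\la) \cong M_k(\la)$ of graded $\g[z]$-modules (Theorem \ref{CG}), combined with Lemma \ref{inf-Kos}, yields
$$P_{\mu,\la}(q^{-1}) = [W(-w_0\mu,0)^*:V(\la)]_q.$$
The standard dualities $V(\la)^*\cong V(-w_0\la)$ and the grading inversion built into $(-)^\vee$ rewrite the right-hand side as a graded $V$-multiplicity in $W(-w_0\mu,0)$ itself. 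On the combinatorial side, Theorem \ref{LNS3} identifies $X_{\mu,\la}(q)$ with the coefficient of $\ch V(\la)$ in $\overline{\gch \mathbb B_{\mathrm{loc}}(\mu)} = P_\mu(q,0)$. The known equality between the graded character of the local Weyl module $W(\mu,0)$ and that of the tensor product crystal $\mathbb B_{\mathrm{loc}}(\mu)$ (implicit in Theorem \ref{energy} and its cited sources, ultimately Naito-Sagaki and Naoi), together with the $-w_0$ diagram symmetry of both sides, matches the two expressions and produces $P_{\mu,\la}(q) = X_{\mu,\la}(q)$. Substituting this into the reduction of the first step completes the argument.

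The main obstacle is the second step, specifically the bookkeeping around the several involutions: the $-w_0$-twist appearing in the definition of $P_{\mu,\la}$, the inversion $q\leftrightarrow q^{-1}$ from the graded dual, and the direction of the energy statistic $D$ on the crystal must all be reconciled so that the graded multiplicity produced by Lemma \ref{inf-Kos} lines up with the crystal sum $X_{\mu,\la}(q)$ without sign or grading discrepancy. Once these conventions are coherently matched, the identity is an immediate consequence of the characterization of Macdonald polynomials at $t=0$ via Kirillov-Reshetikhin type paths.
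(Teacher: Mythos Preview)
Your proposal is correct and follows essentially the same route as the paper's own proof: reduce via the two alternating-sum formulas (Corollary~\ref{altP} and Theorem~\ref{P-char}) to the unrestricted identity $P_{\mu,\lambda}(q)=X_{\mu,\lambda}(q)$, and then deduce the latter from Lemma~\ref{inf-Kos} together with Theorem~\ref{LNS3}. The paper compresses all of this into a single sentence; your write-up simply unpacks the matching of summation ranges and $q$-exponents and flags the $-w_0$ and $q\leftrightarrow q^{-1}$ bookkeeping that the paper leaves implicit.
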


\begin{proof}
Compare them using Corollary \ref{altP} and Theorem \ref{P-char} since we have $P_{\lambda, \mu} ( q ) = X_{\lambda, \mu} ( q )$ by Lemma \ref{inf-Kos} and Theorem \ref{LNS3}.
\end{proof}

{\footnotesize
\bibliography{rkref}
\bibliographystyle{hplain}}

\newpage

\appendix

\small{
\def\thesection{Appendix \Alph{section}}
\section{A free field realization proof of Theorem \ref{mult-one} by Ryosuke Kodera\footnote{\texttt{rkodera@math.kyoto-u.ac.jp}, Department of Mathematics, Kyoto University, Oiwake, Kita-Shirakawa, Sakyo Kyoto 606-8502 JAPAN}\footnote{
{\it current address} \texttt{kodera@math.kobe-u.ac.jp} Department of Mathematics, Kobe University, 1-1 Rokkodai, Nada, Kobe Hyogo 657-8501 JAPAN}}
\def\thesection{\Alph{section}}

For general notation, we refer to \S \ref{prelim} and the beginning of \S \ref{wf} in the main body. Let $\g$ be a simple Lie algebra of type $\mathsf{ADE}$. The goal of this appendix is to provide a proof of the following result (without using Theorem \ref{FC-str}) based on the free field realizations.

\begin{thm}[$=$ Theorem \ref{mult-one}]
Let $\varpi \in P_+^1$. The $\tg$-module $L_{1} ( \varpi )$, viewed as a $\g [z]$-module, admits a filtration by $\{ W ( \mu )\}_{\mu \in P_+}$. Moreover, we have
$$( L_{1} ( \varpi ) : W ( \mu ) )_q = \begin{cases} q ^{- \left< d, w ( \varpi + \Lambda_0 ) \right>} & (\mu = \overline{w ( \varpi + \Lambda_0 )}, w \in W_{\af})\\ 0 & (\text{otherwise})\end{cases}$$
for each $\mu \in P_+$.
\end{thm}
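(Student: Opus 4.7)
The plan is to use the Frenkel--Kac vertex operator construction (\cite{Kac}, Chapter 14) of $L_1(\varpi)$ available for $\g$ of type $\mathsf{ADE}$, which produces the isomorphism
\[
L_1(\varpi) \;\cong\; \mathrm{Sym}(\h \otimes z\C[z]) \otimes_{\C} \C[\varpi + Q],
\]
where the Heisenberg subalgebra $\h \otimes z\C[z]$ acts on the first factor and the root currents $E_\alpha \otimes z^n$ act as Fourier modes of the lattice vertex operators $V_\alpha(\xi)$. For each $\mu \in (\varpi + Q) \cap P_+$ I distinguish the extremal weight vector $v_\mu := 1 \otimes e^\mu$; it has $\h$-weight $\mu$, is annihilated by $\gn$ (by dominance of $\mu$ combined with the Fock vacuum property), and sits at $d$-weight $\frac{1}{2}((\varpi,\varpi) - (\mu,\mu))$. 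The translation formula $t_{\gamma}(\varpi + \Lambda_0) = \varpi + \gamma + \Lambda_0 - ((\varpi, \gamma) + \frac{1}{2}(\gamma, \gamma))\delta$, applied with $\gamma = \mu - \varpi \in Q$ (using $Q = Q^{\vee}$ in type $\mathsf{ADE}$), shows this $d$-weight coincides with $\langle d, t_{\mu - \varpi}(\varpi + \Lambda_0)\rangle$, matching the exponent predicted by the theorem. The inequality $(\mu,\mu) > (\lambda,\lambda)$ for $\mu > \lambda$ both dominant in $\varpi + Q$ further shows that $v_\mu$ is the unique (up to scalar) $\h$-weight $\mu$ vector at its $d$-weight in $L_1(\varpi)$.

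Next set $M_\mu := U(\g[z]) v_\mu \subset L_1(\varpi)$; note that $M_\varpi = L_1(\varpi)$ since the affine negative nilpotent $\gn_\af^-$ is contained in $\g[z]$ and already generates $L_1(\varpi)$ from $v_\varpi$. A direct computation with the exponential form of $V_\alpha(\xi)$ applied to $e^\mu$ shows $(E_\alpha \otimes z^{(\alpha,\mu)+1}) \cdot v_\mu$ is a nonzero scalar multiple of $v_{\mu+\alpha}$ whenever $\mu + \alpha$ is dominant, hence $M_\lambda \subseteq M_\mu$ for all $\lambda \geq \mu$ in $(\varpi + Q) \cap P_+$. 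Refining the dominance order on these weights to a total order, $\{M_\mu\}$ becomes a descending $\g[z]$-filtration of $L_1(\varpi)$ with successive quotients $M_\mu / \sum_{\lambda > \mu} M_\lambda$. A $q$-degree comparison gives $\bar v_\mu \ne 0$ in this quotient, and vertex operator analysis combined with Theorem~\ref{KSconj} (proved in \cite{Kat17b} independently of Theorem~\ref{FC-str}) shows that every $\gn$-invariant dominant-weight-$\gamma$ vector of $L_1(\varpi)$ with $\gamma > \mu$ already lies in $M_\gamma \subset \sum_{\lambda > \mu} M_\lambda$. Consequently no $V(\gamma)$-isotypic component with $\gamma > \mu$ survives in the quotient, and Definition~\ref{gWP} produces a $\g[z]$-surjection
\[
W(\mu) \langle m(\mu) \rangle \;\twoheadrightarrow\; M_\mu \,\big/\!\! \sum_{\lambda > \mu} M_\lambda
\]
with a grade shift $m(\mu)$ matching the value $-\langle d, t_{\mu-\varpi}(\varpi+\Lambda_0)\rangle$.

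To upgrade these surjections to isomorphisms I compare graded characters: the Cherednik--Feigin identity \cite[(1.25)]{CF13}, already invoked in the main text's proof of Theorem~\ref{mult-one}, yields
\[
\mathsf{ch}\, L_1(\varpi) \;=\; \sum_{\mu \in (\varpi + Q) \cap P_+} q^{-\langle d, w_\mu(\varpi + \Lambda_0)\rangle}\, \mathsf{ch}\, W(\mu),
\]
where $w_\mu \in W_\af$ is chosen so that $\overline{w_\mu(\varpi + \Lambda_0)} = \mu$, and together with the linear independence of $\{\mathsf{ch}\, W(\mu)\}_\mu$ this forces each subquotient surjection to be an isomorphism, yielding both the filtration claim and the stated multiplicity formula. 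The main obstacle is the vertex-operator claim that every $\gn$-invariant dominant-weight-$\gamma$ vector of $L_1(\varpi)$ lies in $M_\gamma = U(\g[z]) v_\gamma$: without recourse to Theorem~\ref{FC-str}, this has to be extracted from the Frenkel--Kac description directly, by writing any such vector through the Heisenberg creation exponentials $\exp\bigl(\sum_{n > 0} \alpha_{-n}\xi^n/n\bigr)$ acting on $e^\gamma = v_\gamma$ and matching each contribution, via the operator product expansion of $V_\alpha(\xi)$, with an element generated from $v_\gamma$ by $U(\g[z])$ --- essentially the free-field content of Theorem~\ref{KSconj}.
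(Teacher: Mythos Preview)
Your overall strategy --- Frenkel--Kac realization, the filtration by the submodules $M_\mu = U(\g[z])v_\mu$, surjections from global Weyl modules onto the associated graded pieces, and the character comparison via \cite[(1.25)]{CF13} --- is exactly the route taken in the appendix. The one place where your argument diverges, and where it has a genuine gap, is the justification of the surjection $W(\mu) \twoheadrightarrow M_\mu / \sum_{\lambda > \mu} M_\lambda$.

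You argue that no $V(\gamma)$ with $\gamma > \mu$ survives in the quotient, and then invoke Definition~\ref{gWP}. But $W(\mu) = P(\mu;\mu)$ is the largest quotient of $P(\mu)$ with no $V(\gamma)$ for $\gamma \not\le \mu$, and since the dominance order is only partial these two conditions differ: a priori the quotient could contain a $V(\gamma)$ with $\gamma$ incomparable to $\mu$. Your appeal to Theorem~\ref{KSconj} and the ``main obstacle'' you flag (that every $\gn$-invariant weight-$\gamma$ vector of $L_1(\varpi)$ lies in $M_\gamma$) does not close this gap: even granting that claim, $M_\gamma$ need not sit inside $\sum_{\lambda > \mu} M_\lambda$ when $\gamma$ is incomparable to $\mu$.

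The appendix avoids all of this by proving the stronger and more direct statement $\mathfrak{n}[z]\,\bar v_\mu = 0$. For $\alpha \in \Delta_+$ and $k \ge 0$, the Frenkel--Kac formula already forces $(E_\alpha \otimes z^k)\, v_\mu \in F \otimes e^{\mu+\alpha} = U(z\h[z])\, v_{\mu+\alpha}$ --- essentially the same vertex-operator computation you use, just read differently. Now $v_{\mu+\alpha}$ is an extremal weight vector, so $U(\g)v_{\mu+\alpha}$ is a finite-dimensional simple $\g$-module whose highest weight vector is again an extremal vector $v_\nu$ with $\nu \in P_+$ and $\nu \ge \mu+\alpha > \mu$; hence $v_{\mu+\alpha} \in U(\mathfrak{n}^-)v_\nu$ and $(E_\alpha \otimes z^k)v_\mu \in U(z\h[z])\,U(\mathfrak{n}^-)\,v_\nu \subset M_\nu$ with $\nu > \mu$, so it dies in the quotient. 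Once $\mathfrak{n}[z]\,\bar v_\mu = 0$, a PBW argument shows every weight of the quotient lies in $\mu - \Z_{\ge 0}\Delta^+$, and the surjection from $W(\mu)$ is immediate --- without Theorem~\ref{KSconj} and without the unresolved ``obstacle''.
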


Thanks to \cite[(1.25)]{CF13}, it suffices to show:

\begin{prop}\label{App-main}
There exists a filtration of $L_1(\varpi)$ whose adjoint graded quotients yield the inequality
\[
	\mathsf{ch} \ L_1(\varpi) \leq \sum_{\lambda \in P_+ \cap \overline{W_{\mathrm{af}}(\varpi + \Lambda_0)}} q^{\frac{1}{2} \left( (\lambda, \lambda) - (\varpi,\varpi) \right)} \mathsf{ch} \ W(\lambda).
\]
\end{prop}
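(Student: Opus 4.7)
The plan is to realize $L_1(\varpi)$ via the Frenkel-Kac lattice construction and extract the filtration from that explicit model. For $\g$ of type $\mathsf{ADE}$ one has the Frenkel-Kac isomorphism
\[
L_1(\varpi) \;\cong\; \mathrm{Sym}\bigl(\h \otimes \xi^{-1}\C[\xi^{-1}]\bigr) \otimes_{\C} \C_{\epsilon}[\varpi + Q]
\]
of $\tg$-modules, with the Heisenberg generators $h \otimes \xi^n$ acting in the standard way and each real-root vector $E_\beta \otimes \xi^n$ $(\beta \in \Delta)$ realized as the $n$-th mode of the vertex operator $X_\beta(\xi) = c_\beta e^\beta(\xi)$. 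Here $\C_\epsilon[\varpi + Q]$ is the $\epsilon$-twisted group algebra of the coset $\varpi + Q \subset P$, and each basis element $e^\mu$ has $d$-degree $-\tfrac{1}{2}((\mu,\mu) - (\varpi,\varpi))$, which matches $\left\langle d, w(\varpi + \Lambda_0) \right\rangle$ for any $w \in W_{\af}$ with $\overline{w(\varpi+\Lambda_0)} = \mu$.

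I enumerate $P_+ \cap (\varpi + Q) = \{\lambda_0, \lambda_1, \ldots\}$ in weakly increasing order of $(\lambda_i, \lambda_i)$, with $\lambda_0 = \varpi$, set $v_{\lambda_i} := 1 \otimes e^{\lambda_i}$, and define the filtration
\[
F_n \;:=\; \sum_{i \leq n} U(\g[z]) \cdot v_{\lambda_i} \;\subset\; L_1(\varpi).
\]
Each $F_n$ is a $\g[z]$-submodule by construction. The union $\bigcup_n F_n$ exhausts $L_1(\varpi)$: the classical Weyl group $W \subset G \subset \g[z]$ applied to $v_{\lambda_i}$ produces $1 \otimes e^\mu$ for every $\mu \in W \lambda_i$, the Heisenberg creation modes $h \otimes z^k$ with $k \geq 1$ lie in $\g[z]$ and freely generate the Fock factor at each lattice site, and the coset $\varpi + Q$ decomposes as the disjoint union $\bigsqcup_i W\lambda_i$.

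The core of the argument will identify the successive quotient $F_n/F_{n-1}$ as a $\g[z]$-quotient of $W(\lambda_n)\langle -\tfrac{1}{2}((\lambda_n,\lambda_n) - (\varpi,\varpi))\rangle$. The image $\bar v_{\lambda_n}$ of $v_{\lambda_n}$ in $F_n/F_{n-1}$ has weight $\lambda_n \in P_+$ and cyclically generates the subquotient. To produce a surjection from $W(\lambda_n)$ one verifies the defining relations of Chari-Pressley: that $\gn \otimes \C[z]$ annihilates $\bar v_{\lambda_n}$ in the subquotient, and that the integrability $F_\alpha^{(\alpha^\vee,\lambda_n)+1}\bar v_{\lambda_n} = 0$ holds for each simple root $\alpha$. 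The latter is immediate from $\g$-integrability of $L_1(\varpi)$. Summing the resulting character bounds over $n$, with the degree shift $q^{\tfrac{1}{2}((\lambda_n,\lambda_n) - (\varpi,\varpi))}$ of $\bar v_{\lambda_n}$, yields the stated inequality.

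The main obstacle is the $\gn \otimes \C[z]$-vanishing. A direct vertex-operator computation shows $(E_\alpha \otimes z^m) v_{\lambda_n}$ is nonzero precisely for $m \geq (\alpha, \lambda_n) + 1$, in which case it is a polynomial in Heisenberg modes applied to $e^{\lambda_n + \alpha}$. Since $(\lambda_n + \alpha, \lambda_n + \alpha) > (\lambda_n, \lambda_n)$, the dominant representative $\overline{\lambda_n + \alpha}$ sits strictly later in the enumeration, so $v_{\overline{\lambda_n+\alpha}} \notin F_n$ and the required vanishing in $F_n/F_{n-1}$ is not formal. The key step will be to show that every such nonzero value can be rewritten, modulo $F_{n-1}$, as a $\g[z]$-action applied to a vector already lying in $F_{n-1}$; this should rely on the commutator identities among the vertex operators $X_\alpha(\xi)$ which trade contributions between neighboring lattice sites. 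Producing such an identity independently of Theorem \ref{FC-str} is precisely the role of the free field realization in this appendix and constitutes the technical heart of the argument.
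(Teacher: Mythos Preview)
Your filtration is set up in the wrong direction, and this makes the argument collapse rather than leaving a technical gap. The vector $v_{\lambda_0} = v_\varpi = \vac \otimes e^{\varpi}$ is the highest weight vector of $L_1(\varpi)$. Since $F_0 = E_\vartheta \otimes \xi^{-1} = E_\vartheta \otimes z$ lies in $\g[z]$, the entire lower triangular part $\gn^- \oplus \g \otimes \xi^{-1}\C[\xi^{-1}]$ of $\tg$ sits inside $\g[z]$, and hence $F_0 = U(\g[z]) v_\varpi = L_1(\varpi)$. Your increasing filtration is therefore constant, every $F_n/F_{n-1}$ with $n\geq 1$ vanishes, and the only nontrivial subquotient $F_0/F_{-1} = L_1(\varpi)$ is far from being a quotient of $W(\varpi)$. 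The obstacle you flag (``$v_{\overline{\lambda_n+\alpha}} \notin F_n$'') is a symptom of this: with the ascending order, the lattice sites reached by $\gn[z]$ always lie \emph{outside} the filtration step you are trying to control.

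The paper's remedy is simply to reverse the order: for $\lambda \in P_+ \cap (\varpi + Q_+)$ define
\[
\mathrm{gr}_\lambda L_1(\varpi) \;=\; U(\g[z])\,\bv_\lambda \Big/ \sum_{\mu > \lambda} U(\g[z])\,\bv_\mu,
\]
quotienting by the cyclic submodules for strictly \emph{larger} dominant weights. With this ordering the $\gn[z]$-vanishing is immediate from the Frenkel--Kac formula: $E_\alpha(-k)\bv_\lambda$ lies in $U(z\h[z])(\vac \otimes e^{\lambda+\alpha})$, the vector $\vac \otimes e^{\lambda+\alpha}$ is extremal and hence belongs to $U(\gn^-)\bv_\mu$ for $\mu$ the dominant element of $W(\lambda+\alpha)$, and this $\mu$ satisfies $\mu \geq \lambda + \alpha > \lambda$. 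So $E_\alpha(-k)\bv_\lambda$ already lies in the submodule being killed. No commutator identities between vertex operators are needed; the single observation that $\gn[z]$ moves the lattice site strictly upward in the dominance order is the whole argument.
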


The rest of this appendix is devoted to the proof of Proposition \ref{App-main}.

We recall the Frenkel-Kac construction of level one integrable representations of $\tg$.
Let $\varpi$ be an element of $P^1_+$ and $L_1(\varpi)$ be the integrable highest weight $\tg$-module with highest weight $\varpi + \Lambda_0$.
\begin{lem}\label{lem:extremal_weights}
We have	$W_{\mathrm{af}}(\varpi+\Lambda_0) = \{ t_{\gamma}(\varpi+\Lambda_0) \mid \gamma \in Q \}$. In particular, extremal weights in $L_1(\varpi)$ are parametrized by $Q$.
\end{lem}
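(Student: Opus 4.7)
The plan is to reduce the $W_{\mathrm{af}}$-orbit of $\varpi + \Lambda_0$ to a pure translation orbit, exploiting the type $\mathsf{ADE}$ identifications and the $W$-invariance of the normalized form. The inclusion $\{ t_{\gamma}(\varpi + \Lambda_{0}) \mid \gamma \in Q \} \subseteq W_{\mathrm{af}}(\varpi + \Lambda_{0})$ is immediate since $Q = Q^{\vee}$ in the simply laced case and $t_{\gamma} \in W_{\mathrm{af}} = W \ltimes Q^{\vee}$. For the reverse inclusion, I would use $w \, t_{\mu} = t_{w \mu} \, w$ to rewrite any $w \, t_{\mu}(\varpi + \Lambda_{0})$ as $t_{w\mu} \cdot w(\varpi + \Lambda_{0})$, thereby reducing the claim to showing that $w(\varpi + \Lambda_{0})$ lies in $\{t_{\gamma}(\varpi + \Lambda_{0}) \mid \gamma \in Q\}$ for every $w \in W$.

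The key computation is the following. Since $W$ acts trivially on $\C K \oplus \C d$, one has $w(\Lambda_{0}) = \Lambda_{0}$, so $w(\varpi + \Lambda_{0}) = w(\varpi) + \Lambda_{0}$. Set $\beta := \varpi - w(\varpi)$; then $\beta \in Q$ because $W$ fixes $P/Q$ pointwise. The standard translation formula at level one reads
\[
t_{\gamma}(\varpi + \Lambda_{0}) = \varpi + \gamma + \Lambda_{0} - \bigl( (\varpi, \gamma) + \tfrac{1}{2}(\gamma, \gamma) \bigr)\,\delta,
\]
and substituting $\gamma = -\beta$ yields
\[
t_{-\beta}(\varpi + \Lambda_{0}) = w(\varpi) + \Lambda_{0} + \bigl( (\varpi, \beta) - \tfrac{1}{2}(\beta,\beta) \bigr)\,\delta.
\]
Expanding $\beta = \varpi - w(\varpi)$ and using the $W$-invariance of $(\cdot,\cdot)$ gives $(\varpi, \beta) - \tfrac{1}{2}(\beta, \beta) = \tfrac{1}{2}\bigl((\varpi, \varpi) - (w\varpi, w\varpi)\bigr) = 0$. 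Therefore $w(\varpi + \Lambda_{0}) = t_{-\beta}(\varpi + \Lambda_{0})$ with $-\beta \in Q$, which completes the set-theoretic equality.

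For the parametrization claim, recall that the extremal weights of $L_{1}(\varpi)$ are by definition the orbit $W_{\mathrm{af}}(\varpi + \Lambda_{0})$. Once the first statement is proved, it remains only to check that $\gamma \mapsto t_{\gamma}(\varpi + \Lambda_{0})$ is injective; this is transparent after projecting to the classical $\h^{*}$-component, where $t_{\gamma}$ sends $\varpi$ to $\varpi + \gamma$. The only genuinely computational step in the whole argument is the vanishing of the $\delta$-coefficient, which I expect to be the main check, but it is a one-line consequence of the $W$-invariance of the normalized form applied to the level-one highest weight.
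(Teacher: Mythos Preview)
Your argument is correct. The paper's own proof consists entirely of a citation to \cite[Lemma~12.6]{Kac}, and your direct computation (reducing to the finite Weyl group via $w t_\mu = t_{w\mu} w$ and then checking the vanishing of the $\delta$-coefficient using $W$-invariance of the form) is precisely the content one would unpack from that reference in the simply laced level-one setting; so there is no substantive difference in approach, only in explicitness.
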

\begin{proof}
The equality follows by \cite[Lemma~12.6]{Kac}.
\end{proof}

\begin{rem}
In this case we also have $\max(\varpi+\Lambda_0) = W_{\mathrm{af}}(\varpi+\Lambda_0)$.
Here $\max(\varpi+\Lambda_0)$ denotes the set of the maximal weights of $L_1(\varpi)$.
\end{rem}

\begin{lem}\label{lem:bar_extremal_weights}
We have $P_+ \cap \overline{W_{\mathrm{af}}(\varpi + \Lambda_0)} = P_+ \cap (\varpi + Q_+)$.
\end{lem}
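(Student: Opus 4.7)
The plan is to reduce the $W_{\af}$-orbit identity to a coset statement in $P$ and then treat the resulting inclusion based on the structure of $\varpi \in P_+^1$. By Lemma~\ref{lem:extremal_weights},
\[
W_{\af}(\varpi + \Lambda_0) \;=\; \{\, t_\gamma(\varpi + \Lambda_0) \mid \gamma \in Q\,\},
\]
and the standard translation formula
\[
t_\gamma(\varpi + \Lambda_0) \;=\; \varpi + \gamma + \Lambda_0 \;-\; \bigl((\varpi,\gamma) + \tfrac{1}{2}(\gamma,\gamma)\bigr)\delta
\]
at level one shows $\overline{t_\gamma(\varpi + \Lambda_0)} = \varpi + \gamma$. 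Hence $\overline{W_{\af}(\varpi + \Lambda_0)} = \varpi + Q$, and the lemma becomes the identity $P_+ \cap (\varpi + Q) = P_+ \cap (\varpi + Q_+)$.

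The inclusion $\supseteq$ is immediate from $Q_+ \subseteq Q$. For the reverse, fix $\lambda \in P_+$ with $\mu := \lambda - \varpi \in Q$; the task is to show $\mu \in Q_+$. Since $\g$ is of type $\mathsf{ADE}$, any $\varpi \in P_+^1$ is either $0$ or a minuscule fundamental weight. In the case $\varpi = 0$, the classical non-negativity of the entries of the inverse Cartan matrix $C^{-1}$ is sufficient: writing $\lambda = \sum c_i \varpi_i$ with $c_i \geq 0$ and expanding in the simple-root basis gives non-negative rational coefficients, which are integers thanks to $\lambda \in Q$ and therefore non-negative integers, so $\lambda \in Q_+$.

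The main obstacle is the minuscule case $\varpi = \varpi_{i_0}$. The required assertion is the classical fact that a minuscule fundamental weight is the unique minimum of $P_+ \cap (\varpi_{i_0} + Q)$ in the dominance order (see e.g.\ Bourbaki, \emph{Groupes et algèbres de Lie}, Chapitre VI), which I would invoke directly. A self-contained argument proceeds by writing $\mu = \sum m_j \alpha_j$; the dominance of $\lambda$ translates into the componentwise inequality $Cm \geq -e_{i_0}$ on the coefficient vector, and applying $C^{-1}$ yields $m_j \geq -(C^{-1})_{j,i_0}$. At every index $j$ with $(C^{-1})_{j,i_0} < 1$, the integrality of $m_j$ forces $m_j \geq 0$; this disposes of all indices in type $\mathsf{A}$, and the remaining indices in types $\mathsf{D}$ and $\mathsf{E}$ (where some $(C^{-1})_{j,i_0}$ can equal $1$) are resolved by a short case analysis that exploits the coset condition $\lambda \in \varpi_{i_0} + Q$ to rule out $m_j = -1$.
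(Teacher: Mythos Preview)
Your proof is correct and follows the same route as the paper: both reduce via Lemma~\ref{lem:extremal_weights} to $P_+ \cap (\varpi + Q) = P_+ \cap (\varpi + Q_+)$ and then invoke the classical fact that each element of $P_+^1 \cong P/Q$ is the unique dominance-minimum of its coset in $P_+$ (the paper states this in one line as ``for any $\lambda \in P_+$ there exists unique $\varpi' \in P_+^1$ with $\lambda \ge \varpi'$''; you split into $\varpi = 0$ versus minuscule and cite Bourbaki).

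One caveat on your supplementary self-contained sketch: the assertion that $(C^{-1})_{j,i_0} < 1$ for all $j$ in type $\mathsf{A}$ is false. Already $(C^{-1})_{2,2} = 1$ in $A_3$, and for larger rank the diagonal entries exceed $1$ (e.g.\ $(C^{-1})_{2,2} = 6/5$ in $A_4$, $(C^{-1})_{3,3} = 3/2$ in $A_5$). So the inverse-Cartan bound $m_j \ge -(C^{-1})_{j,i_0}$ alone does not dispose of all indices in type $\mathsf{A}$, and the coset/case analysis you mention for types $\mathsf{D}$ and $\mathsf{E}$ would be needed there too. This does not affect the validity of your argument, since you rely primarily on the Bourbaki citation.
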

\begin{proof}
We have $\overline{W_{\mathrm{af}}(\varpi + \Lambda_0)} = \varpi + Q$ by Lemma~\ref{lem:extremal_weights}.
It is well known that for any $\lambda \in P_+$ there exists unique $\varpi' \in P^1_+ \simeq P/Q$ such that $\lambda \geq \varpi'$.
Thus we have $P_+ \cap (\varpi + Q) = P_+ \cap (\varpi + Q_+)$.
\end{proof}

We use the symbol $X(k) = X \otimes \xi^k = X \otimes z^{-k} \in \tg$ for an element $X \in \g$.
Define a Lie subalgebra $\wts$ of $\tg$ to be $\wts = \h[\xi,\xi^{-1}] \oplus \C K$.
Then $\wts$ is a direct sum of the Heisenberg Lie algebra $\ws = \bigoplus_{k \neq 0} (\h \otimes \xi^{k}) \oplus \C K$ and an abelian Lie algebra $\h \otimes 1$.
Put $\ws_{\geq 0} = \bigoplus_{k > 0} (\h \otimes \xi^{k}) \oplus \C K$ and let $\C_1$ be the one-dimensional representation of $\ws_{\geq 0}$ via $\h \otimes \xi^{k} \mapsto 0$ ($k>0$) and $K \mapsto \id$.
Let $F$ be the Fock representation of the Heisenberg Lie algebra $\ws$ defined by the induction
\[
	F = U(\ws) \otimes_{U(\ws_{\geq 0})} \C_{1}.
\]
We denote by $\vac$ the element $1 \otimes 1 \in F$. 
Consider $\C[Q]$ the group algebra of $Q$. It has a $\C$-basis $e^{\gamma}$ ($\gamma \in Q$) and the multiplication is given by $e^{\beta}e^{\gamma}=e^{\beta+\gamma}$.
We denote by $e^{\varpi}\C[Q]$ a $\C$-vector space which has a $\C$-basis $e^{\varpi+\gamma}$ ($\gamma \in Q$).
An action of $\h \otimes 1$ on $e^{\varpi}\C[Q]$ is given by $h(0) e^{\varpi+\gamma} = \langle h, \varpi+\gamma \rangle e^{\varpi+\gamma}$ for $h \in \h$.
Then $F \otimes e^{\varpi}\C[Q]$ is naturally a module of $\wts = \ws \oplus (\h \otimes 1)$.
We define a $\Z$-grading on $F \otimes e^{\varpi}\C[Q]$ by
\[
	\deg (\h \otimes \xi^{-k}) = k \ (k \geq 1) \text{ and } \deg e^{\varpi+\gamma} = (\varpi, \gamma) + \dfrac{1}{2}(\gamma, \gamma).
\]
Thus $F \otimes e^{\varpi}\C[Q]$ is extended to a module of $\wts \oplus \C d$ such that $(-d)$ counts the degree.
\begin{rem}
The degree of $e^{\varpi+\gamma}$ is determined such that the $\widetilde{\h}$-weight of $\vac \otimes e^{\varpi + \gamma}$ is $t_{\gamma} (\varpi + \Lambda_0)$.
\end{rem}

We take a certain 2-cocycle $\varepsilon \colon Q \times Q \to \{\pm 1\}$ as in \cite[2.3]{FK80}. For an element $\gamma \in Q$, we define an operator $\widetilde{T}_{\gamma}$ on $e^{\varpi}\C[Q]$ by $\widetilde{T}_{\gamma} e^{\varpi+\beta} = \varepsilon(\gamma,\beta) e^{\varpi + \beta + \gamma}$ ($\beta \in Q$).
By \cite[Proposition~2.2]{FK80}, we can choose root vectors $E_{\alpha} \in \g_{\alpha}$ for $\alpha \in \Delta$ satisfying certain relations, e.g., $[E_{\alpha}, E_{\beta}] = \varepsilon(\beta,\alpha) E_{\alpha+\beta}$ if $\alpha + \beta \in \Delta$.

\begin{thm}[Frenkel-Kac \cite{FK80}]\label{thm:Frenkel-Kac}
\begin{enumerate}
\item The restriction of the level one representation $L_1(\varpi)$ of $\tg$ to $\wts \oplus \C d$ is isomorphic to $F \otimes e^{\varpi}\C[Q]$.
\item The $(\wts \oplus \C d)$-module $F \otimes e^{\varpi}\C[Q]$ is extended to $\tg$ by
\begin{equation*}
	\sum_{k \in \Z} E_{\alpha}(k) u^{-k} \mapsto \exp\left( \sum_{k>0} \dfrac{\alpha^{\vee} (-k)}{k} u^{k} \right) (\widetilde{T}_{\alpha} u^{1+\alpha^{\vee}}) \exp\left( -\sum_{k>0} \dfrac{\alpha^{\vee} (k)}{k} u^{-k} \right)
\end{equation*}
and it is isomorphic to $L_1(\varpi)$ as a $\tg$-module.
\end{enumerate}
\end{thm}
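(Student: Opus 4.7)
The plan is to use the Frenkel--Kac realization of Theorem \ref{thm:Frenkel-Kac} to pick out an explicit family of extremal vectors indexed by $P_+\cap(\varpi+Q)$ and to organize the $\g[z]$-submodules they generate into a filtration whose adjoint graded pieces are cyclic quotients of suitably shifted global Weyl modules. I would first identify $L_1(\varpi)\cong F\otimes e^{\varpi}\C[Q]$; by Lemma \ref{lem:bar_extremal_weights} the index set on the right-hand side of the claim is $P_+\cap(\varpi+Q)$. For each such $\lambda$ I set $u_{\lambda}:=\vac\otimes e^{\lambda}$, the extremal vector of affine weight $t_{\lambda-\varpi}(\varpi+\Lambda_0)$, which by construction sits in $(-d)$-degree $\tfrac12((\lambda,\lambda)-(\varpi,\varpi))$. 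Put $N_{\lambda}:=U(\g[z])u_{\lambda}\subset L_1(\varpi)$, fix a linear order $\lambda_1,\lambda_2,\ldots$ on $P_+\cap(\varpi+Q)$ refining the \emph{reverse} dominance order, and set $F^n:=\sum_{i\le n}N_{\lambda_i}$. Since $u_{\varpi}$ is the $\tg$-highest weight vector, $L_1(\varpi)=U(\gn_-^{\af})u_{\varpi}$, and the decomposition $\gn_-^{\af}=\gn_-\oplus(\g\otimes\xi^{-1}\C[\xi^{-1}])\subset\g[z]$ gives $N_{\varpi}=L_1(\varpi)$, so $\bigcup_nF^n=L_1(\varpi)$.

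The geometric heart of the argument will be: for every $\mu\in\varpi+Q$ with dominant $W$-conjugate $\mu'\in P_+\cap(\varpi+Q)$, one has $F\otimes e^{\mu}\subset N_{\mu'}$. Indeed, the weight-$\mu$ vector in $V(\mu')\subset N_{\mu'}$ (unique up to scalar) and the extremal vector $\vac\otimes e^{\mu}$ both lie in the one-dimensional extremal weight space of $L_1(\varpi)$ of affine weight $t_{\mu-\varpi}(\varpi+\Lambda_0)$ (Lemma \ref{lem:extremal_weights}) at $(-d)$-degree $\tfrac12((\mu',\mu')-(\varpi,\varpi))=\tfrac12((\mu,\mu)-(\varpi,\varpi))$, and therefore agree up to scalar; applying $\h\otimes z\C[z]\subset\g[z]$ (which acts on $F$ via $U(\h\otimes z\C[z])\vac=F$) then fills out $F\otimes e^{\mu}$ inside $N_{\mu'}$.

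Next, for each $n$ write $\lambda=\lambda_n$ and $G^n:=F^n/F^{n-1}$. I would first observe $[u_{\lambda}]\ne 0$ in $G^n$, which is immediate since every vector of $N_{\mu}$ for $\mu>\lambda$ has $(-d)$-degree $\ge\tfrac12((\mu,\mu)-(\varpi,\varpi))>\tfrac12((\lambda,\lambda)-(\varpi,\varpi))$ (strict since $(\mu,\mu)>(\lambda,\lambda)$ for strict dominance of dominant weights). The vertex operator formula in Theorem \ref{thm:Frenkel-Kac}(2), combined with $\alpha^{\vee}(\ell)\vac=0$ for $\ell>0$, then yields $E_{\alpha}(k)u_{\lambda}\in F\otimes e^{\lambda+\alpha}$ for every $\alpha\in\Delta^+$ and every $k\le 0$, with $E_{\alpha}(0)u_{\lambda}=0$. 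The dominant conjugate $\mu'$ of $\lambda+\alpha$ satisfies $\mu'\ge\lambda+\alpha>\lambda$, so $F\otimes e^{\lambda+\alpha}\subset N_{\mu'}\subset F^{n-1}$ by the previous paragraph; hence $(\gn\otimes\C[z])[u_{\lambda}]=0$ in $G^n$. Combined with $h(0)[u_{\lambda}]=\langle h,\lambda\rangle[u_{\lambda}]$ and $F_i^{\langle\alpha_i^{\vee},\lambda\rangle+1}[u_{\lambda}]=0$ (inherited from the $\g$-integrable copy $V(\lambda)\subset G^n$ generated by the nonzero $[u_{\lambda}]$), the image $[u_{\lambda}]$ satisfies the defining relations of $W(\lambda)$. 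Therefore $G^n$ is a cyclic quotient of the shifted global Weyl module, giving $\mathsf{ch}\,G^n\le q^{\frac{1}{2}((\lambda_n,\lambda_n)-(\varpi,\varpi))}\mathsf{ch}\,W(\lambda_n)$; summing over $n$ (the filtration is weight-wise finite since each weight-$q$-degree piece of $L_1(\varpi)$ is finite dimensional) produces the claimed character inequality.

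The delicate point will be combining the key containment with the vertex operator computation: one must verify that the operator $\widetilde{T}_{\alpha}$ always shifts us into an extremal weight space whose dominant $W$-conjugate lies strictly above $\lambda$ in dominance, so that precisely those obstructions to the $W(\lambda)$-relations for $u_{\lambda}$ land in the earlier part of the filtration and vanish upon passing to $F^n/F^{n-1}$.
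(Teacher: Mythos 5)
Your proposal does not prove the statement in question. The statement to be established here is Theorem \ref{thm:Frenkel-Kac} itself, i.e.\ the Frenkel--Kac free-field (vertex-operator) realization: part (1) asserts that $L_1(\varpi)|_{\wts\oplus\C d}\cong F\otimes e^{\varpi}\C[Q]$, and part (2) asserts that the explicit vertex operators extend this $(\wts\oplus\C d)$-module structure to a $\tg$-module structure isomorphic to $L_1(\varpi)$. Your argument opens with ``I would first identify $L_1(\varpi)\cong F\otimes e^{\varpi}\C[Q]$'' and later invokes ``the vertex operator formula in Theorem \ref{thm:Frenkel-Kac}(2)'' --- that is, you assume both halves of the theorem you are asked to prove, and then use them to derive a different result, namely the Weyl-module filtration and character inequality of Proposition \ref{App-main} (equivalently Theorem \ref{mult-one}). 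As a proof of Theorem \ref{thm:Frenkel-Kac} this is circular; as written it establishes nothing about the theorem itself.

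For comparison, the paper's proof of Theorem \ref{thm:Frenkel-Kac} is short and of a completely different nature: it cites \cite{FK80} for $\varpi=0$, and for general $\varpi$ it sketches part (1) by observing that each extremal weight vector $\bv'_{\varpi+\gamma}$ generates, under $\wts\oplus\C d$, a copy of the Fock module $F\otimes e^{\varpi+\gamma}$; summing over $\gamma\in Q$ gives an injection $F\otimes e^{\varpi}\C[Q]\to L_1(\varpi)$, which is an isomorphism by character comparison (the specialization of the Weyl--Kac character formula to the level-one $\mathsf{ADE}$ case). Part (2) is then exactly as in \cite{FK80}: one must verify that the displayed vertex operators, together with the $2$-cocycle $\varepsilon$ and the translation operators $\widetilde{T}_{\alpha}$, satisfy the Serre/commutation relations of $\tg$ and that the resulting $\tg$-module is irreducible with highest weight $\varpi+\Lambda_0$. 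None of these steps appears in your proposal. (Your actual argument --- extracting the vectors $u_{\lambda}=\vac\otimes e^{\lambda}$, showing $F\otimes e^{\mu}\subset N_{\mu'}$, and deducing that the graded pieces are cyclic quotients of shifted global Weyl modules --- is essentially the content of Lemma \ref{lem:extremal}, Lemma \ref{lem:extremal2}, and Proposition \ref{prop:surjection} of the appendix, and would be a reasonable submission for \emph{that} statement, but not for this one.)
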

\begin{proof}
The assertion is proved by \cite{FK80} for the case $\varpi=0$.

A proof of (i) for a general $\varpi$ is similar.
We give a sketch.
Let $\bv_{\varpi+\gamma}'$ be an extremal weight vector in $L_1(\varpi)$ of weight $t_{\gamma}(\varpi+\Lambda_0)$.
Then $U(\wts \oplus \C d) \bv_{\varpi+\gamma}'$ is isomorphic to $F \otimes e^{\varpi+\gamma}$ as a module of $\wts \oplus \C d$.
Hence we have an injection $F \otimes e^{\varpi}\C[Q] \to L_1(\varpi)$.
By comparing their characters, we see that they are isomorphic.

A proof of (ii) is the same as \cite{FK80}.
\end{proof}

For each $\gamma \in Q$, we put $\bv_{\varpi+\gamma} = \widetilde{T}_{\gamma} (\vac \otimes e^{\varpi})= \vac \otimes e^{\varpi+\gamma} \in F \otimes e^{\varpi}\C[Q]$. (We note that $\varepsilon(\gamma,0) = 1$.)
We regard $\bv_{\varpi+\gamma}$ as an extremal weight vector in $L_1(\varpi)$ via the Frenkel-Kac construction.
The $\widetilde{\h}$-weight of $\bv_{\varpi+\gamma}$ is
\[
	\varpi +\gamma - \left( (\varpi, \gamma) + \dfrac{1}{2}(\gamma, \gamma) \right) \delta + \Lambda_0
\] by construction.
Hence the following lemma follows.
\begin{lem}\label{lem:extremal}
The vector $\bv_{\varpi+\gamma}$ is an extremal weight vector of weight $t_{\gamma} (\varpi + \Lambda_0)$.
Moreover, $\mathfrak{n} \bv_{\varpi+\gamma} = 0$ if and only if $\varpi+\gamma$ is dominant.
\end{lem}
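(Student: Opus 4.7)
The plan is to prove the two assertions separately, both by direct computation in the Frenkel--Kac Fock realization of Theorem~\ref{thm:Frenkel-Kac}.

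For the first assertion, the $\widetilde{\h}$-weight of $\bv_{\varpi+\gamma}=\vac\otimes e^{\varpi+\gamma}$ has already been computed in the paragraph preceding the lemma as $\varpi+\gamma-\bigl((\varpi,\gamma)+\tfrac12(\gamma,\gamma)\bigr)\delta+\Lambda_0$, and the Remark immediately before Lemma~\ref{lem:extremal_weights} records that this equals $t_\gamma(\varpi+\Lambda_0)$. By Lemma~\ref{lem:extremal_weights} this weight lies in the $W_{\mathrm{af}}$-orbit of $\varpi+\Lambda_0$, hence is an extremal weight of $L_1(\varpi)$. Since $L_1(\varpi)$ is an integrable highest weight module, every weight in that orbit has one-dimensional weight space, so the non-zero vector $\vac\otimes e^{\varpi+\gamma}\in F\otimes e^\varpi\C[Q]$ is automatically (a scalar multiple of) the extremal weight vector of that weight.

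For the second assertion, I would fix $\alpha\in\Delta^+$ and extract $E_\alpha(0)\bv_{\varpi+\gamma}$ as the coefficient of $u^0$ in the Frenkel--Kac formula of Theorem~\ref{thm:Frenkel-Kac}(ii) applied to $\vac\otimes e^{\varpi+\gamma}$. Three simplifications reduce this to a one-line check. First, the rightmost factor $\exp\bigl(-\sum_{k>0}\alpha^\vee(k)/k\cdot u^{-k}\bigr)$ acts as the identity on $\vac$ because $\alpha^\vee(k)\vac=0$ for $k>0$. Second, the middle factor $\widetilde T_\alpha u^{1+\alpha^\vee}$ sends $\vac\otimes e^{\varpi+\gamma}$ to $\varepsilon(\alpha,\gamma)\,u^{1+\langle\alpha^\vee,\varpi+\gamma\rangle}\,\vac\otimes e^{\varpi+\gamma+\alpha}$. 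Third, the leftmost factor $\exp\bigl(\sum_{k>0}\alpha^\vee(-k)/k\cdot u^k\bigr)$ expands in non-negative powers of $u$ only, with constant term equal to the identity. Hence the $u^0$-coefficient vanishes precisely when $1+\langle\alpha^\vee,\varpi+\gamma\rangle>0$, i.e.\ when $\langle\alpha^\vee,\varpi+\gamma\rangle\ge 0$.

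Running this over all $\alpha\in\Delta^+$ and observing that dominance of $\varpi+\gamma$ is equivalent to $\langle\alpha_i^\vee,\varpi+\gamma\rangle\ge 0$ for all simple $i\in\tI$, we get the stated equivalence. The only place needing a little care is the converse direction: when $\langle\alpha^\vee,\varpi+\gamma\rangle=-n-1$ with $n\ge 0$, the $u^0$-coefficient is (up to the non-zero scalar $\varepsilon(\alpha,\gamma)$) the degree-$n$ part of $\exp\bigl(\sum_{k>0}\alpha^\vee(-k)/k\cdot u^k\bigr)$ acting on $\vac\otimes e^{\varpi+\gamma+\alpha}$, which is a non-trivial polynomial in the Heisenberg creation operators $\alpha^\vee(-k)$ applied to $\vac\otimes e^{\varpi+\gamma+\alpha}$; freeness of the Fock module $F$ under the Heisenberg creation algebra guarantees this vector is non-zero. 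Apart from this non-vanishing check, the argument is purely mechanical; there is no substantial obstacle once the vertex operator formula of Theorem~\ref{thm:Frenkel-Kac} is in hand.
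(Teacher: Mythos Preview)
Your proof is correct. For the first assertion you follow exactly the route the paper intends: the $\widetilde{\h}$-weight of $\bv_{\varpi+\gamma}$ was already computed to be $t_\gamma(\varpi+\Lambda_0)$, this lies in $W_{\mathrm{af}}(\varpi+\Lambda_0)$ by Lemma~\ref{lem:extremal_weights}, and one-dimensionality of extremal weight spaces in an integrable highest weight module finishes it.

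For the second assertion you and the paper diverge. The paper's argument is literally the sentence ``Hence the following lemma follows'': once $\bv_{\varpi+\gamma}$ is known to be an extremal weight vector of $\h$-weight $\varpi+\gamma$, standard $\mathfrak{sl}(2,i)$-theory of extremal vectors in integrable modules yields $E_{i}\bv_{\varpi+\gamma}=0$ if and only if $\langle\alpha_i^\vee,\varpi+\gamma\rangle\ge 0$, and since $\mathfrak{n}$ is generated by the $E_i$ ($i\in\tI$) the equivalence with dominance follows. You instead extract $E_\alpha(0)\bv_{\varpi+\gamma}$ directly as the $u^0$-coefficient in the vertex operator of Theorem~\ref{thm:Frenkel-Kac}(ii), for every $\alpha\in\Delta^+$. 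Both approaches are valid. Yours is more self-contained---it uses only the Frenkel--Kac realization and the freeness of the Fock module $F$ over the Heisenberg creation operators, and never appeals to abstract facts about extremal weight vectors---whereas the paper's implicit argument is shorter and independent of the explicit realization. Your non-vanishing check in the case $\langle\alpha^\vee,\varpi+\gamma\rangle<0$ (the homogeneous pieces of the exponential being non-zero polynomials in the $\alpha^\vee(-k)$ applied to $\vac$) is exactly the point that needs care, and you have handled it correctly.
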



The following lemma follows from Lemma~\ref{lem:extremal_weights}.
\begin{lem}\label{lem:extremal2}
	Any extremal weight vector in $L_1(\varpi)$ is of the form $\bv_{\varpi+\gamma}$ $(\gamma \in Q)$ up to scalar.
\end{lem}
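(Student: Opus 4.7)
The plan is to combine Lemma \ref{lem:extremal_weights} with the Frenkel--Kac realization (Theorem \ref{thm:Frenkel-Kac}) to pin down the full weight space of any extremal weight and then identify the given vector inside it. First, let $\bv \in L_1(\varpi)$ be an extremal weight vector. By Lemma \ref{lem:extremal_weights}, its weight equals $t_\gamma (\varpi+\Lambda_0)$ for a unique $\gamma \in Q$, and by Lemma \ref{lem:extremal} the vector $\bv_{\varpi+\gamma}$ already lies in this weight space. The claim therefore reduces to showing that the weight space $L_1(\varpi)_{t_\gamma(\varpi+\Lambda_0)}$ is one--dimensional.

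The plan for the dimension count is to transport the question through the isomorphism $L_1(\varpi) \cong F \otimes e^{\varpi}\C[Q]$ of Theorem \ref{thm:Frenkel-Kac} (i). Since the elements of $F$ are built by applying $\h \otimes \xi^{-k}$ ($k>0$) to the cyclic vector $\vac$, and these operators commute with $\h \otimes 1$, every element of $F$ has classical $\h$-weight zero. Consequently, a nonzero vector of $\h$-weight $\varpi + \gamma$ in $F \otimes e^\varpi \C[Q]$ must be of the form $u \otimes e^{\varpi+\gamma}$ with $u \in F$, since the distinct basis vectors $e^{\varpi+\beta}$ have distinct $\h$-weights $\varpi + \beta$.

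Next, I would read off the $\delta$-component of the weight from the grading convention. The weight $t_\gamma(\varpi+\Lambda_0)$ contains the $\delta$-shift $-\bigl((\varpi,\gamma) + \tfrac{1}{2}(\gamma,\gamma)\bigr)\delta$, and since $(-d)$ counts the degree, the requirement that $u \otimes e^{\varpi+\gamma}$ have this weight forces $\deg(u) + \deg(e^{\varpi+\gamma}) = (\varpi,\gamma) + \tfrac{1}{2}(\gamma,\gamma)$. By the very definition of the grading, $\deg(e^{\varpi+\gamma}) = (\varpi,\gamma) + \tfrac{1}{2}(\gamma,\gamma)$, hence $\deg(u) = 0$, so $u \in \C \vac$. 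This shows that $\bv$ is a scalar multiple of $\bv_{\varpi+\gamma} = \vac \otimes e^{\varpi+\gamma}$, as desired.

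There is no serious obstacle here; the only point requiring a small amount of care is the bookkeeping of the two gradings (the $\h$-weight and the $(-d)$-degree) and the verification that every element of the Fock factor $F$ is $\h$-trivial, which is immediate from the fact that $\ws$ lives inside $\h \otimes \C[\xi^{\pm 1}]$ and commutes with $\h \otimes 1$. Once these are in place, the argument is a one--line matching of degrees.
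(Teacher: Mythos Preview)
Your argument is correct. The only difference from the paper is one of economy: the paper's proof is the single clause ``follows from Lemma~\ref{lem:extremal_weights}'', because in \S\ref{VW} the extremal weight vector is \emph{defined} as the unique $\wth$-eigenvector of its weight (with a reference to Kumar), so once Lemma~\ref{lem:extremal_weights} identifies the extremal weights as $\{t_\gamma(\varpi+\Lambda_0)\}_{\gamma\in Q}$ and Lemma~\ref{lem:extremal} places $\bv_{\varpi+\gamma}$ in that weight space, uniqueness is already in hand. You instead reprove the one-dimensionality of the extremal weight space directly from the Frenkel--Kac realization by separating the $\h$-weight (forcing the $e^{\varpi+\gamma}$ factor) and the $d$-degree (forcing the Fock factor to lie in degree $0$, hence in $\C\vac$). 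This is a perfectly good, self-contained alternative; it buys independence from the cited fact at the cost of a short computation, while the paper's version buys brevity by invoking the definition.
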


We use the Frenkel-Kac construction to prove that $L_1(\varpi)$ has a filtration whose successive quotients are quotients of global Weyl modules. We set
$$\mathrm{gr}_{\la} \, L_1(\varpi) := U(\g[z]) \bv_{\lambda} / \displaystyle\sum_{\stackrel{\mu \in P_+ \cap (\varpi + Q_+),}{\mu > \lambda}} U(\g[z]) \bv_{\mu} \hskip 5mm \la \in P_+ \cap (\varpi + Q_+).$$
\begin{prop}\label{prop:surjection}
Let $\lambda$ be an element of $P_+ \cap (\varpi + Q_+)$.
The image $\bar{\bv}_{\lambda}$ of $\bv_{\lambda}$ in $\mathrm{gr}_{\la} \, L_1(\varpi)$ satisfies
\[
	h(0) \bar{\bv}_{\lambda} = \langle h, \lambda \rangle \bar{\bv}_{\lambda} \ (h \in \h) \ \text{ and } \ \mathfrak{n} [z] \bar{\bv}_{\lambda} = 0.
\]
Hence we have a surjective morphism of degree $\frac{1}{2} \left( (\lambda, \lambda) - (\varpi,\varpi) \right)$ from the global Weyl module $W(\lambda)$ to $\mathrm{gr}_{\la} \, L_1(\varpi)$.
\end{prop}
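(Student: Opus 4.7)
The plan is to verify directly, via the Frenkel--Kac model of Theorem~\ref{thm:Frenkel-Kac}, the two asserted properties of $\bar{\bv}_\lambda$, and then deduce the surjection by the universal property of $W(\lambda) = P(\lambda;\lambda)$. The equation $h(0)\bar{\bv}_\lambda = \langle h,\lambda\rangle\bar{\bv}_\lambda$ is immediate, since Lemma~\ref{lem:extremal} gives $\bv_\lambda = \vac \otimes e^\lambda$ classical $\h$-weight $\lambda$. The substantive content is $\mathfrak{n}[z]\bar{\bv}_\lambda = 0$; equivalently, for every $\alpha \in \Delta^+$ and $k \geq 0$, we must show that $E_\alpha \otimes z^k \bv_\lambda = E_\alpha(-k)\bv_\lambda$ lies in $\sum_{\mu > \lambda,\, \mu \in P_+ \cap (\varpi+Q_+)} U(\g[z])\bv_\mu$.

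I would carry out this via the vertex-operator formula of Theorem~\ref{thm:Frenkel-Kac}(ii). Since $\alpha^\vee(j)\vac = 0$ for $j>0$, the rightmost exponential acts as the identity on $\bv_\lambda$, and a direct calculation yields
\begin{equation*}
\sum_{m\in\Z} E_\alpha(m)\,u^{-m}\bv_\lambda \;=\; \varepsilon(\alpha,\lambda-\varpi)\, u^{1+\langle\alpha^\vee,\lambda\rangle}\exp\!\left(\sum_{j>0}\frac{\alpha^\vee(-j)}{j}u^j\right)\bv_{\lambda+\alpha}.
\end{equation*}
Dominance of $\lambda$ forces $\langle\alpha^\vee,\lambda\rangle\geq 0$, so the $u^k$-coefficient vanishes for $0\leq k\leq \langle\alpha^\vee,\lambda\rangle$, giving $E_\alpha(-k)\bv_\lambda=0$ outright; for larger $k$ it is an explicit polynomial in $\{\alpha^\vee(-j)\}_{j>0}$ applied to $\bv_{\lambda+\alpha}$, and in particular lies in $U(\h[z])\bv_{\lambda+\alpha}\subset U(\g[z])\bv_{\lambda+\alpha}$.

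The crux is then to situate $\bv_{\lambda+\alpha}$ inside $U(\g[z])\bv_\mu$ for some $\mu$ in the required set. I take $\mu \in P_+$ to be the dominant $W$-translate of $\lambda+\alpha$, so that $\mu-(\lambda+\alpha)\in Q_+$ yields $\mu>\lambda$, and Lemma~\ref{lem:bar_extremal_weights} places $\mu$ in $P_+\cap(\varpi+Q_+)$. Since $\mu\in P_+$, Lemma~\ref{lem:extremal} makes $\bv_\mu$ a $\g$-highest weight vector, so $U(\g)\bv_\mu\cong V(\mu)\subset L_1(\varpi)$ and its one-dimensional weight space $V(\mu)_{\lambda+\alpha}$ is non-zero (as $\lambda+\alpha\in W\mu$); the non-zero vector there has degree $\tfrac12((\mu,\mu)-(\varpi,\varpi))=\tfrac12((\lambda+\alpha,\lambda+\alpha)-(\varpi,\varpi))$ since $\g$ acts by degree-zero operators and $\mu,\lambda+\alpha$ are $W$-conjugate, so its $\wth$-weight matches the extremal weight $t_{\lambda+\alpha-\varpi}(\varpi+\Lambda_0)$ of $\bv_{\lambda+\alpha}$. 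By Lemma~\ref{lem:extremal2} the two vectors are therefore proportional. This degree-matching identification is the main obstacle of the argument. Combining everything gives $\mathfrak{n}[z]\bar{\bv}_\lambda=0$; a standard PBW argument then shows that no $\g$-type outside of weights $\leq\lambda$ appears in $U(\g[z])\bar{\bv}_\lambda$, so the tautological map $P(\lambda)\to\mathrm{gr}_\lambda L_1(\varpi)$ sending the generator to $\bar{\bv}_\lambda$ factors through $W(\lambda)=P(\lambda;\lambda)$ and is surjective, with degree shift $\tfrac12((\lambda,\lambda)-(\varpi,\varpi))$ equal to the degree of $\bv_\lambda$ in $L_1(\varpi)$.
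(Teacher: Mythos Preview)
Your proof is correct and follows essentially the same route as the paper. Both arguments use the Frenkel--Kac realization to see that $E_\alpha(-k)\bv_\lambda \in U(z\h[z])\bv_{\lambda+\alpha}$, and then pass from $\bv_{\lambda+\alpha}$ to $\bv_\mu$ for the dominant $\mu \in W(\lambda+\alpha)$ via the finite $\g$-action; the paper phrases this last step as ``$U(\g)\bv_{\lambda+\alpha}$ is simple with highest weight vector $\bv_\mu$'' (citing Lemmas~\ref{lem:extremal} and~\ref{lem:extremal2}), while you phrase it dually as ``$\bv_{\lambda+\alpha}$ is the extremal weight vector inside $U(\g)\bv_\mu \cong V(\mu)$'' via an explicit $d$-degree match---these are the same fact. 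Your vertex-operator computation and the PBW justification for factoring through $W(\lambda)$ are more detailed than what the paper writes, but the underlying argument is identical.
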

\begin{proof}
The relation $h(0) \bar{\bv}_{\lambda} = \langle h, \lambda \rangle \bar{\bv}_{\lambda}$ follows since the $\h$-weight of $\bv_{\lambda}$ is $\lambda$ by construction.

Let $\alpha \in \Delta_+$.
Then for $k \geq 0$, we have 
\[
E_{\alpha}(-k) \bv_{\lambda} \in F \otimes e^{\lambda + \alpha} = U(\xi^{-1}\h[\xi^{-1}]) (\vac \otimes e^{\lambda + \alpha})
\]
by Theorem~\ref{thm:Frenkel-Kac}. 
Here $\vac \otimes e^{\lambda + \alpha} = \bv_{\lambda + \alpha}$ is an extremal weight vector in $L_1(\varpi)$.
The $\g$-submodule $U(\g) \bv_{\lambda+\alpha}$ is finite-dimensional and simple.
Let $\mu \in P_+$ be the highest weight of this module.
Then $U(\g) \bv_{\lambda+\alpha}$ contains $\bv_{\mu}$ as its highest weight vector by Lemma~\ref{lem:extremal} and \ref{lem:extremal2}.
Hence we see that $\bv_{\lambda+\alpha} \in U(\mathfrak{n}^-) \bv_{\mu}$ and $\lambda < \lambda + \alpha \leq \mu$.
This implies that
\[
	E_{\alpha}(-k) \bv_{\lambda} \in U(\xi^{-1}\h[\xi^{-1}]) U(\mathfrak{n}^-) \bv_{\mu}.
\]
and completes the proof.
\end{proof}

\begin{proof}[Proof of Proposition \ref{App-main}]
The filtration is constructed as above. The inequality
\[
	\mathsf{ch} \ L_1(\varpi) \leq \sum_{\lambda \in P_+ \cap (\varpi + Q_+)} q^{\frac{1}{2} \left( (\lambda, \lambda) - (\varpi,\varpi) \right)} \mathsf{ch} \ W(\lambda)
\]
follows from Proposition~\ref{prop:surjection}.
The summation in the right-hand side is over $P_+ \cap \overline{W_{\mathrm{af}}(\varpi + \Lambda_0)}$ by Lemma~\ref{lem:bar_extremal_weights}.
\end{proof}

}

{\small{\bf Acknowledgments:}
R.K. thanks Sergey Loktev for the explaining his idea to use the free field realization in the proof of Theorem \ref{mult-one}.}

\end{document}